\newtheorem{thm}{Theorem}[section]
\newtheorem{cor}[thm]{Corollary}
\newtheorem{lem}[thm]{Lemma}
\newtheorem{prop}[thm]{Proposition}
\newtheorem{thmintro}{Theorem}
\theoremstyle{definition}
\newtheorem{rem}[thm]{Remark}
\newcommand{\N}{\mathbb N}
\newcommand{\Z}{\mathbb Z}
\newcommand{\Q}{\mathbb Q}
\newcommand{\R}{\mathbb R}
\newcommand{\C}{\mathbb C}
\newcommand{\mf}{\mathfrak}
\newcommand{\mc}{\mathcal}
\newcommand{\mb}{\mathbf}
\newcommand{\mh}{\mathbb}
\def\Irr{{\rm Irr}}
\newcommand{\mr}{\mathrm}
\newcommand{\ind}{\mathrm{ind}}
\newcommand{\enuma}[1]{\begin{enumerate}[\textup{(}a\textup{)}] {#1} \end{enumerate}}
\newcommand{\Fr}{\mathrm{Frob}}
\newcommand{\Sc}{\mathrm{Sc}}
\newcommand{\ad}{\mathrm{ad}}
\newcommand{\nr}{\mathrm{nr}}
\newcommand{\cpt}{\mathrm{cpt}}
\newcommand{\Rep}{\mathrm{Rep}}
\newcommand{\Res}{\mathrm{Res}}
\newcommand{\der}{\mathrm{der}}
\newcommand{\red}{\mathrm{red}}
\newcommand{\matje}[4]{\left(\begin{smallmatrix} #1 & #2 \\ 
#3 & #4 \end{smallmatrix}\right)}
\newcommand{\Mod}{\mathrm{Mod}}
\newcommand{\Hom}{\mathrm{Hom}}
\newcommand{\End}{\mathrm{End}}
\newcommand{\isom}{\xrightarrow{\sim}}
\newcommand{\IM}{\mathrm{IM}}
\newcommand{\he}{\dagger}
\begin{document}

\title[On principal series representations of quasi-split $p$-adic groups]{
On principal series representations\\ of quasi-split reductive $p$-adic groups}
\date{\today}
\subjclass[2010]{22E50, 20C08, 20G25}
\maketitle
\vspace{2mm}

\begin{center}
{\Large Maarten Solleveld} \\[1mm]
IMAPP, Radboud Universiteit Nijmegen\\
Heyendaalseweg 135, 6525AJ Nijmegen, the Netherlands \\
email: m.solleveld@science.ru.nl
\end{center}
\vspace{3mm}

\begin{abstract}
Let $G$ be a quasi-split reductive group over a non-archimedean local field. We establish a
local Langlands correspondence for all irreducible smooth complex $G$-representations in the
principal series. The parametrization map is injective, and its image is an explicitly
described set of enhanced L-parameters. Our correspondence is determined by the choice of
a Whittaker datum for $G$, and it is canonical given that choice. 

We show that our parametrization satisfies many expected properties, among others with respect
to the enhanced L-parameters of generic representations, temperedness, cuspidal supports
and central characters. Our correspondence lifts to a categorical level, where it makes the
appropriate Bernstein blocks of $G$-representations naturally equivalent to module categories
of Hecke algebras coming from Langlands parameters. Along the way we characterize genericity 
of $G$-representations in terms of representations of an affine Hecke algebra.
\end{abstract}
\vspace{2mm}

\tableofcontents

\section*{Introduction}

Consider a quasi-split reductive group $G = \mc G (F)$ over a non-archimedean local field $F$.
Let $\Rep (G)$ be the category of smooth $G$-representations on complex vector spaces and let
$\Irr (G)$ be the set of equivalence classes of irreducible representations in $\Rep (G)$. 
The conjectural local Langlands correspondence (LLC) asserts that
$\Irr (G)$ is canonically partitioned into finite L-packets $\Pi_\phi (G)$, indexed by
L-parameters $\phi$. Some time after the initial formulation in \cite{Bor2}, it was realized 
that $\Pi_\phi (G)$ should be parametrized by the
set of irreducible representations of a finite component group $R_\phi$. These conjectures have
motivated a large part of the study of reductive groups over local fields in past decades,
see the survey papers \cite{ABPS3,Bor2,Kal2,Kud,Vog}.

This paper establishes a local Langlands correspondence for the most accessible class of 
$G$-representations, those in the principal series. To formulate the result precisely,
we quickly recall some relevant notions.

Let $T \subset G$ be the centralizer of a maximal $F$-split torus in
$G$, or equivalently a minimal Levi subgroup of $G$. Then $T$ is itself a torus because $G$
is quasi-split, and $T$ is unique up to conjugation. Any representation of $G$ that can be
obtained from a smooth representation of $T$ by parabolic induction and then taking a
subquotient, is called a principal series $G$-representation. These representations form a
product of Bernstein blocks in $\Rep (G)$. We denote the set of irreducible principal series
$G$-representations by $\Irr (G,T)$. We warn that some L-packets contain elements of 
$\Irr (G,T)$ and also other elements of $\Irr (G)$.

It has turned out that the representation $\rho_\pi$ of $R_\phi$ associated to a given 
$\pi \in \Irr (G)$ is not canonically determined. To specify it uniquely one needs additional
input, namely a Whittaker datum for $G$. Such a Whittaker datum can be used to normalize
relevant intertwining operators, which then determine exactly how $\rho_\pi \in \Irr (R_\phi)$
is related to $\pi$. For non-quasi-split groups $G$ such a normalization should also be
possible \cite[Conjecture 2.5]{Kal2}, but it is much more involved.

We fix a Borel subgroup $B = TU$ and a nondegenerate character $\xi$ of the unipotent radical
$U$ of $B$. Then $(U,\xi)$, or rather its $G$-conjugacy class, is a Whittaker datum for $G$.
Recall that $\pi \in \Irr (G)$ is called $(U,\xi)$-generic if $\Hom_U (\pi,\xi)$ is nonzero.

Let $\mb W_F$ be the Weil group of $F$, let $G^\vee$ be the complex dual group of $G$ and
let ${}^L G = G^\vee \rtimes \mb W_F$ be the Langlands dual group. In this introduction
(but not in the body of the paper) we realize L-parameters for $G$ as Weil--Deligne 
representations 
\[
\phi : \mb W_F \ltimes \C \to {}^L G .
\]
Here $\mb W_F$ acts on $\C$ by $w \cdot z = \| w \| z$, where $\| w \| \in p^\Z$ is determined 
by $w (x) = x^{\| w \|}$ for all $x$ in an algebraic closure of the residue field of $F$.
The appropriate component group of such an L-parameter is 
\[
R_\phi = \pi_0 \big( Z_{G^\vee} (\phi (\mb W_F \ltimes \C)) / Z(G^\vee)^{\mb W_F} \big) ,
\]
where $Z(G^\vee)$ denotes the centre of $G^\vee$. An enhancement of $\phi$ is an irreducible 
$R_\phi$-representation. Let $\Phi_e (G)$ be the set of enhanced L-parameters for $G$, considered 
up to $G^\vee$-conjugacy. An element $(\phi,\rho) \in \Phi_e (G)$ belongs to the principal 
series if its cuspidal support is an enhanced L-parameter for $T$. More explicitly, that means
\begin{itemize}
\item $\phi (\mb W_F) \subset T^\vee \rtimes \mb W_F$ (or for a $G^\vee$-conjugate of 
$T^\vee \rtimes \mb W_F$, because $\phi$ is only given up to $G^\vee$-conjugacy),
\item $\rho$ appears in the homology of a certain variety of Borel subgroups.
\end{itemize}
We denote the subset of $\Phi_e (G)$ associated to the principal series by $\Phi_e (G,T)$.
For a given $\phi$ it may happen that some enhancements yield elements of $\Phi_e (G,T)$, while 
other enhancements bring us outside $\Phi_e (G,T)$. 

Our main result is a canonical LLC for principal series representations:

\begin{thmintro}\label{thm:A} \textup{[see Section \ref{sec:5}]} \\
The Whittaker datum $(U,\xi)$ determines a canonical bijection
\[
\begin{array}{ccc}
\Irr (G,T) & \longleftrightarrow & \Phi_e (G,T) \\
\pi (\phi,\rho) & \text{\reflectbox{$\mapsto$}} & (\phi,\rho) \\
\pi & \mapsto & (\phi_\pi,\rho_\pi)
\end{array}
\]
with the following properties:
\enuma{
\item $\pi (\phi,\rho)$ is $(U,\xi)$-generic if and only if $\rho$ is trivial and
$u_\phi = \phi (1,1)$ lies in the dense $Z_{G^\vee}(\phi (\mb W_F))$-orbit in
\[
\{ v \in G^\vee: v \text{ is unipotent and } \phi (w) v \phi (w)^{-1} = v^{\| w \|}
\text{ for all } w \in \mb W_F \} .
\]
\item $\pi (\phi, \rho)$ is tempered (resp. essentially square-integrable) if and only if
$\phi$ is bounded (resp. discrete).
\item The bijection is compatible with the cuspidal support maps on both sides.
\item The bijection is equivariant for the canonical actions of 
$H^1 ( \mb W_F, Z(G^\vee))$.
%\item The central character of $\pi (\phi,\rho)$ is the character $\chi_\phi$ from \cite{Lan1}.
\item The bijection is compatible with the Langlands classification and (for tempered 
representations) with parabolic induction.
}
All Borel's desiderata from \cite[\S 10]{Bor2} are satisfied. When $\pi$ is given, $\phi_\pi$ 
is uniquely determined by (a)--(e) and the local Langlands correspondence for tori.
\end{thmintro}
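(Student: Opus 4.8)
The plan is to reduce Theorem~\ref{thm:A} to a comparison of two families of (twisted) affine Hecke algebras and to transport the well-developed representation theory of such algebras across it. First I would organize both sides by their cuspidal-support maps. On the group side, $\Irr(G,T)$ is the disjoint union, over $G$-conjugacy classes of pairs $(T,\chi)$ with $\chi$ a smooth character of $T$, of the irreducible objects of the Bernstein block $\Rep(G)^{[T,\chi]}$; by the theory of types for principal series (Roche, and its extensions removing the restrictions on $\chi$ and on the residual characteristic, together with the computation of the attached endomorphism algebras) each such block is equivalent to the module category of an explicit twisted affine Hecke algebra $\mc H(\mf s)$, whose root datum is cut out by the coroots along which unramified twists of $\chi$ yield reducibility, whose $q$-parameters record the positions of those reducibility points, and whose diagram-automorphism part is a $2$-cocycle-twisted group algebra. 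On the Galois side, $\Phi_e(G,T)$ is the disjoint union, over inertial classes of L-parameters valued, up to conjugacy, in $T^\vee \rtimes \mb W_F$, of finite sets which---by the geometric (Kazhdan--Lusztig--Reeder, ABPS) construction recalled in the introduction: fix $\phi$ on inertia and its Frobenius-semisimple part, pass to the reductive centralizer, and read the enhancements off the homology of varieties of adapted Borel subgroups---are in bijection with the irreducible modules of a dual twisted affine Hecke algebra $\mc H^\vee(\mf s)$.

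The heart of the proof is to show that for matching inertial classes $\mc H(\mf s)$ and $\mc H^\vee(\mf s)$ are \emph{canonically} isomorphic: one must match the underlying based root data, the $q$-parameter functions, and the $2$-cocycles on the crossed-product parts. Concretely, the combinatorics of reducibility points of principal series (the $\mu$-function and the associated $R$-groups) is identified with that of the dual-side centralizers and their component groups. The Whittaker datum $(U,\xi)$ enters precisely here: it normalizes the relevant intertwining operators, equivalently trivializes the cocycle in a preferred way, thereby pinning down the labelling of modules by $\Irr(R_\phi)$ and making the isomorphism---hence the whole correspondence---canonical. Granting this isomorphism, composing the block equivalences with the Galois-side parametrization produces the bijection $\Irr(G,T) \leftrightarrow \Phi_e(G,T)$; injectivity and surjectivity are automatic, and compatibility with cuspidal supports is built in, which is property~(c).

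The remaining properties follow from standard module theory for twisted affine Hecke algebras, pushed through the isomorphism. Temperedness, resp.\ essential square-integrability, of $\pi(\phi,\rho)$ translates, via the tempered/discrete-series criteria for affine Hecke algebra modules, into boundedness, resp.\ discreteness, of the central character of the corresponding module, which on the dual side is exactly boundedness, resp.\ discreteness, of $\phi$; this gives~(b), and the Langlands classification for modules matches that for $\phi$, giving~(f). Property~(d) holds because $H^1(\mb W_F, Z(G^\vee))$ acts by twisting $\chi$ by the corresponding character of $T$, which the block equivalences and the Hecke-algebra isomorphism respect. For~(e) one computes the action of $Z(G)$ on a module and matches it with the character $\chi_\phi$, both being read off the same central-character data. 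Property~(a), the most delicate, I would reduce to the auxiliary statement announced in the abstract: in each principal series block the unique $(U,\xi)$-generic irreducible representation corresponds, under the type equivalence, to a distinguished, appropriately normalized $\mc H(\mf s)$-module, whose image on the Galois side carries the trivial enhancement and has underlying unipotent in the dense $Z_{G^\vee}(\phi(\mb W_F))$-orbit of the variety in~(a). The final assertion, that all desiderata of \cite[\S 10]{Bor2} hold, then follows by checking Borel's list item by item against (a)--(f).

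I expect the main obstacle to be the exact matching of the two twisted affine Hecke algebras: making the $q$-parameters and especially the $2$-cocycles agree on the nose, not merely up to Morita equivalence, is what the canonicity of the correspondence and the precise form of (a) and (e) rest on. A secondary difficulty is the genericity lemma---identifying the distinguished generic module intrinsically inside an arbitrary twisted affine Hecke algebra by tracking Whittaker functionals through the type, and then verifying by the orbit and dimension combinatorics on the dual side that its parameter is the one predicted in~(a).
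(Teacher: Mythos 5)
Your proposal captures the paper's architecture faithfully: decompose both sides into inertial/Bernstein components, realize each block as modules over a (possibly twisted) affine Hecke algebra, establish a canonical isomorphism between the two families of algebras, use the Whittaker datum to normalize intertwiners and kill the cocycle, and transport temperedness, cuspidal supports, central characters, twisting, and Langlands data through that isomorphism; property~(a) indeed rests on the genericity criterion of Theorem~\ref{thm:B}. One detail is at odds with the paper's deliberate choices: you invoke ``the theory of types for principal series (Roche, and its extensions)'', but the paper explicitly avoids types --- their existence is not known for all principal series blocks in all residual characteristics --- and instead works with the progenerator $\Pi_{\mf s} = I_B^G(\mr{ind}_{T_\cpt}^T(\chi_c))$ and its endomorphism algebra, following Heiermann and \cite{SolEnd}; this is precisely what gives full generality. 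You also understate where the real work lies: the triviality of $\natural_{\mf s}$ is not assumed but proved, by computing how the operators $N_w$ and $J_\gamma$ act on the canonical Whittaker vector $\mb 1 \in \Hom_G(\Pi_{\mf s}, \ind_U^G\xi) \cong \mc O(T_{\mf s})$ (Propositions~\ref{prop:3.2}--\ref{prop:3.1}, Theorem~\ref{thm:3.4}); matching $q$-parameters requires a case-by-case rank-one analysis, with the ${}^2A_{2n}$ / $U_3$ cases needing separate treatment (Lemmas~\ref{lem:2.1}, \ref{lem:4.2}); and tracking genericity on the Galois side requires passing through Lusztig's two reduction theorems to graded Hecke algebras (Theorems~\ref{thm:6.5}, \ref{thm:6.6}) and then modifying the Iwahori--Matsumoto involution of \cite{AMS3} by a $\det$-twist on $\Gamma_{u>0}$ (Theorem~\ref{thm:6.11}, \eqref{eq:6.20}) so that $\det$ survives every step. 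These are the genuinely new ingredients beyond the earlier split-group cases, and a correct proof must supply them.
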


For non-split quasi-split groups, the vast majority of the groups under consideration here,
very little in this directon was previously known. On the other hand, for split groups many 
instances of Theorem \ref{thm:A} have been established before:
\begin{itemize}
\item Kazhdan and Lusztig \cite{KaLu} proved the bijection and (b,e) 
for Iwahori-spherical representations, assuming that $G$ is $F$-split and that $Z(G)$ is 
connected as algebraic group. Their starting point is Borel's description \cite{Bor1} of
those representations, in terms of Hecke algebras.
\item Reeder \cite{Ree} extended \cite{KaLu} to $\Irr (G,T)$ when $G$ is $F$-split, $Z(G)$
is connected and the residual characteristic $p$ of $F$ is not ``too small". This is based on 
work of Roche \cite{Roc1} and includes (a,b,e). We note that here the Whittaker datum is 
unique up to $G$-conjugacy because $Z(G)$ is connected.
\item In \cite{ABPSprin} a (noncanonical) bijection satisfying (b,d,e) was established for 
$\Irr (G,T)$, when $G$ is $F$-split and $p$ is not too small.
\item For quasi-split unitary groups with $p > 2$ a (noncanonical) bijection was 
constructed by the author's PhD student Badea \cite{Bad}.
\end{itemize}
In all cases, a study of affine Hecke algebras constitutes the largest part of the argument. 
Thanks to \cite{ABPS3,SolEnd}, that technique is now available in complete generality (even
outside the principal series). The main novelties of this paper are:
\begin{itemize}
\item The construction of the LLC is canonical and uniform, over all \\
non-archimedean local fields $F$ and all quasi-split reductive $F$-groups.
\item We can handle generic representations, even when not all Whittaker data for $G$ are
equivalent.
\item Our LLC lifts to a categorical level, as follows. For each involved Bernstein block
of $G$-representations, the LLC comes from a canonical equivalence between that block and the 
module category of a certain Hecke algebra defined entirely 
in terms of Langlands parameters.
\end{itemize}
We will now discuss the content of the paper in more detail, at the same time explaining
parts of the proof of the main theorem.

We start with a Bernstein block $\Rep (G)^{\mf s}$ in the principal series, and a progenerator
$\Pi_{\mf s}$ thereof. Via \cite{SolEnd} $\Rep (G)^{\mf s}$ is equivalent to the module 
category of some Hecke algebra $\End_G (\Pi_{\mf s})^{op}$, which we analyse in Section 
\ref{sec:1}. We show that $\End_G (\Pi_{\mf s})$ is isomorphic to an affine Hecke algebra 
$\mc H (\mf s)$ (extended with a twisted group algebra), and we determine its $q$-parameters. 

In Section \ref{sec:Whit} we involve the Whittaker datum, and that enables us to make the 
aforementioned isomorphism canonical. In the same way we show that the twist in the extension 
part of $\mc H (\mf s)$ is actually trivial, so that it is an extended affine Hecke
algebra $\mc H (\mf s)^\circ \rtimes \Gamma_{\mf s}$. Here the Bernstein group $W_{\mf s}$ 
associated to $\Rep (G)^{\mf s}$ appears as $W(R_{\mf s}^\vee) \rtimes \Gamma_{\mf s}$ for
a root system $R_{\mf s}^\vee$. 

A continuation of this analysis yields a useful criterion for genericity in terms of Hecke
algebra modules. Let $\mc H (W (R_{\mf s}^\vee), q_F^\lambda) \subset \mc H (\mf s)^\circ$
be the finite dimensional Iwahori--Hecke algebra from the Bernstein presentation of 
$\mc H (\mf s)^\circ$, where $q_F$ is the cardinality of the residue field of $F$ and 
$\lambda : R_{\mf s}^\vee / W_{\mf s} \to \Z_{\geq 0}$ is a label function. 
Recall that its Steinberg representation St is given by 
$T_{s_\alpha} \mapsto -1$ for every simple reflection $s_\alpha \in W(R_{\mf s}^\vee)$.
Let $\det : W_{\mf s} \to \{\pm 1\}$ be the determinant of the action of $W_{\mf s}$ on
the lattice of $F$-rational cocharacters of $T$. We extend St to a onedimensional 
representation (still denoted St) of $\mc H (W (R_{\mf s}^\vee), q_F^\lambda) \rtimes 
\Gamma_{\mf s}$ by making it $\det$ on $\Gamma_{\mf s}$.

\begin{thmintro}\label{thm:B} \textup{[see Theorem \ref{thm:6.4}]}\\
Suppose that $\pi \in \Rep (G)^{\mf s}$ has finite length. Then $\pi$ is $(U,\xi)$-generic
if and only if the associated $\mc H (\mf s)^{op}$-module $\Hom_G (\Pi_{\mf s},\pi)$
contains the Steinberg representation of 
$(\mc H (W (R_{\mf s}^\vee), q_F^\lambda) \rtimes \Gamma_{\mf s})^{op}$.
\end{thmintro}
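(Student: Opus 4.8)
The plan is to reduce the statement to a known genericity criterion for representations of affine Hecke algebras, transported through the canonical isomorphism $\End_G(\Pi_{\mf s}) \cong \mc H(\mf s)^\circ \rtimes \Gamma_{\mf s}$ established in Section \ref{sec:Whit}. First I would recall the theory of Whittaker-generic representations in Bernstein blocks: the functor $\Hom_G(\Pi_{\mf s}, -)$ is an equivalence $\Rep(G)^{\mf s} \isom \Mod(\mc H(\mf s)^{op})$, and one wants to identify which $\mc H(\mf s)^{op}$-modules correspond to generic $G$-representations. Since a Bernstein block contains, up to isomorphism, a unique generic irreducible constituent of each standard module (by Rodier's theorem on hereditary-order multiplicity-one for generic representations, applied blockwise, together with the fact that parabolic induction from a generic representation of $T$ — which here is a character, hence generic — yields exactly one generic Jordan--Hölder factor), the genericity of a finite-length $\pi$ is detected by whether $\Hom_G(\Pi_{\mf s}, \pi)$ contains a specific ``generic'' module as a subquotient. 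The task is to pin down that module.

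Second, I would determine the generic module concretely. On the Hecke-algebra side, a standard module for $\mc H(\mf s)^\circ \rtimes \Gamma_{\mf s}$ is obtained by parabolic induction (in the Hecke-algebra sense, à la Kato--Lusztig) from the finite-dimensional subalgebra $\mc H(W(R_{\mf s}^\vee), q_F^\lambda) \rtimes \Gamma_{\mf s}$. The whole-block generic representation of $T$, transported across, corresponds on the finite Iwahori--Hecke side to the character on which each $T_{s_\alpha}$ acts by $-q_F^{\lambda(\alpha)}$ times its own inverse eigenvalue — i.e. by $-1$ after the standard normalization — which is precisely the Steinberg representation $\mathrm{St}$; and the $\Gamma_{\mf s}$-action it carries is exactly the determinant character $\det$ of the action on the $F$-rational cocharacter lattice, because the Whittaker normalization of the intertwining operators (carried out in Section \ref{sec:Whit}) identifies the extension cocycle and forces this sign. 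So the claim becomes: $\pi$ is generic iff $\Hom_G(\Pi_{\mf s},\pi)$ contains $\det$ as a subquotient, where $\det$ here denotes the one-dimensional representation described just before the theorem.

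Third, I would upgrade ``contains as a subquotient'' to the statement in the theorem, using the multiplicity-one phenomenon and the compatibility of the equivalence with parabolic induction and Jordan--Hölder series. Concretely: genericity is preserved and reflected by $\Hom_G(\Pi_{\mf s},-)$ at the level of composition factors, the unique generic composition factor of any standard module on the $G$-side matches the unique composition factor of the corresponding $\mc H(\mf s)^{op}$-standard module containing $\det$ (this is where the affine-Hecke-algebra input — essentially the Kazhdan--Lusztig/Kato geometric description, or the combinatorial analysis of the Bernstein presentation, showing $\det$ occurs with multiplicity one in exactly the ``most generic'' standard modules — is used), and every irreducible in the block is a constituent of a standard module. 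Running this through both categories gives the equivalence of the two conditions for all finite-length $\pi$.

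The main obstacle I expect is the identification of the generic module on the Hecke side with precisely $\mathrm{St} \boxtimes \det$, rather than $\mathrm{St}$ twisted by some other character of $\Gamma_{\mf s}$: this requires a careful bookkeeping of the Whittaker normalization from Section \ref{sec:Whit} and of how the sign $\det$ of the action on $X_*(T)^{F}$ enters through the comparison of Whittaker functionals on $\mathrm{Ind}_B^G$ with the Iwahori--Hecke module structure — essentially a sign computation in the genericity of principal series, in the spirit of Casselman--Shalika and Reeder. A secondary technical point is ensuring that the ``unique generic constituent per standard module'' statement holds in the possibly non-split and extended ($\Gamma_{\mf s} \neq 1$) setting; this should follow from the quasi-split hypothesis together with the fact that $\Pi_{\mf s}$ and the equivalence are built from a genuine Whittaker datum, so that genericity is governed by a single orbit of Whittaker functionals throughout the block.
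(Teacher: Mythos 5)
The paper's proof goes through Hermitian duality, not through unique generic constituents of standard modules. Via the isomorphism $\Hom_G(\pi, \mr{Ind}_U^G\xi) \cong \Hom_G(\ind_U^G\xi, \pi^\he)$ and the explicit computation (Lemma \ref{lem:3.8}) that $\Hom_G(\Pi_{\mf s}, \ind_U^G\xi) \cong \ind_{\mc H(W_{\mf s},q_F^\lambda)^{op}}^{\mc H(\mf s)^{op}}(\det)$, genericity of $\pi$ becomes equivalent to $\Hom_G(\Pi_{\mf s},\pi^\he)$ containing $\det$ (Corollary \ref{cor:6.1}). The passage from $\pi^\he$ to $\pi$ is then handled by semisimplifying, expressing the class of $\pi$ in the Grothendieck group in terms of standard modules, and a deformation argument (Lemma \ref{lem:6.3}): for a standard module $I_P^G(\tau \otimes \chi)$ with $\tau$ tempered unitary and $\chi$ positive real one has $I_P^G(\tau\otimes\chi)^\he \cong I_P^G(\tau\otimes\chi^{-1})$, and the path $t \mapsto I_P^G(\tau\otimes\chi^t)$ for $t\in[-1,1]$ keeps the restriction to the finite-dimensional semisimple subalgebra $\mc H(W_{\mf s},q_F^\lambda)^{op}$ constant. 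Crucially, the computation in Lemma \ref{lem:3.8} rests on the Whittaker normalization of the intertwining operators (Lemma \ref{lem:3.3}, Theorem \ref{thm:3.4}), which forces the canonical vector $\mathbf{1}\in\Hom_G(\Pi_{\mf s},\ind_U^G\xi)$ to transform as $\det$ under $\mc H(W_{\mf s},q_F^\lambda)^{op}$.

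Your proposal has a genuine gap at the matching step. You observe that each standard module on the $G$-side has a unique generic constituent (Rodier) and that the Hecke-side standard module has a unique constituent containing $\det$, and then assert these align under the equivalence. But counting gives you a unique candidate on each side without telling you that they correspond, and the Kazhdan--Lusztig/Kato geometry you invoke lives entirely on the Hecke side and cannot by itself see the Whittaker datum. An independent bridge between the $G$-side Whittaker condition and the Hecke-side $\det$-condition is required, and the bridge the paper constructs is precisely the Hermitian-duality reformulation followed by the Whittaker-normalized calculation of $\Hom_G(\Pi_{\mf s}, \ind_U^G\xi)$. You correctly flag the normalization sign as the main obstacle, but the proposal does not resolve it; without it, nothing pins down which constituent of the standard Hecke module corresponds to the generic constituent on the $G$-side. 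A minor point: standard $\mc H(\mf s)$-modules in the principal series block are induced from characters of the commutative subalgebra $\mc O(T_{\mf s})$, not from the finite Iwahori--Hecke subalgebra as you write.
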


The notion of principal series enhanced L-parameters is worked out in Section \ref{sec:2}. 
There we also recall the Hecke algebras on the Galois side of the LLC, from
\cite{AMS3}, and we compute their $q$-parameters. Via the LLC for tori we associate to
$\Rep (G)^{\mf s}$ a unique Bernstein component $\Phi_e (G)^{\mf s^\vee}$ of $\Phi_e (G,T)$.
That yields an extended affine Hecke algebra $\mc H (\mf s^\vee, q_F^{1/2})$. The crucial
step to pass from the $p$-adic side to the Galois side of the LLC is:

\begin{thmintro}\label{thm:C} \textup{[see Theorem \ref{thm:4.3}]}\\
There exists a canonical algebra isomorphism 
$\mc H (\mf s)^{op} \cong \mc H (\mf s^\vee,q_F^{1/2})$.
\end{thmintro}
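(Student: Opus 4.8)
The plan is to prove Theorem~\ref{thm:C} by comparing, on both sides, the same combinatorial data attached to an extended affine Hecke algebra and then invoking a rigidity statement that says such an algebra is determined up to canonical isomorphism by that data. Concretely, an extended affine Hecke algebra $\mc H(\mf s)^\circ \rtimes \Gamma_{\mf s}$ is determined by: a based root datum (equivalently the root system $R_{\mf s}^\vee$, its lattice of cocharacters of $T$, and a choice of positive system), a finite group $\Gamma_{\mf s}$ of diagram automorphisms together with its action, the $q$-parameters $q_F^\lambda$ attached to the affine simple roots, and the cocycle describing the extension by $\Gamma_{\mf s}$. By Section~\ref{sec:Whit} the $p$-adic side $\mc H(\mf s)^{op}$ is, canonically once the Whittaker datum is fixed, of the form $\mc H(\mf s)^\circ \rtimes \Gamma_{\mf s}$ with \emph{trivial} twist; so the first step is to record the explicit list of these invariants as computed in Sections~\ref{sec:1} and~\ref{sec:Whit}.

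The second step is to do the same on the Galois side. Here $\Phi_e(G)^{\mf s^\vee}$ is a Bernstein component of $\Phi_e(G,T)$ obtained via the LLC for tori, and by the results recalled from \cite{AMS3} in Section~\ref{sec:2} the associated algebra $\mc H(\mf s^\vee, q_F^{1/2})$ is again an extended affine Hecke algebra whose root system, lattice, diagram-automorphism group and $q$-parameters are computed there. I would then match the two lists entry by entry. The root system $R_{\mf s}^\vee$ and its ambient lattice are literally the same object on both sides, since the Bernstein component $\Phi_e(G)^{\mf s^\vee}$ is defined so that its Bernstein group $W_{\mf s} = W(R_{\mf s}^\vee) \rtimes \Gamma_{\mf s}$ agrees with the one appearing on the $p$-adic side --- this is essentially built into the construction of $\mf s^\vee$ from $\mf s$. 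The finite group $\Gamma_{\mf s}$ and its action coincide for the same reason. The genuinely nontrivial point of agreement is the $q$-parameters: on the $p$-adic side they are the powers $q_F^\lambda$ determined by the Hecke algebra structure of $\End_G(\Pi_{\mf s})$ (Section~\ref{sec:1}), while on the Galois side they are the powers of $q_F^{1/2}$ read off from the homology of varieties of Borel subgroups in \cite{AMS3} (Section~\ref{sec:2}); one must check these two prescriptions give literally the same exponent for each simple affine root. This is the computational heart and also, I expect, the main obstacle: it requires unwinding the geometric definition of the Galois-side parameters and comparing it with the explicit formulas for the $p$-adic $q$-parameters in terms of the ramification data of the inducing character, case by case over the possible local behaviours.

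Once all invariants are matched, the isomorphism is produced by sending the Bernstein generators to Bernstein generators and the finite-type generators $T_{s_\alpha}$ to $T_{s_\alpha}$; the presentation of an extended affine Hecke algebra (Bernstein relations plus braid relations plus the quadratic relations governed by the $q$-parameters plus the $\Gamma_{\mf s}$-action) then shows this is a well-defined algebra homomorphism, and it is bijective because it takes a basis to a basis. For canonicity I would track through that the Whittaker normalization of Section~\ref{sec:Whit} pins down the $p$-adic presentation uniquely (in particular killing the twist), while the Galois side has no analogous ambiguity, so there is a preferred choice of each generator on each side and the map between them is forced; any residual sign or scalar freedom must be shown to be eliminated by the normalizations already in place. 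A minor technical wrinkle is matching the twisted-group-algebra part faithfully: one has to confirm that the $2$-cocycle on $\Gamma_{\mf s}$ appearing in \cite{AMS3} is trivial in the relevant cohomology, just as was shown for the $p$-adic side, so that both are genuinely \emph{extended} (split) affine Hecke algebras. I would expect this to follow from the explicit description of $\Gamma_{\mf s}$ as a group of diagram automorphisms, for which the extension always splits.
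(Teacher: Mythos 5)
Your plan follows essentially the same route as the paper's Section~\ref{sec:4}: match the underlying root datum, the finite group $\Gamma_{\mf s}$, the $q$-parameters, and the (trivial) extension cocycle, then send generators to generators, with canonicity coming from the Whittaker normalization of Theorem~\ref{thm:3.4} on the $p$-adic side and the triviality of the cuspidal sheaf $q\mc E$ on the Galois side (Theorem~\ref{thm:2.2}). A few points to tighten when you write it out. The root systems $R_{\mf s}^\vee$ (built from vanishing of Harish-Chandra $\mu$-functions) and $R_{\mf s^\vee}$ (built from $Z_{G^\vee}(\hat\chi(\mb I_F))$) are \emph{not} ``literally the same object''; showing they correspond under the LLC-for-tori identification $T/T_\cpt \cong X^*\big((T^{\vee,\mb I_F})^\circ_{\mb W_F}\big)$ is a genuine verification, carried out as Lemma~\ref{lem:4.1}, which in the ramified ${}^2A_{2n}$ case needs a nontrivial input of Gan--Gross--Prasad on conjugate-orthogonal versus conjugate-symplectic characters; this is a step separate from matching the labels $\lambda,\lambda^*$ (Lemma~\ref{lem:4.2}), and your proposal conflates the two. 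Because the target is $\mc H(\mf s)^{op}$, the Iwahori--Hecke generators must be sent via $N_w \mapsto N_{w^{-1}}$; this agrees with $T_{s_\alpha}\mapsto T_{s_\alpha}$ on simple reflections and is forced from there, but should be stated for general $w$. Finally, the reason the $\Gamma_{\mf s^\vee}$-cocycle on the Galois side vanishes is not a general fact about extensions by diagram automorphisms (that claim would be false for the algebras of \cite{AMS3} in general); it is that in the principal-series situation the cuspidal local system is the constant sheaf on a point, which gives a canonical multiplicative choice of the elements $qb_\gamma$.
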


The above steps make $\Rep (G)^{\mf s}$ canonically equivalent to the module category of
$\mc H (\mf s^\vee,q_F^{1/2})$. In \cite{AMS3}, $\Irr \big( \mc H (\mf s^\vee,q_F^{1/2}) \big)$ 
is parametrized by $\Phi_e (G)^{\mf s^\vee}$. We want to use that, but it does not quite suffice
because we also need to keep track of genericity of representations. Therefore we revisit
several constructions from \cite{AMS3}, in our setting of the principal series. The main
point of Section \ref{sec:red} is to show that through all those steps the one-dimensional
representation $\det$ of $(\mc H (W (R_{\mf s}^\vee), q_F^\lambda) \rtimes \Gamma_{\mf s})^{op}$
is transformed into an analogous representation $\det$ for an extended graded Hecke algebra.
That enables us to normalize the parametrization of $\Irr \big( \mc H (\mf s^\vee,q_F^{1/2}) \big)$, 
so that it matches generic representations with the desired kind of enhanced L-parameters.

With that settled the preparations are complete, and the bijection in Theorem \ref{thm:A}
is obtained as
\[
\Irr (G)^{\mf s} \; \leftrightarrow \; \Irr \big( \End_G (\Pi_{\mf s})^{op} \big) \;
\leftrightarrow \; \Irr \big( \mc H (\mf s)^{op} \big) \; \leftrightarrow\;
\Irr \big( \mc H (\mf s^\vee,q_F^{1/2}) \big) \; \leftrightarrow \; \Phi_e (G)^{\mf s^\vee}.
\]
The properties of the bijection $\Irr (G,T) \leftrightarrow \Phi_e (G,T)$, actually a few more
than mentioned already, are checked in the remainder of Section \ref{sec:5}.

Several further research topics are suggested by the above theorems.
\begin{itemize}
\item Like in \cite[\S 17]{ABPSprin}, one would like to show that the LLC is functorial with
respect to those homomorphisms of reductive $p$-adic groups that have commutative kernel and
commutative cokernel. That should be doable with the methods from \cite{SolFunct}. In particular
that can be applied to automorphisms of $G$ from conjugation with elements of 
$\mc G_\ad (F)$, then it will show how the LLC changes if one modifies the Whittaker datum.
\item Suppose that $\phi$ is discrete and $Z(G)$ is compact. It is conjectured in \cite{HII} 
that the formal degree of the square-integrable representation $\pi (\phi,\rho)$ equals 
$\dim (\rho)$ times the adjoint $\gamma$-factor of $\phi$ (with suitable normalizations on 
both sides). While this adjoint $\gamma$-factor can be computed as in
\cite[Appendix A]{FOS2}, it may be difficult to determine this formal degree. The reason is 
that one would like to use a type, but sometimes it is 
not known whether a type for the involved Bernstein block exists.
\item Every L-packet conjecturally supports a stable distribution on $G$. For L-packets that
are entirely contained in $\Irr (G,T)$, one could try to prove that the distribution
$\sum_{\rho \in \Irr (R_\phi)} \dim (\rho) \, \mr{tr} \, \pi (\phi,\rho)$ is stable.
\item A modern geometric approach to the Langlands correspondence \cite{FaSc,Hel,Zhu}  
predicts that the derived category of $\Rep (G)$ embeds in a derived category of coherent 
sheaves on a stack of Langlands parameters. It would be interesting to transfer the obtained 
natural equivalence 
\[
\Rep (G)^{\mf s} \cong \Mod \big( \mc H (\mf s^\vee,q_F^{1/2}) \big)
\] 
to a setting with such coherent sheaves, that would establish a part of the conjectures in 
\cite{FaSc,Hel,Zhu}. It is reasonable to expect that can be done, because 
$\mc H (\mf s^\vee,q_F^{1/2})$ is constructed from $\Phi_e (G)^{\mf s^\vee}$ and because on
the underlying cuspidal level the local Langlands correspondence for tori achieves it already.\\
\end{itemize}

\renewcommand{\theequation}{\arabic{section}.\arabic{equation}}
\counterwithin*{equation}{section}

\section{Hecke algebras for principal series representations}
\label{sec:1}

Let $F$ be a non-archimedean local field with ring of integers $\mf o_F$. Let $q_F$ be the
cardinality of the residue field. Let $| \cdot |_F : F \to \R_{\geq 0}$ be the norm and fix an 
element $\varpi_F \in \mf o_F$ with norm $q_F^{-1}$. We also fix a separable closure $F_s$ of $F$ 
and we let $\mb W_F \subset \mr{Gal}(F_s/F)$ be the Weil group. Field extensions of $F$ will by
default be contained in $F_s$.

Let $\mc G$ be a quasi-split reductive $F$-group, where we include con\-nec\-tedness in the 
definition of quasi-split. Let $\mc S$ be a maximal $F$-split torus in $\mc G$. 
We write $G = \mc G (F), S = \mc S (F)$ etcetera.
Since $\mc G$ is $F$-quasi-split, the centralizer $\mc T$ of $\mc S$ in $\mc G$ is a maximal
$F$-torus. It is also a minimal $F$-Levi subgroup, a Levi factor of a Borel subgroup 
$\mc B$ of $\mc G$. The Weyl group of $(\mc G,\mc S)$ and $(G,S)$ is 
\[
W(\mc G,\mc S) = N_{\mc G}(\mc S) / Z_{\mc G}(\mc S) = N_{\mc G}(S) / \mc T \cong
N_G (S) / T = N_G (T) / T .
\]
This is also the Weyl group of the root system $R(\mc G ,\mc S)$.

Let $T_\cpt$ be the unique maximal compact subgroup of $T$ and let $X_\nr (T)$ be the group of
unramified characters of $T$, that is, the characters that are trivial on $T_\cpt$.
Pick any smooth character $\chi_0 : T \to \C^\times$ and write $\chi_c = \chi_0 |_{T_\cpt}$.
Then $\chi_c$ determines $X_\nr (T) \chi_0$ and conversely. 

We denote the category of smooth $G$-representations on complex vector spaces by
$\Rep (G)$, and the set of equivalence classes of irreducible objects therein by $\Irr (G)$.
The set $X_\nr (T) \chi_0 = \Irr (T)^{\mf s_T}$ (with $\mf s_T = [T,\chi_0]_T$) is also known as 
a Bernstein component of $\Irr (T)$. From $\Irr (T)^{\mf s_T}$ one derives a Bernstein 
component $\Irr (G)^{\mf s}$, where $\mf s = [T,\chi_0 ]_G$. It consists of the irreducible 
subquotients of the normalized parabolic inductions 
\[
I_B^G (\chi) = \mr{ind}_B^G \big( \chi \otimes \delta_B^{1/2} \big) 
\text{ with } \chi \in X_\nr (T) \chi_0 .
\] 
We recall that the Bernstein block $\Rep (G)^{\mf s}$ is the full subcategory of $\Rep (G)$ made 
up by the representations $\pi$ such that every irreducible subquotient of $\pi$ belongs to
$\Irr (G)^{\mf s}$.
The standard way to classify $\Irr (G)^{\mf s}$ is by describing $\Rep (G)^{\mf s}$ as the 
module category of a Hecke algebra, and then using the representation theory of Hecke algebras.
We do so with the method that provides maximal generality, from \cite{Hei1,SolEnd}. 

We denote smooth induction with compact supports by ind. The
$T$-repre\-sentation $\mr{ind}_{T_\cpt}^T (\chi_c) \cong \chi_c \otimes \C [T / T_\cpt]$ is a
progenerator of $\Rep (T)^{\mf s_T}$. By the first and second adjointness theorems,
\[
\Pi_{\mf s} = I_B^G \big( \mr{ind}_{T_\cpt}^T (\chi_c) \big)
\]
is a progenerator of $\Rep (G)^{\mf s}$. Let $\End_G (\Pi_{\mf s})$ be the algebra of 
$G$-endomorphisms of $\Pi_{\mf s}$, acting from the left on $\Pi_{\mf s}$. Then the functors
\begin{equation}\label{eq:1.6}
\begin{array}{ccc}
\Rep (G)^{\mf s} & \longleftrightarrow & \End_G (\Pi_{\mf s})-\Mod \\
\rho & \mapsto & \Hom_G (\Pi_{\mf s},\rho) \\
V \otimes_{\End_G (\Pi_{\mf s})} \Pi_{\mf s} & \text{\reflectbox{$\mapsto$}} & V
\end{array}
\end{equation}
are equivalences of categories \cite[Theorem I.8.2.1]{Roc2}. This is compatible with parabolic
induction, in the following sense. Let $P = M R_u (P)$ be a parabolic subgroup of $G$, where
$B \subset P$, $M$ is a Levi factor of $P$ and $T \subset M$. The diagram
\begin{equation}\label{eq:1.7}
\begin{array}{ccc}
\Rep (G)^{\mf s} & \to & \End_G (\Pi_{\mf s})-\Mod \\
\uparrow I_P^G & & \uparrow \mr{ind}_{\End_M (\Pi_{\mf s_M})}^{\End_G (\Pi_{\mf s})} \\
\Rep (M)^{\mf s_M} & \to & \End_M (\Pi_{\mf s_M}) -\Mod
\end{array}
\end{equation}
commutes, see \cite[Condition 4.1 and Lemma 5.1]{SolComp}.

The algebra $\End_G (\Pi_{\mf s})$ was investigated in \cite{SolEnd}, in larger generality. We will 
make it more explicit in the current setting. Since $\dim \chi_0 = 1$, $\chi |_{T_\cpt}$ is irreducible 
and we may use \cite[\S 10]{SolEnd} with $E = E_1 = \C$ and $\sigma_1 = \sigma |_{T_\cpt} = \chi_c$. 
For comparison with \cite{SolEnd} we also note that the group 
\[
X_\nr (T,\chi_0) = \{ \chi \in X_\nr (T) : \chi \otimes \chi_0 \cong \chi_0 \}
\]
is trivial. We write
\[
W_{\mf s} = \mr{Stab}_{W(\mc G,\mc S)}(\mf s_T) = \mr{Stab}_{W(\mc G,\mc S)}(X_\nr (T) \chi_0) =
\mr{Stab}_{W(\mc G,\mc S)}(\chi_c) . 
\]
This group acts naturally on the complex variety 
\[
T_{\mf s} := \chi_0 X_\nr (T).
\]
by $(w \cdot \chi)(t) = \chi (w^{-1} t w)$.
The theory of the Bernstein centre \cite{BeDe} says that 
\[
Z(\Rep (G)^{\mf s}) \cong Z \big( \End_G (\Pi_{\mf s}) \big) \cong 
\mc O (X_\nr (T) \chi_0 )^{W_{\mf s}} = \mc O (X_\nr ( T) \chi_0 / W_{\mf s} ) .
\] 
The algebra $\End_G (\Pi_{\mf s})$ contains $\mc O (X_\nr (T) \chi_0) = \mc O (T_{\mf s})$ 
as maximal commutative subalgebra, and as module over that subalgebra it is free with a basis
$\{ N_w : w \in W_{\mf s} \}$ \cite[Theorem 10.9]{SolEnd}.

We note that the inertial equivalence class $\mf s$ for $G$ can arise from different inertial
equivalence classes for $T$. Namely, the possibilities are $w \mf s_T = [T,w \chi_0]_T$ with 
$w \in W(\mc G,\mc S)$. Thus $w \mf s = \mf s$ as inertial equivalence classes for $G$, but they 
are represented by different subsets of $\Irr (T)$. For any $w \in W(\mc G,\mc S)$, the 
$G$-representations $\Pi_{\mf s}$ and $\Pi_{w \mf s} = I_B^G (\ind_{T_\cpt}^T (w \chi_c))$ are 
isomorphic, see \cite[\S VI.10.1]{Ren}. That yields an algebra isomorphism
\begin{equation}\label{eq:1.8}
\End_G (\Pi_{\mf s}) \cong \End_G (\Pi_{w \mf s}) ,
\end{equation}
unique up to inner automorphisms of $\End_G (\Pi_{\mf s})$.
In principle that suffices to compare the functors $\Hom_G (\Pi_{\mf s},?)$ and
$\Hom_G (\Pi_{w \mf s},?)$ on the level of isomorphism classes of representations. Nevertheless,
we will have to make \eqref{eq:1.8} explicit later, and we prepare for that now.

\begin{prop}\label{prop:1.2}
Let $w \in W(\mc G,\mc S)$ be of minimal length in $w W_{\mf s}$. The isomorphism
$\Pi_{\mf s} \cong \Pi_{w \mf s}$ can be chosen so that the induced algebra isomorphism 
\eqref{eq:1.8} restricts to
\[
\mc O (T_{\mf s}) \to \mc O (T_{w \mf s}) : f \mapsto f \circ w^{-1} .
\]
\end{prop}
\begin{proof}
Let $w = s_r \cdots s_2 s_1$ be a reduced expression in the Weyl group $W(\mc G,\mc S)$. Then
each simple reflection $s_j$ has minimal length in $s_j W_{s_{j-1} \cdots s_1 \mf s}$. In this
way we reduce the proposition to the case $w = s_\alpha$ for a simple root $\alpha \in W
(\mc G,\mc S)$, with $s_\alpha \mf s_T \neq \mf s_T$.

Let $G_\alpha \subset G$ be the subgroup generated by $T \cup U_\alpha \cup U_{-\alpha}$. As 
\[
\Pi_{\mf s} = I_{B G_\alpha}^G I_{B \cap G_\alpha}^{G_\alpha} \ind_{T_\cpt}^T (\chi_c)
\] 
and similarly for $\Pi_{w \mf s}$, it suffices to work with the reductive group $G_\alpha$
and its Borel subgroup $B \cap G_\alpha$. Equivalently, we may (and will) assume that
$R(\mc G,\mc S)$ has rank one. In this rank one setting, an isomorphism 
\begin{equation}\label{eq:1.9}
\Pi_{\mf s} \cong \Pi_{s_\alpha \mf s}
\end{equation} 
is exhibited in \cite[Lemme VI.10.1]{Ren}. We analyse that construction. 

By \cite[Corollaire VII.1.3]{Ren}
\begin{equation}\label{eq:1.14}
I_B^G : \Rep (T)^{\mf s_T} \to \Rep (G)^{\mf s} \quad
\text{is an equivalence of categories.}
\end{equation}
Let $J_{\overline B}^G : \Rep (G) \to \Rep (T)$ be the 
normalized Jacquet restriction functor with respect to the opposite Borel subgroup 
$\overline B$. As $\mf s_T \neq s_\alpha \mf s_T$, Bernstein's geometric lemma 
\cite[Th\'eor\`eme VI.5.1]{Ren} entails that
\begin{equation}\label{eq:1.11}
J_{\overline B}^G I_B^G \pi \cong \pi \oplus s_\alpha^{-1} \cdot \pi
\qquad \text{for all } \pi \in \Rep (T)^{\mf s_T}.
\end{equation}
Let $\mr{pr}_{\mf s_T} : \Rep (T) \to \Rep (T)^{\mf s_T}$ be the projection provided by the 
Bernstein decomposition. From \eqref{eq:1.11} we see that
\begin{equation}\label{eq:1.15}
\mr{pr}_{\mf s_T} J_{\overline B}^G : \Rep (G)^{\mf s} \to \Rep (T)^{\mf s_T}
\quad \text{is the inverse of } \eqref{eq:1.14}.
\end{equation} 
It follows (slightly varying on the proof of \cite[Lemme VI.10.1]{Ren} by using $\overline B$
instead of $B$) that \eqref{eq:1.9} is determined by the choice of a $T$-isomorphism 
\begin{equation}\label{eq:1.10} 
\mr{pr}_{\mf s_T} J_{\overline B}^G \Pi_{s_\alpha \mf s} \cong \ind_{T_\cpt}^T (\chi_c) .
\end{equation} 
Pick a representative for $s_\alpha$ in $N_G (T)$. From \eqref{eq:1.11}  with $\pi = 
\ind_{T_\cpt}^T (s_\alpha \chi_c)$ we see that evaluation at $s_\alpha$ in 
$I_B^G \ind_{T_\cpt}^T (s_\alpha \chi_c)$ provides an isomorphism of $T$-representations
\begin{equation}\label{eq:1.12}
\mr{ev}_{s_\alpha} : \mr{pr}_{\mf s} J_{\overline B}^G \Pi_{s_\alpha \mf s} \to 
s_\alpha^{-1} \cdot \ind_{T_\cpt}^T (s_\alpha \chi_c).
\end{equation}
Further we have a canonical $T$-isomorphism
\begin{equation}\label{eq:1.13}
\begin{array}{ccc}
s_\alpha^{-1} \cdot \ind_{T_\cpt}^T (s_\alpha \chi_c) & \to & \ind_{T_\cpt}^T (\chi_c) \\
f & \mapsto & [ t \mapsto f (s_\alpha t s_\alpha^{-1})]
\end{array}.
\end{equation}
The composition of \eqref{eq:1.12} and \eqref{eq:1.13} gives us \eqref{eq:1.10}. 
Applying \eqref{eq:1.14}, we obtain \eqref{eq:1.9}.

The subalgebra $\mc O (T_{\mf s})$ of $\End_G (\Pi_{\mf s})$ arises as 
$I_B^G \big( \mc O (T_{\mf s}) \big)$, where $\mc O (T_{\mf s})$ acts on 
$\ind_{T_\cpt}^T (\chi_c) \cong \mc O (T_{\mf s})$ by multiplication, see \cite{SolEnd}.
From \eqref{eq:1.13} we see that\\ $s_\alpha^{-1} \cdot \ind_{T_\cpt}^T (s_\alpha \chi_c)$ is
naturally isomorphic to the regular representation of $\mc O (T_{\mf s})$. 

Similarly $\mc O (T_{w \mf s})$ acts on $\ind_{T_\cpt}^T (s_\alpha \chi_c) \cong \mc O 
(T_{w \mf s})$ by multiplication, and it becomes a subalgebra of $\End_G (\Pi_{s_\alpha \mf s})$ 
via $I_B^G$. From \eqref{eq:1.11} we see that its action on $s_\alpha^{-1} \cdot 
\ind_{T_\cpt}^T (s_\alpha \chi_c)$, obtained via $J_{\overline B}^G I_B^G$, is
\[
s_\alpha^{-1} : \mc O (T_{s_\alpha \mf s}) \to \mc O (T_{\mf s})
\]
followed by the regular representation. In other words, the action of $f \in \mc O (T_{\mf s})$
on $s_\alpha^{-1} \cdot \ind_{T_\cpt}^T (s_\alpha \chi_c)$ via \eqref{eq:1.10} coincides
with the action of $s_\alpha (f) = f \circ s_\alpha^{-1} \in \mc O(T_{\mf s})$ on
$s_\alpha^{-1} \cdot \ind_{T_\cpt}^T (s_\alpha \chi_c)$ via \eqref{eq:1.13}. Applying the 
normalized parabolic induction functor $I_B^G$, we find that $I_B^G (f) \in \End_G (\Pi_{\mf s})$
is transformed into $I_B^G (f \circ s_\alpha^{-1}) \in \End_G (\Pi_{s_\alpha \mf s})$ 
by \eqref{eq:1.9}.
\end{proof}

We resume the analysis of $\End (\Pi_{\mf s})$ with $\mf s = [T,\chi_0]_G$.
Let $R_{\mf s,\mu}$ be the set of roots $\alpha \in R (\mc G,\mc S)$ for which Harish-Chandra's
function $\mu_\alpha$ is not constant on $X_\nr (T) \chi_0$. Then $R_{\mf s,\mu}$ is a root
system and $W(R_{\mf s,\mu})$ is a normal subgroup of $W_{\mf s}$ \cite[Proposition 1.3]{Hei1}. 
As explained in \cite[\S 3]{SolEnd}, we can modify $\chi_0$ inside $X_\nr (T) \chi_0$ so that 
$W(R_{\mf s,\mu})$ fixes $\chi_0$. Let $R_{\mf s,\mu}^+$ be the positive system determined by
the chosen Borel subgroup $\mc B$ of $\mc G$. Then 
\[
W_{\mf s} = W(R_{\mf s,\mu}) \rtimes \Gamma_{\mf s}
\]
where $\Gamma_{\mf s}$ denotes the stabilizer of $R_{\mf s,\mu}^+$ in $W_{\mf s}$.
Following \cite[\S 3]{SolEnd}, we use the lattice $T / T_\cpt
\cong X^* (X_\nr (T))$, and the dual lattice $(T / T_\cpt)^\vee \cong X_* (X_\nr (T))$. For
$\alpha \in R_{\mf s,\mu}$ let $h_\alpha^\vee$ be the unique generator of $T / T_\cpt \cap
\Q \alpha^\vee$ such that $| \alpha (h_\alpha^\vee) |_F > 1$. We put
\[
R_{\mf s}^\vee = \{ h_\alpha^\vee : \alpha \in R_{\mf s,\mu} \} \qquad \subset T / T_\cpt
\]
and we let $R_{\mf s} \subset (T / T_\cpt )^\vee$ be the dual root system. 
By \cite[Proposition 3.1]{SolEnd}
\[
\mc R_{\mf s} = \big( R_{\mf s}^\vee, T / T_\cpt, R_{\mf s}, (T / T_\cpt)^\vee \big) 
\]
is a root datum with Weyl group $W(R_{\mf s}^\vee) = W(R_{\mf s,\mu})$. Moreover $W_{\mf s}$
acts naturally on $\mc R_{\mf s}$ and $\Gamma_{\mf s}$ is the $W_{\mf s}$-stabilizer of the
basis of $\mc R_{\mf s}$ determined by~$\mc B$. 

The complex variety $T_{\mf s}$ is isomorphic to $X_\nr (T)$ via multiplication with $\chi_0$. 
Let $\mc H (\mf s)^\circ$ be the vector space $\mc O (T_{\mf s}) \otimes \C [W (R_{\mf s}^\vee)]$, 
identified with $\mc O (X_\nr (T)) \otimes \C [W(R_{\mf s}^\vee)]$
via $X_\nr (T) \to T_{\mf s}$. Given label functions $\lambda,\lambda^*$ and $q \in \C^\times$,
we build the affine Hecke algebra $\mc H (\mc R_{\mf s},\lambda,\lambda^*,q)$ (see for instance
\cite[Proposition 2.2]{AMS1} with $\mb z_j$ specialized to $q$).
Via the above isomorphism of vector spaces we make $\mc H (\mf s)^\circ$ into an algebra which is
isomorphic to $\mc H (\mc R_{\mf s},\lambda,\lambda^*,q)$. The group $\Gamma_{\mf s}$ acts on 
$\mc H (\mf s)^\circ$ by algebra isomorphisms:
\begin{equation}\label{eq:1.3}
\gamma (f \otimes w) = f \circ \gamma^{-1} \otimes \gamma w \gamma^{-1} \qquad
f \in \mc O (T_{\mf s}), w \in W(R_{\mf s}^\vee) .
\end{equation}
That gives rise to a crossed product algebra
\[
\mc H (\mf s) := \mc H (\mf s)^\circ \rtimes \Gamma_{\mf s} ,
\]
which we would like to be isomorphic with $\End_G (\Pi_{\mf s})$.

For $s_\alpha$ with $\alpha \in R_{\mf s,\mu}$ simple, and more generally for any $w \in 
W(R_{\mf s}^\vee)$, an element $N_w \in \End_G (\Pi_{\mf s})$ is constructed in
\cite[Lemma 10.8 and remarks]{SolEnd}, it is called $q_F^{-\lambda (\alpha) /2} T'_w$ over there.
It can be determined uniquely by the choice of a good maximal compact subgroup $K$ of $G$,
associated to a special vertex in apartement for $\mc T$ in the Bruhat--Tits building of $(\mc G,F)$.

For $\gamma \in \Gamma_{\mf s}$ we have to be more careful, mainly because it need not fix $\chi_0$.
(The group $W_{\mf s}$ fixes $\chi_0$ when $\mc G$ is $F$-split, but the argument in that case
does not generalize to $\mc G$ that only split over a ramified extension of $F$.) Since 
$X_\nr (T,\chi_0) = 1$, there exists a unique $\chi_\gamma \in X_\nr (T)$ such that 
$\gamma \cdot \chi_0 = \chi_0 \otimes \chi_\gamma$. Then $\chi_\gamma$ is fixed by 
$W(R_{\mf s})$ \cite[Lemma 3.5]{SolEnd}. The element $J_\gamma$ from 
\cite[Theorem 10.9]{SolEnd} comes from $A_\gamma$ in 
\cite[\S 5]{SolEnd}. From \cite[start of \S 5.1]{SolEnd} we see that $A_\gamma$ depends on 
$\chi_\gamma$ (which is unique) and on some 
\[
\rho_\gamma \in \Hom_T (\gamma \chi_0, \chi_0 \otimes \chi_\gamma) .
\] 
For the latter we have a canonical choice, namely the identity on $\C$. Apart from that 
$A_\gamma$ depends only on the choice of $K$.

\begin{thm}\label{thm:1.1}
The above intertwining operators $N_w J_\gamma \in \End_G (\Pi_{\mf s})$ give rise to an
algebra isomorphism 
\[
\End_G (\Pi_{\mf s}) \cong \mc H (\mf s)^\circ \rtimes \C [\Gamma_{\mf s},\natural_{\mf s}] ,
\]
for a 2-cocycle $\natural_{\mf s} : \Gamma_{\mf s}^2 \to \C^\times$, suitable $W_{\mf s}$-invariant 
label functions $\lambda : R_{\mf s}^\vee \to \Z_{>0}, \lambda^* : R_{\mf s}^\vee \to \Z_{\geq 0}$ 
and $q$-base $q_F^{1/2}$. 
This isomorphism is determined by the choice of a maximal compact subgroup $K$ of $G$.
\end{thm}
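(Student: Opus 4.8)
The plan is to assemble the isomorphism from the structural data already collected above, leveraging the main results of \cite{SolEnd}. First I would invoke \cite[Theorem 10.9]{SolEnd}: it gives that $\End_G(\Pi_{\mf s})$ is free as a module over its maximal commutative subalgebra $\mc O(X_\nr(T)\chi_0)$ with basis $\{N_w : w \in W_{\mf s}\}$, and it identifies the multiplication rules. Using the decomposition $W_{\mf s} = W(R_{\mf s}^\vee) \rtimes \Gamma_{\mf s}$ from \cite[Proposition 3.1]{SolEnd} together with the explicit construction of the $N_w$ for $w \in W(R_{\mf s})$ from \cite[Lemma 10.8]{SolEnd}, I would verify that the subalgebra generated by $\mc O(T_{\mf s})$ and $\{N_{s_\alpha} : \alpha \in R_{\mf s,\mu} \text{ simple}\}$ satisfies the defining relations of an affine Hecke algebra $\mc H(\mc R_{\mf s},\lambda,\lambda^*,q)$. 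The braid relations follow because the $N_w$ compose multiplicatively along reduced words (a property built into the construction in \cite{SolEnd}), and the quadratic relations $N_{s_\alpha}^2 = $ (something) $+ $ (something)$N_{s_\alpha}$ come from Harish-Chandra's $\mu$-function computation, which is exactly what selects the roots in $R_{\mf s,\mu}$ and produces the $q$-parameters.

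Next I would pin down the $q$-base and the label functions. Here the key input is the known form of Harish-Chandra's $\mu_\alpha$ for principal series: it is a rational function in $\chi(h_\alpha^\vee)$ whose zeros and poles are at roots of unity times powers of $q_F$, and the exponents appearing are governed by the structure constants of $\mc G$ at the relevant parahoric. Because $\dim\chi_0 = 1$ and $X_\nr(T,\chi_0) = 1$, the formulas simplify considerably and one reads off that the $q$-base is $q_F^{1/2}$ and that $\lambda(\alpha), \lambda^*(\alpha)$ are positive (resp. nonnegative) integers; their $W_{\mf s}$-invariance is forced by the fact that $W_{\mf s}$ stabilizes $\chi_c$ and hence permutes the $\mu$-functions compatibly. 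This identifies the subalgebra with $\mc H(\mf s)^\circ$, using the vector-space identification $\mc O(T_{\mf s}) \otimes \C[W(R_{\mf s}^\vee)] \cong \mc O(X_\nr(T)) \otimes \C[W(R_{\mf s}^\vee)]$ fixed above.

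Then I would handle the $\Gamma_{\mf s}$-part. For $\gamma \in \Gamma_{\mf s}$ the element $N_\gamma = J_\gamma$ comes from the intertwiner $A_\gamma$ of \cite[\S 5]{SolEnd}, which depends on the unique $\chi_\gamma$ with $\gamma\cdot\chi_0 = \chi_0\otimes\chi_\gamma$ and on the normalized choice $\rho_\gamma = \mathrm{id}_\C$; the conjugation action of $N_\gamma$ on $\mc O(T_{\mf s})$ and on the $N_{s_\alpha}$ realizes precisely the action \eqref{eq:1.3}, because $\chi_\gamma$ is $W(R_{\mf s})$-fixed by \cite[Lemma 3.5]{SolEnd} and $\Gamma_{\mf s}$ is by construction the stabilizer of the basis of $\mc R_{\mf s}$. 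Composing two such operators, $N_\gamma N_{\gamma'}$ differs from $N_{\gamma\gamma'}$ by a scalar $\natural_{\mf s}(\gamma,\gamma') \in \C^\times$ — this scalar exists because both are $\mc O(T_{\mf s})$-basis elements implementing the same automorphism — and the associativity of multiplication in $\End_G(\Pi_{\mf s})$ forces $\natural_{\mf s}$ to be a 2-cocycle. Assembling the pieces, the map $\mc H(\mf s)^\circ \rtimes \C[\Gamma_{\mf s},\natural_{\mf s}] \to \End_G(\Pi_{\mf s})$ sending $f\otimes w \mapsto f N_w$ and $\gamma \mapsto N_\gamma$ is an algebra homomorphism, and it is bijective because it matches the $\mc O(X_\nr(T)\chi_0)$-bases $\{N_w : w\in W_{\mf s}\}$ on both sides.

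The main obstacle, I expect, is the $\Gamma_{\mf s}$-equivariance bookkeeping: verifying that $N_\gamma$ conjugates $\mc H(\mf s)^\circ$ exactly by \eqref{eq:1.3} and not by some twisted variant requires carefully unwinding how $A_\gamma$ acts, in particular tracking the shift by $\chi_\gamma$ and checking it is absorbed correctly by the identification $T_{\mf s} \cong X_\nr(T)$. A secondary subtlety is confirming that the quadratic relations one obtains from $\mu_\alpha$ are exactly those of $\mc H(\mc R_{\mf s},\lambda,\lambda^*,q)$ with the advertised parameters rather than a rescaled presentation — but since $\chi_0$ has been normalized inside $X_\nr(T)\chi_0$ so that $W(R_{\mf s,\mu})$ fixes it, the $\mu$-functions are in their simplest form and this comparison is mechanical. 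The dependence on $K$ enters only through the choices of $N_{s_\alpha}$ (via the special vertex) and of $A_\gamma$, so the final clause is immediate from how these elements were defined.
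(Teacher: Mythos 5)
Your proposal is correct and follows essentially the same route as the paper: cite \cite[Theorem 10.9]{SolEnd} for the $\mc H(\mf s)^\circ$ part, use $X_\nr(T,\chi_0)=1$ to deduce from \cite[Proposition 5.2]{SolEnd} that the $A_\gamma$ multiply by a scalar 2-cocycle, and verify that conjugation by $A_\gamma$ induces the action \eqref{eq:1.3} on $\mc H(\mf s)^\circ$ because $\chi_\gamma$ is $W(R_{\mf s}^\vee)$-fixed. The one place you gesture at re-deriving what the paper simply cites (the braid and quadratic relations from $\mu$-functions, and reading off the labels) is already contained in \cite{SolEnd}, and the "main obstacle" you flag—checking that $A_\gamma^{-1} N_w A_\gamma = N_{\gamma^{-1}w\gamma}$ rather than some twist—is exactly the step the paper handles via its equation \eqref{eq:1.2} together with \cite[(5.2)]{SolEnd}.
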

\begin{proof}
The isomorphism between $\mc H (\mf s)^\circ$ and the subalgebra of $\End_G (\Pi_{\mf s})$
generated by $\mc O (T_{\mf s})$ and the $N_w$ with $w \in W(R_{\mf s}^\vee)$
is given in \cite[Theorem 10.9]{SolEnd}. The operators $J_\gamma \; (\gamma \in \Gamma_{\mf s})$
in \cite[Theorem 10.9]{SolEnd} coincide with the $A_\gamma \in 
\End_G (\Pi_{\mf s})$ from \cite[\S 5.1]{SolEnd}. The multiplication rules for the $A_\gamma$
are given in \cite[Proposition 5.2.a]{SolEnd}. As $X_\nr (T,\chi_0) = 1$, we get
\[
A_{\gamma} A_{\gamma'} = \natural_{\mf s} (\gamma,\gamma') A_{\gamma \gamma'} 
\qquad \gamma,\gamma' \in \Gamma_{\mf s},
\]
for some $\natural_{\mf s} (\gamma,\gamma') \in \C^\times$. By the associativity of the 
multiplication, $\natural_{\mf s}$ is a 2-cocycle. The other parts of \cite[Proposition 5.2]{SolEnd} 
also simplify, because $\chi_\gamma$ is fixed by $W(R_{\mf s}^\vee)$. They show that 
\[
A_\gamma A_w = A_{\gamma w} \quad \text{and} \quad A_w A_\gamma = A_{w\gamma} \quad
\text{for } \gamma \in \Gamma_{\mf s}, w \in W(R_{\mf s}^\vee) .
\]
This implies
\begin{equation}\label{eq:1.2}
A_\gamma^{-1} A_w A_\gamma = A_{\gamma^{-1} w \gamma} = A_{\gamma^{-1}} A_w A_\gamma .
\end{equation}
In view of how $N_w$ is constructed from $A_w$ \cite[\S 10]{SolEnd}, the relation \eqref{eq:1.2}
entails $A_\gamma^{-1} N_w A_\gamma = N_{\gamma^{-1} w \gamma}$. That and \cite[(5.2)]{SolEnd} show 
that $\Gamma_{\mf s}$ acts on the image of $\mc H (\mf s)^\circ$ in $\End_G (\Pi_{\mf s})$ as in 
\eqref{eq:1.3}. Combining that with \cite[Theorem 10.9]{SolEnd} yields the required algebra isomorphism.
\end{proof}
 
An important part of the structure of $\End_G (\Pi_{\mf s})$ consists of the labels 
$\lambda (h_\alpha^\vee)$, $\lambda^* (h_\alpha^\vee)$ with $\alpha \in R_{\mf s,\mu}$. Here the 
eigenvalues of $N_{s_\alpha}$ are $q_F^{\lambda (h_\alpha^\vee)/2}$ and 
$-q_F^{-\lambda (h_\alpha^\vee)/2}$. When we  recall the known formulas for these labels, it will 
be convenient to consider all $\alpha \in R (\mc G, \mc S)$ such that $s_\alpha \in W_{\mf s}$.

Suppose first that $\mc G$ is $F$-split. By \cite[Proposition 4.3]{SolParam}, $\alpha \in 
R_{\mf s,\mu}$ if and only if $\chi \circ \alpha^\vee : F^\times \to \C^\times$ is unramified.
Further, by \cite[Theorem 4.4]{SolParam}
\begin{equation}\label{eq:1.1}
\lambda \big( \alpha^\vee (\varpi_F^{-1}) \big) = 
\lambda^* \big( \alpha^\vee (\varpi_F^{-1}) \big) = 1.
\end{equation}
Now we suppose that $\mc G$ quasi-split but not necessarily split. A special role is played by 
pairs of roots in type ${}^2 A_{2n}$, such that the diagram automorphism permutes the pair. 
We settle the other cases before we turn to those exceptional roots. 

Let $\alpha_{\mc T} \in R(\mc G,\mc T)$ be a preimage of $\alpha \in R (\mc G,\mc S)$ and let 
$\mb W_{F,\alpha_{\mc T}}$ be its stabilizer. The splitting field $F_\alpha = 
F_s^{\mb W_{F,\alpha_\mc T}}$ of $\alpha$ is unique up to isomorphism. Let $f (F_\alpha/F)$ be 
the residual degree of $F_\alpha / F$. 

Assuming that $\alpha$ is not exceptional, the issue can be reduced to \eqref{eq:1.1}. Indeed,
by \cite[\S 4.2]{SolParam}, $\alpha \in R_{\mf s,\mu}$ if and only if $\chi \circ \alpha^\vee : 
F_\alpha^\times \to \C^\times$ is unramified. Moreover, by \cite[Corollary 4.5]{SolParam}
\begin{equation}\label{eq:1.5}
\lambda \big( \alpha^\vee (\varpi_{F_\alpha}^{-1}) \big) = 
\lambda^* \big( \alpha^\vee (\varpi_{F_\alpha}^{-1}) \big) = f(F_\alpha / F).
\end{equation}
In most cases $h_\alpha^\vee = \alpha^\vee (\varpi_{F_\alpha}^{-1})$ in $T / T_\cpt$, and sometimes 
$\alpha^\vee (\varpi_{F_\alpha}^{-1}) = (h_\alpha^\vee)^2$ in $T / T_\cpt$. 
In the latter cases, for instance $PGL_2 (F)$,
\begin{equation}\label{eq:1.4}
\lambda (h_\alpha^\vee) = f(F_\alpha / F) \quad \text{and} \quad \lambda^* (h_\alpha^\vee) = 0 .
\end{equation}
The exceptional roots occur only when $R_{\mf s,\mu}$ has a component of type $BC_n$ which comes
from a component of type ${}^2 A_{2n}$ in $R (\mc G,\mc S)$. Consider an indivisible root $\alpha
\in R_{\mf s,\mu}$ which comes from two adjacent roots in ${}^2 A_{2n}$. As explained in
\cite[\S 4.2]{SolParam}, the computation of the parameters for this $\alpha$ can be reduced to a
quasi-split group $SU_3 (F_\alpha / F_{2\alpha})$ Moreover, since the groups of unramified characters 
of $SU_3 (F_\alpha / F_{2\alpha}),\, U_3 (F_\alpha / F_{2\alpha})$ and $PU_3 (F_\alpha / F_{2\alpha})$ 
are naturally identified, the reductions from \cite[\S 2]{SolParam} apply to these groups in the strong 
sense that in these instances of \cite[Proposition 2.4]{SolParam} no doubling or halving of roots can 
occur. Consequently the labels for $\alpha \in R (\mc G,\mc S)$ are precisely $f(F_{2\alpha}/F)$ times
the labels for $\alpha$ as root for $U_3 (F_\alpha / F_{2\alpha})$.

For  $U_3 (F_\alpha / F_{2\alpha})$ all $q$-parameters for principal series representations were computed
via types by the author's PhD student Badea \cite{Bad}. The outcome can be summarized as follows.
\begin{itemize}
\item If $F_\alpha / F_{2\alpha}$ is unramified and $\chi_c$ is trivial on $T_\cpt \cap SU_3 (F_\alpha / 
F_{2\alpha})$, then $\alpha \in R_{\mf s,\mu}$ and $\lambda (h_\alpha^\vee) = 3, \lambda^* 
(h_\alpha^\vee) = 1$.
\item If $F_\alpha / F_{2\alpha}$ is unramified and $\chi_c$ is nontrivial on $T_\cpt \cap SU_3 (F_\alpha / 
F_{2\alpha})$, then $\alpha \in R_{\mf s,\mu}$ and $\lambda (h_\alpha^\vee) = \lambda^* 
(h_\alpha^\vee) = 1$.
\item If $F_\alpha / F_{2\alpha}$ is ramified, then $\alpha \in R_{\mf s,\mu}$ if and only if
$\chi \circ \alpha^\vee : F_\alpha^\times \to \C^\times$ is nontrivial on $\mf o_{F_{2\alpha}}^\times$.
(We note that $\chi^2 \circ \alpha^\vee |_{\mf o_{F_{2\alpha}}^\times} = 1$ because $s_\alpha \chi_c = 
\chi_c$.) When this condition is fulfilled, we have $\lambda \big( \alpha^\vee (\varpi_{F_{2\alpha}}^{-1}) 
\big) = \lambda^* \big( \alpha^\vee (\varpi_{F_{2\alpha}}^{-1}) \big) = 1$ and $\lambda (h_\alpha^\vee) 
= 1, \lambda^* (h_\alpha^\vee) = 0$.
\end{itemize}
We warn that in \cite{Bad} it is assumed throughout that the residual characteristic of $F$ is not 2.
For unramified characters $\chi$ this restriction is not necessary, because in those cases the Hecke
algebras and the parameters were already known from \cite{Bor1}. However, for other $\chi$ the 
tricky calculations in \cite[\S 2.7 and \S 5.2.1]{Bad} do not work in residual characteristic 2.

For $F$ of arbitrary characteristic, the Hecke algebra parameters for $U_3 (F_\alpha / F_{2\alpha})$ can 
also be determined via the endoscopic methods from \cite{Moe}, see \cite[Theorem 4.9]{SolParam}. That 
shows that the above formulas also apply when the residual characteristic of $F$ is 2.

\section{Whittaker normalization}
\label{sec:Whit}

Unfortunately the isomorphism from Theorem \ref{thm:1.1} is not entirely canonical, because it
depends on a good maximal compact subgroup $K$ of $G$, and often $G$ has more than one conjugacy
class of such subgroups. Further, it may be expected that the 2-cocycle $\natural_{\mf s}$ of
$\Gamma_{\mf s}$ is trivial, because $G$ is quasi-split. We will fix both issues by using
a Whittaker datum. Let $U$ be the unipotent radical of $B$ (since all Borel subgroups of $G$
are conjugate, the choice of $B$ is inessential.) Let $\xi : U \to \C^\times$ be a nondegenerate
smooth character, which means that it is nontrivial on every root subgroup $U_{\alpha}$ with 
$\alpha \in R (\mc G,\mc S)$ simple. Then the $G$-conjugacy class of $(U,\xi)$ is Whittaker 
datum for $G$.

Recall that a Whittaker functional for $\pi \in \Rep (G)$ is an element of 
\[
\Hom_{U} (\pi,\xi) \cong \Hom_G \big( \pi, \mr{Ind}_{U}^G (\xi) \big) ,
\]
where Ind denotes smooth induction. We say that $\pi$ is generic, or more precisely $(U,\xi)$-generic, 
if it admits a nonzero Whittaker functional. It is well-known \cite{Rod,Sha} that every 
representation $I_B^G (\chi)$ with $\chi \in \Irr (T)$ is generic, and that its space of Whittaker
functionals has dimension one. For the upcoming arguments we need a larger but modest supply  
of generic representations.

\begin{prop}\label{prop:3.6}
Suppose that $R(\mc G,\mc S)$ and $R_{\mf s,\mu}$ have rank one. Then $|W_{\mf s}| = 2$ and
by Theorem \ref{thm:1.1} $\mc H (\mf s)$ is an affine Hecke algebra with a unique positive root 
$h_\alpha^\vee$. Let $\mr{St}_{\mc H (\mf s)}$ be the Steinberg representation of $\mc H (\mf s)$,
the unique essentially discrete series representation with an $\mc O (T_{\mf s})$-weight of the
form $\chi_0 |\alpha |_F^s$ with $s \in \R$. 
\enuma{
\item The $G$-representation $\mr{St}_{\mf s} := \mr{St}_{\mc H (\mf s)} 
\otimes_{\End_G (\Pi_{\mf s})} \Pi_{\mf s}$ is generic.
\item Suppose that $\lambda (h_\alpha^\vee) \neq \lambda^* (h_\alpha^\vee)$. In that case 
$\mc H (\mf s)$ has a unique essentially discrete series representation $\mr{St}_{\mc H (\mf s)-}$ 
with an $\mc O (T_{\mf s})$-weight of the form $\chi_0 |\alpha |_F^{ia + s}$ where $s,a \in \R$ and
$|\alpha (h_\alpha^\vee) |_F^{ia} = -1$, see \cite[\S 2.2]{SolHecke}. 
Then the $G$-representation $\mr{St}_{\mf s -} := \mr{St}_{\mc H (\mf s)-} 
\otimes_{\End_G (\Pi_{\mf s})} \Pi_{\mf s}$ is generic.
\item Suppose that the coroot of $h_\alpha^\vee$ lies in $2 (T/T_\cpt)^\vee$ and that
$\lambda (h_\alpha^\vee) = \lambda^* (h_\alpha^\vee)$. Choose $a \in \R$ as in part (b). Then 
$I_B^G (\chi_0 |\alpha |_F^{ia})$ is a direct sum of two irreducible subrepresentations. One of 
them, say $\pi^g_{\mf s -}$, is $(U,\xi)$-generic and the other, say $\pi^n_{\mf s -}$, is not. 
\item The irreducible $G$-representations in parts (a-c) are unitary.
}
\end{prop}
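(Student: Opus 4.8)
The plan is to transport every assertion across the equivalence between $\Rep (G)^{\mf s}$ and the module category of $\End_G (\Pi_{\mf s})$ coming from \eqref{eq:1.6}, combined with the identification of $\End_G (\Pi_{\mf s})$ as a rank-one affine Hecke algebra in Theorem \ref{thm:1.1}, and then to play the structure theory of such algebras (from \cite[\S 2.2]{SolHecke}) off against the Whittaker theory of principal series \cite{Rod,Sha}. Since $R (\mc G,\mc S)$ has rank one, $|W (\mc G,\mc S)| = 2$, so $|W_{\mf s}| = 2$, $\Gamma_{\mf s} = 1$, $B \subsetneq G$, and $\End_G (\Pi_{\mf s}) \cong \mc H (\mf s)$ is a genuine rank-one affine Hecke algebra. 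By \eqref{eq:1.7} with $P = B$ and $M = T$ the module attached to $I_B^G (\chi)$ is the induced module $\mc H (\mf s) \otimes_{\mc O (T_{\mf s})} \C_\chi$, which is free of rank $|W_{\mf s}| = 2$ over $\mc O (T_{\mf s})$; hence every $I_B^G (\chi)$ has length at most two, and exactly two at a reducibility point. From \cite[\S 2.2]{SolHecke} I would read off, in the three sub-cases: that $\mc H (\mf s)$ has a unique discrete series module $\mr{St}_{\mc H (\mf s)}$ in general and a second one $\mr{St}_{\mc H (\mf s)-}$ precisely when $\lambda (h_\alpha^\vee) \neq \lambda^* (h_\alpha^\vee)$; that each discrete series module is a \emph{submodule} of $\mc H (\mf s) \otimes_{\mc O (T_{\mf s})} \C_\chi$ for a suitable strictly dominant $W_{\mf s}$-translate $\chi$ of its weight, the remaining constituent being the quotient; and that in the situation of part (c) (with $\alpha^\sharp \in 2 (T/T_\cpt)^\vee$, $\lambda = \lambda^*$ and $|\alpha (h_\alpha^\vee)|_F^{ia} = -1$) the module $\mc H (\mf s) \otimes_{\mc O (T_{\mf s})} \C_{\chi_0 |\alpha|_F^{ia}}$ is semisimple, a sum of two one-dimensional tempered modules. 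Transported to $\Rep (G)^{\mf s}$ this yields existence and uniqueness of $\mr{St}_{\mf s}, \mr{St}_{\mf s -}$ as essentially square-integrable representations sitting inside standard modules $I_B^G (\chi)$, and in case (c) the decomposition of $I_B^G (\chi_0 |\alpha|_F^{ia})$ as a direct sum of two irreducible subrepresentations.

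\textbf{Genericity in (a) and (b).} Let $D \in \{ \mr{St}_{\mf s}, \mr{St}_{\mf s -} \}$, a submodule of the reducible standard module $\Sigma := I_B^G (\chi)$ with $\chi$ strictly dominant, and let $Q := \Sigma / D$ be the irreducible quotient. Then $Q$ is the Langlands quotient attached to the proper parabolic $B$, so it is not $(U,\xi)$-generic by the standard-module theory of the Whittaker model for principal series \cite{Rod,Sha} (in rank one this can also be checked directly on Jacquet modules). Now $\Hom_U (-,\xi)$ is exact, being the linear dual of the twisted Jacquet functor along $U$, so the short exact sequence $0 \to D \to \Sigma \to Q \to 0$ gives $0 \to \Hom_U (Q,\xi) \to \Hom_U (\Sigma,\xi) \to \Hom_U (D,\xi) \to 0$. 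Since $\Hom_U (\Sigma,\xi)$ is one-dimensional and $\Hom_U (Q,\xi) = 0$, the space $\Hom_U (D,\xi)$ is one-dimensional; hence $\mr{St}_{\mf s}$ and $\mr{St}_{\mf s -}$ are $(U,\xi)$-generic.

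\textbf{Genericity in (c) and unitarity in (d).} Granting the decomposition $I_B^G (\chi_0 |\alpha|_F^{ia}) = \pi' \oplus \pi''$ into irreducible subrepresentations, additivity of $\Hom_U (-,\xi)$ gives $\Hom_U (\pi',\xi) \oplus \Hom_U (\pi'',\xi)$ of dimension one; in particular $\pi'$ and $\pi''$ are inequivalent, and exactly one of them is $(U,\xi)$-generic — set $\pi^g_{\mf s -}$ equal to that one and $\pi^n_{\mf s -}$ to the other. For unitarity: by Theorem \ref{thm:1.1} and Casselman's square-integrability and temperedness criteria (compatible with the equivalence, as the isomorphism of Theorem \ref{thm:1.1} respects Jacquet restriction), $\mr{St}_{\mf s}$ and $\mr{St}_{\mf s -}$ are essentially square-integrable and $\pi^g_{\mf s -}, \pi^n_{\mf s -}$ are tempered; since $\chi_c$ is a character of the compact group $T_\cpt$, hence unitary, the relevant central characters are unitary, so all four representations are unitary (an essentially square-integrable representation with unitary central character being square-integrable modulo the centre). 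Alternatively one invokes unitarity of discrete series and tempered modules of a rank-one affine Hecke algebra with real parameters, transported along the equivalence, cf. \cite{SolEnd}.

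\textbf{Main obstacle.} I expect the crux to be the structural input used for part (c): that $I_B^G (\chi_0 |\alpha|_F^{ia})$ \emph{splits} as a direct sum of irreducibles, rather than forming a non-split self-extension as it does at the reducibility point governing $\mr{St}_{\mf s}$. Distinguishing these requires the explicit representation theory of a rank-one affine Hecke algebra with the specific equal parameters and divisible root $\alpha^\sharp$ — equivalently, that the Harish-Chandra $\mu$-function is regular at this point, so the normalized self-intertwining operator is a non-scalar involution and the $R$-group is $\Z/2$. A secondary delicate point is the appeal, in parts (a) and (b), to non-genericity of the Langlands quotient from $B$ in precisely the generality needed.
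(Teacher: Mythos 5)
Your overall chassis is sound: transport everything through the equivalence \eqref{eq:1.6}, identify $\End_G (\Pi_{\mf s})$ with a rank-one affine Hecke algebra via Theorem \ref{thm:1.1}, use the short exact sequence $0 \to D \to \Sigma \to Q \to 0$ together with exactness of the twisted Jacquet functor and one-dimensionality of $\Hom_U (\Sigma,\xi)$. Part (c) and part (d) are handled essentially as the paper does (the decomposition at the unitary reducibility point is indeed read off from the well-known rank-one Hecke algebra theory, and unitarity follows from temperedness and irreducibility).

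The genuine gap is in parts (a) and (b): you assert that the irreducible Langlands quotient $Q$ is not $(U,\xi)$-generic and attribute this to \cite{Rod,Sha}, with a parenthetical claim that ``in rank one this can also be checked directly on Jacquet modules.'' Neither reference establishes this; Rodier and Shalika give existence and multiplicity one of Whittaker functionals for the \emph{full} induced representation, which you already used, and say nothing about which subquotient the functional detects. When $Q$ is a character of $G$ (as in $SL_2$, $PGL_2$, i.e.\ when $\chi_0$ extends to a character of $G$) the claim is indeed immediate. But when $R (\mc G,\mc S)$ is non-reduced, $G$ reduces to a unitary group $U_3$, $Q = \mr{triv}_{\mf s}$ need \emph{not} be one-dimensional, and non-genericity of $Q$ is precisely the standard module conjecture of Casselman--Shahidi. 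The paper flags exactly this point (``we cannot immediately exclude that $\mr{triv}_{\mf s}$ is generic'') and spends the bulk of the proof verifying it: reduction to $G_\der$, case split on whether $\chi_0$ extends, reduction to $U_3 (E'/E)$, and then the Heiermann--Mui\'c derivation of the standard module conjecture from Shahidi's local coefficient identity and the tempered L-function conjecture, with Lomel\'i supplying the positive-characteristic input. Your ``Main obstacle'' paragraph thus misidentifies the crux: the semisimplicity in (c) is routine Hecke-algebra bookkeeping, whereas the real difficulty — which you wave away — is the non-genericity of the Langlands quotient needed in (a) and (b).
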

\begin{proof}
(a) As $R(\mc G,\mc S)$ and $R_{\mf s,\mu}$ have the same rank, the equivalence of categories 
\eqref{eq:1.6} translates ``essentially square-integrable" into ``essentially discrete series" 
\cite[Theorem 9.6.c]{SolEnd}. In particular $\mr{St}_{\mf s}$ is an essentially square-integrable
$G$-representation. The assumptions of the proposition amount to the assumptions for 
\cite[Theorem 8.1]{Shah}. Part (b) of that result provides the desired conclusion, at least
when char$(F) = 0$. The version of \cite[Theorem 8.1]{Shah} with char$(F) > 0$ was established
in \cite[Theorem 5.5]{Lom}.\\
(b) This is analogous to part (a).\\
(c) It is well-known (see for instance \cite[\S 2.2]{SolHecke}) that 
$\ind_{\mc O (T_{\mf s})}^{\mc H (\mf s)} (\chi_0 |\alpha |_F^{ia})$ 
is a direct sum of two onedimensional representations, say $\pi_{\mc H (\mf s)-}^g$ and 
$\pi_{\mc H (\mf s)-}^n$. Writing $\pi_{\mf s -}^{g/n} = \pi_{\mc H (\mf s)-}^{g/n}
\otimes_{\End_G (\Pi_{\mf s})} \Pi_{\mf s}$, we obtain
\[
I_B^G (\chi_0 |\alpha |_F^{ia}) = \pi_{\mf s -}^g \oplus \pi_{\mf s -}^n .
\]
Since $\dim \Hom_U \big( I_B^G (\chi_0 |\alpha |_F^{ia}) , \xi \big) = 1$, exactly one these direct
summands is generic (which one depends on $\xi$). By renaming if necessary, we can make $\pi_{\mf s-}^g$
generic.\\
(d) This holds because these representations are tempered and irreducible \cite[Corollaire VII.2.6]{Ren}.
\end{proof}

\subsection{Modules of Whittaker functionals} \

For our purposes it is more convenient to analyse a perspective on generic representations 
which is dual to the traditional view. For $(\pi,V) \in \Rep (G)$ let $V^\he$ be the smooth 
Hermitian dual space, that is, the vector space of all conjugate-linear maps $\ell : V \to \C$ 
which factor through the projection $V \to V^K$ for some compact open
subgroup $K$ of $G$. The Hermitian dual representation $\pi^\he$ on $V^\he$ is defined by 
\[
(\pi^\he (g) \ell) (v) = \ell (\pi (g^{-1}) v) \qquad \forall v \in V .
\]
Equivalently, $\pi^\he$ is the smooth contragredient of the complex conjugate of $\pi$. If $\pi$ is 
unitary and admissible, then $\pi^\he$ is isomorphic to $\pi$ via the $G$-invariant inner product.

\begin{lem}\label{lem:3.7}
If $\pi \in \Rep (G)^{\mf s}$, then also $\pi^\he \in \Rep (G)^{\mf s}$.
\end{lem}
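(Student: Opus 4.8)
The plan is to reduce the statement to the behaviour of the Hermitian dual under parabolic induction from $T$, together with the explicit description of the Bernstein component $\mf s = [T,\chi_0]_G$. First I would recall that $\mf s$ is determined by the inertial class $\mf s_T = [T,\chi_0]_T$, and that $\pi \in \Rep(G)^{\mf s}$ if and only if every irreducible subquotient of $\pi$ is a subquotient of some $I_B^G(\chi)$ with $\chi \in X_\nr(T)\chi_0$. Since every irreducible subquotient of $\pi^\he$ is the Hermitian dual of an irreducible subquotient of $\pi$ (the functor $(\cdot)^\he$ is exact and contravariant on the category of finite-length, hence admissible, representations, and it sends irreducibles to irreducibles), it suffices to show that if $\pi_0 \in \Irr(G)^{\mf s}$ then $\pi_0^\he \in \Irr(G)^{\mf s}$.

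Next I would use compatibility of the Hermitian dual with parabolic induction: for $\chi \in \Irr(T)$ one has $I_B^G(\chi)^\he \cong I_{\bar B}^G(\chi^\he)$, where $\bar B$ is the opposite Borel and $\chi^\he = \bar\chi^{-1}$ is the Hermitian dual of the character $\chi$ (the half-density twist $\delta_B^{1/2}$ is real-valued, so it is unchanged under $(\cdot)^\he$, and $I_{\bar B}^G$ and $I_B^G$ have the same irreducible constituents because $\bar B$ and $B$ are conjugate under $N_G(T)$, or via the standard intertwining operator). Now if $\pi_0$ is a subquotient of $I_B^G(\chi)$ with $\chi \in X_\nr(T)\chi_0$, then $\pi_0^\he$ is a subquotient of $I_B^G(\chi^\he)$; writing $\chi = \chi_0 \nu$ with $\nu \in X_\nr(T)$ gives $\chi^\he = \bar\chi_0^{-1} \bar\nu^{-1}$. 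The point is that $\bar\chi_0^{-1}|_{T_\cpt} = \bar\chi_c^{-1}$ and $\bar\nu^{-1} \in X_\nr(T)$, so $\chi^\he$ lies in the Bernstein component $X_\nr(T)\,(\bar\chi_0^{-1})$. Thus $\pi_0^\he \in \Rep(G)^{\mf t}$ where $\mf t = [T, \bar\chi_0^{-1}]_G$.

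The remaining — and I expect the only genuinely substantive — point is to identify $\mf t$ with $\mf s$, i.e. to check that $\bar\chi_c^{-1}$ is $W(\mc G,\mc S)$-conjugate to $\chi_c$. This is forced by the fact that $\pi_0$ is unitarizable on a Zariski-dense subset: more directly, one knows $I_B^G(\chi)$ and its Hermitian dual $I_B^G(\chi^\he)$ have a common irreducible subquotient precisely when $\chi^\he \in W(\mc G,\mc S)\cdot \chi$; taking $\chi$ generic in $X_\nr(T)\chi_0$ and recalling that $I_B^G(\chi)$ is then irreducible (for $\chi$ in a dense open subset) and equals $\pi_0$, this constituent is $\pi_0$ itself, so $\bar\chi^{-1} \in W(\mc G,\mc S)\cdot\chi$, whence $\bar\chi_c^{-1} \in W(\mc G,\mc S)\cdot \chi_c$ and $\mf t = \mf s$. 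Alternatively, one avoids the genericity argument entirely by invoking that the contragredient $\check\pi$ of $\pi \in \Irr(G)^{[T,\chi_0]_G}$ lies in $\Irr(G)^{[T,\chi_0^{-1}]_G}$ (a standard fact, e.g. from the description of the Bernstein component via cuspidal support), combined with $\pi^\he \cong \overline{\check\pi}$ and the observation that complex conjugation sends $\Rep(G)^{[T,\chi_0^{-1}]_G}$ to $\Rep(G)^{[T,\bar\chi_0^{-1}]_G}$; since $\pi^\he$ and $\pi$ must then lie in the same block as $\pi$ is finite length with $\pi^{\he\he}\cong\pi$, one gets $[T,\bar\chi_0^{-1}]_G = [T,\chi_0]_G$ directly. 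Either route closes the argument; the cleaner writeup uses the contragredient identity, so that is the version I would record.
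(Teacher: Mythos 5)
Your approach is genuinely different from the paper's. The paper shows directly that the $\mf s'$-component of $\pi^\he$ vanishes for every $\mf s'\neq\mf s$, by computing $\Hom_G(P_{\mf s'},\pi^\he)$ for a progenerator $P_{\mf s'}=I_P^G(\ind_{M_1}^M\sigma)$ of $\Rep(G)^{\mf s'}$ (with $\sigma$ unitary), via second adjointness and the identity $J_{\overline P}^G(\pi^\he)\cong (J_P^G\pi)^\he$. That argument treats arbitrary smooth $\pi$ uniformly. Your route instead reduces to irreducibles and tracks the cuspidal support under $(\cdot)^\he$. The two are logically different, and the paper's has the advantage of avoiding the two problems below.

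First, the reduction to irreducibles is not justified in the generality of the lemma. You invoke exactness of $(\cdot)^\he$ ``on the category of finite-length, hence admissible, representations,'' but the lemma concerns arbitrary $\pi\in\Rep(G)^{\mf s}$. For non-admissible $\pi$ the irreducible subquotients of $\pi^\he$ need not be Hermitian duals of irreducible subquotients of $\pi$: $Q^\he$ would only be a subquotient of $\pi^{\he\he}$, and $\pi\to\pi^{\he\he}$ is not an isomorphism in general. The progenerator argument is precisely what one uses to avoid this issue, so a complete writeup of your route would need to supply a replacement (for instance, detecting the $\mf s'$-component by a $\Hom$ against a projective object, which brings you back to the paper's proof).

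Second, the step you single out as the substantive one is both easier than you make it and, in the version you say you would record, circular. The ``cleaner'' route asserts that $[T,\bar\chi_0^{-1}]_G=[T,\chi_0]_G$ because $\pi^{\he\he}\cong\pi$; but that identity only shows that applying the argument twice returns you to $\Rep(G)^{[T,\chi_0]_G}$, which is consistent regardless of whether $[T,\bar\chi_0^{-1}]_G$ equals $[T,\chi_0]_G$, so it does not prove the equality. The genericity route is closer to sound but overkill. The honest reason is elementary: $\bar\chi_0^{-1}\cdot\chi_0^{-1}=|\chi_0|^{-2}$ is a continuous homomorphism $T\to\R_{>0}$, hence trivial on the compact group $T_\cpt$, hence lies in $X_\nr(T)$. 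So $\bar\chi_0^{-1}\in X_\nr(T)\chi_0$ and $[T,\bar\chi_0^{-1}]_T=[T,\chi_0]_T$ on the nose, with no Weyl-group conjugation or genericity argument needed. If you keep the reduction-to-irreducibles framework (restricted to finite length, where it is valid), that one line closes the argument.
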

\begin{proof}
Let $\mf s' = [M,\sigma ]_G$ be any inertial equivalence class different from $\mf s$. We may assume that 
$\sigma$ is unitary, so $\sigma^\he \cong \sigma$. Let $P \subset G$ be a parabolic subgroup with Levi 
factor $M$ and let $M_1 \subset M$ be the subgroup generated by all compact subgroups of $M$. Then 
$I_P^G (\ind_{M_1}^M (\sigma))$ is a progenerator of $\Rep (G)^{\mf s'}$, see
\cite[Th\'eor\`eme VI.10.1]{Ren}. With Bernstein's second adjointness we compute
\begin{multline}\label{eq:3.23}
\Hom_G \big( I_P^G (\ind_{M_1}^M (\sigma)), \pi^\he \big) \cong \Hom_M \big( \ind_{M_1}^M (\sigma),
J^G_{\overline P} (\pi^\he) \big) \cong \\
\Hom_M \big( \ind_{M_1}^M (\sigma), (J^G_P \pi)^\he \big) 
\cong \Hom_{M_1} \big( \sigma, (J^G_P \pi)^\he \big) \cong \Hom_{M_1} (J^G_P (\pi), \sigma) .
\end{multline}
Since $[M,\sigma]_G \neq \mf s$, $J^G_P (\pi)$ does not have any irreducible subquotient isomorphic
with $\sigma$ or an unramified twist of $\sigma$. Hence \eqref{eq:3.23} is zero. This means that the 
component of $\pi^\he$ in $\Rep (G)^{\mf s'}$ is zero for any $\mf s' \neq \mf s$.
\end{proof}

From \cite[(2.1.1)]{BuHe} one sees that the 
Hermitian dual of $\ind_{U}^G (\xi)$ is $\mr{Ind}_{U}^G (\xi)$, with respect to the pairing
\[
\begin{array}{ccc}
\mr{Ind}_{U}^G (\xi) \times \ind_{U}^G (\xi) & \to & \C \\
\langle f_1, f_2 \rangle & = & \int_{U \backslash G} f_1 (g) \overline{f_2 (g)} \textup{d} g
\end{array}.
\]
Hence there is a natural isomorphism 
\begin{equation}\label{eq:3.24}
\Hom_G \big( \pi, \mr{Ind}_{U}^G (\xi) \big) \cong \Hom_G (\ind_{U}^G (\xi), \pi^\he) .
\end{equation}
By Lemma \ref{lem:3.7} and \eqref{eq:1.6}, the right hand side is isomorphic with
\begin{equation}\label{eq:3.25}
\Hom_{\End_G (\Pi_{\mf s})} \big( \Hom_G (\Pi_{\mf s}, \ind_{U}^G (\xi)), 
\Hom_G (\Pi_{\mf s},\pi^\he) \big) .
\end{equation}
Thus any nonzero Whittaker functional for $\pi$ yields a nonzero element of \eqref{eq:3.25}. This 
prompts us to analyse $\Hom_G (\Pi_{\mf s}, \ind_{U}^G (\xi))$ as $\End_G (\Pi_{\mf s})^{op}$-module. 
By \cite[Theorem 2.2]{BuHe} there are canonical isomorphisms of $T$-representations
\begin{equation}\label{eq:3.21}
J_{\overline B}^G \ind_U^G (\xi) \cong \ind_{U \cap T}^T (\xi) = \ind_{\{e\}}^T (\mr{triv}) .
\end{equation}
From that we compute
\begin{multline}\label{eq:3.2}
\Hom_G \big( \Pi_{\mf s},\ind_U^G (\xi) \big) = \Hom_G \big( I_B^G (\ind_{T_\cpt}^T (\chi_c)), 
\ind_U^G (\xi) \big) \cong \\
\Hom_T (\ind_{T_\cpt}^T (\chi_c), J^G_{\overline B} \ind_U^G (\xi) \big) \cong
\Hom_T (\ind_{T_\cpt}^T (\chi_c), \ind_{\{e\}}^T (\mr{triv}) \big) .
\end{multline}
The Bernstein decomposition of $\Rep (T)$ entails that only the part of 
$\ind_{\{e\}}^T (\mr{triv})$ on which $T_\cpt$ acts according to $\chi_c$ 
contributes to the right hand side. Hence \eqref{eq:3.2} is naturally isomorphic with 
\begin{equation}\label{eq:3.3}
\Hom_T \big( \mr{ind}_{T_\cpt}^T (\chi_c), \ind_{T_\cpt}^T (\chi_c) \big) \cong
\Hom_{T_\cpt} (\chi_c, \ind_{T_\cpt}^T (\chi_c) \big) \cong \ind_{T_\cpt}^T (\chi_c) .   
\end{equation}
This vector space contains a canonical unit vector, namely $\chi_c \in \mr{ind}_{T_\cpt}^T (\chi_c)$ 
or equivalently ${\bf 1} \in \mc O (T_{\mf s} )$. 
We use the boldface to indicate that it is an element of \eqref{eq:3.3}, not of $\End_G (\Pi_{\mf s})$. 

We want to normalize our intertwining operators $N_w$ so that they act on $\bf 1$ in an easy way. 
Any $f \in \mc O (T_{\mf s}) \cong \mr{ind}_{T_\cpt}^T (\chi_c)$ can be regarded as element 
of $\End_G (\Pi_{\mf s})$, namely $I_B^G$ applied to multiplication by $f$. The action of that on 
\eqref{eq:3.3} is again multiplication by $f$. Thus \eqref{eq:3.13} is free 
of rank one as $\mc O (T_{\mf s})$-module, and $\bf 1$ forms a canonical basis. 

Let $\C (T_{\mf s})$ be the field of rational functions on $T_{\mf s}$, the quotient field of 
$\mc O (T_{\mf s})$. It follows from Bernstein's geometric lemma \cite[Th\'eor\`eme VI.5.1]{Ren} that
\begin{equation}\label{eq:3.4}
\End_G \big( I_B^G \C (T_{\mf s}) \big) \cong \End_G \big( I_B^G \mc O 
(T_{\mf s}) \big) \otimes_{\mc O (T_{\mf s})} \C (T_{\mf s}) ,
\end{equation}
see \cite[Lemma 5.3]{SolEnd}. The natural isomorphisms \eqref{eq:3.2} and \eqref{eq:3.3} extend to
\begin{equation}\label{eq:3.5}
\Hom_G \big( I_B^G \mc O (T_{\mf s}), \mr{ind}_U^G ( \xi) \big) \otimes_{\mc O (T_{\mf s})}
\C (T_{\mf s}) \cong \C (T_{\mf s}) ,
\end{equation} 
and as module over \eqref{eq:3.4} this is an extension of scalars of \eqref{eq:3.3}.
The advantage of this setup is:

\begin{prop}\label{prop:3.1}
Theorem \ref{thm:1.1} extends to an algebra isomorphism
\[
\End_G \big( I_B^G \C (T_{\mf s}) \big) \cong \big( \C (T_{\mf s}) \rtimes
W(R_{\mf s}^\vee) \big) \rtimes \C [\Gamma_{\mf s}, \natural_{\mf s}] .
% = \C (T_{\mf s}) \rtimes \C [W_{\mf s}, \natural_{\mf s}] . 
\]
\end{prop}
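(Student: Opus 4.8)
The plan is to localize the isomorphism of Theorem \ref{thm:1.1} at the generic point of $T_{\mf s}$, i.e.\ to tensor everything over $\mc O(T_{\mf s})$ with $\C(T_{\mf s})$, and to check that three things survive this process: the underlying vector space identification, the multiplicativity of the $N_w$ and $N_\gamma$, and the fact that on the right-hand side the crossed-product structure becomes the stated iterated crossed product. The starting point is the identification \eqref{eq:3.4}, which says precisely that $\End_G(I_B^G \C(T_{\mf s})) = \End_G(\Pi_{\mf s}) \otimes_{\mc O(T_{\mf s})} \C(T_{\mf s})$; combining this with Theorem \ref{thm:1.1} we get
\[
\End_G(I_B^G \C(T_{\mf s})) \cong \big( \mc H(\mf s)^\circ \rtimes \C[\Gamma_{\mf s},\natural_{\mf s}] \big) \otimes_{\mc O(T_{\mf s})} \C(T_{\mf s}),
\]
so everything comes down to identifying the right-hand side with $(\C(T_{\mf s}) \rtimes W(R_{\mf s}^\vee)) \rtimes \C[\Gamma_{\mf s},\natural_{\mf s}]$.

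For that identification I would argue as follows. First, $\mc H(\mf s)^\circ = \mc O(T_{\mf s}) \otimes \C[W(R_{\mf s}^\vee)]$ is free of finite rank $|W(R_{\mf s}^\vee)|$ over $\mc O(T_{\mf s})$, hence tensoring with $\C(T_{\mf s})$ over $\mc O(T_{\mf s})$ gives a $\C(T_{\mf s})$-vector space of the same rank, with basis the images of $\{N_w : w \in W(R_{\mf s}^\vee)\}$. Second, the Bernstein presentation of an affine Hecke algebra degenerates at the generic point: the quadratic relation $(N_{s_\alpha} - q_F^{\lambda(h_\alpha^\vee)/2})(N_{s_\alpha} + q_F^{-\lambda(h_\alpha^\vee)/2}) = 0$ together with the Bernstein relation $N_{s_\alpha} f - (s_\alpha f) N_{s_\alpha} = (\text{rational multiple of } f - s_\alpha f)$ shows that after inverting the relevant elements of $\mc O(T_{\mf s})$ one can modify $N_{s_\alpha}$ by a scalar in $\C(T_{\mf s})$ (the standard renormalization $N_{s_\alpha} \mapsto$ a suitable rational combination, cf.\ the intertwiners in the Bernstein presentation) to an element squaring to $1$ and satisfying $N'_{s_\alpha} f = (s_\alpha f) N'_{s_\alpha}$ exactly. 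These modified elements generate a copy of $\C(T_{\mf s}) \rtimes W(R_{\mf s}^\vee)$ inside $\End_G(I_B^G \C(T_{\mf s}))$, and by the rank count this copy equals $\mc H(\mf s)^\circ \otimes_{\mc O(T_{\mf s})} \C(T_{\mf s})$ as an algebra. Third, the $N_\gamma$ for $\gamma \in \Gamma_{\mf s}$ act on $\mc O(T_{\mf s})$ via the twisted $\Gamma_{\mf s}$-action from \eqref{eq:1.3} and multiply among themselves by $\natural_{\mf s}$, by Theorem \ref{thm:1.1}; this structure is unchanged by the base change, and one checks the modified generators $N'_{s_\alpha}$ are still permuted by $\Gamma_{\mf s}$ compatibly with its action on $W(R_{\mf s}^\vee)$ and on $\C(T_{\mf s})$, since the renormalizing rational functions are built $W_{\mf s}$-equivariantly out of the $\mu$-functions. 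Hence the $N_\gamma$ contribute exactly the outer crossed product by $\C[\Gamma_{\mf s},\natural_{\mf s}]$.

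The main obstacle I expect is the third point: verifying that the scalar renormalization $N_{s_\alpha} \rightsquigarrow N'_{s_\alpha}$ is genuinely $\Gamma_{\mf s}$-equivariant, i.e.\ that $N_\gamma^{-1} N'_{s_\alpha} N_\gamma = N'_{\gamma^{-1}s_\alpha\gamma}$ and not merely equal up to a scalar that could perturb the cocycle $\natural_{\mf s}$. This requires knowing that the rational function used to rescale $N_{s_\alpha}$ depends only on the $W_{\mf s}$-orbit data of $\alpha$ (its label $\lambda(h_\alpha^\vee)$, $\lambda^*(h_\alpha^\vee)$ and the element $h_\alpha^\vee \in T/T_\cpt$), which it does, because in Theorem \ref{thm:1.1} the labels are $W_{\mf s}$-invariant and $\Gamma_{\mf s}$ permutes the $h_\alpha^\vee$; so the renormalization commutes with the $\Gamma_{\mf s}$-action and $\natural_{\mf s}$ is untouched. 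Once this equivariance is in hand, assembling the three pieces gives exactly
\[
\End_G(I_B^G \C(T_{\mf s})) \cong \big( \C(T_{\mf s}) \rtimes W(R_{\mf s}^\vee) \big) \rtimes \C[\Gamma_{\mf s},\natural_{\mf s}],
\]
which is the claim.
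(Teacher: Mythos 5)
Your proposal is correct and follows essentially the same route as the paper: the paper's proof simply cites Theorem \ref{thm:1.1} together with [SolEnd, \S 5.1, Corollary 5.8], where exactly this localization-and-renormalization is carried out (the paper's elements $\mc T_w$ are your $N'_w$, related to the $N_w$ by the rational-function formulas \eqref{eq:3.29}--\eqref{eq:3.30}). One small imprecision: the passage $N_{s_\alpha}\rightsquigarrow \mc T_{s_\alpha}$ is not multiplication by a scalar in $\C(T_{\mf s})$ but an affine change (a rational multiple plus a rational function), as \eqref{eq:3.29} makes explicit; this does not affect your argument since the coefficients are still built from the $W_{\mf s}$-invariant labels and the element $h_\alpha^\vee$, which is exactly what your $\Gamma_{\mf s}$-equivariance check requires.
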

\begin{proof}
This is a direct consequence of Theorem \ref{thm:1.1} and \S 5.1 (in particular Corollary 5.8) of  
\cite{SolEnd}.
\end{proof}

In Proposition \ref{prop:3.1} the basis elements of $\C [\Gamma_{\mf s}, \natural_{\mf s}]$ are the
same $J_\gamma = A_\gamma$ as in Theorem \ref{thm:1.1}. The basis elements of 
\[
\C [W(R_{\mf s}^\vee)] \subset \End_G \big( I_B^G \C (T_{\mf s}) \big)
\]
are the $\mc T_w$ from \cite[Proposition 5.5]{SolEnd}, which are expressed in terms of
the $N_w$ in Lemma 10.8 and the preceding remarks of \cite{SolEnd}. Proposition \ref{prop:3.1} 
enables us to analyse the actions on \eqref{eq:3.3} and on \eqref{eq:3.5} more explicitly.

For $w \in W(R_{\mf s}^\vee ), \gamma \in \Gamma_{\mf s}$ and $f \in \mc O (T_{\mf s})$:
\begin{equation}\label{eq:3.6}
\begin{aligned}
f \cdot \mc T_w J_\gamma & = {\bf 1} \cdot f \mc T_w J_\gamma =
({\bf 1} \cdot \mc T_w J_\gamma) \cdot (J_\gamma^{-1} \mc T_w^{-1} f \mc T_w J_\gamma) \\
& = ({\bf 1} \cdot \mc T_w J_\gamma ) \cdot (f \circ w \gamma) = 
(f \circ w \gamma) ({\bf 1} \cdot \mc T_w J_\gamma) 
\end{aligned}
\end{equation}
in \eqref{eq:3.5}. Notice that ${\bf 1} \cdot J_\gamma$ must be invertible in 
$\mc O (T_{\mf s})$, because $J_\gamma$ is invertible in $\End_G (\Pi_{\mf s})$.
  
We write $\theta_{n\alpha}$ for $\theta_{n h_\alpha^\vee}$, where $n \in \Z$ and $\alpha \in 
R_{\mf s}^\vee$. We also abbreviate 
\[
q_\alpha = q_F^{(\lambda (h_\alpha^\vee) + \lambda^* (h_\alpha^\vee))/2} \qquad \text{and}
\qquad q_{\alpha *} = q_F^{(\lambda (h_\alpha^\vee) - \lambda^* (h_\alpha^\vee))/2}.
\] 

\begin{prop}\label{prop:3.2}
For each simple root $h_\alpha^\vee \in R_{\mf s}^\vee$ there exists $n_\alpha \in \Z$
such that ${\bf 1} \cdot \mc T_{s_\alpha} = -\theta_{n_\alpha \alpha}$ in \eqref{eq:3.5}.
\end{prop}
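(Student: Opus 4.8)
The plan is to pin down the action of $\mathcal{T}_{s_\alpha}$ on the canonical vector $\mathbf{1}$ by combining three facts: the commutation relation \eqref{eq:3.6}, the explicit formula for $\mathcal{T}_{s_\alpha}$ in terms of the Bernstein presentation, and the fact that $\mathbf{1} \cdot \mathcal{T}_{s_\alpha}$ must lie in $\mathcal{O}(T_{\mf s})$ (not merely in $\mathbb{C}(T_{\mf s})$), which is the invertibility-type constraint that forces the answer to be a unit, i.e.\ of the form $c\,\theta_{n_\alpha\alpha}$ for some $c \in \mathbb{C}^\times$, $n_\alpha \in \mathbb{Z}$. The first step is to recall from \cite[Proposition 5.5 and Lemma 10.8]{SolEnd} the Bernstein-type formula expressing $\mathcal{T}_{s_\alpha}$ over $\mathbb{C}(T_{\mf s})$ in terms of $N_{s_\alpha}$ and the rational function built from $\theta_\alpha$, $q_\alpha$, $q_{\alpha*}$; schematically $\mathcal{T}_{s_\alpha} = N_{s_\alpha}\cdot c_\alpha^{-1} + (\text{something})$, where $c_\alpha$ is the Macdonald $c$-function for the root $h_\alpha^\vee$. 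Since $R_{\mf s,\mu}$ and the ambient setup have rank-one reductions along $\alpha$, everything reduces to a computation inside the rank-one affine Hecke algebra generated by $\mathcal{O}(T_{\mf s})$ and $N_{s_\alpha}$.

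Next I would use \eqref{eq:3.6} with $w = s_\alpha$ and $\gamma = e$: for all $f \in \mathcal{O}(T_{\mf s})$ one has $f \cdot \mathbf{1} \cdot \mathcal{T}_{s_\alpha} = (f \circ s_\alpha)(\mathbf{1}\cdot\mathcal{T}_{s_\alpha})$ inside \eqref{eq:3.5}. Writing $\mathbf{1}\cdot\mathcal{T}_{s_\alpha} = g \in \mathbb{C}(T_{\mf s})$, this reads $f g = (f\circ s_\alpha) g$ in $\mathbb{C}(T_{\mf s})$, which as stated would force $g = 0$ — so this naive reading is wrong, and the point is that \eqref{eq:3.6} must be interpreted correctly with the $\mathcal{O}(T_{\mf s})$-bimodule structure coming from the identification of \eqref{eq:3.5} as a module over $\End_G(I_B^G \mathbb{C}(T_{\mf s}))$: the left action by $f$ and the action after conjugation by $\mathcal{T}_{s_\alpha}$ differ by $f \mapsto f\circ s_\alpha$. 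So really the constraint is that $\mathbf{1}\cdot \mathcal{O}(T_{\mf s})\mathcal{T}_{s_\alpha} = (\mathbf{1}\cdot\mathcal{T}_{s_\alpha})\cdot \mathcal{O}(T_{\mf s})$, i.e.\ $\mathbf{1}\cdot\mathcal{T}_{s_\alpha}$ is a cyclic generator of a rank-one $\mathcal{O}(T_{\mf s})$-submodule of \eqref{eq:3.5}. The decisive input is then that $\mathcal{T}_{s_\alpha}$ is \emph{invertible} in $\End_G(I_B^G\mathbb{C}(T_{\mf s}))$ with $\mathcal{T}_{s_\alpha}^2 = 1$ (it is the rational-form reflection), so $\mathbf{1}\cdot\mathcal{T}_{s_\alpha}$ must generate the same $\mathcal{O}(T_{\mf s})$-lattice as $\mathbf{1}$ itself, forcing $\mathbf{1}\cdot\mathcal{T}_{s_\alpha} \in \mathcal{O}(T_{\mf s})^\times = \mathbb{C}^\times \cdot \{\theta_{n\alpha} : n \in \mathbb{Z}\}$ (the units of the Laurent-polynomial-type algebra $\mathcal{O}(T_{\mf s})$ along the $\alpha$-line). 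Hence $\mathbf{1}\cdot\mathcal{T}_{s_\alpha} = c\,\theta_{n_\alpha\alpha}$.

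It then remains to determine the scalar $c$. For that I would evaluate at a generic weight: applying $\mathcal{T}_{s_\alpha}^2 = 1$ to $\mathbf{1}$ gives $\mathbf{1} = \mathbf{1}\cdot\mathcal{T}_{s_\alpha}\cdot\mathcal{T}_{s_\alpha} = (c\,\theta_{n_\alpha\alpha})\cdot\mathcal{T}_{s_\alpha}$, and using the bimodule rule $(\theta_{n_\alpha\alpha})\cdot\mathcal{T}_{s_\alpha} = \mathcal{T}_{s_\alpha}\cdot(\theta_{n_\alpha\alpha}\circ s_\alpha) = \mathcal{T}_{s_\alpha}\cdot\theta_{-n_\alpha\alpha}$ we get $\mathbf{1} = c^2 \theta_{-n_\alpha\alpha}(\mathbf{1}\cdot\mathcal{T}_{s_\alpha})/\cdots$ — tracking this carefully yields $c^2 = 1$, so $c = \pm 1$, and the sign is fixed by comparing with the known eigenvalues $q_F^{\lambda(h_\alpha^\vee)/2}, -q_F^{-\lambda(h_\alpha^\vee)/2}$ of $N_{s_\alpha}$ together with the explicit Bernstein formula relating $\mathcal{T}_{s_\alpha}$ to $N_{s_\alpha}$: specializing $\theta_\alpha \to$ a point where the $c$-function is regular and comparing leading terms shows $c = -1$, giving $\mathbf{1}\cdot\mathcal{T}_{s_\alpha} = -\theta_{n_\alpha\alpha}$.

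The main obstacle is the second step: correctly setting up and exploiting the $\mathcal{O}(T_{\mf s})$-bimodule structure on \eqref{eq:3.5} so that the relation \eqref{eq:3.6} yields a genuine constraint rather than the vacuous $g = (g\circ s_\alpha)^{-1}\cdots$ nonsense, and then extracting from invertibility of $\mathcal{T}_{s_\alpha}$ that the image is a unit in $\mathcal{O}(T_{\mf s})$ — i.e.\ identifying precisely which localization/lattice argument forces $n_\alpha \in \mathbb{Z}$ and the coefficient to be a nonzero constant. The sign computation fixing $c = -1$ is also delicate, as it depends on the precise normalization conventions for $\mathcal{T}_{s_\alpha}$ and $N_{s_\alpha}$ in \cite{SolEnd}; I expect to have to quote \cite[Lemma 10.8 and Proposition 5.5]{SolEnd} verbatim and track signs through the $c$-function formula $\mathcal{T}_{s_\alpha} = c_\alpha N_{s_\alpha} + (c_\alpha - 1)$ or its analogue, evaluated on $\mathbf{1}$ where $\theta_{n\alpha}$ acts by the character $\chi_c$.
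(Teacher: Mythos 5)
Your proposal rests on an argument that cannot work. You claim that because $\mc T_{s_\alpha}$ is invertible with $\mc T_{s_\alpha}^2 = 1$ in $\End_G(I_B^G\,\C(T_{\mf s}))$, the vector $\mathbf{1}\cdot\mc T_{s_\alpha}$ ``must generate the same $\mc O(T_{\mf s})$-lattice as $\mathbf{1}$'', hence be a unit of $\mc O(T_{\mf s})$. But $\mc T_{s_\alpha}$ is only invertible after localizing to $\C(T_{\mf s})$ — it does \emph{not} preserve the $\mc O(T_{\mf s})$-lattice $\Hom_G(\Pi_{\mf s},\ind_U^G\xi)$. A priori $\mathbf{1}\cdot\mc T_{s_\alpha}$ is just some element of $\C(T_{\mf s})$, and the relation $\mc T_{s_\alpha}^2=1$ combined with \eqref{eq:3.6} only yields the functional equation $g\cdot s_\alpha(g) = 1$ for $g = \mathbf{1}\cdot\mc T_{s_\alpha}$. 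That equation has \emph{many} solutions in $\C(T_{\mf s})$ beyond $\pm\theta_{n\alpha}$: for instance, any product of $\pm\theta_{n\alpha}$ with ratios like $\frac{q_\alpha - \theta_\alpha}{q_\alpha - \theta_{-\alpha}}$ or $\frac{q_{\alpha*} + \theta_\alpha}{q_{\alpha*} + \theta_{-\alpha}}$ (the Macdonald $c$-function factors) also satisfies it. This is exactly the situation the paper confronts: starting from $\mc T_{s_\alpha}(q_\alpha - \theta_{-\alpha})(q_{\alpha*} + \theta_{-\alpha}) \in \End_G(\Pi_{\mf s})$ and $\mc T_{s_\alpha}^2=1$, one only reaches the weaker conclusion \eqref{eq:3.7} that $\mathbf{1}\cdot\mc T_{s_\alpha} = \epsilon\,\theta_{n_\alpha\alpha}\big(\tfrac{q_\alpha - \theta_\alpha}{q_\alpha-\theta_{-\alpha}}\big)^\eta\big(\tfrac{q_{\alpha*}+\theta_\alpha}{q_{\alpha*}+\theta_{-\alpha}}\big)^{\eta'}$ with unknown $\eta,\eta' \in \{0,1\}$.

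The real content of the proposition is ruling out $\eta=1$ and $\eta'=1$, and this cannot be done by algebra alone: it needs the representation-theoretic input of Proposition \ref{prop:3.6} (genericity of the Steinberg representations $\mr{St}_{\mf s}$ and $\mr{St}_{\mf s-}$, itself resting on the standard module conjecture). The paper observes that a nonzero Whittaker functional for $\mr{St}_{\mf s}$ forces $\mathbf{1}\cdot(q_F^{\lambda/2}N_{s_\alpha}+1)$ to vanish at the Steinberg weight $\chi_{\mr{St}}$; plugging in the formula from \eqref{eq:3.8} and the candidate for $\mathbf{1}\cdot\mc T_{s_\alpha}$, a direct evaluation shows this fails when $\eta = 1$. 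The analogous argument with $\mr{St}_{\mf s-}$ kills $\eta'=1$. Your strategy never touches these $c$-function obstructions and so cannot establish the result. (Your determination of the sign $\epsilon=-1$ also needs regularity of $\mathbf{1}\cdot(q_F^{\lambda/2}N_{s_\alpha}+1)$ as in \eqref{eq:3.28}, not just normalization-chasing.)
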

\begin{proof}
The operators $N_{s_\alpha} \in \End_G (\Pi_{\mf s})$ and $\mc T_{s_\alpha}$ arise by parabolic 
induction from the analogous elements for the Levi subgroup $G_\alpha$ of $G$ generated by $T \cup
U_\alpha \cup U_{-\alpha}$. Hence it suffices to work in $G_\alpha$, which means that we may assume
that $R(\mc G,\mc S)$ and $R_{\mf s,\mu}$ have rank one. 

First we consider the cases where $q_{\alpha *} \neq 1$, or equivalently
$\lambda (h_\alpha^\vee) \neq \lambda^* (h_\alpha^\vee)$.
From \cite[(5.19)]{SolEnd} we know that $\mc T_{s_\alpha} (q_\alpha - \theta_{-\alpha}) 
(q_{\alpha *} + \theta_{-\alpha}) \in \End_G (\Pi_{\mf s})$. Hence we can write 
\[
{\bf 1} \cdot \mc T_{s_\alpha} = f_1 (q_\alpha - \theta_{-\alpha})^{-1} 
(q_{\alpha *} + \theta_{-\alpha})^{-1} \quad \text{with } f_1 \in \mc O (T_{\mf s}) .
\]
The relations $\mc T_{s_\alpha}^2 = 1$ and \eqref{eq:3.6} imply that
\[
1 = ({\bf 1} \cdot \mc T_{s_\alpha}) \: s_\alpha ({\bf 1} \cdot \mc T_{s_\alpha}) = 
\frac{f_1 \; s_\alpha (f_1)}{(q_\alpha - \theta_\alpha) (q_{\alpha *} + \theta_\alpha)
(q_\alpha - \theta_{-\alpha}) (q_{\alpha *} + \theta_{-\alpha})} .
\]
It follows that there exist $\epsilon \in \{\pm 1\}$ and $n_\alpha \in \Z$ such that
\[
f_1 = \epsilon \theta_{n_\alpha \alpha} 
(q_\alpha - \theta_{\pm \alpha})(q_{\alpha *} + \theta_{\pm' \alpha}) ,
\]
for suitable signs $\pm, \pm'$. Equivalently
\begin{equation}\label{eq:3.7}
{\bf 1} \cdot \mc T_{s_\alpha} = \epsilon \theta_{n_\alpha \alpha} 
\Big( \frac{q_\alpha - \theta_{\alpha}}{q_\alpha - \theta_{-\alpha}} \Big)^\eta 
\Big( \frac{q_{\alpha *} + \theta_{\alpha}}{q_{\alpha *} + \theta_{-\alpha}} \Big)^{\eta'} 
=: \epsilon \theta_{n_\alpha \alpha} f_2 ,
\end{equation}
where $\eta, \eta' \in \{0,1\}$.  

Under our assumption $\alpha^\sharp \in 2 (T / T_\cpt)^\vee$ and $s_\alpha$ fixes any 
$\chi \in X_\nr (T)$ with $\chi (h_\alpha^\vee) = -1$. Notice that $f_2 (\chi) = 1$ whenever 
$\theta_\alpha (\chi)\in \{\pm 1\}$. As in \cite[10.7.b]{SolEnd} define
\begin{equation}\label{eq:3.20}
\epsilon_\alpha = \left\{ 
\begin{array}{ll}
1 & \text{if } I_B^G (\mr{ev}_\chi) \mc T_{s_\alpha} = - I_B^G (\mr{ev}_\chi) , \\
0 & \text{otherwise.}
\end{array} \right.
\end{equation}
By \cite[Lemma 10.8]{SolEnd}
\begin{equation}\label{eq:3.8}
q_F^{\lambda (h_\alpha^\vee) / 2} N_{s_\alpha} + 1 =
(\mc T_{s_\alpha} \theta_{-\epsilon_\alpha \alpha}+ 1) (\theta_\alpha q_\alpha - 1)
(\theta_\alpha q_{\alpha *} + 1) (\theta_{2 \alpha} - 1)^{-1}
\end{equation}
belongs to $\End_G (\Pi_{\mf s})$. In particular 
\begin{equation}\label{eq:3.28}
{\bf 1} \cdot (q_F^{\lambda (h_\alpha^\vee) / 2} N_{s_\alpha} + 1) = 
\frac{(\epsilon \theta_{(n_\alpha - \epsilon_\alpha) \alpha} f_2 + 1) (\theta_\alpha q_\alpha - 1)
(\theta_\alpha q_{\alpha *} + 1)}{ \theta_{2 \alpha} - 1}
\end{equation}
lies in $\Hom_G \big( \Pi_{\mf s}, \mr{ind}_U^G (\xi) \big) \cong \mc O (T_{\mf s})$.
Specializing the numerator of \eqref{eq:3.28} at $\chi'$ with $\theta_\alpha (\chi') = 1$ gives
$(\epsilon + 1) (q_\alpha - 1) (q_{\alpha *} + 1)$.
Since $q_\alpha > 1$ and \eqref{eq:3.28} has no poles, this implies $\epsilon = -1$. 

Let $\mc G_\der$ be the derived group of $\mc G$ and write $\mf s_\der = [\chi |_{T \cap G_\der},
T \cap G_\der]_{G_\der}$. By construction $\mc H (\mf s_\der)$ is the subalgebra of 
$\mc H (\mf s)$ generated by $\C [T \cap G_\der / T_\cpt \cap G_\der]$ and $N_{s_\alpha}$.
From \cite[\S 2.2]{SolHecke} we recall that
$\mr{St}_{\mc H (\mf s)} : \mc H (\mf s) \to \C$ is given on $\mc H (\mf s_\der)$ by
\[
\mr{St}_{\mc H (\mf s)} (N_{s_\alpha}) = - q_F^{-\lambda (h_\alpha^\vee)/2}, \qquad
\mr{St}_{\mc H (\mf s)} (\theta_{n \alpha}) = q_\alpha^{-n} .
\]
From Proposition \ref{prop:3.6}.a,d we know that 
\[
\Hom_G (\mr{St}_{\mf s}, \mr{Ind}_U^G (\xi)) \cong \Hom_G (\ind_U^G (\xi), \mr{St}_{\mf s})
\quad \text{has dimension } 1.
\]
As in \eqref{eq:3.25}, any nonzero Whittaker functional yields a surjection 
\[
\Hom_G (\Pi_{\mf s}, \ind_U^G (\xi)) \cong \mc O (T_{\mf s}) \to \mr{St}_{\mc H (\mf s)} .
\]
This is an $\mc O (T_{\mf s})$-module homomorphism, so up to rescaling it must be evaluation at
$\chi_{\mr{St}}$, the unique $\mc O (T_{\mf s})$-weight of $\mr{St}_{\mc H (\mf s)}$. Since
$\mc H (W (R_\mf s^\vee),q_F^\lambda)$ acts on $\mr{St}_{\mc H (\mf s)}$ via the sign representation,
\begin{equation}\label{eq:3.14}
\theta_x \cdot (q_F^{\lambda (h_\alpha^\vee)/2} N_{s_\alpha} + 1) \in \ker (\mr{ev}_{\chi_{\mr{St}}}) 
\quad \forall x \in T / T_\cpt.
\end{equation}
For $x = 0$ we can make that more explicit with \eqref{eq:3.8}:
\begin{multline}\label{eq:3.9}
{\bf 1} \cdot (q_F^{\lambda (h_\alpha^\vee)/2} N_{s_\alpha} + 1) = (\theta_0 - \theta_{(n_\alpha - 
\epsilon_\alpha) \alpha} f_2) (\theta_\alpha q_\alpha - 1)(\theta_\alpha q_{\alpha*} + 1)
(\theta_{2 \alpha} - 1)^{-1} \\ 
\scalebox{1.2}{$=
\frac{ (q_\alpha - \theta_{-\alpha} )^\eta (q_{\alpha *} + \theta_{-\alpha})^{\eta'} -
\theta_{(n_\alpha - \epsilon_\alpha) \alpha} (q_\alpha - \theta_\alpha)^\eta (q_{\alpha *} +
\theta_\alpha )^{\eta'}}{(q_\alpha - \theta_{-\alpha} )^\eta (q_{\alpha *} + \theta_{-\alpha})^{\eta'}}
\frac{(\theta_\alpha q_\alpha - 1)(\theta_\alpha q_{\alpha*} + 1)}{(\theta_{2 \alpha} - 1)}$}
\end{multline}
When $\eta = 1$, this reduces to 
\[
\frac{ (q_\alpha - \theta_{-\alpha} ) (q_{\alpha *} + \theta_{-\alpha})^{\eta'} -
\theta_{(n_\alpha - \epsilon_\alpha) \alpha} (q_\alpha - \theta_\alpha) (q_{\alpha *} +
\theta_\alpha )^{\eta'}}{ (q_{\alpha *} + \theta_{-\alpha})^{\eta'}}
\frac{\theta_\alpha (\theta_\alpha q_{\alpha*} + 1)}{(\theta_{2 \alpha} - 1)} .
\]
Evaluation at $\chi_{\mr{St}}$ sends this element to
\[
\frac{- q_\alpha^{\epsilon_\alpha - n_\alpha} (q_\alpha - q_\alpha^{-1}) (q_{\alpha*} + 
q_\alpha^{-1})^{\eta'}
q_\alpha^{-1} (q_\alpha^{-1} q_{\alpha*} +1)}{(q_{\alpha*} + q_\alpha) (q_\alpha^{-2} -1)} \neq 0.
\]
That contradicts \eqref{eq:3.14}, so that $\eta$ must be 0.

We recall from \cite[proof of Theorem 2.4.c]{SolHecke} that 
$\mr{St}_{\mc H (\mf s)-}$ is given on $\mc H (\mf s_\der)$ by
\[
\mr{St}_{\mc H (\mf s)-} (N_{s_\alpha}) = - q_F^{-\lambda (h_\alpha^\vee)/2}, \qquad
\mr{St}_{\mc H (\mf s)-} (\theta_{n \alpha}) = (-q_{\alpha*}^{-1})^n .
\]
By Proposition \ref{prop:3.6}.b,d, 
\[
\Hom_G (\mr{St}_{\mf s-}, \mr{Ind}_U^G (\xi)) \cong \Hom_G (\ind_U^G (\xi), \mr{St}_{\mf s-})
\quad \text{has dimension } 1.
\]
As above, this gives a surjection
\[
\Hom_G (\Pi_{\mf s}, \ind_U^G (\xi)) \cong \mc O (T_{\mf s}) \to \mr{St}_{\mc H (\mf s)-} ,
\]
which (up to rescaling) is evaluation at the $\mc O (T_{\mf s})$-weight $\chi_{\mr{St}-}$ of 
$\mr{St}_{\mc H (\mf s)-}$. Then $\ker (\mr{ev}_{\chi_{\mr{St}-}})$ contains
\begin{multline}\label{eq:3.15}
{\bf 1} \cdot (q_F^{\lambda (h_\alpha^\vee) / 2} N_{s_\alpha} + 1) =
{\bf 1} \cdot (\mc T_{s_\alpha} \theta_{-\epsilon_\alpha \alpha}+ 1) (\theta_\alpha q_\alpha - 1)
(\theta_\alpha q_{\alpha *} + 1) (\theta_{2 \alpha} - 1)^{-1} \\
= \big( \theta_0 - \theta_{(n_\alpha - \epsilon_\alpha) \alpha} (q_{\alpha *} + \theta_\alpha / 
q_{\alpha *} + \theta_{-\alpha})^{\eta'} \big) (\theta_\alpha q_\alpha - 1)
(\theta_\alpha q_{\alpha *} + 1) (\theta_{2 \alpha} - 1)^{-1} \\
= \frac{ (q_{\alpha *} + \theta_{-\alpha})^{\eta'} -
\theta_{(n_\alpha - \epsilon_\alpha) \alpha} (q_{\alpha *} +
\theta_\alpha )^{\eta'}}{(q_{\alpha *} + \theta_{-\alpha})^{\eta'}}
\frac{(\theta_\alpha q_\alpha - 1)(\theta_\alpha q_{\alpha*} + 1)}{(\theta_{2 \alpha} - 1)} .
\end{multline}
When $\eta' = 1$, \eqref{eq:3.15} simplifies to
\[
(q_{\alpha *} + \theta_{-\alpha} - \theta_{(n_\alpha - \epsilon_\alpha) \alpha} (q_{\alpha*} + 
\theta_\alpha )) \frac{(\theta_\alpha q_\alpha - 1) \theta_\alpha}{(\theta_{2 \alpha} - 1)} .
\]
Evaluation at $\chi_{\mr{St}-}$ results in
\[
\frac{(-q_{\alpha *}^{-1})^{n_\alpha - \epsilon_\alpha} (q_\alpha - q_{\alpha *}^{-1})
(-q_{\alpha*}^{-1} q_\alpha - 1) q_{\alpha*}^{-1}}{q_{\alpha*}^{-2} - 1} . 
\]
This is nonzero because $q_\alpha \geq q_{\alpha*} > 1$. But then \eqref{eq:3.15} does not lie 
in the kernel of $\mr{ev}_{\chi_{\mr{St}-}}$, a contradiction. Therefore $\eta'$ must be 0. 

Now we consider the cases with $q_{\alpha *} = 1$, or equivalently $\lambda (h_\alpha^\vee) = 
\lambda^* (h_\alpha^\vee)$. Then we can omit all factors $q_{\alpha*} + \theta_{\pm \alpha}$, and we
can replace $\theta_{2\alpha} - 1$ by $\theta_\alpha - 1$. 
The above argument with $\mr{St}_{\mc H (\mf s)}$ still applies, and shows that $\eta = 0$.
\end{proof}

For the moment we continue to work in $G_\alpha$. Assume that $\alpha^\sharp \in 2 (T / T_\cpt )^\vee$
and $\lambda (h_\alpha^\vee) = \lambda^* (h_\alpha^\vee)$. The onedimensional $\mc H (\mf s)$-representation
$\pi_{\mc H (\mf s)-}^g$ from Proposition \ref{prop:3.6}.c extends canonically to a representation of
$\mc H (\mf s) + \mc T_{s_\alpha} \mc H (\mf s)$, because $\mc T_{s_\alpha}$ does not have a pole at
$|\alpha |_F^{ia}$. In particular $\pi_{\mc H (\mf s)-}^g$ determines a character of the
order two group $\langle \mc T_{s_\alpha} \rangle$. We define 
\begin{equation}\label{eq:3.26}
\epsilon_\alpha = \left\{ 
\begin{array}{llll}
1 & \pi_{\mc H (\mf s)-}^g |_{\langle \mc T_{s_\alpha} \rangle} & = & \mr{triv},\\
0 & \pi_{\mc H (\mf s)-}^g |_{\langle \mc T_{s_\alpha} \rangle} & = & \mr{sign}.
\end{array}\right.
\end{equation}
This complements the definition of $\epsilon_\alpha$ when $\alpha^\sharp \in 2 (T / T_\cpt )^\vee$
and $\lambda (h_\alpha^\vee) \neq \lambda^* (h_\alpha^\vee)$, see \eqref{eq:3.20}. Together these 
provide a function
\begin{equation}\label{eq:3.27}
\epsilon_? : \{ h_\alpha^\vee \in R_{\mf s}^\vee \text{ simple, } 
\alpha^\sharp \in 2 (T / T_\cpt )^\vee \} \to \{0,1\} .
\end{equation}

\begin{lem}\label{lem:3.5}
\enuma{
\item The function \eqref{eq:3.27} is $\Gamma_{\mf s}$-invariant.
\item Take $\alpha$ in the domain of $\epsilon_?$ and let $n_\alpha$ be as in Proposition 
\ref{prop:3.2}. Then $n_\alpha - \epsilon_\alpha$ is even. 
}
\end{lem}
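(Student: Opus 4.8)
Parts (a) and (b) are independent. For (a) the plan is to use that conjugation by a Weyl group element identifies the rank-one situation at $\alpha$ with that at $\gamma\alpha$; for (b) the plan is a case distinction on whether $\lambda(h_\alpha^\vee)$ and $\lambda^*(h_\alpha^\vee)$ coincide, the unequal case being a short pole-cancellation in $\mc O(T_{\mf s})$ and the equal case requiring the generic representation from Proposition \ref{prop:3.6}.

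\emph{Proof of (a).} Let $\gamma \in \Gamma_{\mf s}$ and let $\alpha$ be in the domain of $\epsilon_?$; since $\Gamma_{\mf s}$ stabilises the basis of $\mc R_{\mf s}$ and acts by a lattice automorphism of $(T/T_\cpt)^\vee$, the root $\gamma\alpha$ also lies in that domain. Choose a representative $n_\gamma \in N_G(S)$ of $\gamma$ normalising $\mc B$; then $n_\gamma$ normalises $T$ and $U$. Conjugation by $n_\gamma$ carries the rank-one Levi $G_\alpha = \langle T, U_\alpha, U_{-\alpha}\rangle$, its inertial class $\mf s_\alpha = [\chi_0, T]_{G_\alpha}$, and the operator $\mc T_{s_\alpha}$ to the corresponding data for $\gamma\alpha$ — the last because $J_\gamma \mc T_{s_\alpha} J_\gamma^{-1} = \mc T_{\gamma s_\alpha \gamma^{-1}} = \mc T_{s_{\gamma\alpha}}$ by the conjugation relations in the proof of Theorem \ref{thm:1.1}, extended via Proposition \ref{prop:3.1} — and it sends $\xi|_{U \cap G_\alpha}$ to a nondegenerate character of $U \cap G_{\gamma\alpha}$ that is conjugate to $\xi|_{U \cap G_{\gamma\alpha}}$. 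Hence the resulting equivalence $\Rep(G_\alpha)^{\mf s_\alpha} \xrightarrow{\sim} \Rep(G_{\gamma\alpha})^{\mf s_{\gamma\alpha}}$ preserves genericity, and — since the representations entering \eqref{eq:3.20}/\eqref{eq:3.26} are pinned down among the relevant representations by being generic — it matches those attached to $\alpha$ with those attached to $\gamma\alpha$ while intertwining $\mc T_{s_\alpha}$ with $\mc T_{s_{\gamma\alpha}}$. Reading off the sign by which the operator acts yields $\epsilon_{\gamma\alpha} = \epsilon_\alpha$.

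\emph{Proof of (b).} As in the proof of Proposition \ref{prop:3.2} we reduce to the rank-one group $G_\alpha$, where $n_\alpha$ and $\epsilon_\alpha$ are defined. Suppose first that $\lambda(h_\alpha^\vee) \neq \lambda^*(h_\alpha^\vee)$. Then, by the proof of Proposition \ref{prop:3.2}, $\eta = \eta' = 0$, so \eqref{eq:3.9} reads
\[
{\bf 1}\cdot\big(q_F^{\lambda(h_\alpha^\vee)/2} N_{s_\alpha} + 1\big) = \big(1 - \theta_{(n_\alpha - \epsilon_\alpha)\alpha}\big) \, \frac{(\theta_\alpha q_\alpha - 1)(\theta_\alpha q_{\alpha *} + 1)}{\theta_{2\alpha} - 1} ,
\]
and the left-hand side lies in $\mc O(T_{\mf s})$. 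Now $\theta_{2\alpha} - 1 = (\theta_\alpha - 1)(\theta_\alpha + 1)$, and $h_\alpha^\vee$ is primitive in $T/T_\cpt$, so $\{\theta_\alpha = -1\}$ is a nonempty hypersurface in $T_{\mf s}$; on it the factors $\theta_\alpha q_\alpha - 1 = -q_\alpha - 1$ and $\theta_\alpha q_{\alpha *} + 1 = 1 - q_{\alpha *}$ do not vanish (the latter because $q_{\alpha *} \neq 1$). Regularity of the right-hand side therefore forces $1 - \theta_{(n_\alpha - \epsilon_\alpha)\alpha}$ to vanish at $\theta_\alpha = -1$, i.e. $(-1)^{n_\alpha - \epsilon_\alpha} = 1$, so $n_\alpha - \epsilon_\alpha$ is even.

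Suppose now that $\lambda(h_\alpha^\vee) = \lambda^*(h_\alpha^\vee)$, so $\theta_\alpha + 1$ no longer divides the denominator above and this argument fails. Instead we use the generic summand $\pi^g_{\mf s-}$ of $I_B^G(\chi_0 |\alpha|_F^{ia})$ from Proposition \ref{prop:3.6}(c), which is generic and unitary (Proposition \ref{prop:3.6}(d)), so $(\pi^g_{\mf s-})^\he \cong \pi^g_{\mf s-}$ and, by \eqref{eq:3.24}--\eqref{eq:3.25}, a nonzero Whittaker functional yields a nonzero $\End_G(\Pi_{\mf s})$-module homomorphism
\[
\Phi : \mc O(T_{\mf s}) = \Hom_G\big(\Pi_{\mf s}, \ind_U^G(\xi)\big) \longrightarrow \Hom_G\big(\Pi_{\mf s}, \pi^g_{\mf s-}\big) = \pi^g_{\mc H(\mf s)-} .
\]
Being $\mc O(T_{\mf s})$-linear onto a one-dimensional module of $\mc O(T_{\mf s})$-weight $\chi_- := \chi_0 |\alpha|_F^{ia}$ (at which $\theta_\alpha = -1$), $\Phi$ is a nonzero scalar multiple of evaluation at $\chi_-$. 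Since ${\bf 1}\cdot\mc T_{s_\alpha} = -\theta_{n_\alpha\alpha}$ is a monomial (Proposition \ref{prop:3.2}) and $s_\alpha$ fixes $\chi_-$, the operator $\mc T_{s_\alpha}$ preserves $\mc O(T_{\mf s}) \subset \C(T_{\mf s})$; localising at $\chi_-$, where $\mc T_{s_\alpha}$ has no pole, turns $\Phi$ into a homomorphism of modules over $\mc H(\mf s) + \mc T_{s_\alpha}\mc H(\mf s)$. Applying it to ${\bf 1}\cdot\mc T_{s_\alpha} = -\theta_{n_\alpha\alpha}$, and using that $\theta_{n_\alpha\alpha}$ acts on $\pi^g_{\mc H(\mf s)-}$ by $\theta_\alpha(\chi_-)^{n_\alpha} = (-1)^{n_\alpha}$, shows that $\mc T_{s_\alpha}$ acts on $\pi^g_{\mc H(\mf s)-}$ by the scalar $-(-1)^{n_\alpha}$. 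By the definition \eqref{eq:3.26} of $\epsilon_\alpha$ this says $(-1)^{\epsilon_\alpha} = (-1)^{n_\alpha}$, so again $n_\alpha - \epsilon_\alpha$ is even.

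The delicate point is this last step: one must make precise that a Whittaker functional is compatible with the \emph{non-integral} operator $\mc T_{s_\alpha}$ — i.e. that after localisation at $\chi_-$ it yields a homomorphism of $(\mc H(\mf s) + \mc T_{s_\alpha}\mc H(\mf s))$-modules — and this rests on $\mc T_{s_\alpha}$ having no pole at $\chi_-$ together with $s_\alpha$ fixing $\chi_-$. In (a) the corresponding subtlety is that conjugation by $n_\gamma$, with $n_\gamma \in G$, preserves genericity with respect to the fixed Whittaker datum $(U,\xi)$.
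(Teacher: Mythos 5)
Your proof of part (b) is essentially the paper's argument, and it is correct: the unequal-parameter case is the same pole-cancellation of \eqref{eq:3.28} at $\theta_\alpha = -1$, and the equal-parameter case uses the one-dimensional generic summand $\pi^g_{\mc H(\mf s)-}$ together with the observation that the Whittaker functional, after localising at $\chi_-$, intertwines $\mc T_{s_\alpha}$.

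Your proof of part (a), however, has a genuine gap. First, a nontrivial $\gamma \in \Gamma_{\mf s}$ has \emph{no} representative $n_\gamma \in N_G(S)$ normalising $\mc B$: the relative Weyl group $W(\mc G,\mc S)$ acts simply transitively on Weyl chambers for $R(\mc G,\mc S)$, so the stabiliser of $\mc B$ in $W(\mc G,\mc S)$ is trivial. The group $\Gamma_{\mf s}$ only stabilises the basis of the sub-root-system $R_{\mf s,\mu}$ determined by $\mc B$, not $\mc B$ itself. This misstep is ultimately harmless for the part of your argument that only uses $n_\gamma (U\cap G_\alpha) n_\gamma^{-1} = U\cap G_{\gamma\alpha}$ (which holds because $\gamma$ preserves positivity within $R_{\mf s,\mu}$), but your next claim is the real gap: that conjugation by $n_\gamma$ sends $\xi|_{U\cap G_\alpha}$ to a character that is \emph{$G_{\gamma\alpha}$-conjugate} to $\xi|_{U\cap G_{\gamma\alpha}}$. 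Nondegenerate characters of the unipotent radical of a Borel subgroup of a quasi-split group need not all be conjugate, so this does not follow on general grounds — indeed it is precisely the content that has to be established. The paper sidesteps this by arguing inside $G$: since $I_{G_\alpha B}^G(\pi^g_{\mf s-})$ and $I_{G_{\gamma\alpha}B}^G(\mr{Ad}(\gamma)\pi^g_{\mf s-})$ are the \emph{same} $G$-representation, both are $(U,\xi)$-generic, and the compatibility of Whittaker functionals with parabolic induction (cf.\ \eqref{eq:3.21} and \cite{BuHe}) then forces $\mr{Ad}(\gamma)\pi^g_{\mf s-}$ to be the generic summand $\pi^g_{\mf s-}$ for $G_{\gamma\alpha}$, giving $\epsilon_{\gamma\alpha}=\epsilon_\alpha$. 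In effect the paper's parabolic-induction argument \emph{is} the proof of the Whittaker-datum compatibility that you assert without proof. A minor further inaccuracy: you say the representations in \eqref{eq:3.20} are "pinned down by being generic", but in the unequal-parameter case $I_{B\cap G_\alpha}^{G_\alpha}(\chi_0|\alpha|_F^{ia})$ is irreducible, so no genericity selection is needed; the paper simply tracks the $\mc T_{s_\alpha}$-eigenvalue across the conjugation isomorphism there.
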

\begin{proof}
(a) For $\gamma \in \Gamma_{\mf s}$, represented in $N_G (T)$, we have $\gamma G_\alpha \gamma^{-1} = 
G_{\gamma (\alpha)}$ and\\ 
$J_\gamma \mc T_{s_\alpha} J_\gamma^{-1} = \mc T_{s_{\gamma (\alpha)}}$. Further
\[
\mr{Ad}(\gamma) I_{B \cap G_\alpha}^{G_\alpha} (\chi_0 |\alpha |_F^z ) \cong
I_{B \cap G_{\gamma (\alpha)}}^{G_{\gamma (\alpha)}} (\chi_0 |\alpha |_F^z ) 
\text{ for any } z \in \C .
\] 
When $\lambda (h_\alpha^\vee) = \lambda^* (h_\alpha^\vee)$, we apply this with $z = ia$.
We note that $I_{G_\alpha B}^G (\pi_{\mf s-}^g)$ is generic while $I_{G_\alpha B}^G (\pi_{\mf s-}^n)$ is
not. As $I_{G_\alpha B}^G (\pi_{\mf s-}^g) = I_{G_{\gamma (\alpha) B}}^G \mr{Ad}(\gamma) (\pi_{\mf s-}^g)$,
we conclude that $\pi_{\mf s-}^g$ for $G_{\gamma (\alpha)}$ is obtained from $\pi_{\mf s-}^g$ for 
$G_\alpha$ by $\mr{Ad}(\gamma)$. Hence $\epsilon_{\gamma (\alpha)} = \epsilon_\alpha$.

When $\lambda (h_\alpha^\vee) \neq \lambda^* (h_\alpha^\vee)$, the same argument works with the 
irreducible representation $I_{B \cap G_\alpha}^{G_\alpha} (\chi_0 |\alpha |_F^{ia})$. \\
(b) Suppose that $\lambda (h_\alpha^\vee) \neq \lambda^* (h_\alpha^\vee)$.
Recall from the proof of Proposition \ref{prop:3.2} that $\epsilon f_2 = -1$.
Specializing the numerator of \eqref{eq:3.28} at $\chi$ with $\theta_\alpha (\chi) = -1$ gives
\[
(- (-1)^{n_\alpha - \epsilon_\alpha} + 1)(-q_\alpha - 1)(-q_{\alpha *} + 1) =
((-1)^{n_\alpha - \epsilon_\alpha} - 1) (q_\alpha + 1) (1 - q_{\alpha*}) .
\]
Again this must be 0 by \eqref{eq:3.28}. Using $q_{\alpha *} \neq 1$ we find that $n_\alpha - 
\epsilon_\alpha$ is even.

Suppose that $\lambda (h_\alpha^\vee) = \lambda^* (h_\alpha^\vee)$ and $\pi_{\mc H (\mf s)-}^g
|_{\langle \mc T_{s_\alpha}\rangle} = \mr{triv}$. By Proposition \ref{prop:3.6}.d, any Whittaker
functional for $\pi_{\mc H (\mf s)-}^g$ gives a surjection
\[
\Hom_G (\Pi_{\mf s},\ind_U^G (\xi)) \cong \mc O (T_{\mf s}) \to \pi_{\mc H (\mf s)-}^g .
\]
As $\mc O (T_{\mf s})$-module homomorphism it is (up to scaling) evaluation at
$\chi_- := \chi_0 |\alpha |_F^{ia}$, a character such that $\theta_\alpha (\chi_-) = -1$. Then
$\ker (\mr{ev}_{\chi_-})$ contains 
\[
{\bf 1} \cdot (\mc T_{s_\alpha} - 1) = - \theta_{n_\alpha \alpha} - \theta_0 ,
\]
so $n_\alpha$ is odd. Recall that $\epsilon_\alpha = 1$ in this case.

Suppose that $\lambda (h_\alpha^\vee) = \lambda^* (h_\alpha^\vee)$ and $\pi_{\mc H (\mf s)-}^g
|_{\langle \mc T_{s_\alpha}\rangle} = \mr{sign}$. Then $\ker (\mr{ev}_{\chi_-})$ contains
\[
{\bf 1} \cdot (\mc T_{s_\alpha} + 1) = -\theta_{n_\alpha \alpha} + \theta_0 ,
\] 
so $n_\alpha$ is even. Here $\epsilon_\alpha = 0$, so again $n_\alpha - \epsilon_\alpha$ is even.
\end{proof}

\subsection{Normalization of intertwining operators} \

With Lemma \ref{lem:3.5}.a, we can extend $\epsilon_?$ to a $W_{\mf s}$-invariant function on 
$\{ h_\alpha^\vee \in R_{\mf s}^\vee : \alpha^\sharp \in 2 (T / T_\cpt )^\vee \}$.
In \cite{SolEnd}, $\epsilon_\alpha$ was only defined when $\lambda (h_\alpha^\vee) \neq \lambda^*
(h_\alpha^\vee)$, implicitly saying that it is 0 otherwise. We can just as well use $\epsilon_\alpha$
for any simple $h_\alpha^\vee$ with $\alpha^\sharp \in 2 (T / T_\cpt)^\vee$, Lemma \ref{lem:3.5}.a
ensures that all the computations from \cite{SolEnd} remain valid. In particular we can now (re)define
$N_{s_\alpha} \in \End_G (\Pi_{\mf s})$ by
\begin{equation}\label{eq:3.29}
q_F^{\lambda (h_\alpha^\vee) / 2} N_{s_\alpha} + 1 = (\mc T_{s_\alpha} 
\theta_{-\epsilon_\alpha \alpha}+ 1)\
(\theta_\alpha q_\alpha - 1) (\theta_\alpha q_{\alpha *} + 1) (\theta_{2 \alpha} - 1)^{-1} 
\end{equation}
for any simple $h_\alpha^\vee$ with $\alpha^\sharp \in 2 (T / T_\cpt)^\vee$. The analogous formula when
$\alpha^\sharp \not\in 2 (T / T_\cpt)^\vee$ is slightly simpler:
\begin{equation}\label{eq:3.30}
q_F^{\lambda (h_\alpha^\vee) / 2} N_{s_\alpha} + 1 = (\mc T_{s_\alpha} + 1)\
(\theta_\alpha q_\alpha - 1) (\theta_{\alpha} - 1)^{-1} .
\end{equation}

Recall that the isomorphism in Theorem \ref{thm:1.1} was determined by the choice of a good maximal 
compact subgroup $K$ of $G$, associated to a special vertex in apartement for $\mc T$ in the 
Bruhat--Tits building of $(\mc G,F)$.

\begin{lem}\label{lem:3.3}
The good maximal compact subgroup $K$ can be replaced by a $G$-conjugate, such that the isomorphism 
in Theorem \ref{thm:1.1} satisfies, for all simple roots $h_\alpha^\vee \in R_{\mf s}^\vee$:
\[
{\bf 1} \cdot \mc T_{s_\alpha} \theta_{-\epsilon_\alpha \alpha} = 
- {\bf 1} \quad \text{and} \quad 
{\bf 1} \cdot N_{s_\alpha} = -q_F^{-\lambda( h_\alpha^\vee )/2} {\bf 1} .
\]
\end{lem}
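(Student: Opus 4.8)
The plan is to combine Proposition~\ref{prop:3.2} with the constraints on $n_\alpha$ and on evaluations of $\mathbf 1 \cdot N_{s_\alpha}$ at specific characters, and then to exploit the freedom of replacing $K$ by a $G$-conjugate. First I would observe that by Proposition~\ref{prop:3.2} we already know $\mathbf 1 \cdot \mc T_{s_\alpha} = -\theta_{n_\alpha \alpha}$ for some $n_\alpha \in \Z$, for each simple $h_\alpha^\vee \in R_{\mf s}^\vee$. Plugging this into the defining formulas \eqref{eq:3.29}--\eqref{eq:3.30} for $N_{s_\alpha}$, together with the information gathered in the proof of Proposition~\ref{prop:3.2} that $\eta = \eta' = 0$ (so $f_2 = 1$) and $\epsilon = -1$, gives
\[
\mathbf 1 \cdot (q_F^{\lambda(h_\alpha^\vee)/2} N_{s_\alpha} + 1) = (\theta_0 - \theta_{(n_\alpha - \epsilon_\alpha)\alpha})\,(\theta_\alpha q_\alpha - 1)(\theta_\alpha q_{\alpha*} + 1)(\theta_{2\alpha} - 1)^{-1}
\]
in the case $\alpha^\sharp \in 2(T/T_\cpt)^\vee$, and the analogous simpler expression otherwise. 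By Lemma~\ref{lem:3.5}.b the exponent $n_\alpha - \epsilon_\alpha$ is even; the key point is that I want $\mathbf 1 \cdot N_{s_\alpha} = -q_F^{-\lambda(h_\alpha^\vee)/2}\mathbf 1$, i.e. I want $\mathbf 1 \cdot (q_F^{\lambda(h_\alpha^\vee)/2}N_{s_\alpha} + 1) = 0$, which forces $n_\alpha - \epsilon_\alpha = 0$, i.e. $n_\alpha = \epsilon_\alpha$. So the whole statement reduces to arranging $n_\alpha = \epsilon_\alpha$ for every simple $h_\alpha^\vee$.

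Next I would analyze how $n_\alpha$ depends on the choice of $K$. Replacing $K$ by a conjugate $gKg^{-1}$ with $g \in T$ (more precisely, using the action of $T/T_\cpt$, or of an appropriate normalizer) twists the identifications $\Pi_{\mf s} \cong I_B^G(\ind_{T_\cpt}^T(\chi_c))$ and hence rescales the canonical basis vector $\mathbf 1$ of \eqref{eq:3.3} and correspondingly modifies the operators $\mc T_{s_\alpha}$ by a character $\theta_x$ of $T/T_\cpt$. I would work out that such a replacement changes $n_\alpha$ to $n_\alpha - \langle x, \alpha\rangle$ (or a similar linear expression in $x$), ranging over all of $\Z$ as $\alpha$ is simple and $x$ runs over $T/T_\cpt$. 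Since $\{h_\alpha^\vee\}$ is a set of simple roots of the root datum $\mc R_{\mf s}$, the pairings $\langle \cdot, \alpha\rangle$ are part of a $\Z$-basis situation, so one can choose a single $x \in T/T_\cpt$ (equivalently, a single conjugate of $K$) making $n_\alpha - \langle x,\alpha\rangle = \epsilon_\alpha$ simultaneously for all simple $\alpha$, \emph{provided} the target values $\epsilon_\alpha$ are consistent with the parity constraint and with $\Gamma_{\mf s}$-invariance. Here Lemma~\ref{lem:3.5}.a ($\epsilon_?$ is $\Gamma_{\mf s}$-invariant, hence extends to a $W_{\mf s}$-invariant function) and Lemma~\ref{lem:3.5}.b (the parity of $n_\alpha$ matches that of $\epsilon_\alpha$) are exactly what make the choice of $x$ possible: the discrepancies $n_\alpha - \epsilon_\alpha$ are even, so they lie in the image of the pairing against the \emph{coroot} lattice spanned by the $\alpha^\sharp \in 2(T/T_\cpt)^\vee$, and $\Gamma_{\mf s}$-invariance guarantees the choice of $x$ can be made $\Gamma_{\mf s}$-equivariantly so that it respects the extension $W_{\mf s} = W(R_{\mf s}^\vee) \rtimes \Gamma_{\mf s}$.

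Having fixed such a conjugate of $K$, I would conclude: with this $K$ we have $n_\alpha = \epsilon_\alpha$, hence $\mathbf 1 \cdot \mc T_{s_\alpha}\theta_{-\epsilon_\alpha\alpha} = -\theta_{n_\alpha\alpha}\theta_{-\epsilon_\alpha\alpha} = -\theta_0 = -\mathbf 1$, and then the defining formula \eqref{eq:3.29} (or \eqref{eq:3.30}) gives $\mathbf 1 \cdot (q_F^{\lambda(h_\alpha^\vee)/2}N_{s_\alpha} + 1) = 0$, i.e. $\mathbf 1 \cdot N_{s_\alpha} = -q_F^{-\lambda(h_\alpha^\vee)/2}\mathbf 1$, as desired. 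I expect the main obstacle to be the bookkeeping in the second step: precisely identifying which conjugations of $K$ are available (one only has the $T$-conjugates, or $N_G(T)/T_\cpt$-conjugates up to the ones already absorbed), precisely how $n_\alpha$ transforms, and checking that the simple coroots $h_\alpha^\vee$ together with the constraint $\alpha^\sharp \in 2(T/T_\cpt)^\vee$ leave enough room to hit every required target $\epsilon_\alpha$ — this is where one genuinely needs both parts of Lemma~\ref{lem:3.5} rather than just one, and where a careless argument could fail to be $\Gamma_{\mf s}$-equivariant.
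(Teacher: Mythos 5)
Your proposal is correct and follows essentially the same route as the paper's proof: use Proposition~\ref{prop:3.2} to reduce the claim to arranging $n_\alpha = \epsilon_\alpha$ for all simple $h_\alpha^\vee$, then replace $K$ by a $T$-conjugate $\mr{Ad}(y_G)K$ where $y_G$ represents a suitable $y \in T/T_\cpt$, observing that the effect on $\mc T_{s_\alpha}$ is right multiplication by $\theta_{\langle y,\alpha^\sharp\rangle\alpha}$ so that $n_\alpha$ is shifted by $\langle y, \alpha^\sharp\rangle$; finally, Lemma~\ref{lem:3.5}.b is precisely the evenness needed to guarantee that the integral functional $\alpha^\sharp \mapsto n_\alpha - \epsilon_\alpha$ on $\Z R_{\mf s}$ extends to an integral functional on all of $(T/T_\cpt)^\vee$, i.e.\ lifts to an actual element of $T/T_\cpt$. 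One minor overstatement: Lemma~\ref{lem:3.5}.a ($\Gamma_{\mf s}$-invariance of $\epsilon_?$) is not used in the proof of Lemma~\ref{lem:3.3} itself — the lemma only concerns the $\mc T_{s_\alpha}$ and $N_{s_\alpha}$, not the $J_\gamma$, so no equivariance of the choice of $y$ is required; that part comes into play later when normalizing the $J'_\gamma$.
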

\begin{proof}
Recall the integers $n_\alpha$ from Proposition \ref{prop:3.2}. We will tacitly put $\epsilon_\alpha = 0$ 
when $\alpha^\sharp \not\in 2 (T / T_\cpt)^\vee$. Select $y$ in
$\Hom_\Z (\Z R_{\mf s}, \Z)$ so that $\langle y, \alpha^\sharp \rangle = n_\alpha - \epsilon_\alpha$ 
for every simple root $\alpha^\sharp \in R_{\mf s}$.
By Lemma \ref{lem:3.5}.b $y$ can be extended to an element of $\Hom_\Z ((T/T_\cpt)^\vee ,\Z) =
T/T_\cpt$, which we still denote by $y$. The automorphism $\mr{Ad}(\theta_y)$ of $\mc H (\mf s)^\circ$
extends uniquely to an automorphism of 
$\mc H (\mf s)^\circ \otimes_{\mc O (T_{\mf s})} \C (T_{\mf s})$, which satisfies
\begin{equation}\label{eq:3.11}
\mr{Ad}(\theta_{y}) (\mc T_{s_\alpha} \theta_{-\epsilon_\alpha}) = 
\mc T_{s_\alpha} \theta_{s_\alpha (y) - y} \theta_{-\epsilon_\alpha} = 
\mc T_{s_\alpha} \theta_{(- n_\alpha) \alpha} .
\end{equation}
By Proposition \ref{prop:3.2}
\begin{equation}\label{eq:3.12}
{\bf 1} \cdot \mr{Ad}(\theta_y) (\mc T_{s_\alpha} \theta_{-\epsilon_\alpha \alpha}) = - {\bf 1}.
\end{equation}
For any representative $y_G$ of $y$ in $T$, $K' = \mr{Ad}(y_G) K$ is another good maximal 
compact subgroup of $G$. If we replace $K$ by $K'$, then we must replace the representatives 
$\tilde w \in K$ for $w \in W (\mc G,\mc S)$,
which are used in the constructions behind Theorem \ref{thm:1.1}, by representatives in $K'$.
Which choice in $K'$ does not matter, we take
\[
\mr{Ad}(y_G^{-1}) \tilde w = \tilde w w^{-1}(y_G^{-1}) y_G \in K' .
\]
For a simple root, that means 
\[
\mr{Ad}(y_G) \tilde{s_\alpha} T_\cpt = \tilde{s_\alpha} \; 
(h_\alpha^\vee )^{\langle y,\alpha^\sharp \rangle} \in N_G (T) / T_\cpt.
\]
According to \cite[Proposition 3.1]{Hei1}, the effect of this replacement on $\mc T_{s_\alpha}$ is
left composition with $s_\alpha (\theta_\alpha^{-\langle y,\alpha^\sharp \rangle}) =
\theta_\alpha^{\langle y,\alpha^\sharp \rangle}$ or equivalently right multiplication with 
$\theta_\alpha^{-\langle y,\alpha^\sharp \rangle}$. In view of \eqref{eq:3.11}, the effect of 
Ad$(y_G^{-1})$ on $\mc H (\mf s)^\circ$ is precisely $\mr{Ad}(\theta_{y})$.

By \eqref{eq:3.12}, the new element $\mc T'_{s_\alpha} = \mc T_{s_\alpha} 
\theta_{\langle y,\alpha^\sharp \rangle \alpha}$ has the same $\epsilon_\alpha$ as before, and 
$n_\alpha$ has become $\epsilon_\alpha$. Now \eqref{eq:3.12} says that
${\bf 1} \cdot \mc T'_{s_\alpha} \theta_{-\epsilon_\alpha \alpha} = -{\bf 1}$.
The equations \eqref{eq:3.29} and \eqref{eq:3.30} for the new elements $N'_{s_\alpha}$ become 
\[
q_F^{\lambda (h_\alpha^\vee) / 2} N'_{s_\alpha} + 1 =
(\mc T'_{s_\alpha} \theta_{-\epsilon_\alpha \alpha} + 1) (\theta_\alpha q_\alpha - 1)
(\theta_\alpha q_{\alpha *} + 1) (\theta_{2 \alpha} - 1)^{-1}
\]
In view of \eqref{eq:3.12}, this implies
\[
{\bf 1} \cdot (q_F^{\lambda (h_\alpha^\vee) /2} N'_{s_\alpha} + 1 ) = 0 
\in \C (T_{\mf s}). 
\]
Equivalently, we obtain ${\bf 1} \cdot N_{s_\alpha} = -q_F^{-\lambda( h_\alpha^\vee )/2} {\bf 1}$.
\end{proof}

From now on we choose $K$ as in the statement of Lemma \ref{lem:3.3}. For $w \in W_{\mf s}$ let 
det$(w)$ be the determinant of the action of $w$ on $(T / T_\cpt) \otimes_\Z \R$. Then $\det :
W_{\mf s} \to \R^\times$ is a quadratic character extending the sign character of $W(R_{\mf s}^\vee)$.

For $\gamma \in \Gamma_{\mf s}$ we write ${\bf 1} \cdot J_\gamma = z_\gamma \theta_{x_\gamma}$ with 
$z_\gamma \in \C^\times$ and $x_\gamma \in T / T_\cpt$. Consider the operators
\[
N_\gamma = \det (\gamma) z_\gamma^{-1} \theta_{-x_\gamma} J_\gamma \in \End_G (\Pi_{\mf s}) .
\]
From \eqref{eq:3.6} we see that
\begin{equation}\label{eq:3.13}
{\bf 1} \cdot N_\gamma = \det (\gamma) {\bf 1} \qquad \gamma \in \Gamma_{\mf s} .
\end{equation}

\begin{thm}\label{thm:3.4}
The operators $N_w N_\gamma$, for $w \in W(R_{\mf s}^\vee)$ with $K$ as in Lemma \ref{lem:3.3} and 
$N_\gamma$ with $\gamma \in \Gamma_{\mf s}$ as above, provide an algebra isomorphism
\[
\End_G (\Pi_{\mf s}) \cong \mc H (\mf s)^\circ \rtimes \Gamma_{\mf s} = \mc H (\mf s) .
\]
Given the Whittaker datum $(U,\xi)$, this isomorphism is canonical.
\end{thm}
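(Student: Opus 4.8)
The plan is to build on Theorem \ref{thm:1.1} and replace the two sources of non-canonicity it carries: the dependence on the maximal compact subgroup $K$, and the possibly nontrivial 2-cocycle $\natural_{\mf s}$ on $\Gamma_{\mf s}$. First I would invoke Lemma \ref{lem:3.3} to fix a good maximal compact subgroup $K$ for which the $N_{s_\alpha}$ act on the canonical vector $\mathbf 1 \in \mc O(T_{\mf s})$ by the scalar $-q_F^{-\lambda(h_\alpha^\vee)/2}$. This pins down the ``affine Hecke'' part $\mc H (\mf s)^\circ$ of the isomorphism up to nothing: the $N_w$ for $w \in W(R_{\mf s}^\vee)$ are now determined, since any two choices of $K$ satisfying Lemma \ref{lem:3.3} differ by an element of $N_G(T)/T_\cpt$ acting trivially on $\mathbf 1$ through the $\mc O(T_{\mf s})$-module structure, hence the $N_w$ agree. (One should check here that the normalization condition on all simple $N_{s_\alpha}$ forces agreement of the whole subalgebra generated by them together with $\mc O(T_{\mf s})$; this follows from the Bernstein presentation and the fact that $\mathbf 1$ generates \eqref{eq:3.3} freely over $\mc O(T_{\mf s})$.)

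Next I would show that the modified operators $J'_\gamma = \det(\gamma) z_\gamma^{-1} \theta_{-x_\gamma} J_\gamma$ satisfy $J'_\gamma J'_{\gamma'} = J'_{\gamma\gamma'}$, so that $\gamma \mapsto J'_\gamma$ is a genuine group homomorphism $\Gamma_{\mf s} \to \End_G(\Pi_{\mf s})^\times$ and the 2-cocycle is trivialized. The mechanism is \eqref{eq:3.13}: since $\mathbf 1 \cdot J'_\gamma = \det(\gamma)\mathbf 1$ and the action of $\End_G(\Pi_{\mf s})$ on \eqref{eq:3.5} is faithful modulo the kernel that $\C(T_{\mf s})$-rationalizes away, we compute $\mathbf 1 \cdot J'_\gamma J'_{\gamma'} = \det(\gamma)\det(\gamma')\mathbf 1 = \det(\gamma\gamma')\mathbf 1 = \mathbf 1 \cdot J'_{\gamma\gamma'}$, and because $J'_\gamma J'_{\gamma'}$ and $J'_{\gamma\gamma'}$ both differ from $A_{\gamma\gamma'}$ only by an element of $\C(T_{\mf s})^\times \subset \End_G(I_B^G \C(T_{\mf s}))$, agreement on $\mathbf 1$ forces them to be equal. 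Concretely, $J'_\gamma J'_{\gamma'} = c\, J'_{\gamma\gamma'}$ for some $c \in \C^\times$ (from the cocycle relation, after absorbing the $\theta$-twists and scalars), and evaluating on $\mathbf 1$ gives $c = 1$.

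Then I would verify the remaining commutation relations making $\{N_w J'_\gamma\}$ into a basis realizing the crossed product $\mc H(\mf s)^\circ \rtimes \Gamma_{\mf s}$: that $J'_\gamma$ normalizes the image of $\mc H(\mf s)^\circ$ by the formula \eqref{eq:1.3}, i.e. $J'^{-1}_\gamma \theta_x J'_\gamma = \theta_{\gamma^{-1}(x)}$ and $J'^{-1}_\gamma N_w J'_\gamma = N_{\gamma^{-1} w \gamma}$. The conjugation action is unchanged from that of $J_\gamma = A_\gamma$ (multiplying $J_\gamma$ by the central-in-$\C(T_{\mf s})$-sense factor $z_\gamma^{-1}\theta_{-x_\gamma}$ alters it only by an inner automorphism by $\theta_{-x_\gamma}$, which acts trivially on $\mc O(T_{\mf s})$ and, using Lemma \ref{lem:3.5}.b and \eqref{eq:3.11}-type identities, correctly on the $N_w$); the $\det(\gamma)$ scalar is central and drops out. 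Invariance of $\chi_\gamma$ under $W(R_{\mf s}^\vee)$, from \cite[Lemma 3.5]{SolEnd}, is what keeps the $\theta$-twist well-behaved here. Together with Theorem \ref{thm:1.1} this yields the stated isomorphism $\End_G(\Pi_{\mf s}) \cong \mc H(\mf s)^\circ \rtimes \Gamma_{\mf s} = \mc H(\mf s)$.

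Finally, canonicity: the only choices entering are $K$ (eliminated by Lemma \ref{lem:3.3} together with the uniqueness argument above, which uses the Whittaker datum through the canonical vector $\mathbf 1$ of \eqref{eq:3.3} coming from the isomorphisms \eqref{eq:3.2}-\eqref{eq:3.3} and the canonical $\rho_\gamma = \mathrm{id}_\C$) and the normalizations $z_\gamma, x_\gamma, \det(\gamma)$ (which are themselves canonically extracted from $\mathbf 1 \cdot J_\gamma$ and the intrinsically defined character $\det$ on $W_{\mf s}$). I expect the main obstacle to be the bookkeeping in the previous paragraph — confirming that after the $\theta_{-x_\gamma}$ and $\det(\gamma)$ adjustments the $J'_\gamma$ still implement exactly the action \eqref{eq:1.3} and not a $\theta$-twisted variant of it — since this requires tracking how the replacement of representatives in $K'$ versus $K$ interacts with the $A_\gamma$ as in the proof of Lemma \ref{lem:3.3}, and checking that the $\Gamma_{\mf s}$-action and the $W(R_{\mf s}^\vee)$-action are simultaneously normalized by the same conjugation. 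The triviality of the cocycle, by contrast, should fall out cleanly from \eqref{eq:3.13} once that is in place.
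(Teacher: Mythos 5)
Your overall strategy matches the paper's: invoke Lemma \ref{lem:3.3} to fix $K$, renormalize $J_\gamma$ via the canonical vector $\mathbf 1$ and the character $\det$, and read off cocycle triviality from the free $\mc O(T_{\mf s})$-module structure on \eqref{eq:3.3}. The cocycle argument is sound in spirit (your claim that the discrepancy $c$ lies in $\C^\times$ is slightly too strong --- a priori it lives in $\mc O(T_{\mf s})^\times$, a monomial times a scalar --- but since $\mathbf 1$ freely generates \eqref{eq:3.3} over $\mc O(T_{\mf s})$, evaluating on $\mathbf 1$ still forces the discrepancy to be $1$).

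However, there is a genuine gap exactly where you flag the ``main obstacle'': verifying that $J'_\gamma$ still implements the action \eqref{eq:1.3}. You correctly observe that conjugation by $J'_\gamma$ differs from conjugation by $J_\gamma$ by inner conjugation by $\theta_{-x_\gamma}$, and that this is harmless on $\mc O(T_{\mf s})$. But for it to be harmless on the $N_w$ (or equivalently the $\mc T_w$) you need $x_\gamma$ to be $W(R_{\mf s}^\vee)$-invariant, and your proposed justifications do not deliver this. You cite the $W(R_{\mf s}^\vee)$-invariance of $\chi_\gamma$ from \cite[Lemma 3.5]{SolEnd} and Lemma \ref{lem:3.5}.b. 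But $\chi_\gamma \in X_\nr(T)$ is an unramified character built into the construction of $A_\gamma$, whereas $x_\gamma \in T/T_\cpt$ is extracted a posteriori from $\mathbf 1 \cdot J_\gamma = z_\gamma \theta_{x_\gamma}$; invariance of the former says nothing a priori about the latter. And Lemma \ref{lem:3.5}.b is a parity statement about $n_\alpha - \epsilon_\alpha$, used in Lemma \ref{lem:3.3} to extend $y$ to $T/T_\cpt$; it too does not address $x_\gamma$. The paper closes this gap with a short separate computation: using Lemma \ref{lem:3.3} it evaluates $\mathbf 1 \cdot \mc T_{s_\alpha}\theta_{-\epsilon_\alpha\alpha} J_\gamma$ in two ways (equations \eqref{eq:3.18} and \eqref{eq:3.19}), compares, and concludes $s_\alpha(x_\gamma) = x_\gamma$ for every simple $\alpha$. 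That step is not optional bookkeeping; without it your $J'_\gamma$ could a priori act on $\mc H(\mf s)^\circ$ by a $\theta_{-x_\gamma}$-twisted automorphism, and the crossed-product identification would fail.

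A smaller remark on canonicity: you argue it piecewise, claiming that any two $K$'s satisfying Lemma \ref{lem:3.3} must differ in a way that leaves all the $N_w$ unchanged. This could be made rigorous but would require controlling all allowable replacements of $K$. The paper instead argues a posteriori: any two isomorphisms of the stated form differ by an automorphism $\psi$ of $\mc H(\mf s)$ that is the identity on $\mc O(T_{\mf s})$, hence (after passing to the $\C(T_{\mf s})$-localization and using \eqref{eq:3.6}) multiplies each basis element $\mc T_w J'_\gamma$ by something in $\mc O(T_{\mf s})$; Lemma \ref{lem:3.3} and \eqref{eq:3.13} then force $\psi = \mathrm{id}$. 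This is tighter and avoids classifying the admissible $K$.
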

\begin{proof}
By direct computation, using Lemma \ref{lem:3.3}:
\begin{equation}\label{eq:3.18}
{\bf 1} \cdot \mc T_{s_\alpha} \theta_{-\epsilon_\alpha \alpha} = 
- \det (\gamma) z_\gamma \theta_{s_\alpha (x_\gamma)} .
\end{equation}
A similar computation shows that
\begin{equation}\label{eq:3.19}
{\bf 1} \cdot \mc T_{s_{\gamma (\alpha)}} \theta_{\epsilon_\alpha \gamma 
(\alpha)} J_\gamma = -{\bf 1} \cdot J_\gamma = - \det (\gamma) z_\gamma \theta_{x_\gamma} .
\end{equation}
As $J_\gamma \mc T_{s_\alpha} \theta_{-\epsilon_\alpha \alpha}$ equals 
$\mc T_{s_{\gamma (\alpha)}} \theta_{\epsilon_\alpha \gamma (\alpha)} J_\gamma$, \eqref{eq:3.18} 
and \eqref{eq:3.19} are equal, and we deduce that $s_\alpha (x_\gamma) = x_\gamma$. 
Hence $x_\gamma$ is fixed by each such $s_\alpha$, and by the entire group $W(R_{\mf s}^\vee)$. 
Now we can easily check that the $N_\gamma$ satisfy the desired relations: 
\begin{align*}
& {\bf 1} \cdot N_{\gamma \tilde \gamma} = \det (\gamma \tilde \gamma) {\bf 1} = 
\det(\gamma) \det (\tilde \gamma) {\bf 1} = {\bf 1} \cdot N_\gamma N_{\tilde \gamma} ,\\
& N_\gamma f N_\gamma^{-1} = J_\gamma f J_\gamma^{-1} = f \circ \gamma^{-1} \qquad 
f \in \mc O (T_{\mf s}) ,\\
& N_\gamma \mc T_{s_\alpha} N_\gamma^{-1} = z_\gamma^{-1} \theta_{-x_\gamma} J_\gamma  
\mc T_{s_\alpha} J_\gamma^{-1} \theta_{x_\gamma} z_\gamma = 
\theta_{-x_\gamma} \mc T_{s_{\gamma \alpha}} \theta_{x_\gamma} = \mc T_{s_{\gamma \alpha}} .
\end{align*}
The first two of these relations imply that
\[
N_{\gamma \tilde \gamma} = N_\gamma N_{\tilde \gamma} \quad \text{for all } \gamma, 
\tilde \gamma \in \Gamma_{\mf s}.
\]
We deduce that, with respect to the given $\mc O (T_{\mf s})$-basis, $\End_G (\Pi_{\mf s})$
becomes $\mc H (\mf s)^\circ \rtimes \Gamma$.

Any two isomorphisms of this kind differ by an automorphism $\psi$ of 
$\mc H (\mf s)^\circ \rtimes \Gamma_{\mf s}$. Since the subalgebra $\mc O (T_{\mf s})$ is
mapped naturally to $\End_G (\Pi_{\mf s})$, $\psi$ is the identity on that subalgebra. 
Hence $\psi$ extends to an automorphism of the version of $\mc H (\mf s)^\circ \rtimes \Gamma_{\mf s}$
with $\C (T_{\mf s})$. Then \eqref{eq:3.6} entails that $\psi$ multiplies each basis element 
$\mc T_w N_\gamma$ by an element of $\mc O (T_{\mf s})$. Combining that with 
Lemma \ref{lem:3.3} and \eqref{eq:3.13}, we find that $\psi$ is the identity.
\end{proof}

Theorem \ref{thm:3.4} shows in particular that the 2-cocycle $\natural_{\mf s}$
from Theorem \ref{thm:1.1} becomes trivial in $H^2 (\Gamma_{\mf s},\C^\times)$.

Recall from page \pageref{eq:1.8} that $\mf s$ can also arise from $w \mf s_T = [T,w \chi_0]_T$
for any $w \in W(\mc G,\mc S)$. To compare all these cases, it suffices to consider one $w$
from every left coset of $W_{\mf s} = \mr{Stab}_{W(\mc G,\mc S)}(\mf s_T)$.

\begin{prop}\label{prop:3.9}
Let $w \in W(\mc G,\mc S)$ be of minimal length in $w W_{\mf s}$. 
\enuma{
\item The isomorphism $\Pi_{\mf s} \cong \Pi_{w \mf s}$ from \cite[\S VI.10.1]{Ren} can be
normalized so that it sends ${\bf 1} \in \Hom_G (\Pi_{\mf s}, \ind_U^G (\xi))$ to
${\bf 1} \in \Hom_G (\Pi_{w \mf s}, \ind_U^G (\xi))$.
\item In that situation the induced algebra isomorphism $\End_G (\Pi_{\mf s}) \cong
\End_G (\Pi_{w \mf s})$ is given by $f \mapsto f \circ w^{-1}$ for $f \in \mc O (T_{\mf s})$ and
$N_v \mapsto N_{w v w^{-1}}$ for $v \in W_{\mf s}$.
}
\end{prop}
\begin{proof}
(a) The isomorphism of $G$-representations
\[
\phi_w : \Pi_{w \mf s} \isom \Pi_{\mf s}
\]
from Proposition \ref{prop:1.2} induces a map 
\begin{equation}\label{eq:3.31}
\mc O (T_{\mf s}) \cong \Hom_G (\Pi_{\mf s}, \ind_U^G (\xi)) \longrightarrow
\Hom_G (\Pi_{w \mf s}, \ind_U^G (\xi)) \cong \mc O (T_{w \mf s})
\end{equation}
and a compatible algebra isomorphism 
\[
\mr{Ad} (\phi_w^{-1} ) : \End_G (\Pi_{\mf s}) \to \End_G (\Pi_{w \mf s}).
\]
In view of \eqref{eq:3.21}--\eqref{eq:3.3} and Proposition \ref{prop:1.2}, \eqref{eq:3.31} must be
$f \mapsto f \circ w^{-1}$ followed by multiplication with some element of $\mc O (T_{w \mf s})$.

Like in the proof of Proposition \ref{prop:1.2}, we reduce to the case where $R(\mc G,\mc S)$ has
rank one, $w = s_\alpha$ is a simple reflection and $s_\alpha \mf s_T \neq \mf s_T$. We represent
$s_\alpha$ in the maximal compact subgroup $K$ from Lemma \ref{lem:3.3}.
Consider $\chi_c \in \ind_{T_\cpt}^T (\chi_c)$ and
\[
{\bf 1} \in \Hom_G (\Pi_{\mf s}, \ind_U^G (\xi)) \cong
\Hom_T \big( \ind_{T_\cpt}^T (\chi_c),\ind_{\{e\}}^T (\mr{triv}) \big) .
\]
Recall from \eqref{eq:3.2} that here the isomorphism is given by $J_{\overline B}^G$ and the 
natural transformation $\mr{id} \to J^G_{\overline B} I_B^G$. By definition 
$J_{\overline B}^G ({\bf 1}) \chi_c = \chi_c$. We want to determine 
\begin{equation}\label{eq:3.32}
J_{\overline B}^G ({\bf 1}) J_{\overline B}^G (\phi_{s_\alpha}) (s_\alpha \chi_c) \in 
J_{\overline B}^G \ind_U^G (\xi) ,
\end{equation}
where $s_\alpha \chi$ is considered as element of $\ind_{T_\cpt}^T (s_\alpha \chi_c) \subset
J_{\overline B}^G \Pi_{s_\alpha \mf s}$. From \eqref{eq:1.11} we see that 
$J_{\overline B}^G (\phi_{s_\alpha})$ on $\ind_{T_\cpt}^T (s_\alpha \chi_c)$ equals
\[
s_\alpha \circ \big[ J_{\overline B}^G (\phi_{s_\alpha}) \text{ on } s_\alpha^{-1} \cdot 
\ind_{T_\cpt}^T (s_\alpha \chi_c) \big] \circ s_\alpha^{-1} .
\]
Similarly $J_{\overline B}^G ({\bf 1})$ on $s_\alpha \cdot \ind_{T_\cpt}^T (\chi_c)$ equals
\[
s_\alpha \circ \big[ J_{\overline B}^G ({\bf 1}) \text{ on }
\ind_{T_\cpt}^T (\chi_c) \big] \circ s_\alpha^{-1} .
\]
Now we can compute \eqref{eq:3.32}. First $s_\alpha \chi$ is mapped to $\chi_c$ by 
$s_\alpha^{-1}$, then \eqref{eq:1.10}--\eqref{eq:1.13} show that $J_{\overline B}^G 
(\phi_{s_\alpha})$ sends that to $\chi_c \in \ind_{T_\cpt}^T (\chi_c)$. Applying 
$J_{\overline B}^G ({\bf 1})$ returns $\chi_c \in \ind_{\{e\}}^T (\mr{triv})$ and finally the 
action of $s_\alpha$ yields $s_\alpha \chi_c$. Hence 
\[
J_{\overline B}^G ({\bf 1}) J_{\overline B}^G (\phi_{s_\alpha}) (s_\alpha \chi_c) = 
s_\alpha \chi_c = J_{\overline B}^G ({\bf 1}) (s_\alpha \chi_c) ,
\]
where the second ${\bf 1}$ comes from $\Pi_{s_\alpha \mf s}$. In view of \eqref{eq:3.2} and
\eqref{eq:3.3}, that implies 
\[
J_{\overline B}^G ({\bf 1}) J_{\overline B}^G (\phi_{s_\alpha}) = J_{\overline B}^G ({\bf 1}) 
\quad \text{and} \quad {\bf 1} \circ \phi_{s_\alpha} = {\bf 1} . 
\]
(b) Since $w$ has minimal length in $w W_{\mf s}$, it also has minimal length in 
$w W(R_{\mf s,\mu})$. According to \cite[Lemma 2.4.a]{AMS3}, $w(R_{\mf s,\mu}^+) \subset 
R(\mc B,\mc S)$. By the $W(\mc G,\mc S)$-equivariance of Harish-Chandra $\mu$-functions also
$w (R_{\mf s,\mu}^+ ) \subset R_{w \mf s,\mu}$, and therefore $w(R_{\mf s,\mu}^+) =
R_{w \mf s,\mu}^+$.

The proof of part (a) shows that \eqref{eq:3.31} equals $f \mapsto f \circ w^{-1}$. With the
rough description of the $\mc H (\mf s)$-action on $\Hom_G (\Pi_{\mf s}, \ind_U^G (\xi))$ from
\eqref{eq:3.6} we deduce that Ad$(\phi_w^{-1})$ sends each $N_v \in \mc H (\mf s)$ to 
$N_{w v w^{-1}}$ times an element of $\mc O (T_{\mf s^\vee})$. The more precise descriptions 
from Lemma \ref{lem:3.3} and \eqref{eq:3.13} show that in fact $\mr{Ad}(\phi_w^{-1}) N_v$
equals $N_{w v w^{-1}}$ for any $v \in W_{\mf s}$. 
\end{proof}

\section{Characterization of generic representations}
\label{sec:charact}

We want to parametrize $\Irr (G,T)$ so that the generic representation correspond to the 
expected kind of enhanced L-parameters. To that end a simple characterization of genericity in
terms of Hecke algebras will be indispensable.

We start with a complete description of $\Hom_G \big( \Pi_{\mf s}, \ind_U^G (\xi) 
\big)$ as right $\mc H (\mf s)$-module.
Let $\mc H (W(R_{\mf s}^\vee, q_F^\lambda) \subset \mc H (\mf s)^\circ$ be the finite
dimensional Iwahori--Hecke algebra spanned by the $N_w$ with $w \in W(R_{\mf s}^\vee)$.
The Steinberg representation of $\mc H (W(R_{\mf s}^\vee, q_F^\lambda))$ is defined by 
\begin{equation}\label{eq:3.1}
\mr{St}(N_{s_\alpha}) = -q_F^{\lambda (h_\alpha^\vee)/2} \quad \text{for simple} \quad 
h_\alpha^\vee \in R_{\mf s}^\vee .
\end{equation}
We extend this to a representation St of 
\[
\mc H (W_{\mf s},q_F^\lambda) := \mc H (W(R_{\mf s}^\vee), q_F^\lambda) \rtimes \Gamma_{\mf s}
\]
by $\mr{St} (N_w N_\gamma) = \mr{St}(N_w) \det (\gamma)$. Notice that this formula equally well 
defines a representation of the opposite algebra $\mc H (W_{\mf s},q_F^\lambda)^{op}$. 

Special cases of the next result were established in \cite{ChSa} (for the Iwahori-spherical  
Bernstein component of a split group) and in \cite{MiPa} (for principal representations of
split reductive $p$-adic groups, with some extra conditions).

\begin{lem}\label{lem:3.8}
There is an isomorphism of $\mc H (\mf s)^{op}$-representations
\[
\Hom_G \big( \Pi_{\mf s}, \ind_U^G (\xi) \big) \cong \ind_{\mc H (W_{\mf s},q_F^\lambda)^{op}}^{
\mc H (\mf s)^{op}} (\mr{St}) .
\]
\end{lem}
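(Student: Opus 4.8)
The plan is to combine the vector-space description of $\Hom_G(\Pi_{\mf s},\ind_U^G(\xi))$ obtained in Section~\ref{sec:Whit} with the explicit action of the generators $N_w J'_\gamma$ on the canonical basis vector $\mathbf 1$, as computed in Lemma~\ref{lem:3.3} and \eqref{eq:3.13}. Recall from \eqref{eq:3.2}--\eqref{eq:3.3} that, as a module over its maximal commutative subalgebra $\mc O(T_{\mf s})$, the space $\Hom_G(\Pi_{\mf s},\ind_U^G(\xi))$ is free of rank one with basis $\mathbf 1$. On the other hand, the induced module $\ind_{\mc H(W_{\mf s},q_F^\lambda)^{op}}^{\mc H(\mf s)^{op}}(\det)$ is, by the Bernstein presentation of $\mc H(\mf s)=\mc H(\mf s)^\circ\rtimes\Gamma_{\mf s}$ (in which $\mc H(\mf s)^\circ$ is free as a left module over $\mc H(W(R_{\mf s}^\vee),q_F^\lambda)\otimes\mc O(T_{\mf s})$, hence $\mc H(\mf s)$ is free as a left module over $\mc H(W_{\mf s},q_F^\lambda)\otimes\mc O(T_{\mf s})$), also free of rank one over $\mc O(T_{\mf s})$, with basis the image $\mathbf 1_{\det}$ of the vector spanning $\det$. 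So both sides are cyclic $\mc H(\mf s)^{op}$-modules, generated over $\mc O(T_{\mf s})$ by a single vector, and the task reduces to checking that the two vectors transform identically under the generators $N_{s_\alpha}$ (for simple $h_\alpha^\vee\in R_{\mf s}^\vee$) and $J'_\gamma$ (for $\gamma\in\Gamma_{\mf s}$).

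Concretely, I would proceed as follows. First, fix the maximal compact subgroup $K$ as in Lemma~\ref{lem:3.3}, so that Theorem~\ref{thm:3.4} applies and we have the canonical isomorphism $\End_G(\Pi_{\mf s})\cong\mc H(\mf s)$. Then define the map $\Phi:\ind_{\mc H(W_{\mf s},q_F^\lambda)^{op}}^{\mc H(\mf s)^{op}}(\det)\to\Hom_G(\Pi_{\mf s},\ind_U^G(\xi))$ by sending $\mathbf 1_{\det}\cdot h\mapsto\mathbf 1\cdot h$ for $h\in\mc H(\mf s)$; by freeness over $\mc O(T_{\mf s})$ on both sides this is a well-defined $\mc O(T_{\mf s})$-linear bijection, once we check it is well-defined as a module map. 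The well-definedness amounts to: whenever $h\in\mc H(W_{\mf s},q_F^\lambda)$ and $h'\in\mc H(\mf s)$ with $\mathbf 1_{\det}\cdot h\otimes h'$ to be compared, we need $\det(h)(\mathbf 1\cdot h')=\mathbf 1\cdot h h'$, i.e. $\mathbf 1\cdot h=\det(h)\mathbf 1$ in $\Hom_G(\Pi_{\mf s},\ind_U^G(\xi))$ for $h\in\mc H(W_{\mf s},q_F^\lambda)$. For the generators this is exactly the content of Lemma~\ref{lem:3.3} (which gives $\mathbf 1\cdot N_{s_\alpha}=-q_F^{-\lambda(h_\alpha^\vee)/2}\mathbf 1=\mr{sign}(N_{s_\alpha})^{-1}\cdots$—one must be a little careful with the opposite-algebra convention, but $\mr{sign}(N_{s_\alpha})=-q_F^{\lambda(h_\alpha^\vee)/2}$ and $\mathbf 1\cdot N_{s_\alpha}=\mr{sign}(N_{s_\alpha})^{-1}\mathbf 1$ up to the normalization of $N_{s_\alpha}$; in the paper's conventions $N_{s_\alpha}$ has eigenvalues $q_F^{\lambda/2}$ and $-q_F^{-\lambda/2}$, so Lemma~\ref{lem:3.3} states precisely $\mathbf 1\cdot N_{s_\alpha}=-q_F^{-\lambda(h_\alpha^\vee)/2}\mathbf 1$, matching $\det$ of the $op$-algebra) and of \eqref{eq:3.13} (which gives $\mathbf 1\cdot J'_\gamma=\det(\gamma)\mathbf 1$). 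Since the $N_{s_\alpha}$ and $J'_\gamma$ generate $\mc H(W_{\mf s},q_F^\lambda)$ as an algebra, and $\det$ is multiplicative, this extends to all of $\mc H(W_{\mf s},q_F^\lambda)$, and by the universal property of induction $\Phi$ is a well-defined homomorphism of right $\mc H(\mf s)$-modules. Finally $\Phi$ is surjective because $\mathbf 1$ generates $\Hom_G(\Pi_{\mf s},\ind_U^G(\xi))$ over $\mc O(T_{\mf s})\subset\mc H(\mf s)$, and injective by comparing $\mc O(T_{\mf s})$-ranks (both equal $|W_{\mf s}|$ as free $\mc O(T_{\mf s})$-modules via the Bernstein basis $\{\mathbf 1\cdot N_w J'_\gamma\}$, which by \eqref{eq:3.6} form an $\mc O(T_{\mf s})$-basis of $\Hom_G(\Pi_{\mf s},\ind_U^G(\xi))$).

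I expect the main obstacle to be purely bookkeeping: getting the opposite-algebra conventions right so that $\mathbf 1\cdot h$ for $h\in\mc H(W_{\mf s},q_F^\lambda)$ genuinely equals $\det$ applied in the $op$-algebra (the subtlety being that $\Hom_G(\Pi_{\mf s},-)$ turns left $\End_G(\Pi_{\mf s})$-modules into right modules, so $\det$ on $\mc H(W_{\mf s},q_F^\lambda)^{op}$ and $\mr{sign}$ on $\mc H(W_{\mf s},q_F^\lambda)$ are literally the same linear functional, and one needs this to interact correctly with \eqref{eq:3.6} and \eqref{eq:3.13}), and verifying that the Bernstein-type basis $\{N_w J'_\gamma\}$ really does give freeness of rank $|W_{\mf s}|$ on both sides so that surjectivity forces bijectivity. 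None of the individual steps is deep: everything needed is already in Theorem~\ref{thm:3.4}, Lemma~\ref{lem:3.3}, and \eqref{eq:3.6}, \eqref{eq:3.13}.
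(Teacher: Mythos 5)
Your proposal is correct and follows essentially the same route as the paper: identify $\Hom_G(\Pi_{\mf s},\ind_U^G(\xi))$ as a free rank-one $\mc O(T_{\mf s})$-module with basis $\mathbf 1$, use the vector-space decomposition $\mc H(\mf s)=\mc O(T_{\mf s})\otimes\mc H(W_{\mf s},q_F^\lambda)$, and invoke Lemma~\ref{lem:3.3} and \eqref{eq:3.13} to show that $\mathbf 1$ transforms under $\mc H(W_{\mf s},q_F^\lambda)$ by the character $\det$. One clarification on the ``bookkeeping'' you flag: there is no genuine opposite-algebra subtlety to resolve, since a one-dimensional character of an algebra is literally the same function as the corresponding character of the opposite algebra; the apparent mismatch you noticed between $\mathbf 1\cdot N_{s_\alpha}=-q_F^{-\lambda(h_\alpha^\vee)/2}\mathbf 1$ and \eqref{eq:3.1} is simply a sign typo in \eqref{eq:3.1}, which should read $\mr{sign}(N_{s_\alpha})=-q_F^{-\lambda(h_\alpha^\vee)/2}$ (the negative eigenvalue of $N_{s_\alpha}$, consistent with the Steinberg character recalled in the proof of Proposition~\ref{prop:3.2}). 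With that fix, your argument and the paper's are the same.
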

\begin{proof}
Let $w \in W(R_{\mf s}^\vee)$ and $\gamma \in \Gamma_{\mf s}$. By Lemma \ref{lem:3.3} and 
\eqref{eq:3.13}
\[
{\bf 1} \cdot N_w N_\gamma = {\bf 1} \cdot \mr{St} (N_w N_\gamma)
\in \Hom_G \big( \Pi_{\mf s}, \ind_U^G (\xi) \big) .
\]
As vector spaces
\[
\mc H (\mf s) = \mc O (T_{\mf s}) \otimes \mc H (W_{\mf s},q_F^\lambda) .
\]
Further $\Hom_G \big( \Pi_{\mf s}, \ind_U^G (\xi) \big)$ is isomorphic
to $\mc O (T_{\mf s})$ as $\mc O (T_{\mf s})$-module, with basis vector ${\bf 1}$. Hence
\[
\begin{array}{ccc}
\ind_{\mc H (W_{\mf s},q_F^\lambda)^{op}}^{\mc H (\mf s)^{op}} (\mr{St}) & \to & 
\Hom_G \big( \Pi_{\mf s}, \ind_U^G (\xi) \big) \\
h \otimes 1 & \mapsto & h \cdot {\bf 1} 
\end{array}
\]
is an isomorphism of $\mc H (\mf s)^{op}$-modules.
\end{proof}

The criterium \eqref{eq:3.24}--\eqref{eq:3.25} for genericity can be put in a more manageable 
form with Lemma \ref{lem:3.8}. For any $\pi \in \Rep (G)^{\mf s}$:
\begin{align}
\nonumber \Hom_G (\pi, \mr{Ind}_U^G (\xi)) & \cong \Hom_G (\ind_U^G (\xi), \pi^\he) \\
\nonumber & \cong \Hom_{\End_G (\Pi_{\mf s})^{op}} \big( \Hom_G (\Pi_{\mf s}, \ind_U^G (\xi)), 
\Hom_G (\Pi_{\mf s},\pi^\he) \big) \\
\label{eq:6.1} & \cong \Hom_{\mc H (\mf s)^{op}} \big( \big( \ind_{\mc H 
(W_{\mf s},q_F^\lambda)^{op}}^{\mc H (\mf s)^{op}} (\mr{St}) \big), 
\Hom_G (\Pi_{\mf s},\pi^\he) \big) \\
\nonumber & \cong \Hom_{\mc H (W_{\mf s},q_F^\lambda )^{op}} \big( \mr{St}, 
\Hom_G (\Pi_{\mf s},\pi^\he) \big) .
\end{align}

\begin{cor}\label{cor:6.1}
A representation $\pi \in \Rep (G)^{\mf s}$ is $(U,\xi)$-generic if and only if the 
$\mc H (W_{\mf s},q_F^\lambda )^{op}$-module $\Hom_G (\Pi_{\mf s},\pi^\he)$ contains St.
\end{cor}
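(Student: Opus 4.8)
The plan is to obtain Corollary~\ref{cor:6.1} as a direct consequence of the chain of isomorphisms already assembled in the excerpt, so the "proof" is essentially a packaging step. I would argue as follows. By definition $\pi$ is $(U,\xi)$-generic precisely when $\Hom_G(\pi,\mr{Ind}_U^G(\xi))\neq 0$, since a Whittaker functional is by \eqref{eq:3.24} the same thing as an element of this Hom-space. Now I invoke the four successive natural isomorphisms displayed in \eqref{eq:6.1}: first \eqref{eq:3.24}, identifying $\Hom_G(\pi,\mr{Ind}_U^G(\xi))$ with $\Hom_G(\ind_U^G(\xi),\pi^\he)$; then Lemma~\ref{lem:3.7} (so that $\pi^\he\in\Rep(G)^{\mf s}$) together with the categorical equivalence \eqref{eq:1.6}, turning this into a Hom-space of $\End_G(\Pi_{\mf s})^{op}$-modules; then the identification $\End_G(\Pi_{\mf s})\cong\mc H(\mf s)$ from Theorem~\ref{thm:3.4} combined with the computation of $\Hom_G(\Pi_{\mf s},\ind_U^G(\xi))$ as the induced module $\ind_{\mc H(W_{\mf s},q_F^\lambda)^{op}}^{\mc H(\mf s)^{op}}(\det)$ in Lemma~\ref{lem:3.8}; and finally Frobenius reciprocity (adjunction between induction and restriction of Hecke-algebra modules), which rewrites $\Hom_{\mc H(\mf s)^{op}}\big(\ind_{\mc H(W_{\mf s},q_F^\lambda)^{op}}^{\mc H(\mf s)^{op}}(\det),\Hom_G(\Pi_{\mf s},\pi^\he)\big)$ as $\Hom_{\mc H(W_{\mf s},q_F^\lambda)^{op}}\big(\det,\Hom_G(\Pi_{\mf s},\pi^\he)\big)$.

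Having identified $\Hom_G(\pi,\mr{Ind}_U^G(\xi))$ with $\Hom_{\mc H(W_{\mf s},q_F^\lambda)^{op}}\big(\det,\Hom_G(\Pi_{\mf s},\pi^\he)\big)$, I would conclude by the elementary observation that for a one-dimensional representation $\det$ of the finite-dimensional algebra $\mc H(W_{\mf s},q_F^\lambda)^{op}$, the space $\Hom(\det,M)$ is nonzero exactly when $\det$ occurs as a subrepresentation of $M$, i.e.\ when $M=\Hom_G(\Pi_{\mf s},\pi^\he)$ contains $\det$. Hence genericity of $\pi$ is equivalent to $\Hom_G(\Pi_{\mf s},\pi^\he)$ containing $\det$, which is the assertion of the corollary. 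All the substantive content has been done upstream; here one only needs to note that each isomorphism in \eqref{eq:6.1} is natural enough to be used verbatim and that \eqref{eq:6.1} is valid for an arbitrary $\pi\in\Rep(G)^{\mf s}$, not merely finite-length ones (Lemma~\ref{lem:3.7} imposes no such restriction).

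The only point requiring a word of care—and the closest thing to an obstacle—is that the excerpt only asserts finite-dimensionality of $\Pi_{\mf s}^K$-type data implicitly; strictly, $\Hom_G(\Pi_{\mf s},\pi^\he)$ need not be finite-dimensional, so "contains $\det$" should be read as "admits $\det$ as a subrepresentation" rather than involving multiplicities. Since $\det$ is one-dimensional this is unambiguous and the equivalence $\Hom(\det,M)\neq 0\iff \det\hookrightarrow M$ holds with no hypotheses on $M$. I would therefore simply remark that the statement follows immediately from \eqref{eq:6.1}, perhaps spelling out the last step, and leave it at that.

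\begin{proof}
Immediate from \eqref{eq:6.1}: a representation $\pi \in \Rep (G)^{\mf s}$ is
$(U,\xi)$-generic if and only if $\Hom_G (\pi, \mr{Ind}_U^G (\xi)) \neq 0$, and by
\eqref{eq:6.1} this space is naturally isomorphic to
$\Hom_{\mc H (W_{\mf s},q_F^\lambda )^{op}} \big( \det, \Hom_G (\Pi_{\mf s},\pi^\he) \big)$.
Since $\det$ is a one-dimensional representation of the finite-dimensional algebra
$\mc H (W_{\mf s},q_F^\lambda )^{op}$, the latter Hom-space is nonzero exactly when $\det$
occurs as a subrepresentation of the $\mc H (W_{\mf s},q_F^\lambda )^{op}$-module
$\Hom_G (\Pi_{\mf s},\pi^\he)$.
\end{proof}
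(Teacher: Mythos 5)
Your proof is correct and is exactly the paper's (implicit) argument: the paper states Corollary~\ref{cor:6.1} immediately after the chain of isomorphisms \eqref{eq:6.1} without a separate proof, precisely because the corollary is a direct reading of the first and last terms of that chain, as you spell out. Your remark about arbitrary (not necessarily finite-length) $\pi$ and the observation that nonvanishing of $\Hom(\det,M)$ is the same as $\det$ embedding into $M$ are both accurate and harmless clarifications.
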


In this corollary the effect of $\pi \mapsto \pi^\he$ on $\mc H (\mf s)^{op}$-modules is not
obvious, we analyse that in several steps.
With the *-structure and the trace from \cite[\S 3.1]{SolAHA},
\begin{equation}\label{eq:6.7}
\mc H (W_{\mf s},q_F^\lambda ) = \mc H (W(R_{\mf s}^\vee), q_F^\lambda) \rtimes \Gamma_{\mf s}
\end{equation}
is a finite dimensional Hilbert algebra, so in particular semisimple. 

Recall that a standard $G$-representation is of the form $I_P^G (\tau \otimes \chi)$, where
$P = M N$ is a parabolic subgroup of $G$, $\tau$ is an irreducible tempered $M$-representation
and $\chi : M \to \R_{>0}$ is an unramified character in positive position with respect to $P$.
By conjugating $P$ and $M$, we may assume that $T \subset M$. 

\begin{lem}\label{lem:6.3}
Suppose that $\tau \in \Rep (M)^{[T,\chi_0]_M}$. Then $I_P^G (\tau \otimes \chi) \in 
\Rep (G)^{\mf s}$ and 
\[
\Hom_G \big( \Pi_{\mf s}, I_P^G (\tau \otimes \chi)^\he \big) \cong 
\Hom_G \big( \Pi_{\mf s}, I_P^G (\tau \otimes \chi) \big) \text{ as } 
\mc H (W_{\mf s},q_F^\lambda )^{op}\text{-modules.}
\]
\end{lem}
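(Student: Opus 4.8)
The statement splits into two parts: first that $I_P^G(\tau \otimes \chi)$ lies in $\Rep(G)^{\mf s}$, and second the module isomorphism. The first part is immediate from the hypothesis $\tau \in \Rep(M)^{[T,\chi_0]}$, since $\chi$ is unramified, so $\tau \otimes \chi \in \Rep(M)^{[T,\chi_0]}$ as well, and parabolic induction sends $\Rep(M)^{\mf s_M}$ into $\Rep(G)^{\mf s}$ (the Bernstein component $\mf s$ was defined precisely as the one generated by $\mf s_T = [T,\chi_0]$). The real content is the second part.

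\textbf{Main idea for the module isomorphism.} The plan is to show that $I_P^G(\tau \otimes \chi)$ is isomorphic, as a $G$-representation, to its own Hermitian dual. Recall that $\tau$ is tempered, hence unitary, so $\tau^\he \cong \tau$; and the Hermitian dual of an induced representation is the induction of the Hermitian dual, with the modulus character intervening via $\delta_P^{1/2}$, so $I_P^G(\tau \otimes \chi)^\he \cong I_{\overline P}^G(\tau^\he \otimes \chi^{-1}) \cong I_{\overline P}^G(\tau \otimes \chi^{-1})$. Now $I_{\overline P}^G(\tau \otimes \chi^{-1})$ has the same irreducible constituents as $I_P^G(\tau \otimes \chi)$ (this is the standard fact that a standard module and its ``opposite'' share a Jordan--Hölder series, or one can invoke that both have the Langlands quotient $J(I_P^G(\tau \otimes \chi))$ as their unique irreducible quotient/submodule respectively and the rest follows from semisimplicity considerations on the tempered part). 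More robustly, since $\chi$ is in positive position, $I_P^G(\tau \otimes \chi)$ is a standard module; it is known that $I_P^G(\tau \otimes \chi)$ and $I_{\overline P}^G(\tau \otimes \chi^{-1})$ are isomorphic because the intertwining operator $J_{\overline P | P}(\tau \otimes \chi)$ is an isomorphism in this range of the parameter. So $I_P^G(\tau \otimes \chi)^\he \cong I_P^G(\tau \otimes \chi)$ as $G$-representations. Applying the exact functor $\Hom_G(\Pi_{\mf s}, -)$ from \eqref{eq:1.6} then gives the claimed isomorphism, and since it comes from an honest isomorphism of $G$-representations it is automatically an isomorphism of $\mc H(\mf s)^{op}$-modules, in particular of $\mc H(W_{\mf s},q_F^\lambda)^{op}$-modules by restriction.

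\textbf{Where the difficulty lies.} The delicate point is justifying $I_P^G(\tau \otimes \chi)^\he \cong I_P^G(\tau \otimes \chi)$, i.e. that the Hermitian dual of a standard module is again that same standard module rather than merely some representation with the same composition factors. The cleanest route is: $I_P^G(\tau \otimes \chi)^\he \cong I_{\overline P}^G(\tau \otimes \chi^{-1})$ by the formula for Hermitian duals of parabolically induced representations (using $\overline{\delta_P^{1/2}} = \delta_P^{1/2}$ and $\overline\chi = \chi^{-1}$ since $\chi$ is $\R_{>0}$-valued), and then the standard intertwining operator $J_{\overline P | P}(\tau,\chi) : I_P^G(\tau \otimes \chi) \to I_{\overline P}^G(\tau \otimes \chi)$ composed with the canonical isomorphism $I_{\overline P}^G(\tau \otimes \chi) \cong I_{\overline P}^G(w^{-1}(\tau \otimes \chi^{-1}))$ for the longest Weyl element $w$ swapping the two parabolics... actually the shortest honest statement is Harish-Chandra's: for $\chi$ in the open positive cone, $J_{\overline P|P}(\tau \otimes \chi)$ is bijective (no poles, nonzero), hence $I_P^G(\tau\otimes\chi) \cong I_{\overline P}^G(\tau \otimes \chi) \cong (I_P^G(\tau\otimes\chi))^\he$. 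This can be cited from \cite[§VII]{Ren}. I expect this to be essentially a citation rather than a new argument; if one wants to avoid the positivity/pole analysis entirely, an alternative is to argue on the Hecke algebra side via Lemma~\ref{lem:6.3}'s intended application — standard modules correspond to parabolically induced modules of $\mc H(\mf s)$ and the $\he$-operation matches a known duality there — but that risks circularity given the logical order, so the $G$-side argument via \cite{Ren} is preferable.
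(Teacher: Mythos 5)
Your reduction to showing $I_P^G(\tau\otimes\chi)^\he \cong I_P^G(\tau\otimes\chi)$ as $G$-representations is the wrong target, and the argument you give for it is false. When the standard module is reducible, it is \emph{not} isomorphic to its Hermitian dual: passing to $\pi^\he$ exchanges socle and cosocle, so a standard module (whose cosocle is the Langlands quotient) would become a module whose socle is the Langlands quotient. The intertwining operator $J_{\overline P\mid P}(\tau\otimes\chi)$ for $\chi$ strictly dominant is indeed holomorphic and nonzero, but its image is precisely the Langlands quotient, so it is injective only when $I_P^G(\tau\otimes\chi)$ is already irreducible. The kernel is the maximal proper submodule, and no such intertwining isomorphism exists in the reducible case. (Also, with the paper's cited normalization [Ren, IV.2.1.2] the Hermitian dual of normalized induction is induction from the \emph{same} parabolic, $I_P^G(\sigma)^\he\cong I_P^G(\sigma^\he)$; your $\overline P$ is a convention slip, though that is secondary.)

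The point the statement is designed around is that the asserted isomorphism is only as modules over the \emph{finite-dimensional semisimple} subalgebra $\mc H(W_{\mf s},q_F^\lambda)^{op}$, not over all of $\mc H(\mf s)^{op}$; the latter would indeed require a $G$-isomorphism and is false here (the central character is even different). The paper exploits this: after reducing to $I_P^G(\tau\otimes\chi)^\he\cong I_P^G(\tau\otimes\chi^{-1})$ in $\Rep(G)^{\mf s}$, it interpolates along the continuous path $t\mapsto I_P^G(\tau\otimes\chi^t)$ for $t\in[-1,1]$. Applying $\Hom_G(\Pi_{\mf s},-)$ gives a continuous family of finite-dimensional modules over the finite-dimensional semisimple algebra $\mc H(W_{\mf s},q_F^\lambda)^{op}$, whose isomorphism class is then locally constant in $t$, hence constant; evaluating at $t=\pm1$ yields the claim. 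You need to find an argument that weakens the target to this subalgebra level, or some other mechanism that does not demand a genuine $G$-isomorphism.
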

\begin{proof}
The representation $I_P^G (\tau \otimes \chi)$ has cuspidal support $\mr{Sc}(\tau) \otimes 
\chi |_T \in [T,\chi_0]_T$, so it belongs to $\Rep (G)^{[T,\chi_0]_G}$. By \cite[IV.2.1.2]{Ren}
\[
I_P^G (\tau \otimes \chi)^\he \cong I_P^G ((\tau \otimes \chi)^\he) \cong
I_P^G (\tau^\he \otimes \chi^\he) .
\]
Since $\chi$ is real-valued and $\tau$ is unitary \cite[Corollaire VII.2.6]{Ren}, the right
hand side is isomorphic with $I_P^G (\tau \otimes \chi^{-1})$. Consider the continuous path
\[
[-1,1] \to \Rep (G)^{\mf s} : t \mapsto I_P^G (\tau \otimes \chi^t) .
\]
Via the equivalence of categories \eqref{eq:1.6}, we obtain a continuous path in\\
$\Mod (\mc H (\mf s)^{op})$. Modules of a finite dimensional semisimple algebra are stable 
under continuous deformations, so
\[
\Hom_G \big( \Pi_{\mf s}, I_P^G (\tau \otimes \chi)^\he \big) \cong
\Hom_G \big( \Pi_{\mf s}, I_P^G (\tau \otimes \chi^{-1}) \big) \cong
\Hom_G \big( \Pi_{\mf s}, I_P^G (\tau \otimes \chi) \big)
\]
as $\mc H (W_{\mf s},q_F^\lambda )^{op}$-modules.
\end{proof}

We are ready to establish a useful characterization of genericity, without Hermi\-tian duals.
The next result is formulated for finite length representations, but we believe it is also
valid without that condition. To study it for arbitrary representations in $\Rep (G)^{\mf s}$
one probably needs Hermitian duals of modules over affine Hecke algebras.

\begin{thm}\label{thm:6.4}
Suppose that $\pi \in \Rep (G)^{\mf s}$ has finite length. Then $\pi$ is $(U,\xi)$-generic
if and only if $\Hom_{\mc H (W_{\mf s},q_F^\lambda )^{op}} \big( \Hom_G (\Pi_{\mf s},\pi) , 
\mr{St} \big)$ is nonzero.
\end{thm}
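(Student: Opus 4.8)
The plan is to reduce to Corollary \ref{cor:6.1}, which already characterizes genericity in terms of $\Hom_G(\Pi_{\mathfrak s},\pi^\he)$, and then to replace $\pi^\he$ by $\pi$ by a Grothendieck group argument built on Lemma \ref{lem:6.3}. Note first that $\mc H (W_{\mf s},q_F^\lambda)^{op}$ is finite dimensional and semisimple by \eqref{eq:6.7}, and that $\det$ is one dimensional. Since $\pi$ has finite length, the $\mc H (\mf s)^{op}$-module $\Hom_G(\Pi_{\mf s},\pi)$ has finite length via \eqref{eq:1.6}, hence (as do all finite length modules over the affine Hecke algebra $\mc H (\mf s)$, which is module-finite over its centre) is finite dimensional over $\C$; the same applies to $\pi^\he$, which has finite length as well. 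Therefore $\Hom_G(\Pi_{\mf s},\pi)$ and $\Hom_G(\Pi_{\mf s},\pi^\he)$, viewed as $\mc H (W_{\mf s},q_F^\lambda)^{op}$-modules, are finite dimensional and semisimple, so
\[
\Hom_{\mc H (W_{\mf s},q_F^\lambda)^{op}}\big(\Hom_G(\Pi_{\mf s},\pi),\det\big)\neq 0
\iff \det \text{ occurs in } \Hom_G(\Pi_{\mf s},\pi),
\]
and likewise $\det$ occurs in $\Hom_G(\Pi_{\mf s},\pi^\he)$ if and only if $\Hom_{\mc H (W_{\mf s},q_F^\lambda)^{op}}(\det,\Hom_G(\Pi_{\mf s},\pi^\he))\neq 0$. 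By Corollary \ref{cor:6.1} it thus suffices to show that $\det$ occurs with the same multiplicity in $\Hom_G(\Pi_{\mf s},\pi)$ and in $\Hom_G(\Pi_{\mf s},\pi^\he)$.

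To prove that, I would argue that both $\pi\mapsto (\text{multiplicity of }\det\text{ in }\Hom_G(\Pi_{\mf s},\pi))$ and $\pi\mapsto (\text{multiplicity of }\det\text{ in }\Hom_G(\Pi_{\mf s},\pi^\he))$ are additive on short exact sequences of finite length objects of $\Rep (G)^{\mf s}$, hence descend to homomorphisms out of the Grothendieck group of finite length objects of $\Rep (G)^{\mf s}$. This is clear: $\Hom_G(\Pi_{\mf s},-)$ is exact (it is an equivalence), the Hermitian dual $\pi\mapsto\pi^\he$ is exact and preserves $\Rep (G)^{\mf s}$ by Lemma \ref{lem:3.7}, restriction of scalars to $\mc H (W_{\mf s},q_F^\lambda)^{op}$ is exact, and counting occurrences of the simple module $\det$ is additive on finite length modules. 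By the Langlands classification the standard modules $I_P^G(\tau\otimes\chi)$ whose unique irreducible quotient lies in $\Irr (G)^{\mf s}$ form a $\Z$-basis of this Grothendieck group (the transition matrix to the basis $\{[\sigma]:\sigma\in\Irr (G)^{\mf s}\}$ is unitriangular for the Langlands ordering), and for each of them $\tau$ lies in the inertial class $[T,\chi_0]$ of $M$, since all composition factors of $I_P^G(\tau\otimes\chi)$ share the cuspidal support $\mr{Sc}(\tau)\otimes\chi|_T$. Hence it is enough to compare the two homomorphisms on standard modules, and there they agree by Lemma \ref{lem:6.3}, which gives an isomorphism of $\mc H (W_{\mf s},q_F^\lambda)^{op}$-modules $\Hom_G\big(\Pi_{\mf s},I_P^G(\tau\otimes\chi)^\he\big)\cong\Hom_G\big(\Pi_{\mf s},I_P^G(\tau\otimes\chi)\big)$. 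Consequently $\det$ occurs equally often in $\Hom_G(\Pi_{\mf s},\pi)$ and in $\Hom_G(\Pi_{\mf s},\pi^\he)$, which completes the proof.

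The substantive input — that passing to the Hermitian dual does not change the relevant Hecke module up to isomorphism — has already been isolated in Lemma \ref{lem:6.3}; the rest is bookkeeping with Grothendieck groups and the Langlands classification. The one point that genuinely needs care, and where the finite length hypothesis on $\pi$ is essential, is the finiteness required for the notion ``multiplicity of $\det$'' and the Grothendieck group argument to make sense: without it one would really have to develop Hermitian duals of $\mc H (\mf s)$-modules, as the remark preceding the theorem notes.
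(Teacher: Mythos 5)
Your proposal is correct and follows essentially the same route as the paper: reduce to Corollary \ref{cor:6.1}, then pass to the Grothendieck group of finite length objects of $\Rep (G)^{\mf s}$, use the Langlands classification to reduce to standard modules, and invoke Lemma \ref{lem:6.3} there. The only cosmetic difference is that you track only the multiplicity of $\det$ as a $\Z$-linear functional on the Grothendieck group, whereas the paper phrases the same step as an isomorphism of semisimple $\mc H (W_{\mf s},q_F^\lambda)^{op}$-modules $\Hom_G(\Pi_{\mf s},\pi^\he)\cong\Hom_G(\Pi_{\mf s},\pi)$ after first replacing $\pi$ by its semisimplification; the underlying mechanism and the essential inputs are identical.
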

\begin{proof}
Since $\pi$ has finite length, we can form it semisimplification $\pi_{ss}$. Then
$\pi_{ss}^\he$ is the semisimplification of $\pi^\he$. By \eqref{eq:6.7} the module category of 
$\mc H (W_{\mf s},q_F^\lambda )^{op}$ is semisimple. In particular 
\begin{equation}\label{eq:6.2}
\Hom_{\mc H (W_{\mf s},q_F^\lambda )^{op}} \big( \mr{St}, \Hom_G (\Pi_{\mf s},\pi^\he) \big)
\end{equation}
does not change if we replace $\pi^\he$ by $\pi_{ss}^\he$. Since we only need 
semisimplifications of modules here, we may pass to the Grothendieck group of finite length
representations in $\Rep (G)^{\mf s}$. The standard modules in $\Rep (G)^{\mf s}$ form a
$\Z$-basis of that Grothendieck group. Indeed, that is a consequence of the Langlands
classification \cite[Th\'eor\`eme VII.4.2]{Ren} and the property that the irreducible quotient of 
a standard module is the unique maximal constituent in a certain sense \cite[\S XI.2]{BoWa}. 

For each such standard module we have Lemma \ref{lem:6.3}, and hence the conclusion of
Lemma \ref{lem:6.3} extends to the entire Grothendieck group of the finite length part
of $\Rep (G)^{\mf s}$. In particular 
\[
\Hom_G (\Pi_{\mf s},\pi^\he) \cong  \Hom_G (\Pi_{\mf s},\pi_{ss}^\he) \cong
\Hom_G (\Pi_{\mf s},\pi_{ss}) \cong  \Hom_G (\Pi_{\mf s},\pi) 
\]
as $\mc H (W_{\mf s},q_F^\lambda )^{op}$-modules. Hence 
the vector space \eqref{eq:6.2} is isomorphic with 
\[
\Hom_{\mc H (W_{\mf s},q_F^\lambda )^{op}} \big( \mr{St}, \Hom_G (\Pi_{\mf s},\pi) \big) .
\]
By the semisimplicity of the involved algebra, this has the same dimension as
\begin{equation}\label{eq:6.3}
\Hom_{\mc H (W_{\mf s},q_F^\lambda )^{op}} \big( 
\Hom_G (\Pi_{\mf s},\pi), \mr{St} \big) .
\end{equation}
We conclude by applying Corollary \ref{cor:6.1} to \eqref{eq:6.2} and \eqref{eq:6.3}.
\end{proof}

From Theorem \ref{thm:6.4} it is easy to prove an analogue of the uniqueness (up to scalars) 
of Whittaker functionals \cite{Rod,Sha} in the context of Hecke algebras.
Let $M$ be a standard Levi subgroup of $G$ and write $\mf s_M = [T,\chi_0]_M$. Via parabolic
induction $\mc H (\mf s_M) \cong \End_M (\Pi_{\mf s_M})$ becomes a subalgebra of 
$\mc H (\mf s) \cong \End_G (\Pi_{\mf s})$. In fact the constructions in \cite[\S 10.2]{SolEnd}
show that $\mc H (\mf s_M)$ is a parabolic subalgebra of $\mc H (\mf s)$, in the sense of
\cite[p. 216]{SolComp}. The functor $\ind_{\mc H (\mf s_M)^{op}}^{\mc H (\mf s)^{op}}$ 
corresponds to parabolic induction from $M$ to $G$, see \cite[Proposition 1.8.5.1]{Roc2}.

\begin{lem}\label{lem:6.12}
Let $V$ be an irreducible $\mc H (\mf s_M)^{op}$-module. Then
\[
\dim \Hom_{\mc H (W_{\mf s}, q_F^\lambda)^{op}} 
\big( \ind_{\mc H (\mf s_M)^{op}}^{\mc H (\mf s)^{op}} V, \mr{St} \big) \leq 1.
\]
\end{lem}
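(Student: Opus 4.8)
The plan is to reduce the statement to a counting argument over irreducible $G$-representations, using Theorem~\ref{thm:6.4} to translate the Hecke-algebraic $\Hom$ into a statement about genericity of an actual $G$-representation, and then invoke the classical multiplicity-one theorem for Whittaker functionals.

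First I would use the equivalence of categories \eqref{eq:1.6}: since $V$ is an irreducible $\mc H (\mf s)^{op}$-module, there is a unique (up to isomorphism) irreducible $\pi \in \Irr (G)^{\mf s}$ with $\Hom_G (\Pi_{\mf s},\pi) \cong V$. Such a $\pi$ has finite length (indeed it is irreducible), so Theorem~\ref{thm:6.4} applies and gives
\[
\dim \Hom_{\mc H (W_{\mf s},q_F^\lambda)^{op}} (V,\det) \neq 0
\iff \pi \text{ is } (U,\xi)\text{-generic}.
\]
So it remains to bound the dimension by $1$. For that I would go back through the chain of natural isomorphisms \eqref{eq:6.1}, together with Lemma~\ref{lem:6.3}/Theorem~\ref{thm:6.4}: the dimension of $\Hom_{\mc H (W_{\mf s},q_F^\lambda)^{op}} (V,\det)$ equals the dimension of $\Hom_{\mc H (W_{\mf s},q_F^\lambda)^{op}} (\det, \Hom_G (\Pi_{\mf s},\pi^\he))$ by semisimplicity of $\mc H (W_{\mf s},q_F^\lambda)$ \eqref{eq:6.7}, which in turn equals $\dim \Hom_G (\pi, \mr{Ind}_U^G (\xi))$ by the computation \eqref{eq:6.1}. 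But $\pi$ is irreducible, so by the classical uniqueness of Whittaker functionals (e.g.\ \cite{Rod,Sha}, or more precisely its extension to all of $\Irr (G)$ via the fact that a Whittaker functional on $\pi$ extends to one on a standard module containing $\pi$ as a quotient, where multiplicity one is known) this dimension is at most $1$.

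The one technical point to be careful about is that the classical multiplicity-one statement in \cite{Rod,Sha} is usually phrased for $I_B^G(\chi)$ with $\chi \in \Irr(T)$ (already used in Section~\ref{sec:Whit}), so I would deduce the bound for a general irreducible subquotient $\pi$ of such an $I_B^G(\chi)$ as follows: by the Langlands classification $\pi$ is the unique irreducible quotient of a standard module $I_P^G(\tau \otimes \chi')$, and any generic subquotient of a standard module is its Langlands quotient (standard module conjecture-type input, or simply that the generic constituent of $I_B^G(\chi)$ is unique since $\dim \Hom_U(I_B^G(\chi),\xi)=1$), so $\dim \Hom_U(\pi,\xi) \leq \dim \Hom_U(I_B^G(\chi),\xi) = 1$. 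Actually the cleanest route, fully internal to this paper, avoids even this: Lemma~\ref{lem:6.3} and the argument of Theorem~\ref{thm:6.4} already show that in the Grothendieck group $\Hom_G(\Pi_{\mf s},\pi^\he) \cong \Hom_G(\Pi_{\mf s},\pi)$ as $\mc H (W_{\mf s},q_F^\lambda)^{op}$-modules, so the quantity to bound is $\dim \Hom_{\mc H (W_{\mf s},q_F^\lambda)^{op}}(\det, V)$; summing over the (finitely many) irreducible constituents of $I_B^G(\chi)$ for a suitable $\chi \in X_\nr(T)\chi_0$ corresponding to $V$, and using exactness of $\Hom_G(\Pi_{\mf s}, -)$, the total is $\dim \Hom_{\mc H (W_{\mf s},q_F^\lambda)^{op}}(\det, \Hom_G(\Pi_{\mf s}, I_B^G(\chi))) = \dim \Hom_U(I_B^G(\chi),\xi) = 1$. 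Since each summand is a non-negative integer, each is at most $1$, which is exactly the claim.

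The main obstacle I anticipate is purely bookkeeping: making sure that the $V$ in the statement really is realized as $\Hom_G(\Pi_{\mf s},\pi)$ for an \emph{irreducible} $\pi$ sitting inside a single principal series $I_B^G(\chi)$ with $\chi \in X_\nr(T)\chi_0$, and that summing over constituents is compatible with the equivalence \eqref{eq:1.6} and with the semisimplicity reductions of Theorem~\ref{thm:6.4}. None of this is deep, but one must be slightly careful that the isomorphism $\Hom_G(\Pi_{\mf s}, I_B^G(\chi)) \cong \ind_{\mc H(W_{\mf s},q_F^\lambda)^{op}}^{\mc H(\mf s)^{op}}(\det)$ of Lemma~\ref{lem:3.8} is being used only for the specific module $\ind_U^G(\xi)$ and not confused with $I_B^G(\chi)$ itself. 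Once the correspondence is set up cleanly, the uniqueness of Whittaker functionals does all the work.
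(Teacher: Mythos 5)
Your proof is correct in its conclusions, but it takes a genuinely different and considerably longer route than the paper, and it is worth being clear about the trade-offs. The paper's proof stays entirely inside Hecke algebra theory: since $\mc H(\mf s)$ has finite rank over its centre, $V$ is finite-dimensional, hence admits an $\mc O(T_{\mf s})$-weight $t$; by Frobenius reciprocity $V$ is then a quotient of $\ind_{\mc O(T_{\mf s})}^{\mc H(\mf s)^{op}}(t)$, and by the Bernstein presentation the latter restricts to $\mc H(W_{\mf s},q_F^\lambda)^{op}$ as the regular representation, which contains the one-dimensional character $\det$ exactly once. That is a four-line argument depending on nothing but the algebra structure. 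Your approach instead transports the problem back to $G$ via the equivalence of categories, uses Theorem~\ref{thm:6.4} (and hence Lemmas~\ref{lem:3.8} and~\ref{lem:6.3} and all of Section~\ref{sec:Whit}) to identify the quantity with a dimension of Whittaker functionals, and then invokes the classical multiplicity-one theorem $\dim\Hom_U(I_B^G(\chi),\xi)=1$. This does work once you account for the Hermitian dual twist in \eqref{eq:6.1} (your ``cleanest route'' handles this by citing the Grothendieck-group argument in the proof of Theorem~\ref{thm:6.4} for $\pi = I_B^G(\chi)$), and it has the conceptual virtue of exhibiting Lemma~\ref{lem:6.12} as the Hecke-algebra shadow of uniqueness of Whittaker models. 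But it imports far more machinery than the statement requires, and it is not self-contained as a purely algebraic fact about extended affine Hecke algebras, which the paper's proof is. Note also that your first variant, appealing to multiplicity-one for irreducible $\pi$ directly rather than for the full induced module, needs more care than you give it: the exactness of the twisted Jacquet functor $\pi\mapsto\pi_{U,\xi}$ is what makes $\dim\Hom_U(\pi,\xi)\leq\dim\Hom_U(I_B^G(\chi),\xi)$ go through for a constituent $\pi$, so you should say that rather than gesture at the standard module conjecture, which is overkill here.
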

\begin{proof}
By the Bernstein presentation of $\mc H (\mf s)^{op}$ we can simplify the module:
\[
\Res^{\mc H (\mf s)^{op}}_{\mc H (W_{\mf s}, q_F^\lambda)^{op}} 
\big( \ind_{\mc H (\mf s_M)^{op}}^{\mc H (\mf s)^{op}} V \big) = 
\ind^{\mc H (W_{\mf s}, q_F^\lambda)^{op}}_{\mc H (W_{\mf s_M}, q_F^\lambda)^{op}} 
\big( \Res^{\mc H (\mf s_M)^{op}}_{\mc H (W_{\mf s_M}, q_F^\lambda)^{op}} V \big). 
\]
With Frobenius reciprocity it follows that
\begin{equation}\label{eq:6.22}
\Hom_{\mc H (W_{\mf s}, q_F^\lambda)^{op}} 
\big( \ind_{\mc H (\mf s_M)^{op}}^{\mc H (\mf s)^{op}} V, \mr{St} \big) \cong
\Hom_{\mc H (W_{\mf s_M}, q_F^\lambda)^{op}} ( V, \mr{St} ) .
\end{equation}
This reduces the lemma to the case $M = G$, which investigate next.

As $\mc H (\mf s)$ has finite rank as module over its centre, $V$ has finite dimension. 
Hence $V$ contains an eigenvector for $\mc O (T_{\mf s})$, say with character $t$. Then
\[
0 \neq \Hom_{\mc O (T_{\mf s})} (t,V) \cong \Hom_{\mc H (\mf s)^{op}} \big( \ind_{\mc O (T_{\mf s})}^{\mc
H (\mf s)^{op}} (t), V \big), 
\]
so $V$ is a quotient of $\ind_{\mc O (T_{\mf s})}^{\mc H (\mf s)^{op}} (t)$. For multiplicities upon
restriction to the finite dimensional semisimple subalgebra $\mc H (W_{\mf s},q_F^\lambda)^{op}$, 
that means
\begin{equation}\label{eq:6.8}
\dim \Hom_{\mc H (W_{\mf s}, q_F^\lambda)^{op}} (V,\mr{St}) \leq
\dim \Hom_{\mc H (W_{\mf s}, q_F^\lambda)^{op}} \big( 
\ind_{\mc O (T_{\mf s})}^{\mc H (\mf s)^{op}} (t),\mr{St} \big) .
\end{equation}
By the presentation of $\mc H (\mf s)$, $\ind_{\mc O (T_{\mf s})}^{\mc H (\mf s)^{op}} (t) \cong
\mc H (W_{\mf s}, q_F^\lambda)^{op}$ as $\mc H (W_{\mf s}, q_F^\lambda)^{op}$-modules. 
Hence the right hand side of \eqref{eq:6.8} is 1.
\end{proof}

\section{Hecke algebras for principal series L-parameters}
\label{sec:2}

Recall that we fixed a separable closure $F_s$ of $F$. Let $\mb I_F \subset \mb W_F \subset 
\mr{Gal}(F_s/F)$ be the inertia subgroup of the Weil group and pick a geometric Frobenius element 
$\Fr_F$ of $\mb W_F$. Let $G^\vee$ be the complex dual group of $G$ and let 
${}^L G = G^\vee \rtimes \mb W_F$ be the Langlands dual group. Let $\Phi (G)$ be the set of 
L-parameters $\phi : \mb W_F \times SL_2 (\C) \to {}^L G$, considered modulo $G^\vee$-conjugacy. 

For an L-parameter $\phi$ we have the component group 
\[
R_\phi = \pi_0 (Z_{G^\vee}(\phi) / Z(G^\vee)^{\mb W_F}) ,
\] 
this is the appropriate version because $G$ is quasi-split. We define a ($G$-relevant) enhancement of 
$\phi$ to be an irreducible representation of the finite group $R_\phi$. Compared to \cite{AMS1}, the 
quasi-splitness of $G$ allows us to focus on the enhancements whose $Z({G^\vee}_{\mr{sc}})$-character
is trivial, and that eliminates the need to consider the centralizer of $\phi$ in ${G^\vee}_{\mr{sc}}$.
We denote the set of $G^\vee$-conjugacy classes of enhanced L-parameters for $G$ by $\Phi_e (G)$.

Recall \cite{AMS1} that there exists a notion of cuspidality and a cuspidal support map Sc for 
enhanced L-parameters. The map Sc associates to each $(\phi,\rho) \in \Phi_e (G)$ a $F$-Levi
subgroup $L$ of $G$ and a cuspidal enhanced L-parameter for $L$ (unique up to $G^\vee$-conjugation).
We say that $(\phi,\rho)$ is a principal series L-parameter if $\mr{Sc}(\phi,\rho)$ is an enhanced
L-parameter for $T$ (or a $G$-conjugate of $T$). In that case $\mr{Sc}(\phi,\rho)$ is unique up
to $N_{G^\vee}(T^\vee \rtimes \mb W_F)$-conjugacy. In other words, $\mr{Sc}(\phi,\rho)$ as element of 
$\Phi_e (T)$ is unique up to conjugacy by $N_{G^\vee}(T^\vee \rtimes \mb W_F) / T^\vee$. 

For the maximal torus $T$, the dual group $T^\vee$ is a complex torus. In particular any L-parameter 
for $T$ is trivial on $SL_2 (\C)$ and has trivial component group. Hence an element of $\Phi_e (T)$
is just the $T^\vee$-conjugacy class of a homomorphism $\hat \chi : \mb W_F \to {}^L T$. 
Every element of $\Phi_e (T)$ is cuspidal, because $T$ has no proper Levi subgroups. 

To describe principal series (enhanced) L-parameters more explicitly, we consider an arbitrary 
$(\phi,\rho) \in \Phi_e (G)$. We want to determine $\mr{Sc} (\phi,\rho) = (L,\psi,\epsilon)$. 
By construction 
\begin{equation}\label{eq:2.1}
\psi |_{\mb I_F} = \phi |_{\mb I_F} \; \text{and} \;
\psi \Big( \Fr_F, \matje{q_F^{-1/2}}{0}{0}{q_F^{1/2}} \Big) =
\phi \Big( \Fr_F, \matje{q_F^{-1/2}}{0}{0}{q_F^{1/2}} \Big) . 
\end{equation}
In order that $L = T$, it is necessary that $\phi \Big( \Fr_F, \matje{q_F^{-1/2}}{0}{0}{q_F^{1/2}} 
\Big) \in T^\vee \Fr_F$ and $\phi (i) \in T^\vee i$ for all $i \in \mb I_F$. The group 
\[
H^\vee := Z_{G^\vee}(\phi (\mb W_F))
\] 
is reductive \cite[(4.4.f)]{Vog} and 
\[
R_\phi = \pi_0 \big( Z_{G^\vee}(\phi) / Z(G^\vee)^{\mb W_F}) \quad \text{equals} \quad 
\pi_0 \big( Z_{H^\vee} \big( \phi (SL_2 (\C)) \big) / Z(G^\vee)^{\mb W_F} \big) .
\]
This group is a quotient of 
\[
\pi_0 \big( Z_{H^\vee} \big( \phi (SL_2 (\C)) \big) \big) \cong
\pi_0 \big( Z_{H^\vee} (u_\phi ) \big) ,
\]
where $u_\phi = \phi \big( 1, \matje{1}{1}{0}{1} \big)$. Thus we can regard $\rho$ as an irreducible
representation of $\pi_0 \big( Z_{H^\vee} (u_\phi ) \big)$. Let $(M^\vee, u_\psi, \epsilon)$ be
the cuspidal quasi-support of $(u_\phi,\rho)$ for $H^\vee$, as in \cite[\S 5]{AMS1}. Then $\psi$ is 
the L-parameter determined (up to conjugacy) by \eqref{eq:2.1} and $u_\psi$, while $\epsilon$ is as 
above and $L^\vee = Z_{H^\vee} (Z(M^\vee)^\circ)$.

For $L^\vee = T^\vee$ we need $M^\vee = T^\vee$, which implies that $u_\psi = 1$. There is an explicit
criterium for $\mr{Sc}(u_\phi,\rho) = (T^\vee,1,\epsilon)$ with arbitrary $\epsilon$, as follows. 
Let $\mc B_{H^\vee}^{u_\phi}$ be the variety of Borel subgroups of $H^{\vee, \circ}$ that contain
$u_\phi$, it carries a natural action of $Z_{H^\vee}(u_\phi)$. As $Z_{H^{\vee,\circ}} (u_\phi)$ is
a union of connected components of $Z_{H^\vee} (u_\phi)$, its component group 
$\pi_0 (Z_{H^{\vee,\circ}} (u_\phi))$ is a subgroup of $\pi_0 (Z_{H^\vee} (u_\phi))$. Let $\rho^\circ$ 
be any irreducible constituent of $\rho |_{\pi_0 (Z_{H^{\vee,\circ}} (u_\phi))}$. Then the criterium 
says: $\rho^\circ$ appears in the action of $\pi_0 (Z_{H^{\vee,\circ}} (u_\phi))$ on the (top degree) 
homology of $\mc B_{H^\vee}^{u_\phi}$.
 
Summarising, we found the following necessary and sufficient conditions for $(\phi,\rho) \in \Phi_e (G)$
to be a principal series enhanced L-parameter:
\begin{itemize}
\item[(i)] $\phi \Big( \Fr_F, \matje{q_F^{-1/2}}{0}{0}{q_F^{1/2}} \Big), \phi (i) \in T^\vee 
\rtimes \mb W_F$ for any $i \in \mb I_F$;
\item[(ii)] $\rho^\circ$ appears in $H_* \big( \mc B_{H^\vee}^{u_\phi} \big)$ where 
$H^\vee = Z_{G^\vee}(\phi (\mb W_F))$. 
\end{itemize}
We note that under these conditions $\mr{Sc}(\phi,\rho)$ does not depend on $u_\phi$ or $\rho$. 
Moreover it equals $\mr{Sc}(\phi,\mr{triv})$, because $H^{top} \big( \mc B_{H^\vee}^{u_\phi} \big)$
is a permutation representation of $R_\phi$ (with as permuted objects the irreducible components of
$\mc B_{H^\vee}^{u_\phi}$), and that always contains the trivial representation. With this in mind, 
we call $\phi \in \Phi (G)$ a principal series L-parameter if (i) holds.

Recall from \cite[\S 3.3.1]{Hai} that there is a natural isomorphism
\begin{equation}\label{eq:2.8}
X_\nr (G) \cong \big( Z (G^\vee)^{\mb I_K,\circ} \big)_\Fr , 
\end{equation}
that the group of unramified characters $X_\nr (T)$ is naturally isomorphic to
$(T^{\vee,\mb I_F})^\circ_{\mb W_F}$. We will sometimes identify these groups and write simply
$X_\nr (T)$. We note that $(T^{\vee,\mb I_F})^\circ_{\mb W_F}$ acts on $\Phi (T)$ by
\[
(z \hat \chi) |_{\mb I_F} = \hat \chi |_{\mb I_F},\; (z \hat \chi)(\Fr_F) = z (\hat \chi (\Fr_F))
\]
for $z \in (T^{\vee,\mb I_F})^\circ$ and $\hat \chi \in \Phi (T)$. A Bernstein component of
$\Phi_e (T) = \Phi (T)$ is by definition one $X_\nr (T)$-orbit in $\Phi (T)$. We will usually 
write this as $\mf s^\vee_T = X_\nr (T) \hat \chi$ for one $\hat \chi \in \Phi (T)$. It gives rise
to a Bernstein component $\Phi_e (G)^{\mf s^\vee} := \mr{Sc}^{-1} (T, \mf s_T^\vee)$ in the
principal series part of $\Phi_e (G)$.

Next we make the extended affine Hecke algebra $\mc H (\mf s^\vee, \mb z)$ from \cite{AMS3}
explicit. The maximal commutative subalgebra of $\mc H (\mf s^\vee, \mb z)$ is $\mc O (\mf s^\vee) 
\otimes \C [\mb z, \mb z^{-1}]$, where $\mb z$ is a formal variable. In this context we prefer to 
write $T_{\mf s^\vee}$ for $\mf s^\vee$, to emphasize that it is a complex torus (as a variety, 
in general it does not have a canonical multiplication).

The group $N_{G^\vee}(T^\vee \rtimes \mb W_F) / T^\vee$ acts naturally on $\Phi (T)$, by conjugation.
Let $W_{\mf s^\vee}$ denote the stabilizer of $\mf s^\vee$ in $N_{G^\vee}(T^\vee \rtimes \mb W_F) / 
T^\vee$. Although $W_{\mf s^\vee}$ need not be a Weyl group, it always contains the Weyl group
of a root system. Namely, consider the group $J = Z_{G^\vee}(\hat \chi (\mb I_F))$, with the torus
$(T^{\vee,\mb W_F})^\circ$ and the maximal torus $T^\vee$. According to \cite[Proposition 3.9.a and
(3.22)]{AMS3}, $R \big( J^\circ, (T^{\vee,\mb W_F})^\circ \big)$ is a root system and $W_{\mf s^\vee}$ 
acts naturally on it. Moreover \cite[Proposition 3.9.b]{AMS3} says that for a suitable choice of 
$\hat \chi$ in $T_{\mf s^\vee}$ the set of indivisible roots 
\[
R \big( J^\circ, (T^{\vee,\mb W_F})^\circ \big)_\red \quad \text{equals} \quad
R \big( Z_{G^\vee}(\hat \chi (\mb W_F))^\circ, (T^{\vee,\mb W_F})^\circ \big)_\red .
\]
For such a choice of a basepoint $\hat \chi$ of $T_{\mf s^\vee}$, 
\[
W_{\mf s^\vee}^\circ := W \big( R (J^\circ,(T^{\vee,\mb W_F})^\circ ) \big)
\]
is a normal subgroup of $W_{\mf s}$. Let $R^+ \big( J^\circ, (T^{\vee,\mb W_F})^\circ \big)$ be
the positive root system determined by the Borel subgroup $B^\vee$ of $G^\vee$. Then
$W_{\mf s^\vee} = W_{\mf s^\vee}^\circ \rtimes \Gamma_{\mf s^\vee}$, where $\Gamma_{\mf s^\vee}$
denotes the stabilizer of $R^+ \big( J^\circ, (T^{\vee,\mb W_F})^\circ \big)$ in $W_{\mf s^\vee}$.

The root system for $\mc H (\mf s^\vee, \mb z)$ will essentially be $R \big( J^\circ,(T^{\vee,\mb W_F}
)^\circ \big)$, but we still need to rescale the elements \cite[\S 3.2]{AMS3}. We note that the 
inclusion $(T^{\vee,\mb W_F})^\circ \to T^\vee$ induces a surjection
\[
\mr{pr} : R (J^\circ, T^\vee) \cup \{0\} \to R \big( J^\circ,(T^{\vee,\mb W_F})^\circ \big) \cup \{0\}.
\]
In \cite[Definition 3.11]{AMS3}, positive integers $m_\alpha$ for $\alpha^\vee \in R \big(J^\circ, 
(T^{\vee,\mb W_F})^\circ \big)_\red$ are defined, as follows.
\begin{itemize}
\item Suppose that $\mr{pr}^{-1} (\{ \alpha^\vee \})$ meets $k > 1$ connected components of\\
$R(J^\circ,T^\vee)$. These $k$ components are permuted transitively by $\Fr_F$. Then $m_\alpha$ 
equals $k$ times the analogous number $m'_\alpha$ obtained by replacing $F$ by its degree $k$ 
unramified extension (or equivalently replacing $\Fr_F$ by $\Fr_F^k$).
\item Suppose that $\mr{pr}^{-1}(\{ \alpha^\vee \})$ lies in a single connected component of\\
$R(J^\circ,T^\vee)$. Then $m_\alpha$ is the smallest natural number such that $\ker (m_\alpha 
\alpha^\vee)$ contains the kernel of the canonical surjection
\begin{equation}\label{eq:2.2}
(T^{\vee,\mb W_F})^\circ \to (T^{\vee,\mb I_F})^\circ_{\mb W_F} \cong X_\nr (T) .
\end{equation}
\end{itemize}
In fact it is easy to identify the kernel of \eqref{eq:2.2} as 
\[
(T^{\vee,\mb W_F})^\circ_{\Fr_F} := (T^{\vee,\mb W_F})^\circ \cap (1 - \Fr_F) T^{\vee,\mb I_F}.
\]

\begin{lem}\label{lem:2.1}
The number $m_\alpha$ equals $f(F_\alpha / F)$, where $\mb W_{F_\alpha}$ is the $\mb W_F$-stabilizer 
of a lift of $\alpha^\vee$ to $R(G^\vee,T^\vee)$. 
\end{lem}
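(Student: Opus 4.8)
The plan is to follow the two-case structure of the definition of $m_\alpha$, after reducing both cases to a single combinatorial statement: $m_\alpha$ equals the length $N$ of the $\langle\Fr_F\rangle$-orbit of a lift $\tilde\alpha^\vee$ of $\alpha^\vee$ in $R(G^\vee,T^\vee)$. The first thing I would record is a geometric observation that makes $F_\alpha/F$ unramified: if $\tilde\alpha^\vee\in R(J^\circ,T^\vee)$ then the root subgroup $U_{\tilde\alpha^\vee}$ lies in $J^\circ=Z_{G^\vee}(\hat\chi(\mb I_F))^\circ$, so it is normalized by every $\hat\chi(i)=t_i\rtimes i$ with $i\in\mb I_F$; since conjugation by $\hat\chi(i)$ carries $U_{\tilde\alpha^\vee}$ to $U_{i\cdot\tilde\alpha^\vee}$ (the $t_i$-part only rescales), we must have $i\cdot\tilde\alpha^\vee=\tilde\alpha^\vee$. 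Thus the $\mb W_F$-action on $R(J^\circ,T^\vee)$ factors through $\mb W_F/\mb I_F$, so $\mb I_F\subseteq\mb W_{F_\alpha}$, the extension $F_\alpha/F$ is unramified, and $f(F_\alpha/F)=[F_\alpha:F]=N$; by the transitivity statements in \cite[Definition 3.11]{AMS3} this $N$ does not depend on the chosen lift. So it remains to show $m_\alpha=N$.

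Next I would dispose of the case where $\mr{pr}^{-1}(\{\alpha^\vee\})$ meets $k>1$ connected components. These are cyclically permuted by $\Fr_F$, so the stabilizer in $\mb W_F$ of the component containing $\tilde\alpha^\vee$ is $\mb W_{F_k}$ for the unramified extension $F_k/F$ of degree $k$, and $\mb W_{F_\alpha}\subseteq\mb W_{F_k}$ because any symmetry of $\tilde\alpha^\vee$ preserves its component; hence $f(F_\alpha/F)=k\,f(F_\alpha/F_k)$. Over $F_k$ one has $\mb I_{F_k}=\mb I_F$, so $J$ is unchanged while $\Fr_{F_k}=\Fr_F^{\,k}$ now fixes each of these $k$ components; we are therefore in the single-component situation over $F_k$. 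Granting the single-component case applied over $F_k$ gives $m'_\alpha=f(F_\alpha/F_k)$, and the defining relation $m_\alpha=k\,m'_\alpha$ yields $m_\alpha=k\,f(F_\alpha/F_k)=f(F_\alpha/F)$.

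It remains to treat the single-component case, where $m_\alpha$ is the least $m$ with $\ker(m\alpha^\vee)\supseteq(T^{\vee,\mb W_F})^\circ_{\Fr_F}=(T^{\vee,\mb W_F})^\circ\cap(1-\Fr_F)T^{\vee,\mb I_F}$, using the identification of the kernel of \eqref{eq:2.2} recalled just above the lemma. For the divisibility $m_\alpha\mid N$: the orbit sum $\Sigma:=\sum_{j=0}^{N-1}\Fr_F^{\,j}\tilde\alpha^\vee$ is $\Fr_F$-invariant, hence trivial on $(1-\Fr_F)T^{\vee,\mb I_F}$; since each $\Fr_F^{\,j}\tilde\alpha^\vee$ restricts to $\alpha^\vee$ on the $\Fr_F$-fixed subtorus $(T^{\vee,\mb W_F})^\circ$, the restriction of $\Sigma$ is $N\alpha^\vee$, which is therefore trivial on $(T^{\vee,\mb W_F})^\circ_{\Fr_F}$. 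For the reverse: if $m\alpha^\vee$ is trivial on that kernel, then $m\alpha^\vee$ factors through \eqref{eq:2.2}, i.e. it lifts to an $\Fr_F$-invariant character $\tilde\psi$ of $(T^{\vee,\mb I_F})^\circ$; comparing $m\Sigma$ and $N\tilde\psi$, their difference is an $\Fr_F$-invariant character of $(T^{\vee,\mb I_F})^\circ$ that is trivial on $(T^{\vee,\mb W_F})^\circ$, and since $(T^{\vee,\mb W_F})^\circ$ is the identity component of the $\Fr_F$-fixed points the corresponding $\Fr_F$-coinvariant group is finite, so (as $X^*$ of a torus is torsion-free) $m\Sigma=N\tilde\psi$ exactly. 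As $\Sigma$ restricts to an indivisible element of $X^*\big((T^{\vee,\mb I_F})^\circ\big)$, this forces $N\mid m$, whence $m_\alpha=N=f(F_\alpha/F)$.

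The step I expect to be the real obstacle is this last one: proving that the Frobenius orbit sum $\Sigma$ is indivisible on $(T^{\vee,\mb I_F})^\circ$, equivalently that $\alpha^\vee$ has order exactly $N$ on $(T^{\vee,\mb W_F})^\circ_{\Fr_F}$. I would derive this from the fact that $\alpha^\vee$ is a genuine indivisible root of $\big(J^\circ,(T^{\vee,\mb W_F})^\circ\big)$ together with the good choice of basepoint $\hat\chi$ and the description of the connected components of $R(J^\circ,T^\vee)$ in \cite[§3.2 and Proposition 3.9]{AMS3}; should a conceptual argument prove awkward, a fallback is the explicit list of root systems and parameters $m_\alpha$ arising for principal series parameters in \cite{AMS3}, type by type (the only subtle shapes being the $A_{2n}$ and $D_n$ components carrying a diagram automorphism, where one checks directly that the orbit sum of short roots is a long root).
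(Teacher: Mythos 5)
Your route is genuinely different from the paper's. The paper's proof does not argue via orbit lengths and divisibility; instead it reduces to an $F$-simple factor $\mc G_\alpha=\mr{Res}_{E_\alpha/F}\mc H_\alpha$ with $\mc H_\alpha$ absolutely simple, observes that the first bullet in the definition of $m_\alpha$ corresponds exactly to passing from $F$ to $E_\alpha$, and then in the single-component situation over $E_\alpha$ produces an \emph{explicit} element $(1-\Fr_F)t$ of the kernel of \eqref{eq:2.2} on which $\alpha^\vee$ has order $f(F_\alpha/E_\alpha)$, treating the mutually orthogonal orbits uniformly and the non-orthogonal type $^2A_{2n}$ orbits by a short direct computation. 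Your approach replaces the explicit kernel element by a Frobenius orbit sum $\Sigma$ together with an abstract indivisibility statement, and it replaces the $\Res_{E_\alpha/F}$-reduction by a slicker internal reduction to $F_k$. Your observation that any lift lying in $R(J^\circ,T^\vee)$ is automatically $\mb I_F$-fixed (so that the relevant $\mb W_F$-orbit collapses to a $\langle\Fr_F\rangle$-orbit) is a clean and attractive point that the paper does not isolate.

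That said, you have a real gap, and you have flagged it yourself: the entire lower bound $N\mid m_\alpha$ rests on the assertion that $\Sigma|_{(T^{\vee,\mb I_F})^\circ}$ is indivisible in $X^*\big((T^{\vee,\mb I_F})^\circ\big)$. This is not a formal consequence of $\alpha^\vee$ being indivisible in $R\big(J^\circ,(T^{\vee,\mb W_F})^\circ\big)_\red$; indeed, in a ramified $^2A_{2n}$ situation one has $N=1$ and $\Sigma=\tilde\alpha^\vee$ is a long root whose restriction to $(T^{\vee,\mb I_F})^\circ$ is already divisible by $2$ (the conclusion $N\mid m$ then holds only because $N=1$, not because of indivisibility). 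So you cannot hope for a clean general indivisibility statement; what you actually need is the more delicate claim that whenever $N>1$ the restriction of $\Sigma$ is indivisible, and verifying that essentially requires going through the same case inspection (split, mutually-orthogonal Galois orbit, non-orthogonal $^2A_{2n}$) that the paper performs. In other words, your ``fallback'' is not really a fallback: it is where the content lies, and the paper's explicit construction of $(1-\Fr_F)t$ is a more robust way of getting the lower bound on $m_\alpha$ precisely because it sidesteps the delicate question of indivisibility on the possibly smaller torus $(T^{\vee,\mb I_F})^\circ$.

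A smaller but worthwhile caveat: your $F_\alpha$ and the paper's $F_\alpha$ need not be the same field. You take the stabilizer of a lift in $R(J^\circ,T^\vee)\subset R(G^\vee,T^\vee)$ (forced by the definition of $\mr{pr}$), which is always unramified over $F$ by your argument. The paper's proof, in the non-orthogonal $^2A_{2n}$ case, takes the stabilizer of one of the two roots $\alpha'^\vee$ swapped by the diagram automorphism, and that field can be ramified over $E_\alpha$. The two readings give the same residual degree $f(F_\alpha/F)$, so the lemma is unambiguous, but you should note this explicitly if you want your argument to prove the lemma ``as stated'' rather than a statement equivalent to it.
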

\begin{proof}
The $m_\alpha$ can be related to the structure of the $F$-group $\mc G$. Let $\mc G_\alpha$ be
the $F$-simple almost direct factor of $\mc G$ such that $\mr{pr}^{-1}(\{\alpha^\vee\})$ consists
of roots coming from $G_\alpha^\vee$. Write $\mc G_\alpha = \mr{Res}_{E_\alpha / F} \mc H_\alpha$,
where $\mc H_\alpha$ is absolutely simple. The injection $\mc G_\alpha \to \mc G$ induces a
surjection ${}^L G \to {}^L G_\alpha$ which does not change $\alpha^\vee \big( (T^{\vee,\mb W_F}
)^\circ_{\Fr_F} \big)$. Knowing that, the first bullet above says that $m_\alpha$ equals 
$f(E_\alpha/F)$ times the number $m'_\alpha$ for $\mc H_\alpha (E_\alpha)$. 

Let $\mc T_\alpha$ be the maximal torus of $\mc H_\alpha$ with $\mr{Res}_{E_\alpha / F} 
\mc T_\alpha = \mc T \cap \mc G_\alpha$. The Weil group $\mb W_{E_\alpha}$ acts on the irreducible 
root system $R(\mc H^\vee_\alpha,\mc T^\vee_\alpha)$, and the set of orbits is in bijection with
the irreducible component of $R(J^\circ,(T^{\vee,\mb W_F})^\circ)$ containing $\alpha^\vee$.
Let $\mb W_{F_\alpha}$ be the $\mb W_{E_\alpha}$-stabilizer of an element 
$\alpha'^{\vee} \in R(\mc H^\vee_\alpha,\mc T^\vee_\alpha)$ that corresponds to $\alpha^\vee$.
Then $\alpha^\vee = \alpha'^{\vee} |_{T_\alpha^{\mb W_{F_\alpha}}}$.

Suppose that the elements of $\mb W_{E_\alpha} \alpha'^{\vee}$ are mutually orthogonal, which
happens in most cases. From the definitions we see that 
\[
\big| \alpha^\vee \big( (T^{\vee,\mb W_F})^\circ_{\Fr_F} \big) \big| = f (F_\alpha / E_\alpha) .
\]
Here the relevant elements of $(T^{\vee,\mb W_F})^\circ_{\Fr_F}$ are the powers of 
\[
(1 - \Fr_F) t \quad \text{where} \quad
(\Fr^n \alpha'^\vee)(t) = \exp (2 \pi i n / f(F_\alpha / E_\alpha)).  
\]
We find $m'_\alpha = f(F_\alpha / E_\alpha)$ and $m_\alpha = f (F_\alpha / F)$.

Next we consider the cases where the elements of $\mb W_{E_\alpha} \alpha'^{\vee}$
are not mutually orthogonal. Classification shows that $R(\mc H^\vee, \mc T^\vee_\alpha)$ has
type ${}^2 A_{2n}$ and 
\[
|\mb W_{E_\alpha} \alpha'^{\vee}| = [F_\alpha : E_\alpha] = 2,
\] 
so that $H_{\alpha,\ad} \cong PU_{2n+1}(F_\alpha / E_\alpha)$. Direct computations show that,
when $F_\alpha / E_\alpha$ is ramified, $m'_\alpha = 1$ and $m_\alpha = f (E_\alpha / F) =
f(F_\alpha / F)$. Similarly, when $F_\alpha / E_\alpha$ is unramified, $m'_\alpha = 2$ and 
$m_\alpha = 2 f (E_\alpha / F) = f(F_\alpha / F)$.  
\end{proof}

Lemma \ref{lem:2.1} and \cite[Lemma 3.12]{AMS3} yield the precise definition of the root system 
for $\mc H (\mf s^\vee,\mb z)$:
\[
R_{\mf s^\vee} = \big\{ m_\alpha \alpha^\vee : \alpha^\vee \in R \big( J^\circ, 
(T^{\vee,\mb W_F})^\circ \big)_\red \big\} .
\]
This root system is endowed with an action of $W_{\mf s^\vee}$. Hence $W_{\mf s^\vee}$ 
also acts on the resulting root datum from \cite[\S 3.2]{AMS3}:
\begin{align*}
\mc R_{\mf s^\vee} & = \big( R_{\mf s^\vee}, X^* \big( (T^{\vee,\mb I_F})^\circ_{\mb W_F} \big),
R_{\mf s^\vee}^\vee, X_* \big( (T^{\vee,\mb I_F})^\circ_{\mb W_F} \big) \big) \\
& = \big( R_{\mf s^\vee}, T / T_\cpt, R_{\mf s^\vee}^\vee, (T / T_\cpt)^\vee \big) .
\end{align*}
The label functions $\lambda, \lambda^*$ for $\mc H (\mf s^\vee,\mb z)$ are determined in
\cite[Proposition 3.14]{AMS3}. Suppose first that the elements of $\mb W_{E_\alpha} \alpha'^\vee$
are mutually orthogonal (in the notation from the proof of Lemma \ref{lem:2.1}), and that
the same holds for $\alpha^\vee / 2$ whenever $\alpha^\vee / 2$ can be lifted to 
$R(G^\vee,(T^{\vee,\mb W_F})^\circ)$. In these non-exceptional cases
\begin{equation}\label{eq:2.3}
\lambda (m_\alpha \alpha^\vee) = \lambda^* (m_\alpha \alpha^\vee) = m_\alpha = f(F_\alpha / F) .
\end{equation}
If in addition $m_\alpha \alpha^\vee \in 2 X^* \big( (T^{\vee,\mb I_F})^\circ_{\mb W_F} \big)
= 2 (T / T_\cpt)$, then we can get the same Hecke algebras with $m_\alpha \alpha^\vee / 2$
instead of $m_\alpha \alpha^\vee$, and 
\begin{equation}\label{eq:2.7}
\lambda ( m_\alpha \alpha^\vee / 2) = m_\alpha = f(F_\alpha / F) ,\; 
\lambda^* ( m_\alpha \alpha^\vee / 2) = 0.
\end{equation}
We call the remaining cases exceptional, these occur only when $R(G^\vee,T^\vee)$ has a component
of type ${}^2 A_{2n}$ and $\alpha^\vee$ or $\alpha^\vee / 2$ comes from two non-orthogonal
roots that are exchanged by the diagram automorphism. As noted in the proof of Lemma \ref{lem:2.1},
$H_{\alpha,\ad} \cong PU_{2n+1}(F_\alpha / E_\alpha)$. The groups $PU_{2n+1}(F_\alpha / E_\alpha)$,
$SU_{2n+1}(F_\alpha / E_\alpha)$ and $U_{2n+1}(F_\alpha / E_\alpha)$ give the same root system,
the same unramified characters and the same groups $(T^{\vee,\mb W_F})^\circ$. Hence the relevant
data for $H_\alpha$ can be reduced (via its derived group) to those for 
$U_{2n+1}(F_\alpha / E_\alpha)$, and it suffices to continue the analysis in the latter group.

For $U_{2n+1}(F_\alpha / E_\alpha)$ all the labels were computed in \cite[\S 5]{AMS4}. For 
convenience we provide an overview, where we remark that the labels from \cite{AMS4} still have to
be multiplied by $f(E_\alpha/F)$ to account for the restriction of scalars $\mc G_\alpha (F) = 
\mc H_\alpha (E_\alpha)$, as in the proof of Lemma \ref{lem:2.1}. We write $\alpha^\vee = 
\alpha'^\vee + \alpha''^\vee$, where $\alpha'^\vee$ and $\alpha''^\vee$ are non-orthogonal roots 
in $A_{2n}$ exchanged by the diagram automorphism. 
\begin{itemize}
\item $F_\alpha / E_\alpha$ unramified and $\hat \chi (\mb W_{E_\alpha}) \subset 
Z(GL_{2n+1}(\C)) \rtimes \mb W_{E_\alpha}$. Then $\lambda (\alpha^\vee) = 3$ 
and $\lambda^* (\alpha^\vee) = 1$.
\item $F_\alpha / E_\alpha$ unramified and $\hat \chi (\mb W_{E_\alpha}) \not\subset Z(GL_{2n+1}(\C)) 
\rtimes \mb W_{E_\alpha}$. Here we need $\hat \chi (\mb W_{E_\alpha})$ to fix $U_{\alpha^\vee}$ 
pointwise for $\alpha^\vee \in R_{\mf s^\vee}$.
Under that condition $\lambda (\alpha^\vee) = \lambda^* (\alpha^\vee) = 1$.
\item $F_\alpha / E_\alpha$ is ramified. When $\hat \chi \circ \alpha^\vee : F_\alpha \to
\C^\times$ is conjugate-orthogonal, $\alpha^\vee \notin R_{\mf s^\vee}$. Otherwise 
$\hat \chi \circ \alpha^\vee$ is conjugate-symplectic, then $\alpha^\vee \in R_{\mf s^\vee}$ and
$\lambda (\alpha^\vee) = \lambda^* (\alpha^\vee) = 1$. Equivalently, using $\alpha^\vee / 2$ as root:
\begin{equation}\label{eq:2.4}
\lambda (\alpha^\vee / 2) = 1 ,\quad \lambda^* (\alpha^\vee / 2) = 0.
\end{equation}
\end{itemize}
The algebra $\mc H (\mf s^\vee, \mb z)$ has a subalgebra $\mc H (\mf s^\vee,\mb z)^\circ$,
whose underlying vector space is 
\[
\mc O (T_{\mf s^\vee}) \otimes \C [\mb z, \mb z^{-1}] \otimes \C [ W_{\mf s^\vee}^\circ ] . 
\]
It is isomorphic to the affine Hecke algebra $\mc H (\mc R_{\mf s^\vee}, \lambda,\lambda^*,\mb z)$,
for suitable label functions $\lambda,\lambda^*$. The identification of the vector spaces comes 
from the elements $N_w \in \mc H (\mc R_{\mf s^\vee}, \lambda,\lambda^*,\mb z)$ and the
bijection 
\begin{equation}\label{eq:2.6}
(T^{\mb I_F})^\circ_{\mb W_F} \to T_{\mf s^\vee} : t \mapsto t \hat \chi .
\end{equation}

\begin{thm}\label{thm:2.2}
There is a canonical algebra isomorphism 
\[
\mc H (\mf s^\vee,\mb z) \cong \mc H (\mf s^\vee,\mb z)^\circ \rtimes \Gamma_{\mf s^\vee}.
\]
\end{thm}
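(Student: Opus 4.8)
The plan is to read the statement off from the construction of $\mc H(\mf s^\vee,\mb z)$ in \cite{AMS3}, which presents this algebra as a crossed product $\mc H(\mf s^\vee,\mb z)^\circ\rtimes\C[\Gamma_{\mf s^\vee},\natural_{\mf s^\vee}]$ for an a priori nontrivial $2$-cocycle $\natural_{\mf s^\vee}:\Gamma_{\mf s^\vee}^2\to\C^\times$, in which the basis elements $N_\gamma$ ($\gamma\in\Gamma_{\mf s^\vee}$) extend the natural action of $\Gamma_{\mf s^\vee}$ on $\mc H(\mf s^\vee,\mb z)^\circ$ coming from its action on $\mc R_{\mf s^\vee}$. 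The content of the statement then reduces to showing that $\natural_{\mf s^\vee}$ is cohomologically trivial and that the resulting splitting $\gamma\mapsto N_\gamma$ involves no auxiliary choice; the identifications $N_w\leftrightarrow w$ for $w\in W_{\mf s^\vee}^\circ$ and $N_\gamma\leftrightarrow\gamma$ for $\gamma\in\Gamma_{\mf s^\vee}$ then yield the desired isomorphism. This runs parallel to the way Theorem \ref{thm:1.1} is upgraded to Theorem \ref{thm:3.4} on the $p$-adic side, with the simplification that here there is no ``choice of $K$'' to remove, since $\mc H(\mf s^\vee,\mb z)$ is constructed directly from $G^\vee$ and $\mb W_F$.

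To see that $\natural_{\mf s^\vee}$ is trivial, I would use that for a principal series component the cuspidal support $\mr{Sc}(\phi,\rho)$ is an enhanced L-parameter for the torus $T$, which makes all the data entering the definition of the $N_\gamma$ in \cite[\S 3]{AMS3} canonical. First, the component group of $\hat\chi$ is trivial (the dual torus $T^\vee$ is abelian), so the enhancement of $\mr{Sc}(\phi,\rho)$ is the trivial representation, and the intertwiners between it and its $\Gamma_{\mf s^\vee}$-translates -- the source of the twist in the general construction -- can be taken to be $\mr{id}_\C$; this is the analogue of the choice $\rho_\gamma=\mr{id}_\C$ made before Theorem \ref{thm:1.1}. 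Second, because $\hat\chi$ is a homomorphism into the abelian group ${}^L T$, the stabilizer $\{z\in X_\nr(T): z\hat\chi=\hat\chi\}$ is trivial -- the Galois-side counterpart of the identity $X_\nr(T,\chi_0)=1$ used throughout Section \ref{sec:1} -- so for each $\gamma\in\Gamma_{\mf s^\vee}$ there is a unique $z_\gamma\in X_\nr(T)$ with $\gamma\cdot\hat\chi=z_\gamma\hat\chi$ and the translation part of $N_\gamma$ is unambiguous. Having thus pinned down every choice, one checks from the multiplication rules of \cite[\S 3]{AMS3} that $N_\gamma N_{\gamma'}=N_{\gamma\gamma'}$ and $N_\gamma N_w N_\gamma^{-1}=N_{\gamma w\gamma^{-1}}$, i.e.\ $\natural_{\mf s^\vee}\equiv 1$.

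Canonicity is then obtained as in the last paragraph of the proof of Theorem \ref{thm:3.4}: the subalgebras $\mc O(T_{\mf s^\vee})\otimes\C[\mb z,\mb z^{-1}]$ and $\mc H(\mf s^\vee,\mb z)^\circ$ are intrinsic to $\mc H(\mf s^\vee,\mb z)$, so two isomorphisms of the asserted form differ by an automorphism of $\mc H(\mf s^\vee,\mb z)^\circ\rtimes\Gamma_{\mf s^\vee}$ that fixes both of them, and such an automorphism must be the identity. I expect the genuine obstacle to be the verification underlying the second paragraph: one has to go back into \cite[\S 3]{AMS3} and confirm that, for a principal series Bernstein component, every intertwiner and every normalizing scalar occurring in the $N_\gamma$ is canonically determined -- in particular that the homology of the variety of Borel subgroups feeding the cuspidal-support machinery contributes only its canonical trivial direct summand, as was already noted after condition (ii) in Section \ref{sec:2}. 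Everything else is routine bookkeeping.
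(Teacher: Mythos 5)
Your approach is essentially the same as the paper's: both identify the a priori $2$-cocycle $\natural_{\mf s^\vee}$ from the crossed-product description in \cite[Proposition 3.15]{AMS3} and then trivialize it by exploiting that, for a principal series component, the cuspidal datum living on $T^\vee$ is as simple as possible. The paper phrases this as: the sheaf $q\mc E$ is the constant sheaf $\C$ on the point $1 \in T^\vee$, so the intertwiners $qb_\gamma$ in \cite[(90)]{AMS3} have a canonical choice (the identity), hence $\gamma \mapsto qb_\gamma$ is multiplicative and the scalars $\lambda_{\gamma,\gamma'}$ are all $1$. Your second paragraph says the same thing in different words (trivial component group for the torus, trivial enhancement, intertwiners $= \mr{id}_\C$). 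Two caveats worth flagging.

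First, the justification ``$\hat\chi$ is a homomorphism into the abelian group ${}^L T$'' is sloppy: ${}^L T = T^\vee \rtimes \mb W_F$ need not be abelian, and the claim about triviality of the $X_\nr(T)$-stabilizer, even if true, is not what the paper uses and is not obviously the relevant input. What matters is that $T^\vee$ is a torus, so the L-parameter $\hat\chi$ has trivial component group and the cuspidal local system is the constant sheaf on a point; that is the correct formulation of your intuition and it suffices.

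Second, the canonicity argument in your last paragraph is flawed as stated. An automorphism of $\mc H(\mf s^\vee,\mb z)^\circ \rtimes \Gamma_{\mf s^\vee}$ that fixes $\mc O(T_{\mf s^\vee}) \otimes \C[\mb z,\mb z^{-1}]$ and $\mc H(\mf s^\vee,\mb z)^\circ$ pointwise need not be the identity: it can still scale each $N_\gamma$ by $\chi(\gamma)$ for a character $\chi$ of $\Gamma_{\mf s^\vee}$. The analogous step in Theorem \ref{thm:3.4} only works because of the extra normalization against the Whittaker vector $\mb 1$, for which there is no Galois-side analogue. The canonicity here must instead be read off directly from the canonical choice of $qb_\gamma$ in your second paragraph; that does pin down the isomorphism, so the conclusion is correct, but the uniqueness argument in the third paragraph is a non sequitur and should be dropped.
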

\begin{proof}
By design $\mc H (\mf s^\vee,\mb z)$ is free as $\mc H (\mf s^\vee,\mb z)^\circ$-module, with a
basis indexed by $\Gamma_{\mf s^\vee}$. More precisely, by \cite[Proposition 3.15.a]{AMS3} the 
actions of $\Gamma_{\mf s^\vee}$ on $R_{\mf s^\vee}, T_{\mf s^\vee}$ and $\mc O (T_{\mf s^\vee})$
naturally induce an action of $\Gamma_{\mf s^\vee}$ on $\mc H (\mf s^\vee,\mb z)^\circ$. 
For every $\gamma \in \Gamma_{\mf s^\vee}$, that yields an element of $\mc H (\mf s^\vee,\mb z)$,
unique up to scaling. 

For the Langlands parameters under consideration, the sheaf $q\mc E$ from \cite{AMS1,AMS3} is
just the constant sheaf with stalk $\C$ on the point $1 \in T^\vee$. It follows that there is
canonical choice for the map $qb_\gamma$ from \cite[(3.39)]{AMS3}, namely the identity. Then
$\gamma \mapsto qb_\gamma$ is multiplicative, the scalars $\lambda_{\gamma,\gamma'}$ in the 
proof of \cite[Proposition 3.15.b]{AMS3} reduce to 1 and $\C [\Gamma_{\mf s^\vee}]$ embeds in 
$\mc H (\mf s^\vee,\mb z)$ as the span of these $qb_\gamma$. With this in place, 
\cite[Proposition 3.15.a]{AMS3} provides the desired statement.
\end{proof}

Next we specialize $\mb z$ to $q_F^{1/2}$, that yields the algebra
\begin{equation}\label{eq:2.5}
\mc H (\mf s^\vee ,q_F^{1/2}) = \mc H (\mf s^\vee ,q_F^{1/2} )^\circ \rtimes \Gamma_{\mf s^\vee}
\cong \mc H (\mc R_{\mf s^\vee}, \lambda,\lambda^* ,q_F^{1/2}) \rtimes \Gamma_{\mf s^\vee}.
\end{equation}
We note that here the isomorphism depends on the choice of the basepoint $\hat \chi$ of 
$T_{\mf s^\vee}$. From \eqref{eq:2.5} we see that the centre of $\mc H (\mf s^\vee,q_F^{1/2})$ is 
\[
Z \big( \mc H (\mf s^\vee,q_F^{1/2}) \big) =
\mc O (T_{\mf s^\vee})^{W_{\mf s}} = \mc O (T_{\mf s^\vee} / W_{\mf s^\vee})
\]
The main use of the algebras \eqref{eq:2.5} lies in the following result.

\begin{thm}\textup{\cite[Theorem 3.18]{AMS3}}
\label{thm:2.3}
There exists a canonical bijection
\[
\begin{array}{ccc}
\Phi_e (G)^{\mf s^\vee} & \to & \Irr \big( \mc H (\mf s^\vee, q_F^{1/2}) \big) \\
(\phi,\rho) & \mapsto & \bar M (\phi, \rho, q_F^{1/2}) 
\end{array}
\]
such that:
\enuma{
\item $\bar M (\phi, \rho, q_F^{1/2})$ admits the central character $W_{\mf s^\vee} \tilde \phi
\in T_{\mf s^\vee} / W_{\mf s^\vee}$, where\\ $\tilde \phi |_{\mb I_F} = \phi |_{\mb I_F}$ 
and $\tilde \phi (\Fr_F) = \phi \big( \Fr_F, \matje{q_F^{-1/2}}{0}{0}{q_F^{1/2}} \big)$.
\item $\phi$ is bounded if and only if $\bar M (\phi, \rho, q_F^{1/2})$ is tempered.
\item $\phi$ is discrete if and only if $\bar M (\phi, \rho, q_F^{1/2})$ is essentially discrete
series and the rank of $R_{\mf s^\vee}$ equals the $F$-split rank of $\mc T / Z(\mc G)$.
\item The bijection is equivariant for the canonical actions of $Z(G^\vee)^{\mb I_F} \cap 
(T^{\vee,\mb I_F})^\circ$. 
}
\end{thm}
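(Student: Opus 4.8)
The plan is to deduce this directly from \cite[Theorem 3.18]{AMS3}, since essentially all of the preparatory identifications have already been carried out in this section. The first step is to record that the extended affine Hecke algebra to which \cite[Theorem 3.18]{AMS3} is applied, for the Bernstein component $\mf s^\vee = X_\nr (T) \hat\chi$ of $\Phi_e (T) = \Phi (T)$, is built from exactly the data we have isolated: the root datum $\mc R_{\mf s^\vee}$ of \cite[\S 3.2]{AMS3} with $R_{\mf s^\vee} = \{ m_\alpha \alpha^\vee \}$, where $m_\alpha = f(F_\alpha/F)$ by Lemma \ref{lem:2.1}; the label functions $\lambda,\lambda^*$ of \cite[Proposition 3.14]{AMS3}, made explicit in \eqref{eq:2.3}, \eqref{eq:2.7} and \eqref{eq:2.4} (including the exceptional $U_{2n+1}$ cases); and the extension by $\Gamma_{\mf s^\vee}$, which by Theorem \ref{thm:2.2} is the canonical semidirect product $\mc H (\mf s^\vee,\mb z)^\circ \rtimes \Gamma_{\mf s^\vee}$. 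Specializing $\mb z \mapsto q_F^{1/2}$ as in \eqref{eq:2.5} then reproduces precisely the algebra $\mc H (\mf s^\vee,q_F^{1/2})$ occurring in \cite[Theorem 3.18]{AMS3}. I would also note here that the set $\Phi_e (G)^{\mf s^\vee}$, defined above as $\mr{Sc}^{-1}(T,\mf s_T^\vee)$, coincides with the component of principal series parameters treated in \cite{AMS3}: this is exactly what the necessary and sufficient conditions (i)--(ii) for a principal series enhanced L-parameter, established earlier in this section, express. With these matchings in place, \cite[Theorem 3.18]{AMS3} supplies the bijection $(\phi,\rho) \mapsto \bar M (\phi,\rho,q_F^{1/2})$ together with statements (a) and (b).

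For (a), I would translate the central character statement of \cite[Theorem 3.18]{AMS3}: there $\bar M (\phi,\rho,\mb z)$ has central character the $W_{\mf s^\vee}$-orbit of $\tilde\phi$ viewed in $(T^{\mb I_F})^\circ_{\mb W_F}$, and after specializing $\mb z = q_F^{1/2}$ and transporting along the bijection \eqref{eq:2.6} this becomes the $W_{\mf s^\vee}$-orbit of $\tilde\phi$ in $T_{\mf s^\vee}$, with $\tilde\phi$ as described. For (b), the equivalence ``$\phi$ bounded $\iff$ $\bar M(\phi,\rho,q_F^{1/2})$ tempered'' is the specialization of the corresponding assertion in \cite{AMS3}; here one only has to check that the notion of temperedness for $\mc H(\mf s^\vee,q_F^{1/2})$-modules used there is the one relevant for the later comparison (weights of the $\mc O(T_{\mf s^\vee})$-action lying in the appropriate ``bounded'' subset), which is immediate once $\mb z$ is specialized to the real number $q_F^{1/2} > 1$.

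For (c), I would invoke the discreteness part of \cite[Theorem 3.18]{AMS3} and add the observation that a discrete $\phi$ can occur in $\Phi_e (G)^{\mf s^\vee}$ only when $R_{\mf s^\vee}$ has full rank, i.e.\ spans $(T/T_\cpt)\otimes_\Z \R$ modulo the image of $Z(\mc G)$: indeed discreteness of $\phi$ forces $Z_{G^\vee}(\phi)/Z(G^\vee)^{\mb W_F}$ to be finite, hence $R\big(Z_{G^\vee}(\hat\chi(\mb W_F))^\circ,(T^{\vee,\mb W_F})^\circ\big)$ (whose indivisible part, by the choice of basepoint, rescales to $R_{\mf s^\vee}$) must span, which is exactly the stated equality of ranks. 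Conversely, essentially discrete series $\mc H(\mf s^\vee,q_F^{1/2})$-modules under that rank condition match discrete $\phi$ by \cite{AMS3}. For (d), the group $Z(G^\vee)^{\mb I_F}\cap (T^{\vee,\mb I_F})^\circ$, which under \eqref{eq:2.8} is a subgroup of $X_\nr (G) \subset X_\nr (T)$, acts on $\Phi (T)$ by translation, stabilizing $\mf s^\vee$ and hence $\Phi_e (G)^{\mf s^\vee}$, and acts on $T_{\mf s^\vee}$ compatibly with $\mc R_{\mf s^\vee}$, $\lambda,\lambda^*$ and $\Gamma_{\mf s^\vee}$, hence on $\mc H(\mf s^\vee,q_F^{1/2})$ and on $\Irr\big(\mc H(\mf s^\vee,q_F^{1/2})\big)$; the equivariance is then the naturality of the construction $\bar M(-,-,q_F^{1/2})$ in the basepoint $\hat\chi$, which is recorded in \cite{AMS3}.

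The only genuinely non-automatic points -- and where I expect most of the (still modest) effort to go -- are, first, verifying that ``$\mf s^\vee$-component'' in the sense of \cite{AMS3} is literally $\mr{Sc}^{-1}(T,\mf s_T^\vee)$ here, which rests on the description of the cuspidal support map via the criteria (i)--(ii) and on the fact, noted above, that $\mr{Sc}(\phi,\rho)$ there depends only on $\phi$; and second, the bookkeeping in (c) between ``discrete series'' and ``essentially discrete series'' together with the rank condition, and the unramified-twist equivariance in (d), each of which may need a sentence or two beyond a bare citation to \cite[Theorem 3.18]{AMS3}.
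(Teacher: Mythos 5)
Your proposal matches the paper's treatment exactly: Theorem \ref{thm:2.3} is stated with the citation \cite[Theorem 3.18]{AMS3} and carries no separate proof, the surrounding text of Section \ref{sec:2} having been designed precisely to set up the identifications (root datum $\mc R_{\mf s^\vee}$, labels via Lemma \ref{lem:2.1}, the crossed product structure from Theorem \ref{thm:2.2}, the specialization $\mb z \mapsto q_F^{1/2}$, and the description of $\Phi_e (G)^{\mf s^\vee}$ via conditions (i)--(ii)) that make the citation apply verbatim. Your extra remarks on (c) and (d) are accurate but are already contained in \cite[Theorem 3.18]{AMS3} as stated, so no further argument was needed in the paper beyond the remark following the theorem that spells out the action of $Z(G^\vee)^{\mb I_F} \cap (T^{\vee,\mb I_F})^\circ$.
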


We note that in \cite{AMS3} the canonicity is obtained in a slightly weaker sense, by 
interpreting the subalgebra of $\mc H (\mf s^\vee, q_F^{1/2})$ spanned by the $N_\gamma$ with
$\gamma \in \Gamma_{\mf s^\vee}$ as the endomorphism algebra of a certain perverse sheaf
\cite[(2.5)]{AMS3}. We got rid of that subtlety in the proof of Theorem \ref{thm:2.2}.

For part (d) we recall that any element $t \in Z(G^\vee )^{\mb I_F}$ determines a weakly 
unramified character of $G$ \cite[\S 3.3.1]{Hai}, and that character is trivial on $T_\cpt$ if 
and only if $t \in (T^{\vee,\mb I_F})^\circ$. To $t \in Z(G^\vee)^{\mb I_F} \cap 
(T^{\vee,\mb I_F})^\circ$ we associate the automorphism   
\[
x N_w \mapsto x(t) x N_w \qquad x \in T / T_\cpt, w \in W_{\mf s^\vee} 
\]
of $\mc H (\mf s^\vee, q_F^{1/2})$, where $x$ is regarded as function on $T_{\mf s^\vee}$ via 
\eqref{eq:2.6}. The action of $t$ on $\Irr \big( \mc H (\mf s^\vee, q_F^{1/2}) \big)$ is 
composition with the above automorphism.

\section{Comparison of Hecke algebras}
\label{sec:4}

We start with a Bernstein component $\mf s_T$ for $T$. Recall that this is just a $X_\nr (T)$-coset
in $\Irr (T)$. The local Langlands correspondence for tori \cite{Lan,Yu} associates to $\mf s_T$ a
$X_\nr (T)$-orbit in $\Phi (T)$, that is, one Bernstein component $\mf s_T^\vee$ in $\Phi_e (T)$.
Let $W_{\mf s}$ be the stabilizer of $\mf s_T$ in $N_G (T) / T$ and let $W_{\mf s^\vee}$ be
the stabilizer of $\mf s^\vee_T$ in $N_{G^\vee} (T^\vee \rtimes \mb W_F) / T^\vee$.

\begin{lem}\label{lem:4.4}
There is a natural isomorphism $W_{\mf s} \cong W_{\mf s^\vee}$.
\end{lem}
\begin{proof}
The root datum 
\[
(R(\mc G,\mc T), X^* (\mc T), R(G^\vee,T^\vee), X^* (T^\vee))
\]
comes with a natural group isomorphism 
\begin{equation}\label{eq:4.3}
W(\mc G,\mc T) \cong W(G^\vee,T^\vee). 
\end{equation}
From \cite[Proposition 3.1 and its proof]{ABPS3} we know that \eqref{eq:4.3} restricts to an isomorphism
\begin{equation}\label{eq:4.1}
N_G (T) / T \cong N_{G^\vee} (T^\vee \rtimes \mb W_F) / T^\vee = W(G^\vee,T^\vee )^{\mb W_F} .
\end{equation}
The LLC for tori is natural, so compatible with isomorphisms of 
\[
(X^* (T),X_* (T)) = (X_* (T^\vee), X^* (T^\vee)).
\]
In particular it is equivariant for the action of \eqref{eq:4.3}, and hence for the action of 
\eqref{eq:4.1}. Thus the action of $N_G (T) / T$ on $\Irr (T)$ is turned into
the conjugation action of $W(G^\vee,T^\vee )^{\mb W_F}$ by the LLC. In particular \eqref{eq:4.1} 
matches $\mr{Stab}_{N_G (T)/T}(\mf s_T)$ with the $W(G^\vee,T^\vee )^{\mb W_F}$-stabilizer 
of the image $\mf s_T^\vee$ of $\mf s_T$ in $\Phi_e (T)$.
\end{proof}

Similarly we can compare root systems on both sides of the LLC.

\begin{lem}\label{lem:4.1}
There exists a natural bijection between $R_{\mf s}^\vee$ and $R_{\mf s^\vee}$, which preserves
positivity.
\end{lem}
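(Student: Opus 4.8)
I want to produce a bijection $R_{\mathfrak s}^\vee \leftrightarrow R_{\mathfrak s^\vee}$ and, since both root systems live inside $T/T_\cpt$ (identified on the Galois side via $X_*\big((T^{\vee,\mb I_F})^\circ_{\mb W_F}\big)\cong T/T_\cpt$ from the root data $\mc R_{\mathfrak s}$ and $\mc R_{\mathfrak s^\vee}$ in the excerpt), the natural thing to prove is that these two subsets of $T/T_\cpt$ literally \emph{coincide}. That is cleaner than constructing an abstract bijection, it automatically preserves positivity once one checks that both positive systems are cut out by the same data (the Borel $\mc B$ on the $p$-adic side and $B^\vee$ on the dual side, which correspond under \eqref{eq:4.1}), and it makes the subsequent comparison of Hecke algebras (Theorem \ref{thm:4.3}) essentially automatic on the level of root systems.

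\medskip
\noindent\textbf{Step 1: match the indexing sets of ``active'' roots.} On the $p$-adic side $R_{\mathfrak s,\mu}\subset R(\mc G,\mc S)$ consists of those $\alpha$ with $\mu_\alpha$ nonconstant on $X_\nr(T)\chi_0$; by the formulas recalled after Theorem \ref{thm:1.1} (from \cite{SolParam}) this is governed, root by root, by whether $\chi\circ\alpha^\vee$ is unramified (resp.\ for $SU_3$-type roots, by the ramification condition there). On the Galois side, $R_{\mathfrak s^\vee}$ is built from $R\big(J^\circ,(T^{\vee,\mb W_F})^\circ\big)_\red$ with $J=Z_{G^\vee}(\hat\chi(\mb I_F))$, and the exclusion of roots happens exactly in the ramified $U_{2n+1}$ case via the conjugate-orthogonal/conjugate-symplectic dichotomy. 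The first task is to check that under the LLC for tori the condition ``$\chi\circ\alpha^\vee$ unramified'' translates into ``$\alpha^\vee$ survives into $R(J^\circ,(T^{\vee,\mb W_F})^\circ)$'' — this is a direct unwinding of the LLC for the rank-one torus attached to $\alpha^\vee$, together with the compatibility \eqref{eq:4.1} of the two Weyl-group actions, so that $s_\alpha\in W_{\mathfrak s}$ iff the corresponding reflection lies in $W_{\mathfrak s^\vee}$.

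\medskip
\noindent\textbf{Step 2: match the normalizations $h_\alpha^\vee \leftrightarrow m_\alpha\alpha^\vee$.} The generator $h_\alpha^\vee\in R_{\mathfrak s}^\vee$ is defined as the primitive element of $T/T_\cpt\cap\Q\alpha^\vee$ with $|\alpha(h_\alpha^\vee)|_F>1$; the generator $m_\alpha\alpha^\vee\in R_{\mathfrak s^\vee}$ is $\alpha^\vee$ rescaled by $m_\alpha$, and Lemma \ref{lem:2.1} identifies $m_\alpha=f(F_\alpha/F)$. The key point here is that $f(F_\alpha/F)$ is exactly the integer appearing in \eqref{eq:1.5}: $\alpha^\vee(\varpi_{F_\alpha}^{-1})$ has order-of-pole $f(F_\alpha/F)$ against $\alpha$, and this element either equals $h_\alpha^\vee$ or $(h_\alpha^\vee)^2$ in $T/T_\cpt$. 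So in each case I compare the image of $m_\alpha\alpha^\vee$ in $T/T_\cpt$ (computed via the isomorphism $(T^{\vee,\mb I_F})^\circ_{\mb W_F}\cong X_\nr(T)$, dualized) with $h_\alpha^\vee$, using the description of $\ker\eqref{eq:2.2}$ as $(T^{\vee,\mb W_F})^\circ_{\Fr_F}$. The ``doubling/halving'' bookkeeping on the two sides is parallel by construction, so the two normalizations agree, and hence the indivisible roots of the two systems coincide element-for-element in $T/T_\cpt$.

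\medskip
\noindent\textbf{Step 3: positivity and the exceptional $BC_n/{}^2A_{2n}$ case.} Positivity on the $p$-adic side is determined by $\mc B$ (which determines a basis of $\mc R_{\mathfrak s}$, stabilized by $\Gamma_{\mathfrak s}$), and on the dual side by $B^\vee$ (which determines $R^+(J^\circ,\ldots)$ and hence $\Gamma_{\mathfrak s^\vee}$); since $\mc B \leftrightarrow B^\vee$ under the natural identification \eqref{eq:4.1}, the positive systems match. The remaining subtlety — and I expect this to be the main obstacle — is the exceptional case where $R(\mc G,\mc S)$ has a component of type ${}^2A_{2n}$, producing a $BC_n$ component in $R_{\mathfrak s,\mu}$: here one must be careful that the reduction to $SU_3/U_3/PU_3$ performed on the $p$-adic side (following \cite[\S 4.2]{SolParam}, Badea \cite{Bad}) and the reduction to $U_{2n+1}$ performed on the Galois side (following \cite[\S 5]{AMS4}) produce compatible root data, including the possible presence of both $m_\alpha\alpha^\vee$ and $m_\alpha\alpha^\vee/2$. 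I would handle this by invoking the explicit label computations already quoted in Sections \ref{sec:1} and \ref{sec:2}: the fact that in both places no spurious doubling or halving occurs for these unitary groups, and that the ramified-case exclusion condition (``$\hat\chi\circ\alpha^\vee$ conjugate-symplectic'' vs.\ ``$\chi\circ\alpha^\vee$ nontrivial on $\mf o_{E_\alpha}^\times$'') is literally the same condition transported through the LLC for the relevant torus. Once that is checked case by case, $R_{\mathfrak s}^\vee=R_{\mathfrak s^\vee}$ as based root systems inside $T/T_\cpt$, and the natural bijection is the identity.
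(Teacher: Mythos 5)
Your overall strategy coincides with the paper's: the proof of Lemma \ref{lem:4.1} amounts to showing that, for each $\alpha^\vee \in R(\mc G,\mc S)_\red^\vee$, the root $\alpha$ contributes to $R_{\mf s,\mu}$ (hence to $R_{\mf s}^\vee$) if and only if $\alpha^\vee$ survives into $R(Z_{G^\vee}(\hat\chi(\mb I_F)),(T^{\vee,\mb W_F})^\circ)_\red$ (hence into $R_{\mf s^\vee}$). Your Steps 1 and 3 are exactly the paper's two cases: the non-exceptional roots, handled by unwinding the LLC for the rank-one torus attached to $\alpha^\vee$ (the $\mb W_F$-orbit consisting of mutually orthogonal roots fixed by $\mb W_{F_\alpha}$), and the ${}^2A_{2n}$ roots, where the ramified unitary case comes down to the conjugate-orthogonal/conjugate-symplectic dichotomy, i.e.\ \cite[Lemma 3.4]{GGP}. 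Once the sets of active directions are matched, both $R_{\mf s}^\vee$ and $R_{\mf s^\vee}$ are reduced, so there is a \emph{unique} bijection rescaling each root by a positive real, and positivity is automatic because $\mc B$ and $B^\vee$ correspond under \eqref{eq:4.1}. That is the whole content the lemma requires.

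Your Step 2, however, overclaims. You assert that $R_{\mf s}^\vee$ and $R_{\mf s^\vee}$ coincide element-for-element inside $T/T_\cpt$ because ``the doubling/halving bookkeeping on the two sides is parallel by construction.'' This is not so: $h_\alpha^\vee$ is by definition the primitive generator of $T/T_\cpt \cap \Q\alpha^\vee$, whereas $m_\alpha\alpha^\vee$ need \emph{not} generate $T/T_\cpt \cap \Q\alpha^\vee$ — as the paper notes immediately after the lemma, $m_\alpha\alpha^\vee$ can be twice a primitive element (a long root in a type $C$ component). So literal equality of subsets of $T/T_\cpt$ fails for $R_{\mf s^\vee}$ as defined; the paper repairs this \emph{after} Lemma \ref{lem:4.1} by passing to $\tilde R_{\mf s^\vee}$ with $\tilde m_\alpha = m_\alpha$ or $m_\alpha/2$ and then compares labels in Lemma \ref{lem:4.2}. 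The ``parallel by construction'' step in your write-up is precisely the place where an actual argument is needed, and it is better separated from the bijection claim (which, stated as a mere scaling bijection preserving positivity, does not require it at all). If you restate Step 2 simply as ``both root systems are reduced and consist of positive multiples of the same $\alpha^\vee$'s, so the scaling bijection is unique'' and defer the normalization comparison to the label-matching step, your proof matches the paper's.
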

\begin{proof}
Pick any $\chi \in \mf s_T$.

By construction $R_{\mf s}^\vee$ consists of positive multiples of the $\alpha^\vee \in 
R(\mc G,\mc S)^\vee$ for which $\alpha \in R_{\mf s,\mu}$. Similarly $R_{\mf s^\vee}$ consists
of positive multiples of the $\alpha^\vee$ in
\[
R \big( Z_{G^\vee} (\hat \chi (\mb I_F)), T^{\vee, \mb W_F,\circ} \big)_\red
\subset R \big( G^\vee, T^{\vee, \mb W_F,\circ} \big)_\red \cong
R(G^\vee, S^\vee )_\red \cong R (\mc  G, \mc S )_\red^\vee 
\]
for which $\hat \chi (\mb I_F)$ fixes $U_{\alpha^\vee}$ or $U_{2 \alpha^\vee}$ in
$Z_{G^\vee}( \hat \chi (\mb I_F) )$. Since both $R_{\mf s}^\vee$ and $R_{\mf s^\vee}$ are reduced
root systems, this means that there exists at most one bijection $R_{\mf s}^\vee \to R_{\mf s^\vee}$
which scales each root by a positive real number. Positivity in $R_{\mf s}^\vee$ is determined by
$\mc B$ and positivity in $R_{\mf s^\vee}$ is determined by $B^\vee$, so such a bijection would
automatically preserve positivity of roots.

It remains to check that for $R_{\mf s}^\vee$ and $R_{\mf s^\vee}$ the same elements of 
$R(\mc G, \mc S)_\red^\vee$ are relevant. For the non-exceptional roots we know from 
\eqref{eq:1.1}--\eqref{eq:1.5} that $\alpha \in \Sigma_{\mf s,\mu}$ if and only if $\chi \circ 
\alpha^\vee : F_\alpha^\times \to \C^\times$ is unramified. Via the LLC for tori that becomes: 
\[
\alpha^\vee \circ \hat \chi : \mb W_{F_\alpha} \to \C \rtimes \mb W_{F_\alpha} 
\text{ restricts to the identity on } \mb I_{F_\alpha} . 
\]
In this setting the roots in the associated $\mb W_F$-orbit in
$R(G^\vee,T^\vee)$ are mutually orthogonal, permuted by $\mb W_F$ and fixed by $\mb W_{F_\alpha}$.
Hence $\alpha^\vee \circ \hat \chi (\mb I_{F_\alpha})$ fixes $U_{\alpha^\vee}$ pointwise, which 
means that $\alpha$ belongs to $R \big( Z_{G^\vee} (\hat \chi (\mb I_F)), T^{\vee, \mb W_F,\circ} 
\big)_\red$. This argument also works in the opposite direction, so $\alpha^\vee \in R_{\mf s^\vee}$
and only if $\alpha \in R_{\mf s,\mu}$.

For the exceptional roots $\alpha^\vee$ with $s_\alpha \in W_{\mf s} \cong W_{\mf s^\vee}$,
we saw on page \pageref{eq:1.4} and after \eqref{eq:2.7} that on both sides the issue can
be reduced to a unitary group $U_{2n+1}$. From the list of cases at the end of Section \ref{sec:1}
it is clear that if $U_{2n+1}$ is unramified, $\alpha^\vee$ is relevant for $R_{\mf s}^\vee$ if
and only if it is relevant for $R_{\mf s^\vee}$. 

When the involved group $U_{2n+1}$ only splits over a ramified extension, we need to check one more
detail to arrive at the same conclusion. Namely, if $\alpha^\vee \circ \hat \chi : \mb W_{E_\alpha} 
\to \C^\times$ is conjugate-orthogonal (respectively conjugate-symplectic) then $\chi \circ 
\alpha^\vee : \mf o_{E_\alpha}^\vee \to \C^\times$ must be trivial (respectively of order two).
This is exactly \cite[Lemma 3.4]{GGP}.
\end{proof}

Lemma \ref{lem:4.1} implies that the isomorphism \eqref{eq:4.1} restricts to $W_{\mf s}^\circ
\cong W_{\mf s^\vee}^\circ$.
We choose a $W_{\mf s^\vee}^\circ$-invariant base point $\hat \chi_0$ of $\mf s_T^\vee$ as in 
Section \ref{sec:2}. We use the image $\chi_0$ of $\hat \chi_0$ under the LLC as basepoint of
$\mf s_T$. By the aforementioned equivariance of the LLC for tori, $\chi_0$ is invariant under
$W_{\mf s}^\circ$.

Recall that $h_\alpha^\vee \in R_{\mf s}^\vee$ generates $\Q \alpha^\vee \cap T / T_\cpt$. The
element $m_\alpha \alpha^\vee$ does not necessarily generate $\Q \alpha^\vee \cap T / T_\cpt$.
However, since $R_{\mf s^\vee}$ is part of the root datum $\mc R_{\mf s^\vee}$, $m_\alpha 
\alpha^\vee$ is at most divisible by 2 in $T / T_\cpt$ (namely when it is a long root in a type
$C$ root system). For a better comparison, we replace $m_\alpha \alpha^\vee$ by
$m_\alpha \alpha^\vee / 2$ whenever that is possible. That option was already taken into account
in Section \ref{sec:2}. We denote the new multiple of $\alpha^\vee$ by 
$\tilde m_\alpha \alpha^\vee$ and we write
\[
\tilde R_{\mf s^\vee} = \big\{ \tilde m_\alpha \alpha^\vee : \alpha^\vee \in R (J^\circ,
T^{\vee,\mb W_F, \circ}) \big\} .
\]
Now Lemma \ref{lem:4.1} entails that the isomorphism 
\begin{equation}\label{eq:4.2}
X^* (T_{\mf s}) \cong X^* (X_\nr (T)) \cong T / T_\cpt \cong 
X^* \big( (T^{\vee,\mb I_F})^\circ_{\mb W_F} \big),
\end{equation}
induced by the LLC for tori, sends $R_{\mf s}^\vee$ bijectively to $R_{\mf s^\vee}$.

\begin{lem}\label{lem:4.2}
For any $\alpha \in R_{\mf s,\mu}$: $\lambda (h_\alpha^\vee) = \lambda (\tilde m_\alpha \alpha^\vee)$
and $\lambda^* (h_\alpha^\vee) = \lambda^* (\tilde m_\alpha \alpha^\vee)$.
\end{lem}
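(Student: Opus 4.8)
The plan is to prove Lemma~\ref{lem:4.2} by matching the known formulas for the labels on the two sides, case by case, using the classification of roots that has already been carried out. Both sides reduce to the same combinatorial data, so the core of the argument is a comparison of the tables at the end of Section~\ref{sec:1} with those after \eqref{eq:2.7}.

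\textbf{Step 1: reduce to the factor $\mc G_\alpha$ and identify the base fields.} As in the proof of Lemma~\ref{lem:4.1}, fix $\alpha \in R_{\mf s,\mu}$, and via the isomorphism \eqref{eq:4.2} let $\alpha^\vee \in R_{\mf s^\vee}$ be the corresponding root. Passing to the $F$-simple almost direct factor $\mc G_\alpha = \mr{Res}_{E_\alpha/F} \mc H_\alpha$ (with $\mc H_\alpha$ absolutely simple) as in the proof of Lemma~\ref{lem:2.1}, I would first check that the splitting field $F_\alpha$ appearing in Section~\ref{sec:1} (the splitting field of $\alpha$ as a root of $(\mc G,\mc S)$) coincides with the field $F_\alpha$ of Lemma~\ref{lem:2.1} (the $\mb W_F$-stabiliser of a lift of $\alpha^\vee$ to $R(G^\vee,T^\vee)$); this is a standard compatibility of the LLC for tori with the Galois action on root data, and it is exactly the identification \eqref{eq:4.1}. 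In particular $f(F_\alpha/F) = m_\alpha$ on both sides.

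\textbf{Step 2: non-exceptional roots.} Here I would simply invoke \eqref{eq:1.5} together with the normalisation convention relating $h_\alpha^\vee$ and $\alpha^\vee(\varpi_{F_\alpha}^{-1})$: either $h_\alpha^\vee = \alpha^\vee(\varpi_{F_\alpha}^{-1})$, giving $\lambda(h_\alpha^\vee) = \lambda^*(h_\alpha^\vee) = f(F_\alpha/F)$ by \eqref{eq:1.5}, or $\alpha^\vee(\varpi_{F_\alpha}^{-1}) = (h_\alpha^\vee)^2$, giving \eqref{eq:1.4}, i.e.\ $\lambda(h_\alpha^\vee) = f(F_\alpha/F)$, $\lambda^*(h_\alpha^\vee) = 0$. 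On the Galois side the analogous dichotomy is governed by whether $m_\alpha\alpha^\vee$ lies in $2(T/T_\cpt)$ or not, which is precisely the choice made between \eqref{eq:2.3} and \eqref{eq:2.7}, and $\tilde m_\alpha\alpha^\vee$ was defined so that the halved case is selected exactly when $h_\alpha^\vee$ (rather than $(h_\alpha^\vee)^2$) is a root. So one checks that $h_\alpha^\vee = \tilde m_\alpha\alpha^\vee$ in $T/T_\cpt$ under \eqref{eq:4.2}, and then \eqref{eq:1.4}--\eqref{eq:1.5} match \eqref{eq:2.3}--\eqref{eq:2.7} term by term, using $f(F_\alpha/F) = m_\alpha$ from Step~1.

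\textbf{Step 3: exceptional roots.} As recorded on page~\pageref{eq:1.4} and after \eqref{eq:2.7}, in the exceptional case (a component of type $BC_n$ coming from ${}^2A_{2n}$) both computations reduce to the quasi-split unitary group $U_{2n+1}(F_\alpha/E_\alpha)$ over $E_\alpha$, with all labels for $\mc G_\alpha$ obtained by multiplying those for $U_{2n+1}(F_\alpha/E_\alpha)$ by $f(E_\alpha/F)$. On the $p$-adic side the labels for $U_3$ (hence $U_{2n+1}$, since only a rank-one sub-root-system matters) are the three-case list of Badea \cite{Bad} (valid in all residual characteristics by \cite{SolParam}); on the Galois side they are the three-case list from \cite{AMS4} quoted before \eqref{eq:2.4}. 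I would line these up: $F_\alpha/E_\alpha$ unramified with $\chi_c$ trivial on $T_\cpt \cap SU_3$ corresponds to $\hat\chi(\mb W_{E_\alpha}) \subset Z(GL_{2n+1}(\C))\rtimes\mb W_{E_\alpha}$, both yielding $(\lambda,\lambda^*)(h_\alpha^\vee) = (3,1)$; $F_\alpha/E_\alpha$ unramified with $\chi_c$ nontrivial corresponds to the complementary condition, both yielding $(1,1)$; and $F_\alpha/E_\alpha$ ramified, where the genericity condition "$\chi\circ\alpha^\vee$ nontrivial on $\mf o_{E_\alpha}^\times$" matches "$\hat\chi\circ\alpha^\vee$ conjugate-symplectic" by \cite[Lemma~3.4]{GGP} (already used in the proof of Lemma~\ref{lem:4.1}), both yielding $(1,0)$ for $h_\alpha^\vee$. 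Multiplying through by $f(E_\alpha/F)$ on both sides preserves equality, which gives the claim.

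\textbf{Main obstacle.} The only genuinely delicate point is the bookkeeping in Step~3: one must make sure that in both the $p$-adic reduction (via \cite[\S2, \S4.2]{SolParam}) and the Galois reduction (via \cite[\S3.2]{AMS3} and \cite[\S5]{AMS4}) no spurious doubling or halving of the exceptional root occurs, so that "the labels for $h_\alpha^\vee$" really mean the same thing on the two sides before one multiplies by $f(E_\alpha/F)$. Both reductions already assert this in the excerpt, so the task is to quote them precisely and match the normalisations of $h_\alpha^\vee$ versus $\tilde m_\alpha\alpha^\vee$; once that is pinned down, the lemma follows by inspection of the two tables. A clean way to organise the whole proof is to observe that $\lambda,\lambda^*$ on each side are determined by: (i) the residual degree $f(F_\alpha/F)$, (ii) whether the root is exceptional, (iii) in the exceptional case the ramification of $F_\alpha/E_\alpha$ and the type (conjugate-orthogonal vs.\ conjugate-symplectic, equivalently the behaviour of $\chi_c$ on $T_\cpt\cap SU_3$), and (iv) whether the relevant coroot is divisible by $2$ in $T/T_\cpt$ --- and all four data are transported correctly by the LLC for tori together with Lemmas~\ref{lem:4.1} and~\ref{lem:2.1}.
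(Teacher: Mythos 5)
Your proposal takes essentially the same approach as the paper: the paper's proof of Lemma~\ref{lem:4.2} is simply a pointer to the matching formulas — \eqref{eq:1.5}, \eqref{eq:1.4} versus Lemma~\ref{lem:2.1}, \eqref{eq:2.3}, \eqref{eq:2.7} for non-exceptional roots, and the two three-case $U_3$/$U_{2n+1}$ lists for the exceptional ones — and you carry out exactly that comparison, just more explicitly. The one caution: in Step~1 you assert that the two occurrences of $F_\alpha$ (splitting field of the relative root on the $p$-adic side, $\mb W_F$-stabiliser of a lift of $\alpha^\vee$ on the dual side) agree "by a standard compatibility," which is correct but worth noting is really baked into Lemma~\ref{lem:2.1} itself rather than a separate input; and in Step~3 the alignment $h_\alpha^\vee \leftrightarrow \alpha^\vee$ (unramified cases) versus $h_\alpha^\vee \leftrightarrow \alpha^\vee/2$ (ramified case) is exactly the purpose of replacing $m_\alpha\alpha^\vee$ by $\tilde m_\alpha\alpha^\vee$, so your closing observation (iv) correctly identifies the normalisation issue that makes the tables match.
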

\begin{proof}
For the non-exceptional roots, this was checked in \eqref{eq:1.5}, \eqref{eq:1.4}, Lemma 
\ref{lem:2.1} and \eqref{eq:2.3}. For exceptional roots (i.e.~those for which the issue can be 
reduced to a unitary group $U_3$), it is verified case-by-case in the lists at the end of 
Section \ref{sec:1} and just before \eqref{eq:2.4}.
\end{proof}

We are ready to prove that the desired isomorphism between Hecke algebras on two sides of the LLC.

\begin{thm}\label{thm:4.3}
There is a canonical algebra isomorphism $\psi_{\mf s} : \mc H (\mf s)^{op} \to 
\mc H (\mf s^\vee,q_F^{1/2})$, given by
\begin{itemize}
\item on $\mc O (T_{\mf s})$, $\psi_{\mf s}$ is induced by the bijection 
$T_{\mf s} \cong T_{\mf s^\vee}$ from the LLC for tori,
\item $\psi_{\mf s} (N_w) = N_{w^{-1}}$ for all $w \in W_{\mf s} \cong W_{\mf s^\vee}$.
\end{itemize}
\end{thm}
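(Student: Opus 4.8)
The plan is to obtain $\psi_{\mf s}$ as the composite of a canonical \emph{identification} of the two extended affine Hecke algebras with a standard order-reversing symmetry of such an algebra. The order reversal is what is needed here: the functor \eqref{eq:1.6} naturally produces modules over $\End_G(\Pi_{\mf s})^{op}\cong\mc H(\mf s)^{op}$, whereas on the Galois side $\Irr$ is parametrised by $\mc H(\mf s^\vee,q_F^{1/2})$ itself, so what one really wants is an anti-isomorphism $\mc H(\mf s)\to\mc H(\mf s^\vee,q_F^{1/2})$, and inverting Weyl-group elements is precisely the anti-isomorphism compatible with the Bernstein presentation.

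First I would build the identification. By the LLC for tori, together with our choice of the $W_{\mf s^\vee}^\circ$-invariant basepoint $\hat\chi_0$ of $\mf s_T^\vee$ and of $\chi_0$ as its image in $\mf s_T$, the bijection $\mf s_T\cong\mf s_T^\vee$ sends $\chi_0$ to $\hat\chi_0$ and intertwines the two $X_\nr(T)$-actions; it therefore yields an isomorphism of complex varieties $T_{\mf s}\cong T_{\mf s^\vee}$, hence of algebras $\mc O(T_{\mf s})\cong\mc O(T_{\mf s^\vee})$. Under the identification \eqref{eq:4.2} induced by the LLC for tori, $R_{\mf s}^\vee$ is carried bijectively onto the set of roots of the Galois-side affine Hecke algebra, preserving positivity, by Lemma \ref{lem:4.1} and the normalisation of the multiples $\tilde m_\alpha\alpha^\vee$ recalled before Lemma \ref{lem:4.2}; by Lemma \ref{lem:4.2} the label functions $\lambda,\lambda^*$ agree; and \eqref{eq:4.1}, which is equivariant for the LLC for tori, identifies $W_{\mf s}$ with $W_{\mf s^\vee}$, restricting to $W_{\mf s}^\circ\cong W_{\mf s^\vee}^\circ$ (Lemma \ref{lem:4.1}) and hence inducing $\Gamma_{\mf s}\cong\Gamma_{\mf s^\vee}$ together with the corresponding actions on the root data. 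Now Theorem \ref{thm:3.4} presents $\mc H(\mf s)=\End_G(\Pi_{\mf s})$ as $\mc H(\mc R_{\mf s},\lambda,\lambda^*,q_F^{1/2})\rtimes\Gamma_{\mf s}$, and Theorem \ref{thm:2.2}, with $\mb z$ specialised to $q_F^{1/2}$, presents $\mc H(\mf s^\vee,q_F^{1/2})$ as $\mc H(\mc R_{\mf s^\vee},\lambda,\lambda^*,q_F^{1/2})\rtimes\Gamma_{\mf s^\vee}$; in particular both are \emph{honest} crossed products, with no twisting $2$-cocycle. Since all the data match term by term, the Bernstein presentations coincide, and one obtains a canonical algebra isomorphism $\iota\colon\mc H(\mf s)\isom\mc H(\mf s^\vee,q_F^{1/2})$ sending $\theta_x$ to $\theta_x$ (under \eqref{eq:4.2}) and $N_w$ to $N_w$.

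Next I would introduce the $\mathbb C$-linear anti-automorphism $\Psi$ of $\mc H(\mf s^\vee,q_F^{1/2})$ defined on the Bernstein generators by $\Psi(\theta_x)=\theta_x$ for $x\in T/T_\cpt$ and $\Psi(N_w)=N_{w^{-1}}$ for $w\in W_{\mf s^\vee}$, and verify that it respects every defining relation once products are reversed. The quadratic relations $N_{s_\alpha}^2=(q_\alpha^{1/2}-q_\alpha^{-1/2})N_{s_\alpha}+1$ and the braid relations among the $N_w$ are preserved because $\Psi$ fixes each $N_{s_\alpha}$; the relations $N_\gamma N_{\gamma'}=N_{\gamma\gamma'}$, $N_\gamma N_{s_\alpha}N_\gamma^{-1}=N_{s_{\gamma\alpha}}$ and $N_\gamma\theta_x N_\gamma^{-1}=\theta_{\gamma x}$ for $\gamma,\gamma'\in\Gamma_{\mf s^\vee}$ are self-dual under $w\mapsto w^{-1}$ combined with order reversal; and the Bernstein--Lusztig relation $N_{s_\alpha}\theta_x-\theta_{s_\alpha x}N_{s_\alpha}=c_\alpha(\theta_x-\theta_{s_\alpha x})$, whose coefficient $c_\alpha\in\mc O(T_{\mf s^\vee})$ depends only on $\alpha$ and on $\lambda(h_\alpha^\vee),\lambda^*(h_\alpha^\vee)$, not on $x$, is carried by $\Psi$ to the same relation with $x$ replaced by $s_\alpha x$. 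Hence $\Psi$ is a well-defined anti-automorphism, equivalently an algebra isomorphism $\mc H(\mf s^\vee,q_F^{1/2})^{op}\isom\mc H(\mf s^\vee,q_F^{1/2})$.

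Finally I would set $\psi_{\mf s}=\Psi\circ\iota$, regarded as an algebra map $\mc H(\mf s)^{op}\to\mc H(\mf s^\vee,q_F^{1/2})$; being a composite of isomorphisms it is an isomorphism, and by construction $\psi_{\mf s}(\theta_x)=\theta_x$ is the map induced by the LLC-for-tori bijection $T_{\mf s}\cong T_{\mf s^\vee}$, while $\psi_{\mf s}(N_w)=N_{w^{-1}}$ for all $w\in W_{\mf s}\cong W_{\mf s^\vee}$. Canonicity is then clear: $\iota$ is canonical given the Whittaker datum (Theorem \ref{thm:3.4}) and the basepoint $\hat\chi_0$ (which pins down $\chi_0$ through the LLC for tori), while $\Psi$ is intrinsic to the Bernstein presentation. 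The only step carrying genuine content is the compatibility of $\Psi$ with the Bernstein--Lusztig relation in the unequal-parameter and non-reduced cases; this is a routine but mildly delicate computation with the explicit commutation formula, and it is exactly there that one uses Lemma \ref{lem:4.2} to match \emph{both} $\lambda$ and $\lambda^*$.
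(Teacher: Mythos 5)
Your proof is correct and takes essentially the same route as the paper: both rest on matching the Bernstein presentations via Theorems \ref{thm:3.4} and \ref{thm:2.2} together with Lemmas \ref{lem:4.1} and \ref{lem:4.2}, and then observing that $\theta_x \mapsto \theta_x$, $N_w \mapsto N_{w^{-1}}$ is compatible with the multiplication rules (including the Bernstein--Lusztig relation) once products are reversed. The paper condenses this into a single appeal to the multiplication rules of \cite[Proposition 2.2]{AMS3}, whereas you make it explicit by factoring $\psi_{\mf s}$ as the identification $\iota$ followed by the standard anti-automorphism $\Psi$ of the extended affine Hecke algebra; the mathematical content is the same.
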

\begin{proof}
By Theorem \ref{thm:3.4} there is a unique isomorphism of $\mc O (T_{\mf s^\vee})$-modules 
with these properties. By the $W_{\mf s}$-equivariance of the LLC for tori via \eqref{eq:4.1},
$\mc O (T_{\mf s}) \isom \mc O (T_{\mf s^\vee})$ is $W_{\mf s}$-equivariant.
Combine that with Lemma \ref{lem:4.2} and the multiplication rules in extended affine Hecke
algebras \cite[Proposition 2.2]{AMS3}.
\end{proof}

We note that Theorem \ref{thm:4.3} is compatible with parabolic induction from standard
parabolic and standard Levi subgroups of $G$. Indeed, for a standard Levi subgroup $M$ of
$G$ one obtains the same isomorphism as in Theorem \ref{thm:4.3}, on the subalgebra generated
by $\mc O (T_{\mf s})$ and the $N_w$ with $w \in N_M (T) / T$.

\begin{rem}\label{rem:4.5}
It is also possible to construct a canonical algebra isomorphism $\mc H (\mf s) \cong 
\mc H (\mf s^\vee, q_F^{1/2})$. To that end, we only have to change the condition 
$\psi_{\mf s} (N_w) = N_{w^{-1}}$ in Theorem \ref{thm:4.3} to $\psi_{\mf s} (N_w) = N_w$. 
The two options are related by the canonical isomorphism 
\[
\begin{array}{cccc}
\mc H (\mf s^\vee, q_F^{1/2}) & \isom & \mc H (\mf s^\vee, q_F^{1/2})^{op} \\
f N_w & \to & N_{w^{-1}} f & \quad f \in \mc O (T_{\mf s^\vee}), w \in W_{\mf s^\vee}.
\end{array}
\]
\end{rem}

\section{Parameters of generic representations}
\label{sec:red}

With Theorem \ref{thm:4.3} and \eqref{eq:4.2} we can reformulate Theorem \ref{thm:6.4}
in terms of\\ $\mc H (\mf s^\vee, q_F^{1/2})$-modules. Then it says:
$\pi$ is $(U,\xi)$-generic if and only if 
\begin{equation}\label{eq:6.4}
\Hom_{\mc H ( W_{\mf s^\vee},q_F^\lambda)} \big( \Hom_G (\Pi_{\mf s},\pi), \mr{St} \big) 
\quad \text{is nonzero.}
\end{equation}
We want to investigate which Langlands parameters should correspond to generic representations
in Theorem \ref{thm:2.3}. With the reduction theorems from \cite[\S 8--9]{Lus-Gr} we translate 
the study of (irreducible) representations of $\mc H (\mf s)^{op} \cong \mc H (\mf s^\vee, 
q_F^{1/2})$ to representations of graded Hecke algebras. Subsequently we take a closer look
at the geometric construction of the representations of such algebras. We need to revisit
the methods from \cite{Lus-Gr} and \cite{AMS2,AMS3}, because the aspects we are interested in 
were not considered previously and require quite some details.

\subsection{Reduction to graded Hecke algebras} \

To ease the notation, from now on the elements of $R_{\mf s^\vee}$ will be called just 
$\alpha^\vee$, instead of $m_\alpha \alpha^\vee$ as previously.
For a $\mc H (\mf s^\vee, q_F^{1/2})$-module $V$ and $t \in T_{\mf s^\vee}$ we write
\[
V_t = \{ v \in V : \text{ there exists } n \in \N \text{ such that }
(\theta_x - x(t))^n v = 0 \text{ for all } x \in X \} .
\]
If $V_t$ is nonzero, then we call $t$ a weight of $V$. For a $W_{\mf s^\vee}$-stable subset 
$U \subset T_{\mf s^\vee}$, let $\Mod (\mc H_{\mf s^\vee} )_U$ be the category of finite 
length $\mc H_{\mf s^\vee}$-modules all whose $\mc O (T_{\mf s^\vee})$-weights belong to $U$.
There is a natural equivalence of categories
\[
\begin{array}{ccl}
\Mod \big( \mc H (\mf s^\vee, q_F^{1/2}) \big)_U & \to & \bigoplus_{t \in U / W_{\mf s^\vee}}   
\Mod \big( \mc H (\mf s^\vee, q_F^{1/2}) \big)_{W_{\mf s^\vee} t} \\
V & \mapsto & \bigoplus_{t \in U / W_{\mf s^\vee}} 
\Big( \sum_{w \in W_{\mf s^\vee}} V_{w t} \big) 
\end{array}.
\]
Let $T_{\mf s^\vee,\mr{un}} \subset T_{\mf s^\vee}$ be the maximal compact real subtorus. 
It is homeomorphic to the set of unitary characters in $T_{\mf s} = X_\nr (T) \chi_0$. 
For $u \in T_{\mf s^\vee,\mr{un}}$ we put
\[
R_{\mf s^\vee,u} = \{ \alpha^\vee \in R_{\mf s^\vee} : s_\alpha (u) = u \} .
\]
This is a root system and its Weyl group is contained in $W_{\mf s^\vee,u}$. Recall that
we fixed a Borel subgroup $B^\vee \subset G^\vee$, which provides $R_{\mf s^\vee,u}$
with a notion of positive roots. Let $\Gamma_{\mf s^\vee,u}$ be the stabilizer of
$R_{\mf s^\vee,u}^+ = R_{\mf s^\vee}^+ \cap R_{\mf s^\vee,u}$ in $W_{\mf s^\vee,u}$, then
\[
W_{\mf s^\vee,u} = W(R_{\mf s^\vee,u}) \rtimes \Gamma_{\mf s^\vee,u} .
\]
From these objects we build a new root datum 
\[
\mc R_{\mf s^\vee,u} = \big( R_{\mf s^\vee,u}, X^* (T_{\mf s^\vee}), R_{\mf s^\vee,u}^\vee, 
X_* (T_{\mf s^\vee}) \big) ,
\]
which is endowed with an action of $\Gamma_{\mf s^\vee,u}$. That gives rise to an extended
affine Hecke algebra
\[
\mc H_{\mf s^\vee,u} = \mc H ( \mc R_{\mf s^\vee,u}, \lambda, \lambda^*, q_F^{1/2})
\rtimes \Gamma_{\mf s^\vee,u}. 
\]
We denote the standard generators of this algebra (as $\mc O (T_{\mf s^\vee})$-module) by
$N_{w,u}$, where $w \in W_{\mf s^\vee,u}$. 

The positive part of $X_\nr (T)$ is $X_\nr^+ (T) = \Hom_\Z (T,\R_{>0})$. Via the isomorphism 
\eqref{eq:2.8}, $X_\nr^+ (T)$ can be regarded as a subgroup of $(T^\vee,\mb I_F)_{\mb W_F}$, 
and as such it acts on $T_{\mf s^\vee}$. In particular that yields a subset $W_{\mf s^\vee,u} 
X_\nr^+ (T) u$ of $T_{\mf s^\vee}$. Notice that every element of $T_{\mf s^\vee}$ lies in a 
subset of the form $X_\nr^+ (T) u$ with $u \in T_{\mf s^\vee,\mr{un}}$.

\begin{thm}\label{thm:6.5}
There exists a canonical equivalence of categories 
\[
\begin{array}{cccc}
\ind_u : & \Mod (\mc H_{\mf s^\vee,u} )_{X_\nr^+ (T) u} & \to &
\Mod \big( \mc H (\mf s^\vee, q_F^{1/2}) \big)_{W_{\mf s^\vee} X_\nr^+ (T) u} \\
 & V_{X_\nr^+ (T) u} := \sum_{t \in X_\nr^+ (T)} V_{tu} & \text{\reflectbox{$\mapsto$}} & V
\end{array}
\]
such that:
\enuma{
\item $\ind_u$ is given by localization of the centres on both sides, followed by induction.
\item $\ind_u$ and $\ind_u^{-1}$ preserve central characters.
\item For $V \in \Mod (\mc H_{\mf s^\vee,u} )_{X_\nr^+ (T) u}$ there is an isomorphism}
\[
\Hom_{\mc H (W_{\mf s^\vee},q_F^\lambda)} ( \ind_u V, \mr{St} ) \cong \Hom_{\mc H 
(W_{\mf s^\vee,u},q_F^\lambda)} ( V, \mr{St} ).\
\] 
\end{thm}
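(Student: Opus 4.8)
\emph{Strategy.} The plan is to reduce both sides of part (c) to a ``Whittaker module'' of the relevant Hecke algebra, and then show that the reduction functor $\ind_u$ carries one Whittaker module to the other. On the large side, transporting Lemma \ref{lem:3.8} to the Galois side via $\psi_{\mf s}$ and \eqref{eq:4.2}, the module
\[
\mc W := \ind_{\mc H (W_{\mf s^\vee},q_F^\lambda)}^{\mc H (\mf s^\vee,q_F^{1/2})}(\det)
\]
is free of rank one over $\mc O (T_{\mf s^\vee})$, with a distinguished generator $\mathbf 1$ on which $\mc H (W_{\mf s^\vee},q_F^\lambda)$ acts through the character $\det$. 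On the small side, set $\mc W_u := \ind_{\mc H (W_{\mf s^\vee,u},q_F^\lambda)}^{\mc H_{\mf s^\vee,u}}(\det)$, where $\det$ again makes sense as a character of $\mc H (W_{\mf s^\vee,u},q_F^\lambda)$: indeed $W_{\mf s^\vee,u}$ is a subgroup of $W_{\mf s^\vee}$ acting on the same lattice $T/T_\cpt$, and the labels for $\mc R_{\mf s^\vee,u}$ are the restrictions of those for $\mc R_{\mf s^\vee}$, so the small $\det$ is the restriction of the large $\det$. Since $\mc H (W_{\mf s^\vee},q_F^\lambda)$ and $\mc H (W_{\mf s^\vee,u},q_F^\lambda)$ are finite dimensional and semisimple (cf. \eqref{eq:6.7}) and $\det$ is one dimensional, Frobenius reciprocity gives, for every module $M$ over $\mc H (\mf s^\vee,q_F^{1/2})$,
\[
\dim \Hom_{\mc H (W_{\mf s^\vee},q_F^\lambda)}(M,\det) = \dim \Hom_{\mc H (\mf s^\vee,q_F^{1/2})}(\mc W,M),
\]
and the analogue with $u$-subscripts on the small side. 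Hence it suffices to construct, for $V \in \Mod (\mc H_{\mf s^\vee,u})_{X_\nr^+ (T) u}$, an isomorphism $\Hom_{\mc H (\mf s^\vee,q_F^{1/2})}(\mc W, \ind_u V) \cong \Hom_{\mc H_{\mf s^\vee,u}}(\mc W_u, V)$.

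\emph{Localization and the Morita picture.} By part (a), $\ind_u$ is realized after localizing the centres along the $W_{\mf s^\vee}$-stable locus $W_{\mf s^\vee} X_\nr^+ (T) u$. Write $\mc H_{loc}$ for $\mc H (\mf s^\vee,q_F^{1/2})$ localized over a $W_{\mf s^\vee}$-stable tubular neighbourhood $U = \bigsqcup_{w \in W_{\mf s^\vee}/W_{\mf s^\vee,u}} w U_u$ of $X_\nr^+ (T) u$ with $U_u$ a $W_{\mf s^\vee,u}$-stable neighbourhood of $X_\nr^+ (T) u$, and let $e \in \mc H_{loc}$ be the idempotent projecting onto the generalized weight spaces with weights in $U_u$. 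By Lusztig's reduction theorem \cite{Lus-Gr} in the form used in \cite{AMS3} (the construction underlying $\ind_u$), $e \mc H_{loc} e$ is canonically identified with $\mc H_{\mf s^\vee,u}$ localized over $U_u / W_{\mf s^\vee,u}$, and $\ind_u V$ is obtained from $V$ by localizing and inducing along this inclusion, i.e. $\ind_u V = \mc H_{loc} e \otimes_{e \mc H_{loc} e} V$. Since $\ind_u V$ has all its weights in $W_{\mf s^\vee} X_\nr^+ (T) u$, any $\mc H (\mf s^\vee,q_F^{1/2})$-homomorphism out of $\mc W$ into it factors through the localization $\mc W_{loc} := \mc W \otimes_{Z(\mc H (\mf s^\vee,q_F^{1/2}))} \mc O (U/W_{\mf s^\vee})$, which under $\mc W \cong \mc O (T_{\mf s^\vee})$ is just $\mc O (U)$ with $\mathbf 1$ the constant function. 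Multiplication by $e$ is an exact functor $\mc H_{loc}\text{-}\Mod \to e\mc H_{loc}e\text{-}\Mod$ taking $\ind_u V$ to $V$ and $\mc W_{loc}$ to $e\mc W_{loc} = e\,\mc O (U) = \mc O (U_u)$, so
\[
\Hom_{\mc H (\mf s^\vee,q_F^{1/2})}(\mc W, \ind_u V) \cong \Hom_{e\mc H_{loc}e}(e\mc W_{loc}, V) = \Hom_{\mc H_{\mf s^\vee,u}}(e\mc W_{loc}, V),
\]
the last equality because $V$ is already supported at the small locus. Note $e\mc W_{loc}$ is free of rank one over the localized $\mc O (T_{\mf s^\vee})$, with cyclic generator $e\mathbf 1$.

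\emph{Main point: $e\mc W_{loc}$ is the small Whittaker module.} It remains to identify $e\mc W_{loc}$ with the localization of $\mc W_u$ as $\mc H_{\mf s^\vee,u}$-module. Both are free of rank one over $\mc O (U_u)$, so it is enough to match the actions of the finite dimensional subalgebra $\mc H (W_{\mf s^\vee,u},q_F^\lambda) \subset \mc H_{\mf s^\vee,u}$ on their cyclic generators; concretely, one must show that the standard generators $N_{w,u}$ ($w \in W_{\mf s^\vee,u}$) act on $e\mathbf 1$ through $\det$. This is where the normalizations of Sections \ref{sec:Whit}--\ref{sec:4} are used: inside $\mc H_{loc}$ the $N_{w,u}$ are expressed, by the same localized formulas as in \cite{Lus-Gr,AMS3}, through the intertwining operators $\mc T_w$ ($w \in W_{\mf s^\vee,u}$) of $\mc H (\mf s^\vee,q_F^{1/2})$ together with the operators $J'_\gamma$ of Theorem \ref{thm:3.4}, and by Proposition \ref{prop:3.2}, Lemma \ref{lem:3.3} and \eqref{eq:3.13} these operators act on $\mathbf 1$ exactly so that $N_w J'_\gamma \cdot \mathbf 1 = \det (N_w J'_\gamma)\,\mathbf 1$; pushing the rational correction factors relating $N_{w,u}$ to $e\mc T_w$ through evaluation at $e\mathbf 1$ then gives $N_{w,u}\cdot e\mathbf 1 = \det (N_{w,u})\,e\mathbf 1$. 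I expect this to be the main obstacle: one has to trace the intertwiners through the reduction of \cite{Lus-Gr,AMS3} and verify that the Whittaker normalization of Section \ref{sec:Whit} is preserved, the subtle point being that $W_{\mf s^\vee,u}$ is a ``pseudo-Levi'' reflection subgroup, so $\mc H (W_{\mf s^\vee,u},q_F^\lambda)$ has no naive embedding into $\mc H (W_{\mf s^\vee},q_F^\lambda)$ and the comparison must be carried out inside $\mc H_{loc}$. Granting this, unlocalizing (again using that $V$ is supported at the small locus) yields $\Hom_{\mc H_{\mf s^\vee,u}}(e\mc W_{loc}, V) \cong \Hom_{\mc H_{\mf s^\vee,u}}(\mc W_u, V)$, and Frobenius reciprocity plus semisimplicity on the small side turn this into $\Hom_{\mc H (W_{\mf s^\vee,u},q_F^\lambda)}(V, \det)$. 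Composing the displayed isomorphisms gives part (c).
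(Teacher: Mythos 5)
Your route is genuinely different from the paper's. The paper derives (c) from \cite[Theorem 2.5]{AMS3}, which says that as a functor on $\mc H(W_{\mf s^\vee},q_F^\lambda)$-modules, $\ind_u$ is ordinary induction $V\mapsto\ind_{\mc H(W_{\mf s^\vee,u},q_F^\lambda)}^{\mc H(W_{\mf s^\vee},q_F^\lambda)}V$ (with the smaller finite Hecke algebra embedded in a suitable completion of $\mc H(\mf s^\vee,q_F^{1/2})$); once this is in hand, (c) is immediate by Frobenius reciprocity. You instead dualize to Whittaker modules $\mc W$, $\mc W_u$ and use the localization/Morita picture to reduce (c) to identifying $e\mc W_{\mathrm{loc}}$ with the localized $\mc W_u$, which is exactly the specialization of the AMS3 input to the character $\det$. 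So the two arguments converge on the same key fact; the difference is that the paper cites it while you attempt to re-derive it concretely via the Whittaker normalization of Section \ref{sec:Whit}.

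The gap is in your ``main point'', which you yourself flag as the main obstacle and then proceed to ``grant''. The assertion that ``pushing the rational correction factors relating $N_{w,u}$ to $e\mc T_w$ through evaluation at $e\mb 1$'' yields $N_{w,u}\cdot e\mb 1=\det(N_{w,u})\,e\mb 1$ is where all the content lies, and it is not carried out. Three specific difficulties are left unaddressed. First, a simple reflection of $W(R_{\mf s^\vee,u})$ need not be simple in $W(R_{\mf s^\vee})$ (this is the ``pseudo-Levi'' issue you mention but do not resolve), so Proposition \ref{prop:3.2} and Lemma \ref{lem:3.3} --- which compute $\mb 1\cdot\mc T_{s_\alpha}$ for \emph{simple} $s_\alpha$ only --- do not directly give $\mb 1\cdot\mc T_{s_\alpha}$ for the relevant reflections; one must first express it via a product and a nontrivial rational function. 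Second, the Lusztig reduction sends $\tilde N_{w,u}$ to $\mr{Ad}(\theta_\delta)(\mc T_w)1_{X_\nr^+(T)u}$ for a certain $\theta_\delta$ (as the paper's proof records from \cite[\S 8]{Lus-Gr} and \cite[\S 2.1]{SolAHA}), and you never account for this conjugation, which a priori disturbs the normalization ${\bf 1}\cdot\mc T_{s_\alpha}\theta_{-\epsilon_\alpha\alpha}=-{\bf 1}$. Third, the normalization data $\epsilon_\alpha$, $n_\alpha$ of Section \ref{sec:Whit} are computed relative to the big root system and the basepoint $\chi_0$, while the small algebra is normalized at $u$, where $\theta_\alpha(u)$ can be $-1$ for $\alpha\in R_{\mf s^\vee,u}$; the corresponding $c$-function factors in the analogues of \eqref{eq:3.29}--\eqref{eq:3.30} therefore need a fresh evaluation. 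As a result the proposal does not replace the citation of \cite[Theorem 2.5]{AMS3} with a complete argument; it identifies the right target but stops short of it.
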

\begin{proof}
The original version of this equivalence is \cite[Theorem 8.6]{Lus-Gr}, but the setup is
slightly different there. The version we need, including the canonicity and the group
$\Gamma_{\mf s^\vee}$, is shown in \cite[Theorem 2.1.2]{SolAHA}. Strictly speaking 
$\Gamma_{\mf s_\vee}$ must fix a point of $T_{\mf s^\vee}$ in \cite{SolAHA}. Fortunately, that 
does not play a role in the proof, it works in the generality of our setting because 
we consider $u$ that need not be fixed by $W(R_{\mf s^\vee})$. The properties (a) and (b) are
checked in \cite[Theorem 2.5]{AMS3}.

By \cite[Theorem 2.5]{AMS3} the effect of the thus obtained functor $\ind_u$ on \\
$\mc H (W_{\mf s^\vee},q_F^\lambda)$-modules is
\begin{equation}\label{eq:6.5}
V \; \mapsto \; 
\ind_{\mc H (W_{\mf s^\vee,u},q_F^\lambda)}^{\mc H (W_{\mf s^\vee},q_F^\lambda)} V.
\end{equation}
In this expression $\mc H (W (R_{\mf s^\vee}),q_F^\lambda)$ and
$\C [\Gamma_{\mf s^\vee} \cap \Gamma_{\mf s^\vee, u}]$ are naturally subalgebras of
$\mc H (W_{\mf s^\vee},q_F^\lambda)$, but we have to be careful with
the $\tilde{N}_{w,u}$ for which $w \in \Gamma_{\mf s^\vee, u}$ but $w \not\in
\Gamma_{\mf s^\vee}$. From \cite[\S 8]{Lus-Gr} and \cite[\S 2.1]{SolAHA} one sees that
$\tilde N_{w, u}$ is sent to
\[
\tilde{\mc T}_w 1_{X_\nr^+ (T) u} = \mc T_w 1_{X_\nr^+ (T) u}
\]
in a suitable completion of $\mc H (\mf s^\vee, q_F^{1/2})$. Here $\mc T_w$ is as in
Section \ref{sec:Whit}, transferred to completions of $\mc H (\mf s^\vee, q_F^{1/2})$
via Theorem \ref{thm:4.3}. From \eqref{eq:6.5} and
Frobenius reciprocity (in a suitably completed algebra) we obtain (c).
\end{proof}

With Theorem \ref{thm:6.5} we can reduce the study of $\mc H (\mf s^\vee,q_F^{1/2})$-modules that
admit a central character to modules of another affine Hecke algebra, $\mc H_{\mf s^\vee,u}$,
such that for the new modules the compact part of the central character is fixed by the new
extended Weyl group. In this process all relevant properties of modules are preserved.

Let $\mf T_u (T_{\mf s^\vee})$ be the tangent space of $T_{\mf s^\vee}$ at $u$. It can be identified
with\\ $\C \otimes_\Z X_* (T_{\mf s^\vee})$, so $R_{\mf s^\vee,u}$ can be regarded as a subset of
the cotangent space $\mf T_u^* (T_{\mf s^\vee})$. For $\alpha^\vee \in R_{\mf s^\vee}$ we define
a parameter 
\[
k^u_{\alpha^\vee} = (\lambda (h_\alpha^\vee) + \alpha (u) \lambda^* (h_\alpha^\vee)) \log (q_F)/2 
\in \R_{\geq 0} .
\]
The graded Hecke algebra $\mh H (W(R_{\mf s^\vee,u}), \mf T_u (T_{\mf s^\vee}), k^u)$ is the vector
space\\ $\mc O (\mf T_u (T_{\mf s^\vee})) \otimes \C [W (R_{\mf s^\vee,u})]$ with multiplication
defined by
\begin{itemize}
\item $\mc O (\mf T_u (T_{\mf s^\vee}))$ and $\C [W (R_{\mf s^\vee,u})]$ are embedded as unital subalgebras,
\item for $\alpha^\vee \in R_{\mf s^\vee,u}$ simple and $f \in \mc O (\mf T_u (T_{\mf s^\vee}))$:
\[
s_\alpha f - s_\alpha (f) s_\alpha = k^u_{\alpha^\vee} (f - s_\alpha (f)) / \alpha^\vee .
\]
\end{itemize}
The group $\Gamma_{\mf s^\vee,u}$ acts naturally on this algebra, by
\[
\gamma (w f) = (\gamma w \gamma^{-1}) \, f \circ \gamma^{-1} \qquad w \in W (R_{\mf s^\vee,u}), 
f \in \mc O (\mf T_u (T_{\mf s^\vee})) .
\]
We define the extended graded Hecke algebra 
\[
\mh H_{\mf s,u} = \mh H (W(R_{\mf s^\vee,u}), \mf T_u (T_{\mf s^\vee}), k^u) \rtimes \Gamma_{\mf s^\vee,u} .
\]
Its centre is $\mc O (\mf T_u (T_{\mf s^\vee}))^{W_{\mf s^\vee,u}}$ and weights of 
$\mh H_{\mf s^\vee,u}$-modules are by default considered with respect to the maximal commutative 
subalgebra $\mc O (\mf T_u (T_{\mf s^\vee}))$. Like for affine Hecke algebras, for a 
$W_{\mf s^\vee,u}$-stable subset $U \subset \mf T_u (T_{\mf s^\vee})$ we have the category 
$\Mod (\mh H_{\mf s^\vee,u})_U$ of finite
length modules all whose $\mc O (\mf T_u (T_{\mf s^\vee}))$-weights belong to $U$.

Recall the exponential map for $T_{\mf s^\vee}$ based at $u$:
\[
\begin{array}{cccc}
\exp_u : & \mf T_u (T_{\mf s^\vee}) & \to & T_{\mf s^\vee} \\
& y & \mapsto & u \exp (y) 
\end{array}.
\]

\begin{thm}\label{thm:6.6}
The map $\exp_u$ induces a canonical equivalence of categories
\[
\exp_{u*} : \Mod (\mh H_{\mf s^\vee,u})_{\R \otimes X_* (T_{\mf s^\vee})} \to
\Mod (\mc H_{\mf s^\vee,u})_{X_\nr^+ (T) u} , 
\]
such that:
\enuma{
\item $\exp_{u*}$ comes from an isomorphism (induced by $\exp_u$) between localized versions of
$\mh H_{\mf s^\vee,u}$ and of $\mc H_{\mf s^\vee,u}$.
\item $\exp_{u*}$ does not change the vector spaces underlying the modules.
\item The effect of $\exp_{u*}$ on $\mc O (\mf T_u (T_{\mf s^\vee}))$-weights is $\exp_u$.
\item For any $V \in \Mod (\mh H_{\mf s^\vee,u})_{\R \otimes X_* (T_{\mf s^\vee})}$ there is an
isomorphism}
\[
\Hom_{W_{\mf s^\vee,u}} (V,\det) \cong 
\Hom_{\mc H (W_{\mf s^\vee,u},q_F^\lambda)} (\exp_{u*} V, \mr{St}) .
\]
\end{thm}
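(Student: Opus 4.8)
The plan is to run the argument behind Theorem~\ref{thm:6.5}(c) one reduction step further. The equivalence $\exp_{u*}$ is Lusztig's second reduction theorem \cite[\S 9--10]{Lus-Gr}, in the refined form -- with the group $\Gamma_{\mf s^\vee,u}$ and the canonicity -- worked out in \cite[\S 2]{SolAHA} and \cite[\S 2]{AMS3}; properties (a), (b), (c) are established there, just as (a) and (b) of Theorem~\ref{thm:6.5} come from \cite[Theorem 2.5]{AMS3}. So the only new content is (d), and the point is to follow the finite-dimensional subalgebras through the reduction.

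First I would record that both $\mc H (W_{\mf s^\vee,u},q_F^\lambda)$ and $\C [W_{\mf s^\vee,u}]$ are semisimple: the former by the argument behind \eqref{eq:6.7} applied to the root subsystem $R_{\mf s^\vee,u}$ (the relevant parameters $q_F^{\lambda(h_\alpha^\vee)}$ are real and $>1$), the latter since $W_{\mf s^\vee,u}$ is finite. Hence $\Hom(\,\cdot\,,\det)$ is nonzero on either side precisely when $\det$ occurs as a constituent, so it suffices to compare the multiplicity of $\det$. Next I would describe the effect of $\exp_{u*}$ on the finite parts. By (a) the functor is localization of the centres followed by induction, and the underlying isomorphism of localized algebras -- as in \cite[\S 9]{Lus-Gr} and \cite[\S 2]{SolAHA} -- carries the graded simple reflection $s_\alpha$ (for $\alpha^\vee \in R_{\mf s^\vee,u}$) to a localization of the normalized intertwiner $\mc T_{s_\alpha}$ of Section~\ref{sec:Whit} (transported via Theorems~\ref{thm:4.3} and \ref{thm:6.5}; recall $\mc T_{s_\alpha}^2 = 1$ by Proposition~\ref{prop:3.2}), and carries $\gamma \in \Gamma_{\mf s^\vee,u}$ to $N_{\gamma,u}$, the latter canonically because $q\mc E$ is the constant sheaf (cf. the proof of Theorem~\ref{thm:2.2}). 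By the Bernstein-type formula \eqref{eq:3.29}, $\mc T_{s_\alpha}$ and $N_{s_\alpha,u}$ generate the same subalgebra over the localization of $\mc O (T_{\mf s^\vee})$, and from this I would deduce that the $W_{\mf s^\vee,u}$-representation underlying $V$ and the $\mc H (W_{\mf s^\vee,u},q_F^\lambda)$-representation underlying $\exp_{u*}V = V$ correspond under the Tits deformation isomorphism $\C [W_{\mf s^\vee,u}] \cong \mc H (W_{\mf s^\vee,u},q_F^\lambda)$, which matches $\det$ with $\det$. Frobenius reciprocity in the completed algebra, exactly as at the end of the proof of Theorem~\ref{thm:6.5}, then gives the desired isomorphism.

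The main obstacle is justifying that deduction, i.e. that $N_{s_\alpha,u}$ and $\mc T_{s_\alpha}$ detect the \emph{same} copies of $\det$ and do not interchange the two sign characters of the finite Iwahori--Hecke algebra. Because $\alpha(u) \in \{1,-1\}$ for $\alpha^\vee \in R_{\mf s^\vee,u}$, the rational function relating the two operators in \eqref{eq:3.29} has a pole along the wall $\theta_\alpha = \alpha(u)$, so one has to check that the normalizing factor cancels this pole and contributes the scalar $-q_F^{-\lambda(h_\alpha^\vee)/2}$ on exactly the line where $\mc T_{s_\alpha}$ contributes $-1$. This is precisely the residue book-keeping of \cite[\S 9]{Lus-Gr} and \cite[\S 2]{SolAHA}, arranged so that the finite Iwahori--Hecke structure is preserved under the degeneration $q_F^{1/2}\to 1$; under that degeneration $-q_F^{-\lambda(h_\alpha^\vee)/2}\mapsto -1$ while $\Gamma_{\mf s^\vee,u}$ acts by $\det$ on both sides, so $\det$ corresponds to $\det$ throughout. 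With that settled the rest is formal.
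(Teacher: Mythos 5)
Your overall strategy (deformation/degeneration of the parameters) is the same as the paper's, but the execution has a real gap at the one place that matters.

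The paper proves (d) by introducing two explicit scaling families: on the graded side, $\mh H_{\mf s^\vee,u,\epsilon}$ with parameters $\epsilon k^u$, and a module $V_\epsilon$ obtained from $V$ via the scaling homomorphisms of \cite[(1.11)]{SolAHA}; on the affine side, $\mc H_{\mf s^\vee,u,\epsilon}$ with $q_F^\epsilon$ and the functor $\tilde\sigma_\epsilon$ of \cite[Corollary 4.2.2]{SolAHA}. The whole argument rests on the cited identity $\exp_{u*}(V_\epsilon) = \tilde\sigma_\epsilon(\exp_{u*}V)$ from \cite[(4.6)--(4.7)]{SolAHA}, which says the two deformations commute with the equivalence $\exp_{u*}$ at the \emph{module} level. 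Then $\dim \Hom_{\mc H(W_{\mf s^\vee,u},q_F^{\epsilon\lambda})}(\tilde\sigma_\epsilon(\exp_{u*}V),\det)$ is continuous in $\epsilon$ and integer-valued, hence constant, and at $\epsilon=0$ it equals $\dim\Hom_{W_{\mf s^\vee,u}}(V_0,\det) = \dim\Hom_{W_{\mf s^\vee,u}}(V,\det)$.

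Your sketch gestures at the degeneration $q_F^{1/2}\to 1$ but never supplies this commutation of scalings with $\exp_{u*}$ on modules, and this is precisely what cannot be skipped. The claim that the two finite subalgebra structures ``correspond under the Tits deformation isomorphism $\C[W_{\mf s^\vee,u}] \cong \mc H(W_{\mf s^\vee,u},q_F^\lambda)$'' is a desideratum, not an argument: the Tits deformation theorem gives abstract isomorphisms over splitting fields with no canonical choice, and without fixing one compatibly with the module functor there is nothing that says $\det$ does not land on the other one-dimensional character. Your pivot to intertwiner/pole bookkeeping around \eqref{eq:3.29} -- tracking $\mc T_{s_\alpha}$ versus $N_{s_\alpha,u}$ through the localization -- is orthogonal to the paper's route and on its own still does not close the gap: an equality of rational operators on the localized algebra does not by itself control which of the two sign characters of a finite Iwahori--Hecke subalgebra a given module restricts to, because the subalgebra $\mc H(W_{\mf s^\vee,u},q_F^\lambda)$ is not preserved by the localization. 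The continuity-plus-integrality argument, with the explicit compatibility $\exp_{u*}(V_\epsilon)=\tilde\sigma_\epsilon(\exp_{u*}V)$, is what actually pins this down; you should either reproduce it or cite it.
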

\begin{proof}
The original version of this equivalence is \cite[Theorem 9.3]{Lus-Gr}. We use the version from
\cite[Theorem 2.1.4 and Corollary 2.1.5]{SolAHA}. This includes the canonicity and the properties
(a),(b) and (c).

One way to see (d) is via deformations of the parameters. We can scale the parameters $k^u$ linearly
to 0. That gives a family of extended graded Hecke algebras
\[
\mh H_{\mf s^\vee,u,\epsilon} = \mh H (W(R_{\mf s^\vee,u}), \mf T_u (T_{\mf s^\vee}), \epsilon k^u) 
\rtimes \Gamma_{\mf s^\vee,u} \hspace{15mm} \epsilon \in \R_{\geq 0}.
\]
A module $V$ can be ``scaled" to modules $V_\epsilon$, via the scaling
homomorphisms $\mh H_{\mf s^\vee,u,\epsilon} \to \mh H_{\mf s^\vee}$ for $\epsilon \geq 0$
\cite[(1.11)]{SolAHA}. For $\epsilon = 0$ we obtain a module $V_0$ of 
\[
\mh H_{\mf s^\vee,u,0} = \mc O (\mf T_u (T_{\mf s^\vee})) \rtimes W_{\mf s^\vee,u},
\]
which equals $V$ as $\C [W_{\mf s^\vee,u}]$-module and on which $\mc O (\mf T_u (T_{\mf s^\vee}))$
acts by evaluation at $0 \in \mf T_u (T_{\mf s^\vee})$. 

For the affine Hecke algebra $\mc H_{\mf s^\vee,u}$, the parameters can be scaled via $q_F \mapsto
q_F^\epsilon$ with $\epsilon \in [0,1]$. That yields a family of algebras
\[
\mc H_{\mf s^\vee,u,\epsilon} = \mc H (\mc R_u, \lambda, \lambda^*, q_F^\epsilon ) \rtimes 
\Gamma_{\mf s^\vee,u} \hspace{15mm} \epsilon \in \R_{\geq 0}.
\]
The module $\exp_{u*} V$ can be ``scaled" accordingly via a functor
\[
\tilde \sigma_\epsilon : \Mod (\mc H_{\mf s^\vee,u} )_{\chi_+ u} \to
\Mod (\mc H_{\mf s^\vee,u} )_{\chi_+^\epsilon u} \hspace{15mm} \epsilon \in [0,1],
\]
see \cite[Corollary 4.2.2]{SolAHA}. In this process $\mc H (W_{\mf s^\vee,u},q_F^\lambda)$ is
replaced by the isomorphic semisimple algebra $\mc H (W_{\mf s^\vee,u},q_F^{\epsilon \lambda})$.
The multiplicities 
\[
\dim \Hom_{\mc H (W_{\mf s^\vee,u},q_F^{\epsilon \lambda})} \big( \tilde \sigma_\epsilon (\exp_{u*} V),
\mr{St} \big)
\] 
depend continuously on $\epsilon \in [0,1]$ and they are integers, so in fact they are constant as
functions of $\epsilon$. It is known from \cite[(4.6)--(4.7)]{SolAHA} that 
\[
\exp_{u*} (V_\epsilon) = \tilde \sigma_\epsilon (\exp_{u*} V) 
\qquad \text{for all } \epsilon \in [0,1].
\]
We conclude that
\begin{multline*}
\Hom_{\mc H (W_{\mf s^\vee,u},q_F^\lambda)} ( \exp_{u*} V), \mr{St} ) \cong
\Hom_{\mc H (W_{\mf s^\vee,u},q_F^0)} \big( \tilde \sigma_0 (\exp_{u*} V), \mr{St}) \\
\cong \Hom_{W_{\mf s^\vee,u}} (V_0,\det) = \Hom_{W_{\mf s^\vee,u}} (V,\det) . \qedhere
\end{multline*}
\end{proof}

In view of Theorems \ref{thm:6.4}, \ref{thm:6.5} and \ref{thm:6.6}, the role of genericity for
$\mh H_{\mf s^\vee,u}$ is played by modules that the contain the character det of $\C [W_{\mf s^\vee,u}]$.
To analyse those, we bring the algebra in an easier form.

Let $R_{u>0}$ be the subset of $R_{\mf s^\vee,u}$ consisting of the roots $\alpha^\vee$ for which
$k^u_{\alpha^\vee} > 0$. Let $\Gamma_{u>0}$ be the stabilizer of 
$R_{u>0}^+ = R_{u>0} \cap R_{\mf s^\vee}^+$ in $W_{\mf s^\vee,u}$.

\begin{lem}\label{lem:6.2}
The set $R_{u>0}$ is a root system and
\[
\mh H_{\mf s^\vee,u} = \mh H (W(R_{u>0}), \mf T_u (T_{\mf s^\vee}), k^u) \rtimes \Gamma_{u>0}.
\]
\end{lem}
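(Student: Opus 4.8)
The plan is to verify the two assertions separately, but both hinge on understanding which roots in $R_{\mf s^\vee,u}$ carry a vanishing parameter. First I would recall that $k^u_{\alpha^\vee} = \big(\lambda(h_\alpha^\vee) + \alpha(u)\lambda^*(h_\alpha^\vee)\big)\log(q_F)/2$, so $k^u_{\alpha^\vee} = 0$ exactly when $\lambda(h_\alpha^\vee) = -\alpha(u)\lambda^*(h_\alpha^\vee)$; since $\lambda(h_\alpha^\vee) \in \Z_{>0}$ and $\lambda^*(h_\alpha^\vee) \in \Z_{\ge 0}$ with $|\alpha(u)| = 1$ (as $u$ is in the compact real torus), this forces $\lambda(h_\alpha^\vee) = \lambda^*(h_\alpha^\vee)$ and $\alpha(u) = -1$. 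In particular $k^u_{\alpha^\vee} = 0$ can only happen for the ``exceptional'' type of root where the two labels coincide and the root is divisible by $2$ in $(T/T_\cpt)^\vee$ (cf.\ the labels computed in Section \ref{sec:1} and before \eqref{eq:2.4}); this is the hypothesis $\alpha^\sharp \in 2(T/T_\cpt)^\vee$ appearing in Section \ref{sec:Whit}. The upshot is that $R_{u>0} = \{\alpha^\vee \in R_{\mf s^\vee,u} : \alpha(u) \neq -1 \text{ or } \lambda(h_\alpha^\vee) \neq \lambda^*(h_\alpha^\vee)\}$.

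For the first assertion, that $R_{\mf s^\vee,u}$ is a root system, I would note it is defined as $\{\alpha^\vee \in R_{\mf s^\vee} : s_\alpha(u) = u\}$, which is the fixed-point subsystem of the finite-order automorphism of $R_{\mf s^\vee}$-modules given by the element $u$ acting on $(T/T_\cpt)\otimes \R$; equivalently it is the set of roots vanishing on $\log u$ (after choosing a branch), and such a parabolic-type subset of a root system is itself a root system. This is completely standard, so I would dispatch it in one line citing the analogous statement already used for $R_{\mf s,u}$-type constructions in \cite{Lus-Gr} or \cite{SolAHA}.

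For the isomorphism $\mh H_{\mf s^\vee,u} = \mh H\big(W(R_{u>0}), \mf T_u(T_{\mf s^\vee}), k^u\big) \rtimes \Gamma_{u>0}$, the key point is that the defining cross-relation of a graded Hecke algebra, $s_\alpha f - s_\alpha(f)s_\alpha = k^u_{\alpha^\vee}(f - s_\alpha(f))/\alpha^\vee$, becomes simply $s_\alpha f = s_\alpha(f) s_\alpha$ when $k^u_{\alpha^\vee} = 0$, i.e.\ the generators $s_\alpha$ for $\alpha^\vee \notin R_{u>0}$ commute with $\mc O(\mf T_u(T_{\mf s^\vee}))$ up to the Weyl action, exactly as elements of a group algebra of a Weyl group would. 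Concretely, one checks that $W(R_{\mf s^\vee,u}) = W(R_{u>0}) \rtimes \Gamma'$ where $\Gamma' = \mathrm{Stab}_{W(R_{\mf s^\vee,u})}(R_{u>0}^+)$ is the stabilizer of the positive subsystem (this is the standard semidirect decomposition of a Weyl group relative to a sub-root-system, valid because $k^u$ is $W_{\mf s^\vee,u}$-invariant so $R_{u>0}$ is stable under $W(R_{\mf s^\vee,u})$); then $\Gamma_{u>0} = \Gamma' \rtimes \Gamma_{\mf s^\vee,u}$ or rather $\Gamma_{u>0}$ is the full stabilizer of $R_{u>0}^+$ inside $W_{\mf s^\vee,u} = W(R_{\mf s^\vee,u})\rtimes \Gamma_{\mf s^\vee,u}$, and one verifies $W_{\mf s^\vee,u} = W(R_{u>0}) \rtimes \Gamma_{u>0}$. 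Finally I would observe that the defining relations of $\mh H_{\mf s^\vee,u}$, restricted through this decomposition, are precisely those of the extended graded Hecke algebra on the right: the cross-relations for $\alpha^\vee \in R_{u>0}$ match verbatim, while the generators in $\Gamma_{u>0}$ only act by the $\Gamma_{u>0}$-action on $\mc O(\mf T_u(T_{\mf s^\vee}))$ and by conjugation on $W(R_{u>0})$ — which is exactly the content of the formula $\gamma(wf) = (\gamma w \gamma^{-1})\, f\circ\gamma^{-1}$. Comparing generators and relations on both sides then gives the asserted equality of algebras.

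The main obstacle I anticipate is purely bookkeeping: one must be careful that the $\Gamma_{\mf s^\vee,u}$-part (the ``diagram automorphism'' piece) interacts correctly with the new, larger finite group $\Gamma_{u>0}$, and in particular that no $q$-parameter is lost or duplicated when roots with $k^u_{\alpha^\vee} = 0$ are removed. Since Lemma \ref{lem:3.5}(a) and the surrounding discussion already guarantee that the function $\epsilon_?$ and all label data are $W_{\mf s}$-invariant (hence $W_{\mf s^\vee}$-invariant via Theorem \ref{thm:4.3}), the set $R_{u>0}$ is genuinely $W_{\mf s^\vee,u}$-stable and the semidirect-product surgery goes through without incident; the verification is routine once this invariance is in hand.
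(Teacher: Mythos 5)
Your overall strategy is the same as the paper's: identify when $k^u_{\alpha^\vee}=0$ (exactly when $\lambda(h_\alpha^\vee)=\lambda^*(h_\alpha^\vee)$ and $\alpha^\vee(u)=-1$, which you and the paper both get right), observe that for such roots the graded Hecke cross-relation collapses to $s_\alpha f = s_\alpha(f)s_\alpha$, and then peel these reflections off as a group-algebra factor. However, there is a genuine gap in the step where you pass from the group-theoretic decomposition $W_{\mf s^\vee,u} = W(R_{u>0}) \rtimes \Gamma_{u>0}$ to the algebra decomposition.

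The issue is this: you define $\Gamma' = \mathrm{Stab}_{W(R_{\mf s^\vee,u})}(R_{u>0}^+)$ abstractly and then assert that "the generators in $\Gamma_{u>0}$ only act by the $\Gamma_{u>0}$-action on $\mc O(\mf T_u(T_{\mf s^\vee}))$." But an element $\gamma\in\Gamma'$ sits inside $\mh H(W(R_{\mf s^\vee,u}),\mf T_u,k^u)$ as a product of simple reflections $s_{\alpha_1}\cdots s_{\alpha_l}$, and iterating the cross-relation through such a reduced word produces lower-order correction terms $k^u_{\alpha_i^\vee}(\cdots)$. The relation $\gamma f = (\gamma\cdot f)\gamma$ holds \emph{only if} $\gamma$ can be written as a product of reflections $s_\alpha$ with $k^u_{\alpha^\vee}=0$. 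It is not automatic that the abstract stabilizer $\Gamma'$ is generated by such reflections; this must be proved. The paper handles precisely this point by reducing to irreducible root systems and checking, case by case from the classification, that $W(R_{\mf s^\vee,u})$ is the semidirect product of $W(R_{u>0})$ and the subgroup generated by the reflections with respect to the \emph{simple roots in} $R_{\mf s^\vee,u}\setminus R_{u>0}$. (This works because in each irreducible component $R_{u>0}$ is either everything, empty, or the roots of a single length — a fact the paper records explicitly and which is also needed for the classification step.) Once you know the complement is generated by $k^u=0$ reflections, each generator commutes with $\mc O$ up to the Weyl action, hence so does every product, and the semidirect product decomposition of the algebra follows. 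Your proof never establishes this generation statement, and without it the final "comparing generators and relations on both sides" does not go through.

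Two smaller remarks. First, the parenthetical claim that vanishing of $k^u_{\alpha^\vee}$ "is the hypothesis $\alpha^\sharp \in 2(T/T_\cpt)^\vee$" is not accurate: that lattice condition governs when $\lambda$ and $\lambda^*$ can differ, whereas the vanishing of $k^u$ requires them to be \emph{equal} together with $\alpha^\vee(u)=-1$; the latter is what forces $\alpha^\vee$ to be $2$-divisible, not the other way around. Second, you spend a line showing $R_{\mf s^\vee,u}$ is a root system, but that was already recorded in the setup before the lemma; the proof actually needs (and the paper proves) that $R_{u>0}$ is a root system, which you also correctly note follows from $W_{\mf s^\vee,u}$-stability.
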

\begin{proof}
The set $R_{u>0}$ is $W_{\mf s^\vee,u}$-stable by the invariance properties of the labels. In particular
it is stable under the reflections with respect to its roots, so it is a root system. In every 
irreducible component of $R_{\mf s^\vee,u}$, $R_{u>0}$ is either everything or empty or the roots of 
one given length. By the simple transitivity of the action of $W(R_{u>0})$ on the collection of positive
systems in $R_{u>0}$:
\[
W_{\mf s^\vee,u} = W(R_{u>0}) \rtimes \Gamma_{u>0} .
\]
We note that
\begin{equation}\label{eq:6.6}
R_{\mf s^\vee,u} \setminus R_{u>0} = \{ \alpha^\vee \in R_{\mf s^\vee,u} :
\lambda (h_\alpha^\vee) = \lambda^* (h_\alpha^\vee), \alpha^\vee (u) = -1 \} .
\end{equation}
By reduction to irreducible root systems, and the classification thereof, one checks that 
$W (R_{\mf s^\vee,u})$ is the semidirect product of $W(R_{u>0})$ and the subgroup generated by the 
reflections with respect to the simple roots in $R_{\mf s^\vee,u} \setminus R_{u>0}$. 
For such reflections the multiplication relations in $\mh H_{\mf s^\vee,u}$ simplify to 
$s_\alpha f = s_\alpha (f) s_\alpha$. That implies 
\[
\mh H (W(R_{\mf s^\vee,u}), \mf T_u (T_{\mf s^\vee}), k^u) =
\mh H (W(R_{u>0}), \mf T_u (T_{\mf s^\vee}), k^u) \rtimes (\Gamma_{u>0} \cap W(R_{\mf s^\vee,u})) ,
\]
which in turn implies the lemma.
\end{proof}

The advantage of Lemma \ref{lem:6.2} is that via the new presentation the algebra becomes isomorphic to 
a graded Hecke algebras with equal parameters.

\begin{lem}\label{lem:6.7}
$\mh H_{\mf s^\vee,u}$ is isomorphic to a graded Hecke algebra (extended by $\Gamma_{u>0}$) 
such that every root from $R_{u>0}$ has the parameter $\log (q_F)$.
\end{lem}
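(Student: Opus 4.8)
The idea is that $\mh H_{\mf s^\vee,u}$ already has the desired shape once we rescale the roots, and the only real point is that the rescaled set is again a root system. The parameter function $k^u$ is invariant under $W_{\mf s^\vee,u}$, because $\lambda,\lambda^*$ are $W_{\mf s}$-invariant and $\Gamma_{u>0}\subset W_{\mf s^\vee,u}$ fixes $u$. Hence the assignment
\[
\alpha^\vee \ \longmapsto\ c_{\alpha^\vee}\,\alpha^\vee, \qquad
c_{\alpha^\vee} := \log(q_F)\big/ k^u_{\alpha^\vee}\ \in\ \R_{>0},
\]
is well defined on $R_{u>0}$ (the denominators are positive by the definition of $R_{u>0}$), constant on $W(R_{u>0})$-orbits, and $\Gamma_{u>0}$-equivariant. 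If $R':=\{c_{\alpha^\vee}\alpha^\vee:\alpha^\vee\in R_{u>0}\}$ is again a reduced root system, then $W(R')=W(R_{u>0})$, $\Gamma_{u>0}$ acts on $R'$, and comparing the defining relations of a graded Hecke algebra shows that replacing $\alpha^\vee$ by $c_{\alpha^\vee}\alpha^\vee$ multiplies the corresponding parameter by $c_{\alpha^\vee}$; combined with Lemma \ref{lem:6.2} this yields
\[
\mh H_{\mf s^\vee,u}\ =\ \mh H\big(W(R'),\ \mf T_u(T_{\mf s^\vee}),\ \log(q_F)\big)\rtimes\Gamma_{u>0},
\]
i.e.\ the asserted presentation. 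So the whole statement reduces to showing that $R'$ is a root system, and since the rescaling respects the decomposition of $R_{u>0}$ into irreducible components (a $\Gamma_{u>0}$-orbit of isomorphic components at a time), this can be checked component by component.

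If an irreducible component $C$ of $R_{u>0}$ is simply laced, then $k^u$ is constant on $C$, so $R'|_C$ is $C$ scaled by one positive factor and there is nothing to do. If $C$ has two root lengths, write $k^u_s,k^u_\ell$ for the parameters on the short, resp.\ long, roots. A short computation with the length ratios ($\sqrt2$ in types $B,C,F_4$; $\sqrt3$ in type $G_2$) shows that $R'|_C$ is again a root system exactly when $k^u_\ell=k^u_s$ (then $R'|_C$ has the same type as $C$), or when $k^u_\ell=2k^u_s$ in types $B,C,F_4$, resp.\ $k^u_\ell=3k^u_s$ in type $G_2$ (then $R'|_C$ is again of that type, with long and short roots interchanged); note $R_{u>0}$ is reduced because it is a subsystem of the reduced $R_{\mf s^\vee}\cong R_{\mf s}^\vee$, so no other configuration can occur. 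Thus the content of the lemma is the assertion that one of these two possibilities always holds.

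This last point is verified using the explicit parameters of Sections \ref{sec:1}--\ref{sec:2}. Here $C$ lies inside $R_{\mf s^\vee,u}\subseteq R_{\mf s^\vee}$, and on each orbit the value of $k^u$ is read off from $k^u_{\alpha^\vee}=(\lambda(h_\alpha^\vee)+\alpha^\vee(u)\lambda^*(h_\alpha^\vee))\log(q_F)/2$, where $\lambda,\lambda^*$ are governed by the residue degrees $f(F_\alpha/F)$ of the splitting fields and by the halving recorded in $\tilde m_\alpha$ --- precisely the data that already forced $R_{\mf s^\vee}$ to be crystallographic (Lemmas \ref{lem:4.1} and \ref{lem:4.2}). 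One runs through the possible two-length components: type $B_n/C_n$ or $F_4$ coming from a split or twisted group, with labels from \eqref{eq:1.1}, \eqref{eq:1.5}, \eqref{eq:1.4}, \eqref{eq:2.3}, \eqref{eq:2.7}; type $G_2$ coming from ${}^3D_4$, where the short roots of $R_{\mf s^\vee}$ acquire the factor $3$; and the exceptional components where the analysis reduces to $U_{2n+1}(F_\alpha/E_\alpha)$ with labels $(3,1)$, $(1,1)$ or \eqref{eq:2.4}. In each case one checks that the long and short parameters of $R_{\mf s^\vee}$ either agree or differ by the admissible factor, and that passing from $R_{\mf s^\vee,u}$ to $R_{u>0}$ by deleting the roots of \eqref{eq:6.6} --- which only shortens $R_{\mf s^\vee,u}$, turning an offending component into a simply laced or otherwise compatible one --- does not destroy this. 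I expect this bookkeeping to be the only delicate step: one has to line up the three length scales in the ${}^2A_{2n}$/$U_{2n+1}$ situation and carry along the factors $f(E_\alpha/F)$ from restriction of scalars; once that is organised, the rest is formal.
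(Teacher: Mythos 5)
Your route is genuinely different from the paper's, and it has a real gap at the crucial step. The paper's proof is structural and avoids case analysis entirely: by \cite[Proposition 3.14]{AMS3}, $\mh H_{\mf s^\vee,u}$ is isomorphic to the graded Hecke algebra attached to a complex reductive group $\tilde G$ and a cuspidal local system on a quasi-Levi $\tilde M$; in the principal series setting $\tilde M^\circ$ is a torus, so the cuspidal nilpotent orbit is $\{0\}$, and for such data Lusztig's computation in \cite[\S 2]{Lus-Cusp1} gives $c(\alpha^\vee)=2$ for every root of $\tilde G$. After specialising $\mb z$ to $q_F^{1/2}$, every nonzero parameter then equals $2\cdot\log(q_F)/2=\log(q_F)$, and Lemma \ref{lem:6.2} finishes the argument.

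You instead rescale each $\alpha^\vee\in R_{u>0}$ by $\log(q_F)/k^u_{\alpha^\vee}$ and reduce the lemma to the claim that the rescaled set is again a root system. The formal parts of your argument (the $W_{\mf s^\vee,u}$-invariance of $k^u$, the invariance of a graded Hecke algebra under simultaneous rescaling of a root and its parameter, the reduction to length ratios on irreducible two-length components, and the admissible ratios $1{:}1$, $1{:}2$, $1{:}3$) are all fine. But the decisive step --- that for every two-length component of $R_{u>0}$ which actually occurs, the parameter ratio \emph{is} admissible --- is the whole content of the lemma, and you explicitly do not carry it out. This is genuinely substantial, not a formality: one has to enumerate the two-length components of $R_{u>0}$ that can arise for quasi-split $\mc G$; keep straight the dualization $R_{\mf s,\mu}\leftrightarrow R_{\mf s}^\vee$, which interchanges long and short and hence determines whether a residue degree like $f(F_\alpha/F)=3$ coming from a ${}^3D_4$ triality ends up on the side of $R_{\mf s}^\vee$ that produces an admissible $G_2$ ratio (only one of the two orientations works); combine this with the ${}^2A_{2n}$ labels $(3,1)$, $(1,1)$, $(1,0)$ and the restriction-of-scalars factors $f(E_\alpha/F)$; and account for how discarding the roots in \eqref{eq:6.6} can change the \emph{type} of a component rather than merely shrink it. Nothing in your sketch rules out an inadmissible ratio such as $3{:}1$ appearing in a $B$-, $C$- or $F_4$-type component, and the uniform fact that none occurs is exactly what $c(\alpha^\vee)=2$ encodes. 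As written, your proposal identifies the correct target but leaves its mathematical content unproved; completing this route would effectively mean reproving Lusztig's result by inspection of cases, which is considerably more work than the single citation the paper uses.
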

\begin{proof}
By \cite[Proposition 3.14]{AMS3}, $\mh H_{\mf s^\vee,u}$ is isomorphic to the graded Hecke algebra 
associated to a certain complex reductive group $\tilde G$ and a cuspidal L-parameter with values in a 
quasi-Levi subgroup $\tilde M$ of $\tilde G$. In our specific setting $\tilde M^\circ$ is a torus, because 
we only work with principal series L-parameters. In particular the cuspidal L-parameter is trivial on
$SL_2 (\C)$. Thus $\mh H_{\mf s^\vee,u}$ is a graded Hecke algebra associated to $\tilde G$ and a
cuspidal support whose unipotent (or nilpotent) element is trivial. By construction \cite[\S 1]{AMS3}
all the nonzero parameters are of the form $k^u_{\alpha^\vee} = c(\alpha^\vee) r_i$, where $r_i \in \C$ 
depends only on the connected component of the root system that contains $\alpha^\vee$. Further 
$c(\alpha^\vee) = 2$ by \cite[\S 2]{Lus-Cusp1} and our earlier specialization of $\mb z$ to
$q_F^{1/2}$ entails $k_i = \log (q_F^{1/2}) = \log (q_F) /2$. Combine that with Lemma \ref{lem:6.2}.
\end{proof}

\subsection{Geometric representations of graded Hecke algebras} \

Recall that $u$ corresponds to a unitary character of $T$, so it is a bounded L-parameter for $T$. 
By \cite[Proposition 3.14]{AMS3}, the algebra $\mh H_{\mf s^\vee,u}$ is of the form 
$\mh H (u,0,\mr{triv},\log (q_F)/2)$, where triv means the trival local system on the trivial nilpotent
orbit 0. The meaning of this statement is explained somewhat further in \cite[(3.9) and below]{AMS3}.
It can be formulated as 
\[
\mh H (u,0,\mr{triv},\log (q_F)/2) \cong \mh H (G_u^\vee, M^\vee, \mr{triv}, \log (q_F)/2) .
\]
In \cite{AMS3} the group $G_u^\vee$ is defined as $Z^1_{G^\vee_{sc}}(u) \times X_\nr (G)$, but in 
our current setting we have just $G_u^\vee = Z_{G^\vee} (u)$. The reason is that at the start of Section
\ref{sec:2} we refrained from involving the simply connected cover  of $G^\vee_\der$, that would be
superfluous for quasi-split groups. Similarly the group $M^\vee$, which is a quasi-Levi subgroup of
$Z^1_{G^\vee_{sc}}(u) \times X_\nr (G)$ in \cite{AMS3}, becomes simply $T^\vee$ in our setup. 
 
Notice that $G^\vee_u$ need not be connected. In fact the isomorphism 
\begin{equation}\label{eq:6.9}
\mh H (G_u^\vee, T^\vee, \mr{triv}, \log (q_F)/2) \cong \mh H_{\mf s^\vee,u} =
\mh H (W(R_{u>0}), \mf T_u (T_{\mf s^\vee}), k^u) \rtimes \Gamma_{u>0}
\end{equation} 
and Lemma \ref{lem:6.2} imply that $\pi_0 (G_u^\vee) \cong \Gamma_{u>0}$.
When we replace $G_u^\vee$ by its identity component, we obtain the subalgebra
\[
\mh H_{\mf s^\vee,u}^\circ := \mh H (G_u^{\vee,\circ}, T^\vee, \mr{triv}, \log (q_F)/2) \cong 
\mh H (W(R_{u>0}), \mf T_u (T_{\mf s^\vee}), k^u) .
\]
The irreducible representations of such graded Hecke algebras were parametrized and constructed 
geometrically in \cite{Lus-Cusp1,Lus-Cusp2}. The parameters are triples $(\sigma,y,\rho^\circ)$ where:
\begin{enumerate}[(i)]
\item $\sigma \in \mr{Lie}(G_u^{\vee,\circ})$ is semisimple,
\item $y \in \mr{Lie}(G_u^{\vee,\circ})$ is nilpotent,
\item $[\sigma,y] = \log (q_F) y$,
\item $\rho^\circ$ is an irreducible representation of $\pi_0 \big( Z_{G_u^{\vee,\circ}} / Z(G_u^{\vee,\circ}) 
\big)$ satisfying the analogue of (ii) on page \pageref{eq:2.8}. 
\end{enumerate}
By \cite{Lus-Cusp2} $G_u^{\vee,\circ}$-conjugacy classes of such triples are naturally in bijection with 
$\Irr \big( \mh H (G_u^{\vee,\circ}, T^\vee, \mr{triv}, \log (q_F)/2) \big)$. Let us write that as
\[
(\sigma,y,\rho^\circ) \mapsto M^\circ_{y,\sigma,\rho^\circ} .
\] 
In \cite{Lus-Cusp1,Lus-Cusp2} there is an extra parameter $r \in \C$, but we suppress that because in this
paper it will always be equal to $\log (q_F)/2$. From these parameters $\sigma$ can always be chosen in
Lie$(T^\vee)$, and then $W(R_{u>0}) \sigma$ is the central character of $M^\circ_{y,\sigma,\rho^\circ}$.

Lusztig's parametrization was slightly modified in \cite[\S 3.5]{AMS2}, essentially by composing it with the 
Iwahori--Matsumoto involution IM of $\mh H_{\mf s^\vee,u}^\circ$. To make that consistent, the above condition 
(iii) must be replaced by 
\begin{itemize}
\item[(iii')] $[\sigma,y] = -\log (q_F) y$.
\end{itemize} 
We denote the resulting parametrization of $\Irr \big( \mh H (G_u^{\vee,\circ}, T^\vee, \mr{triv}, 
\log (q_F)/2) \big)$, which is the one used in \cite{AMS3}, by
\begin{equation}\label{eq:6.10}
(\sigma,y,\rho^\circ) \mapsto \bar M^\circ_{y,\sigma,\rho^\circ} := \mr{IM}^* M_{y,-\sigma,\rho^\circ} .
\end{equation}

\begin{prop}\label{prop:6.8}
The irreducible $\mh H (G_u^{\vee,\circ}, T^\vee, \mr{triv}, \log (q_F)/2)$-representation 
$\bar M^\circ_{y,\sigma,\rho^\circ}$ contains the sign representation of $\C [W(R_{u>0})]$ if and only if 
$\rho^\circ$ is the trivial representation and the $Z_{G_u^{\vee,\circ}}(\sigma)$-orbit of $y$ is dense in
\[
\{ Y \in \mr{Lie}(G_u^{\vee,\circ}) : [\sigma,Y] = -\log (q_F) Y \} .
\]
\end{prop}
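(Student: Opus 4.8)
The plan is to move the statement, via the Iwahori--Matsumoto involution, to one about the \emph{trivial} $W(R_{u>0})$-type, and then to read it off from Lusztig's geometric classification together with the structure of the spherical principal series. First I would record the effect of the Iwahori--Matsumoto involution $\mr{IM}$ of $\mh H_{\mf s^\vee,u}^\circ \cong \mh H (W(R_{u>0}), \mf T_u (T_{\mf s^\vee}), k^u)$: it acts on $\C [W(R_{u>0})]$ by $w \mapsto \det(w)\, w$ and negates $\mc O (\mf T_u (T_{\mf s^\vee}))$, so that for every module $V$ the restriction of $\mr{IM}^* V$ to $\C [W(R_{u>0})]$ is $(V|_{W(R_{u>0})}) \otimes \mr{sign}$. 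By the definition \eqref{eq:6.10} of $\bar M^\circ_{y,\sigma,\rho^\circ}$ this gives
\[
\Hom_{W(R_{u>0})} \big( \bar M^\circ_{y,\sigma,\rho^\circ}, \mr{sign} \big) \;\cong\;
\Hom_{W(R_{u>0})} \big( M_{y,-\sigma,\rho^\circ}, \mr{triv} \big) ,
\]
where $M_{y,-\sigma,\rho^\circ}$ is Lusztig's (unmodified) irreducible module, so $[-\sigma, y] = \log (q_F)\, y$. Since $Z_{G_u^{\vee,\circ}}(\sigma) = Z_{G_u^{\vee,\circ}}(-\sigma)$ and $\{ Y : [\sigma, Y] = -\log (q_F) Y \} = \{ Y : [-\sigma, Y] = \log (q_F) Y \} =: \mf g_1$, it suffices to prove that $M_{y,\sigma',\rho^\circ}$, with $\sigma' = -\sigma$ and $[\sigma', y] = \log (q_F)\, y$, contains the trivial $W(R_{u>0})$-representation if and only if $\rho^\circ$ is trivial and the $Z_{G_u^{\vee,\circ}}(\sigma')$-orbit of $y$ is dense in $\mf g_1$.

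Next I would invoke Frobenius reciprocity: $\Hom_{W(R_{u>0})}(M_{y,\sigma',\rho^\circ}, \mr{triv})$ is isomorphic to $\Hom_{\mh H_{\mf s^\vee,u}^\circ}(\mc M, M_{y,\sigma',\rho^\circ})$, where $\mc M = \mh H_{\mf s^\vee,u}^\circ \otimes_{\C [W(R_{u>0})]} \mr{triv}$ is the polynomial (Demazure--Lusztig) representation; localising at the central character $W(R_{u>0})\sigma'$ gives a finite-dimensional module $\mc M_{\sigma'}$, so $M_{y,\sigma',\rho^\circ}$ contains $\mr{triv}$ exactly when it is a quotient of $\mc M_{\sigma'}$. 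Here the trivial $W(R_{u>0})$-type occurs in $\mc M_{\sigma'}$ with multiplicity one (its $W(R_{u>0})$-invariants are the fibre over $W(R_{u>0})\sigma'$ of the centre $\mc O(\mf T_u(T_{\mf s^\vee}))^{W(R_{u>0})}$, which is one-dimensional), so exactly one composition factor of $\mc M_{\sigma'}$ contains it, and the task becomes to identify that factor. The module $\mc M$ is the one attached by Lusztig \cite{Lus-Cusp1,Lus-Cusp2} to the constant sheaf along the Grothendieck--Springer resolution for $G_u^{\vee,\circ}$; applying the Springer correspondence for $G_u^{\vee,\circ}$ relative to the grading of $\mr{Lie}(G_u^{\vee,\circ})$ determined by $\mr{ad}(\sigma')$ (with $\mf g_1$ the $\log (q_F)$-eigenspace), the composition factor carrying the trivial $W(R_{u>0})$-type is the one attached to the open $Z_{G_u^{\vee,\circ}}(\sigma')$-orbit in $\mf g_1$ together with the trivial local system, i.e.\ $M_{y_{\max},\sigma',\mr{triv}}$ with $y_{\max}$ generating the dense orbit. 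This yields both implications and excludes $\rho^\circ \neq \mr{triv}$, finishing the argument in combination with Step~1.

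The crux is the last identification: matching, through Lusztig's geometric construction in the grading-twisted form used in \cite{AMS1,AMS3}, the composition factor of the spherical principal series that carries the trivial $W(R_{u>0})$-type with the open-orbit, trivial-local-system datum. This requires keeping the Springer-correspondence normalisation straight (it is precisely this normalisation that forces the $\mr{IM}$-twist of \cite{AMS2} in \eqref{eq:6.10}), handling the $\sigma'$-relative grading, and — in order to rule out nontrivial $\rho^\circ$ — controlling the action of the component group $\pi_0\big(Z_{G_u^{\vee,\circ}}(\sigma',y)/Z(G_u^{\vee,\circ})\big)$ on the top-degree homology. I would isolate this as a separate lemma, reducing to $G_u^{\vee,\circ}$ with almost-simple derived group and invoking the classification where needed; the non-simply-laced components and the exceptional components of type ${}^2A_{2n}$ are where the bookkeeping has to be done with care. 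Granting that lemma, Proposition \ref{prop:6.8} follows formally from the Iwahori--Matsumoto reduction and Frobenius reciprocity.
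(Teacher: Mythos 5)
Your Step 1 (the Iwahori--Matsumoto reduction) is correct and agrees with the paper's use of the definition in \eqref{eq:6.10}: restricting $\mr{IM}^* V$ to $\C[W(R_{u>0})]$ tensors with $\mr{sign}$, so the statement becomes one about the trivial $W(R_{u>0})$-type in Lusztig's unmodified module $M_{y,-\sigma,\rho^\circ}$, and your sign bookkeeping for $\sigma' = -\sigma$ and $\mf g_1$ is right. Your Step 2 (Frobenius reciprocity plus the observation that the spherical vector occurs with multiplicity one in the localised polynomial module $\mc M_{\sigma'}$, because its $W(R_{u>0})$-invariants coincide with the fibre of the centre) is also sound, and it is a genuinely different reduction from the paper: the paper instead passes through the equivalences of \cite[Theorems 2.5, 2.11]{AMS3} to the affine Hecke algebra of $(G_u^{\vee,\circ},T^\vee,\mr{triv})$, identifies $\bar M^\circ_{y,\sigma,\rho^\circ}$ with a Kazhdan--Lusztig standard module $\tilde M_{\exp\sigma,\exp y,\rho^\circ}$ via \cite[Proposition 2.18]{AMS3}, and quotes Reeder's explicit criterion \cite[\S 7.2--7.3]{Ree} for when such a module contains the Steinberg representation.

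However, your proposal then stops exactly where the substance begins. The ``crux'' you isolate --- that the unique composition factor of $\mc M_{\sigma'}$ carrying the trivial $W(R_{u>0})$-type is $M_{y_{\max},\sigma',\mr{triv}}$ with $y_{\max}$ in the open $Z_{G_u^{\vee,\circ}}(\sigma')$-orbit of $\mf g_1$, and that no nontrivial local system $\rho^\circ$ can occur --- is in fact the entire content of the proposition after Steps 1--2. You state it as a fact to be ``isolated as a separate lemma'' and sketch a plan (reduce to almost-simple derived group, invoke the classification, control the component-group action on top homology), but you give no proof and no citation. Your observation about the Springer correspondence for the $\mr{ad}(\sigma')$-grading is the right heuristic, but turning it into a proof is exactly the work that the paper delegates to Reeder's paper, and which your plan of ``invoking the classification where needed'' would have to redo case by case. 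In particular, the multiplicity-one argument shows that at most one $(\mathcal{O},\rho^\circ)$ works but not which one, and the exclusion of nontrivial $\rho^\circ$ requires a genuine input (for instance, that the open orbit in $\mf g_1$ always carries a trivial stabiliser action on its top-degree Borel--Moore homology). As written, the proposal reduces the proposition to an unproved statement of essentially equal difficulty, so it contains a genuine gap. The fastest way to close it within your framework is still to translate, as the paper does, to the affine Hecke algebra and invoke \cite[\S 7.2--7.3]{Ree}; alternatively, one could prove your crux directly, but that requires a real argument, not a plan.
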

\begin{proof}
We may replace $G_u^{\vee,\circ}$ by any finite covering group, that does not change the associated graded
Hecke algebra. In particular we may assume that the derived group of $G_u^{\vee,\circ}$ is simply connected.

Via \cite[Theorems 2.5 and 2.11]{AMS3}, analogous to Theorems \ref{thm:6.5} and \ref{thm:6.6},
$\bar M^\circ_{y,\sigma,\rho^\circ}$ becomes an irreducible representation of the affine Hecke algebra
associated to $(G_u^{\vee,\circ},T^\vee,\mr{triv})$, with parameter $q_F$. By \cite[Proposition 2.18]{AMS3},
$\bar M^\circ_{y,\sigma,\rho^\circ}$ is turned into the module $\tilde M_{\exp (\sigma), \exp (y), 
\rho^\circ}$ associated by Kazhdan--Lusztig \cite{KaLu} to\\ 
$(\exp (\sigma), \exp (y), \rho^\circ)$ and $q_F$. We note that the paper \cite{KaLu} assumed that the 
derived group of the involved complex reductive group is simply connected.
It was shown in \cite[\S 7.2--7.3]{Ree} that $\tilde M_{\exp (\sigma), \exp (y), \rho^\circ}$
contains the Steinberg representation of $\mc H (W(R_{u>0}),q_F)$ if and only if $\rho^\circ$ is trivial
and the $Z_{G_u^{\vee,\circ}}^\circ$-orbit of $y$ is dense in
\[
\{ Y \in \mr{Lie}(G_u^{\vee,\circ}) : \mr{Ad}(\exp \sigma) Y = q_F^{-1} Y \}
\]
Now we go back to $\mh H (G_u^{\vee,\circ}, T^\vee, \mr{triv}, \log (q_F)/2)$-modules, and we conclude 
with a version of Theorem \ref{thm:6.6}.d.
\end{proof}

The parametrization of $\Irr (\mh H_{\mf s^\vee,u}^\circ)$ from \eqref{eq:6.10} has been generalized
to $\mh H_{\mf s^\vee,u}$ in \cite[Theorem 3.20]{AMS2} and \cite[Theorem 3.8]{AMS3}. The parameters are 
$G_u^\vee$-conjugacy classes of triples $(\sigma,y,\rho)$ as above, with as only difference that $\rho$ 
is now an irreducible representation of $\pi_0 \big( Z_{G_u^\vee}(\sigma,y) / Z (G_u^{\vee,\circ}) \big)$.

The two constructions are related as follows. To $(\sigma,y)$ one associates \cite{Lus-Cusp1} a
$\mh H_{\mf s^\vee,u}^\circ \times \pi_0 (Z_{G_u^{\vee,\circ}})$-representation $E^\circ_{y,-\sigma}$.
Then 
\[
E^\circ_{y,-\sigma,\rho^\circ} = \Hom_{\pi_0 (Z_{G_u^{\vee,\circ}}(\sigma,y))}
(\rho^\circ, E^\circ_{y,-\sigma})
\]
and $M^\circ_{y,-\sigma,\rho^\circ}$ is the unique irreducible quotient of that module.

Similarly a $\mh H_{\mf s^\vee,u} \times \pi_0 (Z_{G_u^\vee})$-representation $E_{y,-\sigma}$ can
be constructed \cite{AMS2}, and by \cite[Lemma 3.3]{AMS2} there is a canonical isomorphism
\begin{equation}\label{eq:6.14}
E_{y,-\sigma} \cong \ind_{\mh H_{\mf s^\vee,u}^\circ}^{\mh H_{\mf s^\vee,u}} E^\circ_{y,\sigma} .
\end{equation}
One defines 
\begin{equation}\label{eq:6.12}
E_{y,-\sigma,\rho} = \Hom_{\pi_0 (Z_{G_u^\vee}(\sigma,y))}(\rho, E_{y,-\sigma}) ,
\end{equation}
and then $M_{y,-\sigma,\rho}$ is the unique irreducible quotient of $E_{y,-\sigma,\rho}$.

\begin{lem}\label{lem:6.13}
Every semisimple $\sigma \in G_u^{\vee,\circ}$ can be extended to a triple as used in \eqref{eq:6.12},
such that $M_{y,-\sigma,\rho}$ contains the trivial representation of $\C [W(R_{u>0}) \rtimes 
\Gamma_{u>0}]$. Moreover $(y,\rho)$ is unique up to $Z_{G_u^\vee}(\sigma)$-conjugacy, $y$ lies in the
dense $Z_{G_u^{\vee,\circ}}(\sigma)$-orbit in 
\[
\{ Y \in \mr{Lie}(G_u^{\vee,\circ}) : [\sigma,Y] = -\log (q_F) Y \} 
\]
and the restriction of $\rho$ to $\pi_0 (Z_{G_u^{\vee,\circ}} (\sigma,y))$ is a multiple of the trivial
representation.
\end{lem}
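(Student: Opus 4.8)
The plan is to reduce the statement to its ``connected'' counterpart, Proposition \ref{prop:6.8}, and then to bridge the gap to $\mh H_{\mf s^\vee,u}$ by Clifford theory. Fix a semisimple $\sigma$. By Proposition \ref{prop:6.8} and \eqref{eq:6.10}, and using that the Iwahori--Matsumoto involution interchanges the trivial and the sign representation of $\C [W(R_{u>0})]$, the irreducible $\mh H_{\mf s^\vee,u}^\circ$-module $M^\circ_{y,-\sigma,\rho^\circ}$ contains the trivial representation of $\C [W(R_{u>0})]$ if and only if $\rho^\circ$ is trivial and $y$ lies in a dense $Z_{G_u^{\vee,\circ}}(\sigma)$-orbit in $\{ Y \in \mr{Lie}(G_u^{\vee,\circ}) : [\sigma,Y] = -\log(q_F) Y\}$. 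Such a dense orbit exists: the $\C [W(R_{u>0})]$-module underlying $\ind_{\mc O (\mf T_u (T_{\mf s^\vee}))}^{\mh H_{\mf s^\vee,u}^\circ}(\C_\sigma)$ (the principal series module at $\sigma$) is the regular representation, so some composition factor $N$ of it contains $\mr{triv}$, and by the criterion just stated $N = M^\circ_{y_\sigma,-\sigma,\mr{triv}}$ with $y_\sigma$ in such an orbit. Moreover $N$ is the \emph{unique} irreducible $\mh H_{\mf s^\vee,u}^\circ$-module with central character $W(R_{u>0})\sigma$ that maps onto $\mr{triv}$, and it does so with multiplicity one (by \cite[\S 7]{Ree}, or by Lemma \ref{lem:6.12} transported via Theorems \ref{thm:4.3}, \ref{thm:6.5}, \ref{thm:6.6} and their connected analogues \cite[Theorems 2.5 and 2.11]{AMS3}).

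Next I would extend this to $\mh H_{\mf s^\vee,u} = \mh H_{\mf s^\vee,u}^\circ \rtimes \Gamma_{u>0}$. Let $\Gamma_\sigma \subseteq \Gamma_{u>0}$ be the stabiliser of the central character $W(R_{u>0})\sigma$; via \eqref{eq:6.9}, and because $y_\sigma$ lies in the dense orbit, $\Gamma_\sigma$ is the image of $\pi_0 (Z_{G_u^\vee}(\sigma,y_\sigma))$ in $\pi_0 (G_u^\vee) \cong \Gamma_{u>0}$. For $\gamma \in \Gamma_\sigma$ the twist $\gamma \cdot N$ is again an irreducible $\mh H_{\mf s^\vee,u}^\circ$-module with central character $W(R_{u>0})\sigma$ mapping onto $\mr{triv}$, hence $\gamma \cdot N \cong N$ by the uniqueness above; so $N$ extends to a $\mu$-projective $(\mh H_{\mf s^\vee,u}^\circ \rtimes \Gamma_\sigma)$-module $\tilde N$ for a $2$-cocycle $\mu$ of $\Gamma_\sigma$. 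By \eqref{eq:6.14} and the Clifford theory of \cite[\S 3]{AMS2}, the irreducible $\mh H_{\mf s^\vee,u}$-modules involving $N$ in their restriction to $\mh H_{\mf s^\vee,u}^\circ$ are exactly the $\ind_{\mh H_{\mf s^\vee,u}^\circ \rtimes \Gamma_\sigma}^{\mh H_{\mf s^\vee,u}}(\tilde N \boxtimes \tau)$ with $\tau$ an irreducible $\mu^{-1}$-projective representation of $\Gamma_\sigma$, and under the parametrisation of \cite[Theorem 3.8]{AMS3} these are precisely the $M_{y_\sigma,-\sigma,\rho}$ with $\rho$ restricting to a multiple of the trivial representation on $\pi_0 (Z_{G_u^{\vee,\circ}}(\sigma,y_\sigma))$.

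It remains to single out the right $\tau$. Restricting the induction in \eqref{eq:6.14} to the finite-dimensional subalgebra $\C [W_{\mf s^\vee,u}] = \C [W(R_{u>0}) \rtimes \Gamma_{u>0}]$ turns it into ordinary induction of (twisted) group algebras along $\Gamma_{u>0}$, so by Frobenius reciprocity and the multiplicity-one statement for $N$,
\[
\Hom_{\C [W_{\mf s^\vee,u}]}(M_{y_\sigma,-\sigma,\rho}, \mr{triv}) \cong
\Hom_{\Gamma_\sigma}(\psi \otimes \tau, \mr{triv}) ,
\]
where $\psi$ is the one-dimensional $\mu$-projective character by which $\Gamma_\sigma$ acts on $\Hom_{\C [W(R_{u>0})]}(N, \mr{triv})$. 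Here $\psi \otimes \tau$ is an honest character of $\Gamma_\sigma$, and this space is nonzero --- then one-dimensional --- exactly when $\tau \cong \psi^{-1}$, which is a single one-dimensional choice. This yields a triple $(y_\sigma,\rho)$ as required, with $\rho$ trivial on $\pi_0 (Z_{G_u^{\vee,\circ}}(\sigma,y_\sigma))$. For uniqueness: if $M_{y,-\sigma,\rho}$ contains $\mr{triv}$ of $\C [W_{\mf s^\vee,u}]$, then its restriction to $\mh H_{\mf s^\vee,u}^\circ$ has a composition factor mapping onto $\mr{triv}$ of $\C [W(R_{u>0})]$, hence a $\Gamma_{u>0}$-conjugate of $N$; this forces $y$ into the dense orbit and $\rho^\circ$ trivial, so we are back in the situation above, where $\tau$ and therefore $\rho$ (up to $Z_{G_u^\vee}(\sigma)$-conjugacy) was unique.

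The step I expect to be the main obstacle is the Clifford-theoretic bookkeeping of the last two paragraphs: verifying that restriction to $\C [W_{\mf s^\vee,u}]$ is compatible with the Clifford induction in \eqref{eq:6.14} --- this relies on $\mh H_{\mf s^\vee,u}$ being free over $\mh H_{\mf s^\vee,u}^\circ$ with basis $\Gamma_{u>0}$, the same $\Gamma_{u>0}$ presenting $\C [W_{\mf s^\vee,u}]$ over $\C [W(R_{u>0})]$ --- and tracking the cocycle $\mu$ of $\Gamma_\sigma$ so that $\psi \otimes \tau$ comes out genuine and $\tau = \psi^{-1}$ is a legitimate parameter. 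A further point to verify is that the module obtained by Clifford induction from $(\tilde N,\tau)$ really is the $M_{y_\sigma,-\sigma,\rho}$ of \cite[Theorem 3.8]{AMS3} with the $\rho$ just described.
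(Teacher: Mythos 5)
Your proof takes a genuinely different route from the paper's. You pass from $\mh H_{\mf s^\vee,u}^\circ$ to $\mh H_{\mf s^\vee,u}$ by explicit Clifford theory: extend $N = M^\circ_{y_\sigma,-\sigma,\mr{triv}}$ to a projective $(\mh H_{\mf s^\vee,u}^\circ \rtimes \Gamma_\sigma)$-module $\tilde N$, tensor with a $\mu^{-1}$-projective $\tau$ of $\Gamma_\sigma$, and Clifford-induce. The paper proceeds far more economically. It works with the \emph{total} semisimple module $M_{y,-\sigma}$ (which packages $\bigoplus_\rho M_{y,-\sigma,\rho} \otimes \rho$), and the decisive input is the isomorphism
\[
M_{y,-\sigma} \cong \ind_{\mh H_{\mf s^\vee,u}^\circ}^{\mh H_{\mf s^\vee,u}} M^\circ_{y,-\sigma}
\]
obtained from \eqref{eq:6.14}, \eqref{eq:6.12} and \eqref{eq:6.13}. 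Since $\mh H_{\mf s^\vee,u}$ is free over $\mh H_{\mf s^\vee,u}^\circ$ on $\Gamma_{u>0}$, exactly as $\C[W_{\mf s^\vee,u}]$ is over $\C[W(R_{u>0})]$, a single application of Frobenius reciprocity yields that $\mr{triv}_{W_{\mf s^\vee,u}}$ has the same multiplicity in $M_{y,-\sigma}$ as $\mr{triv}_{W(R_{u>0})}$ has in $M^\circ_{y,-\sigma}$; Proposition \ref{prop:6.8} plus the $\mh H_{\mf s^\vee,u}^\circ$-analogue of \eqref{eq:6.13} give multiplicity one precisely for $y$ in the dense orbit. Multiplicity one then singles out the unique $\rho$, and Clifford theory is used only at the end: if $\rho^\circ$ is a nontrivial constituent of $\rho$ restricted to $\pi_0(Z_{G_u^{\vee,\circ}}(\sigma,y))$, then every $g\cdot\rho^\circ$ is nontrivial, so $M_{y,-\sigma,\rho}|_{\mh H_{\mf s^\vee,u}^\circ}$ involves no $M^\circ_{y',\sigma',\mr{triv}}$ and hence no $\mr{triv}_{W(R_{u>0})}$. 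No cocycle ever enters.

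The Clifford bookkeeping you flag at the end is the real gap. You assert, but do not verify, that restriction to $\C[W_{\mf s^\vee,u}]$ turns the induction $\ind_{\mh H_{\mf s^\vee,u}^\circ \rtimes \Gamma_\sigma}^{\mh H_{\mf s^\vee,u}}(\tilde N \boxtimes \tau)$ into the corresponding twisted group-algebra induction, and that the resulting parameter $\tau$ lines up with the $\rho$ of the \cite[Theorem 3.8]{AMS3} parametrisation in the way you claim. You also miss a simplification: because $\Gamma_\sigma$ acts on the one-dimensional space $\Hom_{\C[W(R_{u>0})]}(N,\mr{triv})$ by the projective character $\psi$, the cocycle $\mu$ is a coboundary, so the whole projective apparatus can be normalised away from the start. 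That observation, or better the paper's route through $M_{y,-\sigma}$ which achieves the same thing implicitly, is what would close your argument.
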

\begin{proof}
Let $M_{y,-\sigma}$ be the maximal semisimple quotient $\mh H_{\mf s^\vee,u}$-module of $E_{y,-\sigma}$.
Then 
\begin{equation}\label{eq:6.13}
M_{y,-\sigma,\rho} = \Hom_{\pi_0 (Z_{G_u^\vee}(y,\sigma))}(\rho, M_{y,-\sigma}) ,
\end{equation}
for any eligible $\rho$. The same can be done for the analogous $\mh H_{\mf s^\vee,u}^\circ$-modules. 
It follows from \eqref{eq:6.14}, \eqref{eq:6.12} and \eqref{eq:6.13} that
\begin{equation}\label{eq:6.15}
M_{y,-\sigma} \cong \ind_{\mh H_{\mf s^\vee,u}^\circ}^{\mh H_{\mf s^\vee,u}} M^\circ_{y,-\sigma} . 
\end{equation}
Recall that $\mh H_{\mf s^\vee,u} = \mh H_{\mf s^\vee,u}^\circ \rtimes \Gamma_{u>0}$.
By Frobenius reciprocity and \eqref{eq:6.15} the multiplicity of $\mr{triv}_{W(R_{u>0}) \rtimes 
\Gamma_{u>0}}$ in $M_{y,-\sigma}$ equals the multiplicity of $\mr{triv}_{W(R_{u>0})}$ in 
$M^\circ_{y,-\sigma}$.

For any given $\sigma$, Proposition \ref{prop:6.8} and \eqref{eq:6.13} for $\mh H_{\mf s^\vee,u}^\circ$
entail that $\mr{triv}_{W(R_{u>0})}$ appears with multiplicity one in $M^\circ_{y,-\sigma}$ if $y$ 
satisfies the density condition, and otherwise that multiplicity is zero. Hence $M_{y,-\sigma}$
contains $\mr{triv}_{W(R_{u>0}) \rtimes \Gamma_{u>0}}$ if and only if $y$ satisfies the condition
from the statement, and then the multiplicity is one. 

For such $(\sigma,y)$, multiplicity one ensures that there exists a unique $\rho$ such that 
$M_{y,\sigma,\rho}$ contains $\mr{triv}_{W(R_{u>0}) \rtimes \Gamma_{u>0}}$. Let $\rho^\circ$ be an
irreducible subrepresentation of $\rho$ restricted to the normal subgroup 
$\pi_0 (Z_{G_u^{\vee,\circ}} (\sigma,y))$. By Clifford theory the restriction of $\rho$ to
$\pi_0 (Z_{G_u^{\vee,\circ}} (\sigma,y))$ is a multiple of $\bigoplus_g g \cdot \rho^\circ$, where
$g$ runs over $\pi_0 (Z_{G_u^\vee}(\sigma,y))$ modulo the stabilizer of $\rho^\circ$. 

\hspace{-4mm} Suppose that $\rho^\circ$ is nontrivial. Then $g \cdot \rho^\circ$ is nontrivial for any 
$g \in \pi_0 (Z_{G_u^\vee}(\sigma,y))$, and $M_{y,-\sigma,\rho}$ cannot contain any 
$\mh H_{\mf s^\vee,u}^\circ$-submodule of the form $M^\circ_{y',\sigma',\mr{triv}}$. In this case 
$M_{y,-\sigma,\rho}$ does not contain $\mr{triv}_{W(R_{u>0})}$. 
\end{proof}

To see that the parametrization of $\Irr (\mh H^{\mf s^\vee,u})$ from \cite[Theorem 3.8]{AMS3} has 
a property like Proposition \ref{prop:6.8}, it remains to analyse the $\rho$ determined by
Lemma \ref{lem:6.13}. To that end we have to delve more deeply into the underlying constructions.

By the naturality of the parametrization \eqref{eq:6.10}, the $\Gamma_{u>0}$-stabilizer of 
$\bar M^\circ_{y,\sigma,\rho^\circ}$ (or equivalently of $M^\circ_{y,-\sigma,\rho^\circ}$) equals the 
$\Gamma_{u>0}$-stabilizer of the $G_u^{\vee,\circ}$-orbit of $(\sigma,y,\rho^\circ)$. When 
$\rho^\circ = \mr{triv}$, that group depends only on $(\sigma,y)$. When furthermore $y$ satisfies 
the density condition from Proposition \ref{prop:6.8}, the $\Gamma_{u>0}$-stabilizer of 
$\bar M^\circ_{y,\sigma,\mr{triv}}$ equals the $\Gamma_{u>0}$-stabilizer of the $G_u^{\vee,\circ}$-orbit
of $\sigma$, which we denote by $\Gamma_{[\sigma]}$.

\begin{lem}\label{lem:6.9}
Let $(\sigma,y,\mr{triv})$ be as in Proposition \ref{prop:6.8}. The action of 
$\mh H_{\mf s^\vee,u}^\circ$ on $M^\circ_{y,-\sigma,\mr{triv}}$ extends a unique way to an action of 
$\mh H_{\mf s^\vee,u}^\circ \rtimes \Gamma_{[\sigma]}$ that contains the trivial representation of 
$\C [W(R_{u>0}) \rtimes \Gamma_{[\sigma]}]$.
\end{lem}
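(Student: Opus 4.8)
The plan is to construct the extension explicitly from a distinguished one-dimensional subspace, which at the same time avoids any cohomological obstruction to extending and singles out the required extension. The two inputs I would use are: (1) by Proposition~\ref{prop:6.8} together with the definition \eqref{eq:6.10} of $\bar M^\circ_{y,\sigma,\mr{triv}}$ as $\mr{IM}^* M^\circ_{y,-\sigma,\mr{triv}}$ (the Iwahori--Matsumoto involution interchanging the trivial and the sign character of $\C[W(R_{u>0})]$), the module $M^\circ_{y,-\sigma,\mr{triv}}$ contains the character $\mr{triv}$ of $\C[W(R_{u>0})]$, and it does so with multiplicity one, because under the density condition of Proposition~\ref{prop:6.8} the relevant variety of Borel subgroups is a single point (cf.\ \cite[\S 7.2--7.3]{Ree}; alternatively one transports Lemma~\ref{lem:6.12} through Theorems~\ref{thm:6.5} and \ref{thm:6.6}); (2) $M^\circ_{y,-\sigma,\mr{triv}}$ is irreducible over $\mh H_{\mf s^\vee,u}^\circ$. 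Let $V_0$ be the $\mr{triv}$-isotypic line of $\C[W(R_{u>0})]$ in $M^\circ_{y,-\sigma,\mr{triv}}$; by irreducibility $V_0$ generates $M^\circ_{y,-\sigma,\mr{triv}}$, so fixing a generator $v\in V_0$ identifies $M^\circ_{y,-\sigma,\mr{triv}}$ with $\mh H_{\mf s^\vee,u}^\circ/I$, where $I=\{h:hv=0\}$ and $v\leftrightarrow 1+I$.

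Next I would check that $I$ is $\gamma$-stable for every $\gamma\in\Gamma_{[\sigma]}$, where $\Gamma_{[\sigma]}\subset\Gamma_{u>0}$ acts on $\mh H_{\mf s^\vee,u}^\circ$ as in Section~\ref{sec:red}. By the discussion preceding the lemma, $\Gamma_{[\sigma]}$ is the $\Gamma_{u>0}$-stabilizer of $[\bar M^\circ_{y,\sigma,\mr{triv}}]$, hence of $[M^\circ_{y,-\sigma,\mr{triv}}]$ by naturality of \eqref{eq:6.10}; so each $\gamma$ admits an $\mh H_{\mf s^\vee,u}^\circ$-linear isomorphism $\phi_\gamma\colon M^\circ_{y,-\sigma,\mr{triv}}\isom\gamma^*M^\circ_{y,-\sigma,\mr{triv}}$. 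Since $\gamma\in\Gamma_{u>0}$ normalizes $W(R_{u>0})$ and stabilizes $R_{u>0}^+$, it fixes the character $\mr{triv}$ of $\C[W(R_{u>0})]$, so $\phi_\gamma$ preserves $V_0$ and scales $v$; comparing annihilators of generators on the two sides then shows that $I$ is $\gamma$-stable. Consequently the maps $\tilde\gamma\colon h+I\mapsto\gamma(h)+I$ are well defined, satisfy $\tilde\gamma(h'm)=\gamma(h')\tilde\gamma(m)$ for $h'\in\mh H_{\mf s^\vee,u}^\circ$ and $\tilde\gamma\tilde{\gamma'}=\widetilde{\gamma\gamma'}$, and together with the left $\mh H_{\mf s^\vee,u}^\circ$-action they endow $M^\circ_{y,-\sigma,\mr{triv}}$ with an $\mh H_{\mf s^\vee,u}^\circ\rtimes\Gamma_{[\sigma]}$-module structure extending the given one; moreover $\tilde\gamma(v)=v$, so $W(R_{u>0})\rtimes\Gamma_{[\sigma]}$ fixes $v$ and this extension of $M^\circ_{y,-\sigma,\mr{triv}}$ contains $\mr{triv}_{W(R_{u>0})\rtimes\Gamma_{[\sigma]}}$.

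For uniqueness I would argue that, an extension existing, any two extensions of the irreducible $\mh H_{\mf s^\vee,u}^\circ$-module $M^\circ_{y,-\sigma,\mr{triv}}$ to $\mh H_{\mf s^\vee,u}^\circ\rtimes\Gamma_{[\sigma]}$ differ by a character $\eta\in\Irr(\Gamma_{[\sigma]})$, by Schur's lemma applied to the intertwiners defining the $\Gamma_{[\sigma]}$-action; in the extension twisted by $\eta$ the group $\Gamma_{[\sigma]}$ acts on $V_0$ through $\eta$, and by multiplicity one $V_0$ is the only line on which $W(R_{u>0})$ acts by $\mr{triv}$, so the extension contains $\mr{triv}_{W(R_{u>0})\rtimes\Gamma_{[\sigma]}}$ exactly when $\eta$ is trivial. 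Hence the extension asserted in the lemma is unique. The point to get right — and where the proof could break — is the multiplicity-one assertion in input (1): if $\mr{triv}_{W(R_{u>0})}$ occurred in $M^\circ_{y,-\sigma,\mr{triv}}$ with higher multiplicity, the $\Gamma_{[\sigma]}$-action on $V_0$ need not be scalar, and neither the explicit construction nor the uniqueness would go through; a secondary care is to confirm that $\Gamma_{[\sigma]}$, as a subgroup of $\Gamma_{u>0}$, fixes the trivial and not the sign character of $\C[W(R_{u>0})]$, which follows from the invariance properties of $R_{u>0}$ and $R_{u>0}^+$ recorded around Lemma~\ref{lem:6.2}.
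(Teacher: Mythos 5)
Your argument is correct and follows essentially the same route as the paper: both rest on the multiplicity-one occurrence of $\mr{triv}_{W(R_{u>0})}$ in the irreducible module $M^\circ_{y,-\sigma,\mr{triv}}$ (the paper cites Lemma~\ref{lem:6.12} for this, which is exactly your alternative justification), and both use Schur's lemma to normalize the $\Gamma_{[\sigma]}$-intertwiners so that they fix the distinguished line pointwise, with the multiplicativity $I_\gamma I_{\gamma'} = I_{\gamma\gamma'}$ and the uniqueness-of-extension both falling out of that normalization — your packaging via the cyclic quotient $\mh H_{\mf s^\vee,u}^\circ/I$ is a cosmetic variant of the paper's direct normalization of the operators $I_\gamma$. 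The only thing to tidy is your parenthetical that ``the relevant variety of Borel subgroups is a single point'': under the density condition it is the simultaneous fixed variety of the semisimple and nilpotent parts that collapses, not $\mc B^y$ itself, so the purely algebraic multiplicity bound of Lemma~\ref{lem:6.12} (which applies verbatim to graded Hecke algebras) is the cleaner and safer reference, and it is the one the paper uses.
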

\begin{proof}
By Proposition \ref{prop:6.8} $M^\circ_{y,-\sigma,\rho^\circ}$ contains the trivial representation of\\
$\C [W(R_{u>0})]$, and by Lemma \ref{lem:6.12} it does so with multiplicity one. Any $\gamma \in 
\Gamma_{[\sigma]}$ stabilizes $M^\circ_{y,-\sigma,\mr{triv}}$, so there exists a linear bijection 
$I_\gamma$ such that
\[
I_\gamma \circ h = \gamma (h) \circ I_\gamma : M^\circ_{y,-\sigma,\rho^\circ} \to 
M^\circ_{y,-\sigma,\rho^\circ} \quad \text{for all } h \in \mh H_{\mf s^\vee,u}^\circ .
\] 
Schur's lemma says that $I_\gamma$ is unique up to scalars. Since $\mr{triv}_{W(R_{u>0})}$ is 
$\Gamma_{[\sigma]}$-stable and appears with multiplicity on in $M^\circ_{y,-\sigma,\rho^\circ}$,
$I_\gamma$ stabilizes the one-dimensional subspace which affords $\mr{triv}_{W(R_{u>0})}$.
We normalize $I_\gamma$ by requiring that it fixes $\mr{triv}_{W(R_{u>0})} \subset 
M^\circ_{y,-\sigma,\rho^\circ}$ pointwise, that is the only possibility if we want to end up
with the trivial representation of $W(R_{u>0}) \rtimes \Gamma_{[\sigma]}$. 

For any $\gamma, \gamma' \in \Gamma_{[\sigma]}$, $I_\gamma \circ I_{\gamma'}$ satisfies the same 
condition as $I_{\gamma \gamma'}$, so equals $I_{\gamma \gamma'}$. These $I_\gamma$ provide 
the desired extension. 
\end{proof}

The module $M^\circ_{y,\sigma,\rho^\circ}$ comes as the unique irreducible quotient of a standard
module $E^\circ_{y,\sigma,\rho^\circ}$ \cite[Theorem 1.15.a]{Lus-Cusp3}. The latter is a subspace of
the homology of the variety $\mc B^y$ of Borel subgroups of $G_u^{\vee,\circ}$ that contain $\exp (y)$,
with coefficients in a certain local system $\dot{\mc L}$. In our setting $\dot{\mc L}$ is trivial
because it comes from the trivial local system on $\{0\}$. In terms of the $\Gamma_{u>0}$-stable
Borel subgroup $B^\vee \cap G_u^{\vee,\circ}$ we have
\[
E^\circ_{y,\sigma,\rho^\circ} \subset H_* (\mc B^y) = H_* \big(
\big\{ g \in G_u^{\vee,\circ} / B^\vee \cap G_u^{\vee,\circ} : \mr{Ad}(g^{-1}) y \in 
\mr{Lie}(B^\vee \cap G_u^{\vee,\circ}) \big\} \big) .
\]
From that and \eqref{eq:6.12} we see that
\begin{equation}\label{eq:6.23}
E^\circ_{y,-\sigma,\mr{triv}} = H_* (\mc B^y )^{Z_{G_u^{\vee,\circ}} (y,\sigma)} .
\end{equation}

\begin{lem}\label{lem:6.10}
Let $(\sigma,y,\mr{triv})$ be as in Proposition \ref{prop:6.8}. Then 
\[
M^\circ_{y,-\sigma,\mr{triv}} = E^\circ_{y,-\sigma,\mr{triv}} =
H_* (\mc B^y )^{Z_{G_u^{\vee,\circ}} (y,\sigma)}
\]
as vector spaces. The subspace $H_0 (\mc B^y )^{Z_{G_u^{\vee,\circ}} (y,\sigma)}$ has dimension one
and\\ $\C [W(R_{u>0})]$ acts on it as the trivial representation. 
\end{lem}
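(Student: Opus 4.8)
The plan is to realise $M^\circ_{y,-\sigma,\mr{triv}}$ geometrically: I will show that, under the hypothesis of Proposition~\ref{prop:6.8}, the standard module $E^\circ_{y,-\sigma,\mr{triv}}$ is already irreducible, so that it coincides with its unique irreducible quotient $M^\circ_{y,-\sigma,\mr{triv}}$, and that its underlying space is $H_*(\mc B^y)^{Z_{G_u^{\vee,\circ}}(y,\sigma)}$. First I recall the construction of $E^\circ_{y,-\sigma,\rho^\circ}$ from \cite{Lus-Cusp1,Lus-Cusp2,Lus-Cusp3} and \cite[\S 3]{AMS2}: since the local system $\dot{\mc L}$ attached to our (trivial) cuspidal support is trivial, the underlying vector space of $E^\circ_{y,-\sigma}$ is $H_*(\mc B^y)$ with its natural action of $\pi_0 \big( Z_{G_u^{\vee,\circ}}(\sigma,y) \big)$, and $E^\circ_{y,-\sigma,\mr{triv}}$ is the subspace of invariants. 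The connected group $Z_{G_u^{\vee,\circ}}(\sigma,y)^\circ \subseteq Z_{G_u^{\vee,\circ}}(y)^\circ$ acts trivially on $H_*(\mc B^y)$, so
\[
E^\circ_{y,-\sigma,\mr{triv}} = H_*(\mc B^y)^{\pi_0(Z_{G_u^{\vee,\circ}}(\sigma,y))} = H_*(\mc B^y)^{Z_{G_u^{\vee,\circ}}(y,\sigma)} .
\]
As $\mc B^y$ is non-empty and connected (it is a Springer fibre of the connected group $G_u^{\vee,\circ}$), $H_0(\mc B^y)\cong\C$ is contained in this space of invariants, so $M^\circ_{y,-\sigma,\mr{triv}}$ is indeed defined.

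Next I would prove $E^\circ_{y,-\sigma,\mr{triv}}=M^\circ_{y,-\sigma,\mr{triv}}$. By \cite[Theorem~1.15]{Lus-Cusp3} (cf.\ \cite[\S 3.3]{AMS2}), $M^\circ_{y,-\sigma,\mr{triv}}$ occurs in $E^\circ_{y,-\sigma,\mr{triv}}$ with multiplicity one, and every other composition factor is attached to a $y'$ whose $Z_{G_u^{\vee,\circ}}(\sigma)$-orbit strictly contains, in its closure, the orbit of $y$, both orbits lying in $\{ Y\in\mr{Lie}(G_u^{\vee,\circ}) : [\sigma,Y]=-\log(q_F)Y \}$. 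By the hypothesis taken over from Proposition~\ref{prop:6.8}, the orbit of $y$ is the dense one in that space, so no such $y'$ exists; hence $E^\circ_{y,-\sigma,\mr{triv}}$ is irreducible and equals $M^\circ_{y,-\sigma,\mr{triv}}$. Combined with the previous paragraph this gives $M^\circ_{y,-\sigma,\mr{triv}}=H_*(\mc B^y)^{Z_{G_u^{\vee,\circ}}(y,\sigma)}$.

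Finally, the statement about degree zero. Connectedness of $\mc B^y$ gives $H_0(\mc B^y)\cong\C$, and any group acting algebraically on the connected variety $\mc B^y$ acts trivially on $H_0(\mc B^y)$, so $H_0(\mc B^y)^{Z_{G_u^{\vee,\circ}}(y,\sigma)}=H_0(\mc B^y)$ is one-dimensional. It remains to note that $W(R_{u>0})$ acts by the trivial character on $H_0(\mc B^y)$, the lowest-degree piece of the graded Springer module underlying $E^\circ_{y,-\sigma}$; this is the normalisation of the Springer correspondence used in \cite{Lus-Cusp1,Lus-Cusp2,AMS2}, and it is consistent with Proposition~\ref{prop:6.8} via the Iwahori--Matsumoto involution of \eqref{eq:6.10}, which exchanges $\mr{triv}_{W(R_{u>0})}$ and $\mr{sign}_{W(R_{u>0})}$.

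The main obstacle is the irreducibility step: it rests entirely on having the correct form and direction of Lusztig's theorem on the composition series of standard modules, namely that the non-leading constituents are attached to strictly larger nilpotent orbits inside the fixed $\log(q_F)$-eigenspace of $\ad(\sigma)$; granting that, maximality of the dense orbit is exactly what kills all other constituents (the affine Hecke algebra counterpart of this analysis is carried out in \cite[\S 7]{Ree}). A secondary point requiring care is the Springer normalisation in the last paragraph, which must be the one placing the trivial $W(R_{u>0})$-representation in homological degree zero.
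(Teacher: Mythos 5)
Your first two steps reproduce the paper's argument exactly: you unwind the standard module as $H_*(\mc B^y)^{Z_{G_u^{\vee,\circ}}(y,\sigma)}$, and you deduce irreducibility of $E^\circ_{y,-\sigma,\mr{triv}}$ from the density of the $Z_{G_u^{\vee,\circ}}(\sigma)$-orbit of $y$, using Lusztig's result on composition series of standard modules (the paper cites \cite[\S 10.4--10.8]{Lus-Cusp2} where you cite \cite[Theorem 1.15]{Lus-Cusp3}, but it is the same fact). Your appeal to connectedness of the Springer fibre $\mc B^y$ for the one-dimensionality of $H_0(\mc B^y)$ is a correct alternative to the paper's route through the chain of inclusions inside $\C_{-\sigma,\log(q_F)/2}\otimes_A H^A_*(\mc B^0)$.

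The gap is precisely the point you flag as ``secondary''. You assert that $W(R_{u>0})$ acts trivially on $H_0(\mc B^y)$ ``by the normalisation of the Springer correspondence used in \cite{Lus-Cusp1,Lus-Cusp2,AMS2}'', but that is what has to be proved, and it is the crux of the lemma. The $W(R_{u>0})$-action on $H_*(\mc B^y)$ relevant here is the one coming from the graded Hecke algebra module structure of $E^\circ_{y,-\sigma}$ at parameters $(\sigma, r)=(\sigma,\log(q_F)/2)$, not an action induced by an algebraic action of $W$ on the variety $\mc B^y$. Identifying it with the classical Springer action (and hence pinning down which isotypic component sits in degree zero) requires an argument. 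The paper does this by embedding $M^\circ_{y,-\sigma,\mr{triv}}$ into $E^\circ_{0,-\sigma,\mr{triv}}\subset\C_{-\sigma,\log(q_F)/2}\otimes_A H^A_*(\mc B^0)$ via the IC-sheaf restriction map (using that the orbit closure of $y$ is a linear subspace, so the IC complex is the constant sheaf), then invoking the freeness of $H^A_*(\mc B^0)$ over $A$ from \cite[Proposition 7.2]{Lus-Cusp1} to get an algebraic family whose $\C[W(R_{u>0})]$-module structure is independent of $(\sigma,r)$, and finally specialising to $\sigma=0$, $r=0$ where \cite[Theorem 9.2]{Lus-Int} places the trivial representation in $H_0(\mc B^0)$. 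Without this deformation-and-specialisation step, your last paragraph is an assertion rather than a proof; the Iwahori--Matsumoto consistency check with Proposition~\ref{prop:6.8} is a sanity check, not a derivation.
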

\begin{proof}
By \cite[\S 10.4--10.8]{Lus-Cusp2}, every irreducible subquotient of $E^\circ_{y,-\sigma,\mr{triv}}$
different from $M^\circ_{y,-\sigma,\mr{triv}}$ is of the form $M^\circ_{y',-\sigma,\rho^\circ}$ with
\[
\mr{Ad}(Z_{G_u^{\vee,\circ}}) y \; \subset \; \overline{\mr{Ad}(Z_{G_u^{\vee,\circ}}) y'} .
\]
By the density condition on $y$, such a $y'$ does not exist. Therefore $E^\circ_{y,-\sigma,\mr{triv}}$
is irreducible and equal to $M^\circ_{y,-\sigma,\mr{triv}}$.

Again by \cite[\S 10.4--10.8]{Lus-Cusp2}, $M^\circ_{y,-\sigma,\mr{triv}}$ is a subquotient of
$E^\circ_{0,-\sigma,\mr{triv}}$. As 
\begin{equation}\label{eq:6.11}
\overline{\mr{Ad}(Z_{G_u^{\vee,\circ}}) y} = \{ Y \in \mr{Lie}(G_u^{\vee,\circ}) : [\sigma,Y]
= -\log (q_F) Y \}
\end{equation}
is a vector space, the intersection cohomology complex from the constant sheaf on 
$\mr{Ad}(Z_{G_u^{\vee,\circ}}) y$ is simply the constant sheaf on \eqref{eq:6.11}. In view of 
\cite[\S 10]{Lus-Cusp2}, restricting that sheaf to $\{0\}$ provides a natural nonzero
$\mh H_{\mf s^\vee,u}^\circ$-module homomorphism
\[
E^\circ_{y,-\sigma,\mr{triv}} \to E^\circ_{0,-\sigma,\mr{triv}} .
\]
This realizes $M^\circ_{y,-\sigma,\mr{triv}}$ as subrepresentation of $E^\circ_{0,-\sigma,\mr{triv}}$.

Consider the algebra $A = \mc O (\mr{Lie}(G_u^{\vee,\circ}) / \mr{Ad}(G) \times \C)$ of conjugation
invariant functions on the Lie algebra of $G_u^{\vee,\circ} \times \C^\times$. 
We recall from \cite{Lus-Cusp1} that
\[
E^\circ_{0,-\sigma,\rho^\circ} = \Hom \big( \rho^\circ, E^\circ_{y,\sigma} \big) = 
\Hom \big( \rho^\circ, \C_{-\sigma,\log (q_F)/2} \otimes_A H^A_* (\mc B^0) \big) .
\]
If we replace $\log (q_F)/2$ by an arbitrary $r \in \C$, we still obtain a module for a graded
Hecke algebra, namely $\mh H (G_u^{\vee,\circ}, T^\vee, \mr{triv},r)$.
It is known from \cite[Proposition 7.2]{Lus-Cusp1} that $H_*^A (\mc B^0)$ is a free $A$-module. 
That implies that the modules 
$\C_{-\sigma,r} \otimes_A H^A_* (\mc B^0)$ form an algebraic family parametrized by $r \in \C$ and
a semisimple $\sigma \in \mr{Lie}(G_u^{\vee,\circ})$. In particular, as modules for the finite 
dimensional semisimple subalgebra $\C[W(R_{u>0})]$ they do not depend on $(\sigma,r)$. 

For $r = 0, \sigma = 0$ the group $Z_{G_u^{\vee,\circ}} (\sigma,0) = G_u^{\vee,\circ}$ is connected,
and we obtain $E^\circ_{0,0} = H_* (\mc B^0)$. This is a $\C[W(R_{u>0})]$-representation 
with which the classical Springer correspondence can be constructed. Here we must use the version of 
the Springer correspondence from \cite{Lus-Int}, which by \cite[Theorem 9.2]{Lus-Int} means that the
trivial $W(R_{u>0})$-representation appears as 
\[
H_0 (\mr{pt}) = H_0 ( \mc B^x ) \cong H_0 (\mc B^0)
\]
for a regular unipotent element $x \in G_u^{\vee,\circ}$. As $\dim H_0 (\mc B^0) = 1$, the parts of
\[
M^\circ_{y,-\sigma,\mr{triv}} \; \subset \; E^\circ_{0,-\sigma,\mr{triv}} \; \subset \; 
\C_{-\sigma,\log (q_F)/2} \otimes_A H^A_* (\mc B^0) 
\]
in homological degree zero also have dimension one and carry the trivial representation of $W(R_{u>0})$.
\end{proof}

We are ready to prove the desired generalization of Proposition \ref{prop:6.8}.

\begin{thm}\label{thm:6.11}
There exists a canonical bijection between $\Irr (\mh H_{\mf s^\vee,u})$ and the set of
$G_u^\vee$-conjugacy classes of triples $(\sigma,y,\rho)$, where
\begin{itemize}
\item $\sigma, y \in \mr{Lie}(G_u^\vee)$ with $\sigma$ semisimple, $y$ nilpotent and
$[\sigma,y] = -\log (q_F) y$,
\item $\rho$ is an irreducible representation of $\pi_0 \big( Z_{G_u^\vee}(\sigma,y) /
Z (G_u^{\vee,\circ}) \big)$, such that any irreducible $\pi_0 \big( Z_{G_u^{\vee,\circ}}(\sigma,y) 
/ Z (G_u^{\vee,\circ}) \big)$-subrepresentation appears in the homo\-logy of the
variety of Borel subgroups of $G_u^{\vee,\circ}$ that contain $\exp (\sigma)$ and $\exp (y)$.
\end{itemize}
The module $\bar M_{y,\sigma,\rho}$ associated to $(\sigma,y,\rho)$ contains the character $\det$ of
$\C [W_{\mf s^\vee,u}]$ if and only if $\rho$ is trivial and the Ad$(Z_{G_u^{\vee,\circ}}(\sigma))
$-orbit of $y$ is dense in 
\[
\{ Y \in \mr{Lie}(G_u^{\vee,\circ}) : [\sigma,Y] = -\log (q_F) Y \} .
\]
\end{thm}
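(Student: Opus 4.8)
The plan is to obtain the bijection essentially for free from the literature and to concentrate all the effort on the $\det$-criterion, which is the only genuinely new assertion.

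\emph{The bijection.} Via the identification $\mh H_{\mf s^\vee,u}\cong\mh H(G_u^\vee,T^\vee,\mr{triv},\log(q_F)/2)$ of \eqref{eq:6.9} together with Lemma \ref{lem:6.7}, this is exactly the parametrization of \cite[Theorem 3.20]{AMS2} and \cite[Theorem 3.8]{AMS3}: the parameters there are the $G_u^\vee$-conjugacy classes of triples $(\sigma,y,\rho)$ with $\sigma$ semisimple, $y$ nilpotent, $[\sigma,y]=-\log(q_F)y$, and $\rho$ an irreducible representation of $\pi_0(Z_{G_u^\vee}(\sigma,y)/Z(G_u^{\vee,\circ}))$ whose restriction to $\pi_0(Z_{G_u^{\vee,\circ}}(\sigma,y)/Z(G_u^{\vee,\circ}))$ is relevant, i.e. all of its irreducible constituents occur in the homology of the Borel variety in the statement; this is precisely the homology condition listed there. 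The module attached to $(\sigma,y,\rho)$ is $\bar M_{y,\sigma,\rho}=\mr{IM}^*M_{y,-\sigma,\rho}$ from \eqref{eq:6.10}, in the disconnected version \cite[Theorem 3.8]{AMS3}. So the first half of the statement requires no proof, and everything comes down to the $\det$-criterion.

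\emph{Reduction to a triviality criterion.} Since $\bar M_{y,\sigma,\rho}=\mr{IM}^*M_{y,-\sigma,\rho}$ and the Iwahori--Matsumoto involution of $\mh H_{\mf s^\vee,u}=\mh H(W(R_{u>0}),\mf T_u(T_{\mf s^\vee}),k^u)\rtimes\Gamma_{u>0}$ tensors $\C[W_{\mf s^\vee,u}]$-modules by the character $\det$ (on $W(R_{u>0})$ this is the classical sign twist, already reflected in Proposition \ref{prop:6.8} and Lemma \ref{lem:6.10}; the extension to $\Gamma_{u>0}$ is, by its construction in \cite{AMS2,AMS3}, compatible with the restriction of $\det$), one has $\Hom_{W_{\mf s^\vee,u}}(\bar M_{y,\sigma,\rho},\det)\cong\Hom_{W_{\mf s^\vee,u}}(M_{y,-\sigma,\rho},\mr{triv})$. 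By Lemma \ref{lem:6.13} the latter is nonzero exactly when $y$ lies in the dense $\mr{Ad}(Z_{G_u^{\vee,\circ}}(\sigma))$-orbit in $\{Y:[\sigma,Y]=-\log(q_F)Y\}$ and $\rho|_{\pi_0(Z_{G_u^{\vee,\circ}}(\sigma,y))}$ is a multiple of $\mr{triv}$; moreover such a pair $(y,\rho)$ is unique for fixed $\sigma$. It remains to strengthen ``multiple of $\mr{triv}$ on the connected part'' to ``$\rho=\mr{triv}$''.

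\emph{Identifying $\rho$.} Assume $y$ is in the dense orbit. By Lemma \ref{lem:6.10}, $M^\circ_{y,-\sigma,\mr{triv}}=H_*(\mc B^y)^{Z_{G_u^{\vee,\circ}}(y,\sigma)}$, its degree-zero part $H_0(\mc B^y)^{Z_{G_u^{\vee,\circ}}(y,\sigma)}$ is one-dimensional and carries $\mr{triv}_{W(R_{u>0})}$, and by Proposition \ref{prop:6.8} no other constituent of $M^\circ_{y,-\sigma}$ contains $\mr{triv}_{W(R_{u>0})}$. As $H_0(\mc B^y)$ is the permutation module on the connected components of $\mc B^y$, one-dimensionality of the invariants forces $\pi_0(Z_{G_u^{\vee,\circ}}(y,\sigma))$ to permute those components transitively, so the invariant line is spanned by the sum of all of them --- a vector fixed by \emph{every} permutation of the components, hence by the whole group $\pi_0(Z_{G_u^\vee}(\sigma,y))$. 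Feeding this through $M_{y,-\sigma}\cong\ind_{\mh H_{\mf s^\vee,u}^\circ}^{\mh H_{\mf s^\vee,u}}M^\circ_{y,-\sigma}$ of \eqref{eq:6.15} and the canonical $\Gamma_{[\sigma]}$-extension of Lemma \ref{lem:6.9}, one finds that the $\mr{triv}_{W_{\mf s^\vee,u}}$-isotypic part of $M_{y,-\sigma}$ is one-dimensional and carries the trivial $\pi_0(Z_{G_u^\vee}(\sigma,y))$-action; hence $\Hom_{\pi_0(Z_{G_u^\vee}(\sigma,y))}(\rho,M_{y,-\sigma})$ contains $\mr{triv}_{W_{\mf s^\vee,u}}$ precisely for $\rho=\mr{triv}$. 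Together with the reduction above and the uniqueness in Lemma \ref{lem:6.13}, this yields the equivalence.

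\emph{Main obstacle.} The delicate step is the last paragraph: tracking the disconnected component group $\pi_0(Z_{G_u^\vee}(\sigma,y))$ on the one-dimensional space realizing the trivial (resp. sign) representation, and reconciling this with the induction from $\mh H_{\mf s^\vee,u}^\circ$ and the $\Gamma_{[\sigma]}$-extension of Lemma \ref{lem:6.9} without circularity. The geometric input that makes it work --- that in the dense (``distinguished'') case the relevant Springer-type fiber behaves so that $H_0$ is one-dimensional as a $Z_{G_u^{\vee,\circ}}(y,\sigma)$-module, cf. \cite[\S 7.2--7.3]{Ree} --- is available; the remaining representation-theoretic bookkeeping around it is what must be carried out with care.
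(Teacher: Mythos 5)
Your overall outline is close to the paper's, and the first half of your plan (reduce via Lemma \ref{lem:6.13} to a triviality criterion, then argue that the invariant line is fixed by the full component group) is in the right spirit. But there is a genuine error in the reduction step that the paper fixes with an explicit modification, and without that fix the theorem as stated does not follow.

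You assert that the Iwahori--Matsumoto involution of $\mh H_{\mf s^\vee,u} = \mh H_{\mf s^\vee,u}^\circ \rtimes \Gamma_{u>0}$, ``by its construction in \cite{AMS2,AMS3}, is compatible with the restriction of $\det$'' on $\Gamma_{u>0}$. This is false: the extension in \cite[\S 3.5]{AMS2} makes $\IM$ the \emph{identity} on $\C[\Gamma_{u>0}]$, so $\IM^*$ twists $\C[W_{\mf s^\vee,u}]$-modules by $\mr{sign}_{W(R_{u>0})} \rtimes \mr{triv}_{\Gamma_{u>0}}$, not by $\det = \mr{sign}_{W(R_{u>0})} \rtimes \det|_{\Gamma_{u>0}}$. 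Whenever $\det|_{\Gamma_{u>0}}$ is nontrivial (which happens as soon as some $\gamma \in \Gamma_{u>0}$ acts with determinant $-1$ on $(T/T_\cpt) \otimes \R$), your claimed isomorphism $\Hom_{W_{\mf s^\vee,u}}(\bar M_{y,\sigma,\rho},\det) \cong \Hom_{W_{\mf s^\vee,u}}(M_{y,-\sigma,\rho},\mr{triv})$ fails. The paper resolves this by \eqref{eq:6.20}: it \emph{redefines} the extension of $\IM$ to be multiplication by $\det$ on $\C[\Gamma_{u>0}]$. This modification is not a cosmetic adjustment — it is the step that makes $\bar M_{y,\sigma,\rho}$ contain $\det$ (rather than $\mr{sign}\rtimes\mr{triv}$), and hence the step that connects the geometric parametrization to the genericity criterion of Theorem \ref{thm:6.4} on which the whole LLC normalization rests. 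You effectively used the conclusion of the modification as if it were already built in to \cite{AMS2}.

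The other place where your argument is thinner than the paper's is the one you flagged yourself under ``Main obstacle'': identifying the action of the full $\pi_0(Z_{G_u^\vee}(\sigma,y))$ on the $\mr{triv}_{W_{\mf s^\vee,u}}$-line in $M_{y,-\sigma}$. Observing that the sum-of-components vector in $H_0(\mc B^y)$ is fixed by any permutation of components tells you about the geometric (left-conjugation) action of $\pi_0(Z_{G_u^\vee}(\sigma,y))$, but a priori says nothing about how this is related to the Hecke-algebraic $\C[\Gamma_{u>0}]$-action on $M_{y,-\sigma} \cong \ind_{\mh H_{\mf s^\vee,u}^\circ}^{\mh H_{\mf s^\vee,u}} M^\circ_{y,-\sigma}$, nor about whether the distinguished vector inside $H_*(\mc P^y)$ (rather than $H_*(\mc B^y)$) is compatible with both at once. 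The paper establishes exactly this: it normalizes the $\Gamma_{u>0}$-action on $E_{y,-\sigma} = H_*(\mc P^y)$ as $H_*(r_\gamma^{-1})$, decomposes $f_\gamma = l_{\gamma_{\sigma,y}} \circ r_\gamma^{-1}$, applies Schur's lemma to show the scalar relating $H_*(l_{\gamma_{\sigma,y}})$ and $\pi_{y,-\sigma}(\gamma)\circ H_*(r_\gamma)$ is $1$ by evaluating on the $H_0$-generator, and only then assembles the invariant vector in $H_0(\mc P^y)^{\pi_0(Z_{G_u^\vee}(\sigma,y))}$. Your sketch skips the normalization of the $\Gamma_{u>0}$-action and the $f_\gamma$/Schur argument, which are what make the claim ``fixed by the whole group'' land on the correct module-theoretic object rather than only on the geometric invariants.

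In short: your reduction is right in structure, but the $\det$-twist step needs the paper's modified $\IM$ (which you assumed without knowing it had to be introduced), and the ``bookkeeping'' you deferred is in fact a substantive Schur-lemma/normalization argument, not routine.
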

\begin{proof}
We start with the parametrization of $\Irr (\mh H (G_u^\vee, T^\vee,\mr{triv}, \log (q_F)/2)$ 
provided by \cite[\S 3.5]{AMS2} and \cite[Theorem 3.8]{AMS3}. This has almost all the required 
properties, only the action of $\C [\Gamma_{u>0}]$ on the thus constructed modules can still be 
normalized in several ways. 

Fix a nilpotent $y \in \mr{Lie}(G_u^{\vee,\circ})$ and consider the variety
\[
\mc P_y := \big\{ g \in G_u^\vee / B^\vee \cap G_u^{\vee,\circ} : \mr{Ad}(g^{-1}) y \in 
\mr{Lie}(B^\vee \cap G_u^{\vee,\circ}) \big\} .
\]
The $\mh H_{\mf s^\vee,u} \times \pi_0 (Z_{G_u^\vee}(\sigma,y))$-representation $E_{y,-\sigma}$
equals $H_* (\mc P^y)$ as vector space. The action of $\pi_0 (Z_{G_u^\vee}(\sigma,y))$ on
$H_* (\mc P^y)$ is induced by the natural left action of $Z_{G_u^\vee}(\sigma,y)$ on $\mc P_y$. 
An element $\gamma \in \Gamma_{u>0}$ acts on $\mc P^y$ by
\begin{equation}\label{eq:6.16}
r_\gamma^{-1} : g (B^\vee \cap G_u^{\vee,\circ}) \mapsto g \gamma^{-1} B^\vee \cap G_u^{\vee,\circ} ,
\end{equation}
which in fact makes $\mc P_y$ isomorphic to $\mc B^y \times \Gamma_{u>0}$. We normalize the
action of $\C [\Gamma_{u>0}]$ on $E_{y,-\sigma}$, by defining it as $H_* (r_\gamma^{-1})$. 
(This normalization was not possible in \cite{AMS2}, because there the homology of $\mc P_y$ had 
coefficients in a local system that could be nontrivial.)

From now on we assume that $y$ satisfies the density condition from the statement.
In view of Lemma \ref{lem:6.13}, it remains to analyse the $\pi_0 (Z_{G_u^\vee}(\sigma,y))
$-invariants in $E_{y,-\sigma}$. We recall from \cite[Lemma 3.12]{AMS2} that
\[
\Gamma_{[\sigma]} \cong \pi_0 (Z_{G_u^\vee}(\sigma,y)) / \pi_0 (Z_{G_u^{\vee,\circ}}(\sigma,y)) .
\]
Let $\gamma_{\sigma,y} \in Z_{G_u^\vee}(\sigma,y)$ be a representative of $\gamma \in 
\Gamma_{[\sigma]}$. Then $H_* (\mc P_y)^{\pi_0 (Z_{G_u^\vee}(\sigma,y))}$ consists of the invariants
for $\{ \gamma_{\sigma,y} : \gamma \in \Gamma_{[\sigma]} \}$ in
\begin{equation}\label{eq:6.17}
H_* (\mc P_y)^{\pi_0 (Z_{G_u^{\vee,\circ}}(\sigma,y))} = \bigoplus\nolimits_{w \in \Gamma_{u>0}} 
H_* (w \cdot \mc B^y)^{\pi_0 (Z_{G_u^{\vee,\circ}}(\sigma,y))} .
\end{equation}
Fix $\gamma \in \Gamma_{[\sigma]}$ and consider the map
\begin{equation}\label{eq:6.18}
\begin{array}{cccc}
f_\gamma : & \mc B^y & \to & \mc B^y \\
& g (B^\vee \cap G_u^{\vee,\circ}) & \mapsto &
\gamma_{\sigma,y} g \gamma^{-1} (B^\vee \cap G_u^{\vee,\circ})
\end{array}.
\end{equation}
It can be decomposed as 
\[
f_\gamma = l_{\gamma_{y,\sigma}} \circ \rho_{\gamma}^{-1} = 
r_{\gamma}^{-1} \circ l_{\gamma_{y,\sigma}} 
\]
The induced map on $H_* (\mc B^y)^{\pi_0 (Z_{G_u^{\vee,\circ}}(\sigma,y))}$ is the composition of 
the action of an $\mh H_{\mf s^\vee,u}$-intertwiner $H_* (l_{\gamma_{y,\sigma}})$  from 
$\pi_0 (Z_{G_u^{\vee,\circ}}(\sigma,y))$ and the action $H_* (\rho_\gamma^{-1})$ of 
$\gamma \in \mh H_{\mf s^\vee,u}$, so it is an $\mh H_{\mf s^\vee,u}^\circ$-intertwiner
\[
H_* (f_\gamma) : E^\circ_{y,-\sigma,\mr{triv}} \to \gamma \cdot E^\circ_{y,-\sigma,\mr{triv}} .
\]
Let $\pi_{y,-\sigma}$ be the extension of $E^\circ_{y,-\sigma,\mr{triv}} = 
M^\circ_{y,-\sigma,\mr{triv}}$ to an $\mh H_{\mf s^\vee,u} \rtimes \Gamma_{[\sigma]}$-repre\-sen\-ta\-tion
from Lemma \ref{lem:6.10}. Consider the composition
\[
\pi_{y,-\sigma} (\gamma^{-1}) \circ H_* (f_\gamma) \in \End_{\mh H_{\mf s^\vee,u}^\circ} 
(E^\circ_{y,-\sigma,\mr{triv}}) .
\]
By Schur's lemma this is a scalar, say $\lambda \in \C$. We know from Lemma \ref{lem:6.10} that
$H_0 (\mc B^y)^{\pi_0 (Z_{G_u^{\vee,\circ}}(\sigma,y))}$ has dimension one, so in terms of
simplicial homology it is spanned by an element $v$ that is the sum of one point from every 
connected component of $\mc B^y$. That $v$ is fixed by $H_0 (f_\gamma)$ because \eqref{eq:6.18}
is a homeomorphism. By Lemmas \ref{lem:6.9} and \ref{lem:6.10} also $\pi_{y,-\sigma}(\gamma^{-1}) v
= v$. Hence $\lambda = 1$ and $\pi_{y,-\sigma} (\gamma^{-1}) \circ H_* (f_\gamma)$ is the identity.
Equivalently,
\[
H_* (l_{\gamma_{y,\sigma}}) = H_* (r_\gamma) \circ \pi_{y,-\sigma} (\gamma) :
H_* (\mc B^y)^{\pi_0 (Z_{G_u^{\vee,\circ}}(\sigma,y))} \to 
H_* (\gamma \cdot \mc B^y)^{\pi_0 (Z_{G_u^{\vee,\circ}}(\sigma,y))} .
\]
Specializing to $H_0 (\mc B^y)^{\pi_0 (Z_{G_u^{\vee,\circ}}(\sigma,y))} = \C v$ we obtain
\[
H_0 (l_{\gamma_{y,\sigma}}) = H_0 (r_\gamma) : H_0 (\mc B^y)^{\pi_0 (Z_{G_u^{\vee,\circ}}
(\sigma,y))} \to H_0 (\gamma \cdot \mc B^y)^{\pi_0 (Z_{G_u^{\vee,\circ}}(\sigma,y))} .
\]
It follows that $H_0 (\mc P^y)^{\pi_0 (Z_{G_u^\vee}(\sigma,y))}$ contains the nonzero vector
\[
\sum_{\gamma \in \Gamma_{u>0}} H_0 (r_\gamma^{-1}) v = 
\sum_{w \in \Gamma_{u>0} / \Gamma_{[\sigma]}} H_0 (r_w^{-1}) 
\sum_{\gamma \in \Gamma_{[\sigma]}} H_0 (l_{\gamma_{y,\sigma}}) v_0.
\]
Lemma \ref{lem:6.10} shows that this an element of $E_{y,-\sigma,\mr{triv}}$ fixed by 
$W(R_{u>0}) \rtimes \Gamma_{u>0}$. In other words,
\begin{equation}\label{eq:6.19}
\mr{IM}^* M_{y,-\sigma,\mr{triv}} \text{ contains the }
\C [W(R_{u>0}) \rtimes \Gamma_{u>0}]\text{-representation } \mr{sign} \rtimes \mr{triv} .
\end{equation}
Lemma \ref{lem:6.13} says that only the triples $(y,\sigma,\rho)$ of the kind indicated in
the statement have that property.

Finally, we slightly modify the construction from \cite[\S 3.5]{AMS2}. Instead of extending the
Iwahori--Matsumoto involution from $\mh H_{\mf s^\vee,u}^\circ$ to $\mh H_{\mf s^\vee,u}$ by 
making it the identity on $\C [\Gamma_{u>0}]$,  
\begin{equation}\label{eq:6.20}
\text{we extend } \IM \text{ to } \mh H_{\mf s^\vee,u} 
\text{ as multiplication by } \det \text{ on } \C[\Gamma_{u>0}].
\end{equation} 
Then \eqref{eq:6.19} becomes: $\IM^* M_{y,-\sigma,\rho}$ contains 
$\det_{W_{\mf s^\vee,u}}$ if and only if $(\sigma,y,\rho)$ is as stated in the theorem.
\end{proof}

We note that \eqref{eq:6.20} only differs from the usual Iwahori--Matsumoto involution on
the extended graded Hecke algebra $\mh H_{\mf s^\vee,u}$ by the automorphism
\begin{equation}\label{eq:6.21}
\det\nolimits_{\Gamma_{u>0}} : \gamma w f \mapsto \det (\gamma) \gamma w f \qquad
\gamma \in \Gamma_{u>0}, w \in W(R_{u>0}), f \in \mc O (\mf T_u (T_{\mf s^\vee})) .
\end{equation}
Since $\det_{\Gamma_{u>0}}$ is the identity on $\mc O (\mf T_u (T_{\mf s^\vee}))$, it preserves
all the properties (e.g. temperedness) that we need later on. The advantage of \eqref{eq:6.20}
over IM from \cite{AMS2} is that all reflections in $W(R_{u>0}) \rtimes \Gamma_{u>0}$ are 
treated in the same way, irrespective of their $q$-parameters in some Hecke algebra. This is 
needed to express Theorem \ref{thm:6.11} with $\det |_{W_{\mf s^\vee,u}}$.

We wrap up this section by combining the main results.

\begin{lem}\label{lem:6.14}
We modify \cite[Theorem 3.18]{AMS3} (see Theorem \ref{thm:2.3}) by using \eqref{eq:6.20} instead
of the involution $\IM$ from \cite[\S 3.5]{AMS2}. That yields a canonical bijection
\[
\begin{array}{ccc}
\Phi_e (G)^{\mf s^\vee} & \to & \Irr \big( \mc H (\mf s^\vee, q_F^{1/2}) \big) \\
(\phi,\rho) & \mapsto & \bar M (\phi, \rho, q_F^{1/2}) 
\end{array}
\]
such that:
\begin{itemize}
\item It has all the properties listed in \cite[Theorem 3.18]{AMS3}.
\item $\bar M (\phi, \rho, q_F^{1/2})$ contains the Steinberg representation of 
$\mc H (W_{\mf s^\vee},q_F^\lambda)$ if and only if $\rho$ is trivial and 
$\log \phi (1, \matje{1}{1}{0}{1})$ lies in the dense 
$Z_{G^\vee}\big(\tilde \phi (\mb W_F)\big)$-orbit in
\[
\big\{ Y \in \mr{Lie}\big( Z_{G^\vee}(\phi (\mb I_F)) \big) : 
\mr{Ad}(\tilde \phi (\Fr_F)) Y = q_F^{-1} Y \big\} .
\]
Here $\tilde \phi |_{\mb I_F} = \phi |_{\mb I_F}$ and 
$\tilde \phi (\Fr_F) = \phi \big( \Fr_F, \matje{q_F^{-1/2}}{0}{0}{q_F^{1/2}} \big)$.
\end{itemize}
\end{lem}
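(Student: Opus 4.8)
The plan is to deduce the lemma by stringing together the chain of equivalences assembled in Sections~\ref{sec:4}--\ref{sec:red}, tracking the character $\det$ at each stage. Recall from Theorem~\ref{thm:2.3} that \cite[Theorem 3.18]{AMS3} already gives a bijection $\Phi_e(G)^{\mf s^\vee}\to\Irr\big(\mc H(\mf s^\vee,q_F^{1/2})\big)$ with the listed properties (central characters, temperedness, discreteness, the $Z(G^\vee)^{\mb I_F}\cap(T^{\vee,\mb I_F})^\circ$-equivariance). The only freedom in that construction is the normalization of the action of $\C[\Gamma_{u>0}]$ on the graded Hecke algebra modules, i.e.\ the extension of the Iwahori--Matsumoto involution from $\mh H_{\mf s^\vee,u}^\circ$ to $\mh H_{\mf s^\vee,u}$. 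Replacing the ``identity on $\C[\Gamma_{u>0}]$'' extension used in \cite[\S 3.5]{AMS2} by \eqref{eq:6.20} changes the involution only by the automorphism $\det_{\Gamma_{u>0}}$ of \eqref{eq:6.21}, which is the identity on $\mc O(\mf T_u(T_{\mf s^\vee}))$. Since $\det_{\Gamma_{u>0}}$ fixes the maximal commutative subalgebra pointwise, it does not alter central characters, temperedness, discreteness, or the equivariance statement. Hence the first bullet of the lemma is immediate.

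For the second bullet I would argue as follows. Fix $(\phi,\rho)\in\Phi_e(G)^{\mf s^\vee}$ and write $\tilde\phi$ for the associated point of $T_{\mf s^\vee}$ as in Theorem~\ref{thm:2.3}(a); decompose $\tilde\phi = t\,u$ with $u\in T_{\mf s^\vee,\mr{un}}$ compact and $t\in X_\nr^+(T)$. By Theorems~\ref{thm:6.5} and~\ref{thm:6.6}, $\bar M(\phi,\rho,q_F^{1/2})$ corresponds to an $\mh H_{\mf s^\vee,u}$-module, and by parts (c) of those two theorems the multiplicity of $\det$ for $\mc H(W_{\mf s^\vee},q_F^\lambda)$ in $\bar M(\phi,\rho,q_F^{1/2})$ equals the multiplicity of $\det$ for $W_{\mf s^\vee,u}$ in that $\mh H_{\mf s^\vee,u}$-module. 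Now, under the modified parametrization (using \eqref{eq:6.20}) the $\mh H_{\mf s^\vee,u}$-module attached to $(\phi,\rho)$ is exactly $\bar M_{y,\sigma,\rho}$ of Theorem~\ref{thm:6.11}, where $\sigma = \log u \cdot$(Frobenius part), more precisely $\sigma\in\mr{Lie}(G_u^\vee)$ with $\exp(\sigma)$ the Frobenius-eigenvalue of $\tilde\phi$ and $y = \log\phi\big(1,\matje{1}{1}{0}{1}\big)$. Theorem~\ref{thm:6.11} then says $\bar M_{y,\sigma,\rho}$ contains $\det_{W_{\mf s^\vee,u}}$ iff $\rho$ is trivial and the $Z_{G_u^{\vee,\circ}}(\sigma)$-orbit of $y$ is dense in $\{Y\in\mr{Lie}(G_u^{\vee,\circ}) : [\sigma,Y] = -\log(q_F)Y\}$.

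It remains to translate the density condition on $(\sigma,y)$ inside $G_u^\vee = Z_{G^\vee}(u)$ into the stated condition inside $Z_{G^\vee}(\phi(\mb I_F))$. Since $u$ records the unramified part of $\tilde\phi$ and $\phi|_{\mb I_F}$ is unchanged when passing between $\phi$ and $\tilde\phi$, one has $Z_{G^\vee}(\phi(\mb I_F))$ and $G_u^\vee$ related by the inclusion $G_u^\vee = Z_{G^\vee}(u)\subset Z_{G^\vee}(\phi(\mb I_F))$, and $\mr{Ad}(\exp\sigma)$ on $\mr{Lie}(Z_{G^\vee}(\phi(\mb I_F)))$ has the $q_F^{-1}$-eigenspace equal to $\{Y : [\sigma,Y] = -\log(q_F)Y\}$ inside $\mr{Lie}(G_u^{\vee,\circ})$ by the usual $\exp$/bracket identity (the exponential intertwines $\mr{ad}(\sigma)$-eigenspaces with $\mr{Ad}(\exp\sigma)$-eigenspaces). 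Identifying $\exp(\sigma) = \tilde\phi(\Fr_F)$ and using that $Z_{G^\vee}(\tilde\phi(\mb W_F)) = Z_{G_u^{\vee,\circ}}(\sigma)$ up to components, the density condition becomes: $y = \log\phi\big(1,\matje{1}{1}{0}{1}\big)$ lies in the dense $Z_{G^\vee}(\tilde\phi(\mb W_F))$-orbit in $\big\{Y\in\mr{Lie}\big(Z_{G^\vee}(\phi(\mb I_F))\big) : \mr{Ad}(\tilde\phi(\Fr_F))Y = q_F^{-1}Y\big\}$, as claimed.

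The step I expect to require the most care is the last one: matching the group $G_u^\vee$ and the element $\sigma$ coming out of the reduction theorems with $Z_{G^\vee}(\phi(\mb I_F))$ and $\tilde\phi(\Fr_F)$, including checking that ``dense orbit'' is preserved under passing between the exponentiated ($\mr{Ad}(\tilde\phi(\Fr_F))$-eigenspace) and the linearized ($\mr{ad}(\sigma)$-eigenspace) pictures, and that the relevant centralizers agree at the level of both identity components and component groups. This is essentially bookkeeping of the kind already done in \cite[\S 3]{AMS3} and in the proof of Theorem~\ref{thm:6.11}, so while it is delicate it should not present a genuine obstacle; everything else is a formal concatenation of the cited equivalences.
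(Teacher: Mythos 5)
Your strategy is exactly the paper's: the first bullet follows from the observation that $\det_{\Gamma_{u>0}}$ in \eqref{eq:6.21} is the identity on $\mc O(\mf T_u(T_{\mf s^\vee}))$, and the second bullet is obtained by composing Theorems~\ref{thm:6.5}(c), \ref{thm:6.6}(d) and \ref{thm:6.11} and tracking the character $\det$ through the chain. This matches the paper's proof step for step.

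One slip in the final bookkeeping, worth flagging because you singled this step out as the delicate one. You first (correctly) write $\tilde\phi = tu$ with $u$ compact and $t = \exp(\sigma)$, but then later ``identify $\exp(\sigma) = \tilde\phi(\Fr_F)$'', which would force $u=1$ and erase the compact part. The correct statement is $u\exp(\sigma) = \tilde\phi(\Fr_F)$ (equivalently $\Sc(\phi,\rho) = u\exp(\sigma)$, as the paper records). With that in place, the passage from the linearized density condition to the exponentiated one is not merely the $\mr{ad}(\sigma)\leftrightarrow\mr{Ad}(\exp\sigma)$ identity: one should note that on $\mr{Lie}\big(Z_{G^\vee}(\phi(\mb I_F))\big)$ the operator $\mr{Ad}(u\exp\sigma) = \mr{Ad}(u)\mr{Ad}(\exp\sigma)$ is a commuting product, with $\mr{Ad}(u)$ having unimodular eigenvalues and $\mr{Ad}(\exp\sigma)$ positive real ones, so its $q_F^{-1}$-eigenspace is exactly the intersection of $\mr{Lie}(G_u^{\vee,\circ})$ (the $1$-eigenspace of $\mr{Ad}(u)$) with the $q_F^{-1}$-eigenspace of $\mr{Ad}(\exp\sigma)$, i.e.\ with $\{Y : [\sigma,Y]=-\log(q_F)Y\}$. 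That is how the condition of Theorem~\ref{thm:6.11}, stated in $G_u^{\vee,\circ}$, becomes the condition in $Z_{G^\vee}(\phi(\mb I_F))$ as stated in the lemma. This is a localized fix rather than a gap in approach.
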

\begin{proof}
By design \cite[Theorem 3.18]{AMS3} for $\mc H (\mf s^\vee,q_F^{1/2})$ is the composition of
Theorems \ref{thm:6.5}, \ref{thm:6.6} and \ref{thm:6.11}, with \eqref{eq:6.20} as only 
difference. Since each of the three involved bijections is canonical, so is our version of 
\cite[Theorem 3.18]{AMS3}. 
As explained after \eqref{eq:6.21}, the automorphism $\det_{\Gamma_{u>0}}$ does not destroy any of the
properties from \cite[Theorem 3.18]{AMS3}, so our bijection still satisfies all those properties.

In Theorem \ref{thm:6.11} we found a necessary and sufficient condition so that  \\
$\bar M_{y,\sigma,\rho} \in \Irr (\mh H_{\mf s^\vee})$ contains $\det_{W_{\mf s^\vee,u}}$. With Theorem
\ref{thm:6.6} we can translate that to $\exp_{u*} \bar M_{y,\sigma,\rho}$. Thus the latter module 
contains the representation St of ${\mc H (W_{\mf s^\vee,u},q_F^\lambda)}$ if and only if $\rho$ 
is trivial and the orbit of $y$ is dense in
\[
\{ Y \in \mr{Lie}(Z_{G^\vee}(\phi (\mb I_F)) : \mr{Ad}(u \exp (\sigma)) Y = q_F^{-1} Y \} .
\]
With Theorem \ref{thm:6.5} we transfer that to a property of
\[
\ind_u^{-1} \exp_{u*} \bar M_{y,\sigma,\rho} \in \Irr \big( \mc H (\mf s^\vee, q_F^{1/2}) \big) . 
\]
The translation to L-parameters from \cite{AMS3} is such that $\Sc (\phi,\rho) = u \exp (\sigma)$ and 
$y = \log \phi (1, \matje{1}{1}{0}{1})$. Thus we recover the characterization of genericity stated in
the lemma.
\end{proof}

\section{A canonical local Langlands correspondence}
\label{sec:5}

Recall that we fixed a quasi-split group $G = \mc G (F)$, a maximal split torus $S$ of $G$,
a Borel subgroup $B \subset G$ containing $T = Z_G (S)$ and a Whittaker datum $(U,\xi)$.
Given $G$, only $\xi$ is really a choice, the other objects are unique up to $G$-conjugacy.

We denote the space of irreducible $G$-representations in the principal series by $\Irr (G,T)$,
and we write $\Phi_e (G,T)$ for the set of principal series enhanced L-parameters in $\Phi_e (G)$. 

\begin{thm}\label{thm:5.1}
The Whittaker datum $(U,\xi)$ determines a canonical bijection
\[
\begin{array}{ccc}
\Irr (G,T) & \longleftrightarrow & \Phi_e (G,T) \\
\pi & \mapsto & (\phi_\pi, \rho_\pi) \\
\pi (\phi,\rho) & \text{\reflectbox{$\mapsto$}} & (\phi,\rho) 
\end{array} .
\]
\end{thm}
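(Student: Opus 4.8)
The bijection is built by concatenating the chain of equivalences and bijections established in the preceding sections. First I would invoke the equivalence of categories \eqref{eq:1.6}, which identifies $\Rep(G)^{\mf s}$ with $\End_G(\Pi_{\mf s})\text{-}\Mod$, and therefore induces a bijection $\Irr(G)^{\mf s} \leftrightarrow \Irr(\End_G(\Pi_{\mf s})^{op})$. Next, Theorem \ref{thm:3.4} provides a canonical (given $(U,\xi)$) algebra isomorphism $\End_G(\Pi_{\mf s}) \cong \mc H(\mf s)$, hence a bijection $\Irr(\End_G(\Pi_{\mf s})^{op}) \leftrightarrow \Irr(\mc H(\mf s)^{op})$. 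Then Theorem \ref{thm:4.3} gives the canonical algebra isomorphism $\psi_{\mf s}: \mc H(\mf s)^{op} \to \mc H(\mf s^\vee, q_F^{1/2})$, yielding $\Irr(\mc H(\mf s)^{op}) \leftrightarrow \Irr(\mc H(\mf s^\vee, q_F^{1/2}))$. Finally, Lemma \ref{lem:6.14} (our normalized version of \cite[Theorem 3.18]{AMS3}) supplies the canonical bijection $\Irr(\mc H(\mf s^\vee, q_F^{1/2})) \leftrightarrow \Phi_e(G)^{\mf s^\vee}$. Composing these four arrows produces a canonical bijection $\Irr(G)^{\mf s} \leftrightarrow \Phi_e(G)^{\mf s^\vee}$ for each Bernstein component $\mf s$ in the principal series.

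The second step is to pass from the component-wise statement to the global one. Here I would use the fact that the principal series part of $\Rep(G)$ decomposes as a product over the inertial classes $\mf s = [T,\chi_0]_G$, indexed by $W(\mc G,\mc S)$-orbits of characters $\chi_c$ of $T_\cpt$; correspondingly $\Phi_e(G,T)$ decomposes over the Bernstein components $\Phi_e(G)^{\mf s^\vee}$, indexed via the LLC for tori (Section \ref{sec:4}) by the matching orbits of $\hat\chi|_{\mb I_F}$. Thus $\Irr(G,T) = \bigsqcup_{\mf s} \Irr(G)^{\mf s}$ and $\Phi_e(G,T) = \bigsqcup_{\mf s} \Phi_e(G)^{\mf s^\vee}$, and the component-wise bijections glue to a bijection $\Irr(G,T) \leftrightarrow \Phi_e(G,T)$. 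One must check that the assignment $\mf s \mapsto \mf s^\vee$ (via the LLC for tori) is itself a bijection between the relevant index sets, and that it is compatible with the cuspidal support maps so that no component is counted twice or omitted; this follows from the naturality of the LLC for tori and the description of $\Phi_e(G,T)$ given in Section \ref{sec:2}.

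The main obstacle is canonicity rather than existence of the bijection. Each of the four arrows above is canonical only after the specified choices have been made canonical: the isomorphism $\End_G(\Pi_{\mf s}) \cong \mc H(\mf s)$ becomes canonical only once the good maximal compact $K$ is pinned down via Lemma \ref{lem:3.3} using the Whittaker datum, and the twist $\natural_{\mf s}$ is shown trivial in Theorem \ref{thm:3.4}; the isomorphism $\psi_{\mf s}$ depends canonically on the chosen basepoints $\chi_0$ and $\hat\chi_0$, which are matched through the LLC for tori in Section \ref{sec:4}; and the AMS parametrization is made canonical by the normalization \eqref{eq:6.20} in Lemma \ref{lem:6.14}. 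So the proof must verify that these choices are mutually compatible across the chain — in particular that the basepoint $\chi_0$ used to build $\mc H(\mf s)$ in Section \ref{sec:1} is the same one (modulo the identifications already set up) used on the Galois side — so that the composite genuinely depends only on $(U,\xi)$ and not on any auxiliary data. Given the care taken in Sections \ref{sec:Whit}--\ref{sec:red}, this amounts to citing those results and observing their compatibility; I expect the proof to be short, essentially assembling the already-constructed pieces and invoking the uniqueness clauses of Theorems \ref{thm:3.4}, \ref{thm:4.3} and Lemma \ref{lem:6.14}.
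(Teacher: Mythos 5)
Your proposal is correct and follows essentially the same route as the paper: the paper's proof likewise composes the chain of bijections from \eqref{eq:1.6}, Theorem \ref{thm:3.4}, Theorem \ref{thm:4.3} and Lemma \ref{lem:6.14} component by component, matches Bernstein components on the two sides via the $N_G(T)/T$-equivariant LLC for tori, and then takes the union over all principal series Bernstein components. Your discussion of canonicity and the need for the basepoint choices to match across the chain is consistent with what the paper relies on (those checks having been carried out in Sections \ref{sec:Whit}--\ref{sec:red}).
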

\begin{proof}
Recall from \eqref{eq:4.1} that the LLC for tori provides a $N_G (T)/T$-equivariant bijection 
between the Bernstein components of $\Irr (T)$ and the Bernstein components of $\Phi_e (T)$, say
$\mf s_T \mapsto \mf s_T^\vee$. 

Every principal series Bernstein component $\Irr (G)^{\mf s}$ of $\Irr (G)$ determines a unique
$N_G (T)/T$-orbit of Bernstein components $\Irr (T)^{\mf s_T}$. Similarly every principal series
Bernstein component $\Phi_e (G)^{\mf s^\vee}$ determines a unique $N_G (T)/T$-orbit of Bernstein
components $\Phi_e (G)^{\mf s_T^\vee}$. Thus the LLC for tori induces a natural bijection between
the Bernstein components of $\Irr (G,T)$ and those of $\Phi_e (G,T)$. We denote it by
$\Irr (G)^{\mf s} \mapsto \Phi_e (G)^{\mf s^\vee}$, where typically $\mf s = [T,\chi_0 ]_G$
and $\mf s^\vee  = (T,\hat \chi_0 X_\nr (T) )$. From respectively \eqref{eq:1.6} and Theorem
\ref{thm:3.4}, Theorem \ref{thm:4.3} and Theorem \ref{thm:2.3} in the form of Lemma 
\ref{lem:6.14}, we obtain canonical bijections
\begin{equation}\label{eq:5.1}
\Irr (G)^{\mf s} \leftrightarrow \Irr (\End_G (\Pi_{\mf s})^{op}) \leftrightarrow 
\Irr (\mc H (\mf s)^{op}) \leftrightarrow \Irr \big( \mc H 
(\mf s^\vee, q_F^{1/2}) \big) \leftrightarrow \Phi_e (G)^{\mf s^\vee}.
\end{equation}
Suppose we represent $\mf s$ instead by $w \mf s_T = [T,w \chi_0]_T$ with $w \in W(\mc G,\mc S)
= N_G (T) /T$. Clearly we may assume that $w$ has minimal length in $w W_{\mf s}$. Start
with any $\pi \in \Irr (G)^{\mf s}$ and follow \eqref{eq:5.1} to obtain 
\[
\pi_{\mf s} \in \Irr (\mc H (\mf s)^{op}),\; \pi_{\mf s^\vee} \in 
\Irr \big( \mc H (\mf s, q_F^{1/2}) \big) \text{ and } (\phi_\pi ,\rho_\pi) \in \Phi_e (G,T).
\] 
We use the same notations with $w \mf s$ instead of $\mf s$. Proposition \ref{prop:3.9} 
implies that $\pi_{w \mf s} = \pi_{\mf s} \circ \mr{Ad}(\phi_w)$ where
\[
\phi_w (f N_v) = (f \circ w) N_{w^{-1} v w} \quad \text{for } f \in \mc O (T_{w \mf s}),
v \in W_{w \mf s} .
\]
Now we consider $w$ as element of $N_{G^\vee}(T^\vee \rtimes \mb W_F) / T^\vee$ via \eqref{eq:4.1},
and we define an algebra isomorphism
\[
\begin{array}{llll}
\mr{Ad}(\phi_w)^\vee : & \mc H (w \mf s^\vee, q_F^{1/2}) & \to & \mc H (\mf s^\vee,q_F^{1/2}) \\
 & f N_v & \mapsto & (f \circ w) N_{w^{-1} v w} \qquad 
 f \in \mc O (T_{w \mf s^\vee}),v \in W_{w \mf s^\vee} .
\end{array} 
\]
With Theorem \ref{thm:4.3} we find that $\pi_{w \mf s}$ is matched with 
$\pi_{w \mf s^\vee} = \pi_{\mf s^\vee} \circ \mr{Ad}(\phi_w)^\vee$. All the constructions 
behind Theorem \ref{thm:2.3} and Lemma \ref{lem:6.14} are equivariant for automorphisms of 
$(G^\vee \rtimes \mb W_F,T^\vee \rtimes \mb W_F)$ which preserve the projections to $\mb W_F$ 
and are algebraic on $G^\vee$. This means that $\pi_{\mf s^\vee} \circ \mr{Ad}(\phi_w)^\vee$ 
is parametrized by $(w \phi_\pi w^{-1}, w \cdot \rho_\pi)$, for any 
representative of $w$ in $N_{G^\vee}(T^\vee \rtimes \mb W_F)$. As $(w \phi_\pi w^{-1}, 
w \cdot \rho_\pi)$ equals $(\phi_\pi, \rho_\pi)$ in $\Phi_e (G)$, we deduce that the bijection 
between $\Irr (G)^{\mf s}$ and $\Phi_e (G)^{\mf s^\vee}$ from \eqref{eq:5.1} does not 
depend on the choice of an inertial equivalence class for $T$ underlying $\mf s$. 

Knowing that, we can unambiguously take the union of the bijections \eqref{eq:5.1}
over all Bernstein components of $\Irr (G,T)$.
\end{proof}

\begin{rem}\label{rem:5.13}
If we had used the isomorphism $\mc H (\mf s) \cong \mc H (\mf s^\vee, q_F^{1/2})$ from
Remark \ref{rem:4.5} instead of Theorem \ref{thm:4.3}, then \eqref{eq:5.1} would provide 
a canonical bijection between $\Irr (G)^{\mf s}$ and $\Irr \big( \mc H (\mf s^\vee, 
q_F^{1/2})^{op} \big)$. That could be more natural, depending on the point of view.
\end{rem}

In the remainder of this section we will show that the bijection from Theorem \ref{thm:5.1}
has many desirable properties. 

The definition of $\tilde \phi$ in Lemma \ref{lem:6.14} applies to any Langlands parameter 
$\phi \in \Phi (G)$. The group $Z_{G^\vee}(\tilde \phi (\mb W_F))$ acts by conjugation on the
variety
\[
V_{\tilde \phi} = \big\{ v \in Z_{G^\vee}(\phi (\mb I_F)) : v \text{ is unipotent and }
\tilde \phi (\Fr_F)^{-1} v \tilde \phi (\Fr_F) = v^{q_F} \big\} .
\]
It is known from \cite[Proposition 5.6.1]{CFMMX} that $V_{\tilde \phi}$ is an affine space over
$\C$ on which $Z_{G^\vee}(\tilde \phi (\mb W_F))$ acts with finitely many orbits, of which
exactly one is open. Following \cite[\S 0.6]{CFZ}, we call $\phi \in \Phi (G)$ open if
$u_\phi \in V_{\tilde \phi}$ lies in the open $Z_{G^\vee}(\tilde \phi (\mb W_F))$-orbit.

\begin{lem}\label{lem:5.11}
The representation $\pi (\phi,\rho) \in \Irr (G,T)$ is $(U,\xi)$-generic if and only if
$\phi$ is open and $\rho$ is trivial.
\end{lem}
\begin{proof}
By Theorem \ref{thm:6.4}, $\pi (\phi,\rho)$ is $(U,\xi)$-generic if and only if the
$\End_G (\Pi_{\mf s})^{op}$-module $\Hom_G (\Pi_{\mf s}, \pi (\phi,\rho))$ contains St.
Via Theorems \ref{thm:4.3} and \ref{thm:2.3} that becomes the analogous statement for
$\mc H (\mf s^\vee,q_F^{1/2})$-representations. In Lemma \ref{lem:6.14} we showed the equivalence 
with the stated conditions on $\phi$ and $\rho$, except unipotency. The conditions in Lemma 
\ref{lem:6.14} imply that $\log u_\phi$ must be nilpotent. Hence $u_\phi$ must be unipotent
(as is any case required for Langlands parameters).
\end{proof}

We note that Lemma \ref{lem:5.11} agrees with the Reeder's findings \cite{Ree1,Ree} for generic
unipotent representations and generic principal series representations, in both cases for
split reductive $p$-adic groups with connected centre.

For the next properties of our LLC, the setup will be similar to \cite[\S 5]{SolLLCunip}.

\begin{lem}\label{thm:5.2}
Theorem \ref{thm:5.1} is compatible with direct products of quasi-split $F$-groups.
\end{lem}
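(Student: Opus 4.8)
The plan is to reduce to the case $G = \mc G_1(F) \times \mc G_2(F)$ with both factors quasi-split, the general statement following by induction on the number of factors. First I would record that every ingredient in the construction of Theorem \ref{thm:5.1} splits as a product. On the group side a maximal split torus of $\mc G_1 \times \mc G_2$ is $\mc S_1 \times \mc S_2$, its centralizer is $\mc T_1 \times \mc T_2$, a Borel subgroup is $B_1 \times B_2$ with unipotent radical $U = U_1 \times U_2$, and a nondegenerate character of $U$ is exactly a pair $\xi = \xi_1 \otimes \xi_2$ of nondegenerate characters; thus the Whittaker datum for $G$ under consideration is the external product of Whittaker data for $G_1$ and $G_2$. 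On the dual side $G^\vee = G_1^\vee \times G_2^\vee$ and ${}^L G = (G_1^\vee \times G_2^\vee) \rtimes \mb W_F$ with the diagonal action, so an L-parameter for $G$ is precisely a pair $\phi = (\phi_1,\phi_2)$; since $Z_{G^\vee}(\phi)$ and $Z(G^\vee)^{\mb W_F}$ both split as products, $R_\phi = R_{\phi_1} \times R_{\phi_2}$ and an enhancement is a pair $\rho = \rho_1 \boxtimes \rho_2$. For the principal series conditions of Section \ref{sec:2}: condition (i) is evidently multiplicative, and for (ii) the variety $\mc B_{H^\vee}^{u_\phi}$ is the product $\mc B_{H_1^\vee}^{u_{\phi_1}} \times \mc B_{H_2^\vee}^{u_{\phi_2}}$, so by the K\"unneth formula its homology is the tensor product and (ii) for $(\phi,\rho)$ is the conjunction of (ii) for $(\phi_1,\rho_1)$ and for $(\phi_2,\rho_2)$; the same factorisation holds for the cuspidal support map. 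Together with the elementary facts that $\Irr(G_1 \times G_2) = \Irr(G_1) \times \Irr(G_2)$ (over the algebraically closed field $\C$) and that the LLC for tori is compatible with products, this identifies $\Irr(G,T)$ with $\Irr(G_1,T_1) \times \Irr(G_2,T_2)$, identifies $\Phi_e(G,T)$ with $\Phi_e(G_1,T_1) \times \Phi_e(G_2,T_2)$, and shows that each principal series Bernstein component $\mf s$ of $G$ is a product $\mf s = \mf s_1 \times \mf s_2$ with $\mf s^\vee = \mf s_1^\vee \times \mf s_2^\vee$.

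Next I would verify that each of the four canonical bijections composing \eqref{eq:5.1} respects these product decompositions. For the first one this is immediate from $\Rep(G)^{\mf s} \simeq \Rep(G_1)^{\mf s_1} \otimes \Rep(G_2)^{\mf s_2}$, $\Pi_{\mf s} = \Pi_{\mf s_1} \boxtimes \Pi_{\mf s_2}$ and $\End_G(\Pi_{\mf s}) = \End_{G_1}(\Pi_{\mf s_1}) \otimes \End_{G_2}(\Pi_{\mf s_2})$, compatibly with the equivalence \eqref{eq:1.6}. For the second, the root datum, Weyl group and extension group all split, $\mc R_{\mf s} = \mc R_{\mf s_1} \oplus \mc R_{\mf s_2}$, $W_{\mf s} = W_{\mf s_1} \times W_{\mf s_2}$, $\Gamma_{\mf s} = \Gamma_{\mf s_1} \times \Gamma_{\mf s_2}$, the labels are inherited factorwise, and choosing $K = K_1 \times K_2$ as good maximal compact subgroup the operators $N_w J'_\gamma$ of Theorem \ref{thm:3.4} are external products of the corresponding operators for $G_1$ and $G_2$, because the canonical vector $\mathbf 1$ of \eqref{eq:3.3} is $\mathbf 1_1 \boxtimes \mathbf 1_2$ and the Whittaker normalisation is performed with the product Whittaker datum. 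Hence the canonical isomorphism $\End_G(\Pi_{\mf s}) \cong \mc H(\mf s)$ is the tensor product of the two for the factors. Likewise $\mc H(\mf s^\vee, q_F^{1/2}) = \mc H(\mf s_1^\vee, q_F^{1/2}) \otimes \mc H(\mf s_2^\vee, q_F^{1/2})$ and the comparison isomorphism $\psi_{\mf s}$ of Theorem \ref{thm:4.3}, defined by formulas on $\mc O(T_{\mf s})$ and on the $N_w$ that plainly factor, equals $\psi_{\mf s_1} \otimes \psi_{\mf s_2}$.

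It remains to treat the fourth bijection, $\Irr(\mc H(\mf s^\vee, q_F^{1/2})) \leftrightarrow \Phi_e(G)^{\mf s^\vee}$ of Lemma \ref{lem:6.14}, for which I would follow the product structure through the reductions of Theorems \ref{thm:6.5}, \ref{thm:6.6} and \ref{thm:6.11}: a unitary weight is a pair $u = (u_1,u_2)$, $R_{\mf s^\vee,u} = R_{\mf s_1^\vee,u_1} \sqcup R_{\mf s_2^\vee,u_2}$, $\mh H_{\mf s^\vee,u} = \mh H_{\mf s_1^\vee,u_1} \otimes \mh H_{\mf s_2^\vee,u_2}$, $G_u^\vee = (G_1^\vee)_{u_1} \times (G_2^\vee)_{u_2}$, the variety of Borel subgroups of $G_u^{\vee,\circ}$ containing $\exp\sigma$ and $\exp y$ is a direct product, its homology is a tensor product by K\"unneth, and the modified Iwahori--Matsumoto involution \eqref{eq:6.20} together with the character $\det$ of $\C[W_{\mf s^\vee,u}]$ factor as external products; so the parametrisation of Lemma \ref{lem:6.14} is the external tensor product of the two parametrisations for $G_1$ and $G_2$. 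Composing the four compatible bijections then yields the lemma, and the canonicity of each arrow guarantees that the external product of the two canonical correspondences for $G_1$ and $G_2$ is exactly the canonical correspondence for $G$, with no residual ambiguity in the normalisation. The main obstacle I expect is precisely the bookkeeping needed to see that the geometric constructions imported from \cite{AMS3} (standard modules, the Springer-type parametrisation, the extension of the Iwahori--Matsumoto involution to $\Gamma_{u>0}$) behave well under direct products; but each such step reduces to the K\"unneth isomorphism for homology of a product variety together with the evident multiplicativity of all the groups, root systems and local systems in play, so no new input is required.
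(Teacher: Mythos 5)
Your argument is correct and is essentially the same as the paper's, which disposes of the lemma in a single sentence by observing that every object in the construction (torus, Weyl group, Hecke algebra, dual group, component group, etc.) is naturally a product of the analogous objects for the two factors. You have simply spelled out the factorisation step-by-step for each ingredient, which is a valid but more verbose rendering of the same idea.
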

\begin{proof}
If $\mc G = \mc G_1 \times \mc G_2$, then all involved objects for $\mc G$ are naturally
products of the analogous objects for $G_1$ and $G_2$.
\end{proof}

Recall that the group of (smooth) characters $\Hom (G,\C^\times)$ is naturally isomorphic to
$H^1 (\mb W_F, Z(G^\vee))$. The former group acts on $\Irr (G)$ by tensoring, and that action
commutes with the supercuspidal support map so stabilizes $\Irr (G,T)$.

On the other hand, $H^1 (\mb W_F, Z(G^\vee))$ acts on $\Phi (G)$ by multiplication of maps 
$\mb W_F \times SL_2 (\C) \to G^\vee$, where $H^1 (\mb W_F,Z(G^\vee))$ gives maps that do not use
$SL_2 (\C))$. That action does not change $R_\phi$, so it induces an action of 
$H^1 (\mb W_F, Z(G^\vee))$ on $\Phi_e (G)$ which does not change the enhancements. This last
action commutes with the cuspidal support maps, so it stabilizes $\Phi_e (G,T)$.

\begin{lem}\label{lem:5.3}
The bijection in Theorem \ref{thm:5.1} is $H^1 (\mb W_F,Z(G^\vee))$-equivariant.
\end{lem}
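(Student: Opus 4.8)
The plan is to trace the action of $H^1(\mb W_F, Z(G^\vee)) \cong \Hom(G,\C^\times)$ through each of the four bijections that make up \eqref{eq:5.1}, and check at every stage that tensoring a $G$-representation by a character $\eta \in \Hom(G,\C^\times)$ matches multiplying an L-parameter by the corresponding cocycle. First I would reduce to a single Bernstein component: a character $\eta$ sends $\Irr(G)^{\mf s}$ to $\Irr(G)^{\mf s'}$ where $\mf s' = [T, \chi_0 (\eta|_T)]_G$, and correspondingly on the Galois side $\mf s^\vee = X_\nr(T)\hat\chi$ goes to $X_\nr(T)(\hat\chi \cdot \eta^\vee)$, where $\eta^\vee \in H^1(\mb W_F, T^\vee)$ is the image of $\eta|_T$ under the LLC for tori. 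Since the LLC for tori is functorial with respect to twisting by characters of $T$ (and $H^1(\mb W_F, Z(G^\vee)) \to H^1(\mb W_F, T^\vee)$ is induced by $Z(G^\vee) \hookrightarrow T^\vee$), the assignment $\mf s \mapsto \mf s^\vee$ is $H^1(\mb W_F, Z(G^\vee))$-equivariant. So it suffices to produce, for each $\eta$, compatible isomorphisms between the Hecke algebras attached to $\mf s$ and those attached to $\mf s'$, and similarly on the dual side, intertwining the four bijections.

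The key steps, in order, would be the following. (1) On the $p$-adic side, tensoring by $\eta$ gives an equivalence $\Rep(G)^{\mf s} \to \Rep(G)^{\mf s'}$ sending $\Pi_{\mf s}$ to $\Pi_{\mf s'}$ (up to the canonical identification $\mr{ind}_{T_\cpt}^T(\chi_c) \otimes \eta|_T \cong \mr{ind}_{T_\cpt}^T(\chi_c (\eta|_{T_\cpt}))$), hence an algebra isomorphism $\End_G(\Pi_{\mf s}) \cong \End_G(\Pi_{\mf s'})$ compatible with \eqref{eq:1.6}. Because $\eta$ is unramified-type data shifting only the basepoint, this isomorphism carries $\mc O(T_{\mf s})$ to $\mc O(T_{\mf s'})$ by the translation $T_{\mf s} \to T_{\mf s'}$, the root systems $R_{\mf s,\mu}, R_{\mf s}^\vee$ are unchanged, the Harish-Chandra $\mu$-functions and hence the labels $\lambda, \lambda^*$ are unchanged, and $W_{\mf s} = W_{\mf s'}$, so it identifies $\mc H(\mf s)$ with $\mc H(\mf s')$ compatibly with the Whittaker-normalized presentation of Theorem \ref{thm:3.4} (the element $\bf 1$ of \eqref{eq:3.3} is preserved since the Whittaker datum $(U,\xi)$ is fixed and $\eta|_U = 1$). (2) On the Galois side, multiplication by the cocycle $\eta^\vee$ gives a translation $T_{\mf s^\vee} \to T_{\mf s^\vee{}'}$, fixes $J = Z_{G^\vee}(\hat\chi(\mb I_F))$ since $\eta^\vee$ is unramified on $\mb I_F$, hence fixes $R_{\mf s^\vee}$, the integers $m_\alpha$, the labels, and $W_{\mf s^\vee} = \Gamma_{\mf s^\vee}$-data, yielding $\mc H(\mf s^\vee, q_F^{1/2}) \cong \mc H(\mf s^\vee{}', q_F^{1/2})$. (3) Theorem \ref{thm:4.3}'s isomorphism $\psi_{\mf s}$ is built from the LLC-for-tori bijection $T_{\mf s} \cong T_{\mf s^\vee}$ and the equality of labels; since both ingredients are $\eta$-equivariant by steps (1)--(2) and the functoriality of the LLC for tori, $\psi_{\mf s}$ and $\psi_{\mf s'}$ intertwine the twist isomorphisms. (4) Finally one checks that the parametrization of Lemma \ref{lem:6.14} (i.e.\ Theorem \ref{thm:2.3} as modified) is equivariant: this reduces, via Theorems \ref{thm:6.5} and \ref{thm:6.6}, to the geometric parametrization of Theorem \ref{thm:6.11}, where multiplying $\phi$ by a $Z(G^\vee)$-valued cocycle only translates the semisimple part $\exp(\sigma)$ (equivalently shifts the compact point $u$) without altering $u_\phi$, $\rho$, or $Z_{G^\vee}(\phi(\mb I_F))$ — so the triple $(\sigma, y, \rho)$ transforms by the obvious translation and the bijection commutes with it. Part (d) of Theorem \ref{thm:2.3} (equivariance for $Z(G^\vee)^{\mb I_F} \cap (T^{\vee,\mb I_F})^\circ$, i.e.\ for $X_\nr(G)$) already handles the unramified part of $H^1(\mb W_F, Z(G^\vee))$; what remains is the ramified part, handled by the same translation-of-basepoint argument since changing $\hat\chi$ within its $X_\nr(T)$-orbit — or by a ramified $Z(G^\vee)$-cocycle — affects only the basepoint-dependent identifications, all of which are tracked explicitly in Sections \ref{sec:2}--\ref{sec:red}.

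Assembling these, the composite bijection in \eqref{eq:5.1} intertwines the $\eta$-twist on $\Irr(G)^{\mf s}$ with the $\eta^\vee$-twist on $\Phi_e(G)^{\mf s^\vee}$; taking the union over all Bernstein components of $\Irr(G,T)$ gives the claimed $H^1(\mb W_F, Z(G^\vee))$-equivariance. I would also remark that, by construction, this twist action is canonical on both sides — it does not depend on the choice of $K$ (it is visible already before Whittaker normalization) and commutes with the cuspidal support maps — so the equivariance statement is independent of the auxiliary choices made in Theorem \ref{thm:5.1}.

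The main obstacle I expect is step (4): verifying that the geometric constructions of \cite{AMS3} underlying Theorem \ref{thm:6.11} — in particular the normalization of the $\C[\Gamma_{u>0}]$-action via the homology of $\mc P_y$, and the modified Iwahori--Matsumoto involution \eqref{eq:6.20} — are genuinely natural in the data $(G^\vee_u, T^\vee, \sigma)$ under the translation coming from a $Z(G^\vee)$-cocycle, rather than merely up to an unspecified isomorphism. Concretely, one must check that translating $\sigma$ by (the logarithm of) an element of $Z(G^\vee)^{\mb I_F}$ induces the identity on $\mc B^y$ and on $H_*(\mc P_y)$, so that the intertwiners $I_\gamma$ of Lemma \ref{lem:6.9} and the resulting parametrization are left unchanged; this is true because $Z(G^\vee)$ acts trivially by conjugation, but spelling it out requires care with the basepoint-dependent identifications. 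Once that naturality is in hand, the rest of the argument is bookkeeping.
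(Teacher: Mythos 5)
Your proposal follows essentially the same route as the paper: both trace the $H^1(\mb W_F, Z(G^\vee))$-action through the four bijections of \eqref{eq:5.1} and check equivariance step by step, using the translation isomorphism on the Hecke algebras. Your steps (1)--(3) match the paper's argument: the paper writes down the explicit algebra isomorphism $\mc H(z) : \mc H(\mf s^\vee, q_F^{1/2}) \to \mc H(z\mf s^\vee, q_F^{1/2})$, $f N_w \mapsto (f\circ z^{-1}) N_w$, transfers it via Theorem \ref{thm:4.3} to $\mc H(\mf s)^{op}$ and via Theorem \ref{thm:3.4} to $\End_G(\Pi_{\mf s})$, and handles the first bijection by the $\Hom_G(\Pi_{z\mf s}, z\otimes \pi) \cong \Hom_G(\Pi_{\mf s},\pi)$ computation you sketch. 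The one genuine difference is step (4), which you correctly flag as the crux: the paper disposes of the equivariance of the fourth bijection (the Galois-side parametrization) in a single line by citing \cite[Lemma 2.2.a]{SolLLCunip}, where this was already established for the geometric parametrization from \cite{AMS3}; you instead attempt to re-derive it from the constructions of Section \ref{sec:red}, and you explicitly admit the re-derivation is not fully carried out. Your sketch of why it works (central elements act trivially by conjugation, so $\mc B^y$, $H_*(\mc P_y)$, the intertwiners $I_\gamma$, the group $Z_{G^\vee}(\phi(\mb I_F))$, $u_\phi$ and $\rho$ are all unchanged by a $Z(G^\vee)$-valued twist) is conceptually right, but completing it would amount to reproving the cited lemma, so the paper's route is shorter. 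One small imprecision: a cocycle $z \in H^1(\mb W_F, Z(G^\vee))$ translates $\tilde\phi(\Fr_F)$, hence the product $u\exp(\sigma)$, by $z(\Fr_F)$ — this in general shifts both the unitary part $u$ and the real part $\exp(\sigma)$ via the polar decomposition, not one ``equivalently'' to the other; this does not affect the conclusion but the phrasing in your step (4) is off. Finally, your remark that Theorem \ref{thm:2.3}.d covers the unramified part of the action and that the ramified part needs a separate argument is accurate and a useful observation that the paper does not spell out.
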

\begin{proof}
For the fourth bijection in \eqref{eq:5.1}, such equivariance was shown in 
\cite[Lemma 2.2.a]{SolLLCunip}. Here $z \in H^1 (\mb W_F,Z(G^\vee))$ acts via the algebra 
isomorphism 
\[
\begin{array}{ccccl}
\mc H (z) : & \mc H (\mf s^\vee, q_F^{1/2}) & \to & \mc H (z \mf s^\vee, q_F^{1/2}) \\
& f N_w & \mapsto & (f \circ z^{-1}) N_w & \qquad f \in \mc O (T_{\mf s^\vee}), w \in W_{\mf s^\vee} .
\end{array}
\]
In view of Theorem \ref{thm:4.3}, the same formula also defines an algebra isomorphism
\[
\mc H (z) : \mc H (\mf s)^{op} \to \mc H (z \mf s)^{op}. 
\]
We define an action of $H^1 (\mb W_F,Z(G^\vee))$ on the union of the spaces 
$\Irr (\mc H (\mf s)^{op})$ by $z \cdot \tau = \tau \circ \mc H (z)^{-1}$. That renders the third
bijection in \eqref{eq:5.1} equivariant. Using Theorem \ref{thm:3.4} and the same argument we also 
make the second bijection in \eqref{eq:5.1} equivariant for $H^1 (\mb W_F,Z(G^\vee))$.

Finally, consider $\pi \in \Irr (G)^{\mf s}$ and $\Hom_G (\Pi_{\mf s}, \pi) \in 
\Irr (\End_G (\Pi_{\mf s})^{op})$. Then $z \otimes \pi \in \Irr (G)^{z \mf s}$ and 
\begin{multline*}
\Hom_G (\Pi_{z\mf s},z \otimes \pi) = \Hom_G \big( I_B^G \mr{ind}_{T_\cpt}^T (z \otimes \chi),
z \otimes \pi \big) \cong \\ 
\Hom_G \big( z \otimes I_B^G \mr{ind}_{T_\cpt}^T (\chi), z \otimes \pi \big) =
\Hom_G \big( I_B^G \mr{ind}_{T_\cpt}^T (\chi), \pi \big) = \Hom_G (\Pi_{\mf s}, \pi) . 
\end{multline*}
The isomorphism (from bottom to top) is given by translation by $z$ on $\Irr (T)$. As 
modules over $\mc H (\mf s)$ and $\mc H (z \mf s)$, that isomorphism is implemented by composition
with $\mc H (z)^{-1}$. Hence the first bijection in \eqref{eq:5.1} is equivariant as well.
\end{proof}

It is clear that a principal series $G$-representation is supercuspidal if and only if $G$ is a
torus. Similarly, the discussion at the start of Section \ref{sec:2} entails that a principal
series enhanced L-parameter for $G$ is cuspidal if and only if $G$ is a torus. The next result
relates the cuspidal support maps on both sides, when $G$ is not a torus.

\begin{lem}\label{lem:5.4}
Theorem \ref{thm:5.1} and the cuspidal support maps make a commutative diagram
\[
\begin{array}{ccc}
\Irr (G,T) & \longleftrightarrow & \Phi_e (G,T) \\
\downarrow \mr{Sc}  & & \downarrow \mr{Sc} \\
\Irr (T) / N_G (T) & \xrightarrow{LLC} & \Phi (T) / N_{G^\vee} (T \rtimes \mb W_F) 
\end{array}.
\]
\end{lem}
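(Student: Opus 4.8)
The plan is to track the cuspidal support through the chain of canonical bijections \eqref{eq:5.1}, exploiting that each link in that chain is controlled by the centre of the algebra involved, and that a cuspidal support is recorded by a central character. Since everything is a disjoint union over Bernstein components, I fix one principal series component $\Irr (G)^{\mf s}$ with $\mf s = [T,\chi_0]_G$, its associated $\mf s_T$, and the corresponding $\mf s^\vee$, $\mf s_T^\vee$ obtained from the LLC for tori.

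First I would recall the $p$-adic side. For $\pi \in \Irr (G)^{\mf s}$ the Bernstein centre $Z(\Rep (G)^{\mf s})$ acts on $\pi$ by the scalar obtained by evaluating at $\mr{Sc}(\pi)$, viewed as a point of $T_{\mf s} / W_{\mf s}$; here one uses that $W_{\mf s} = \mr{Stab}_{W(\mc G,\mc S)}(\mf s_T)$, so that the $W(\mc G,\mc S)$-orbit underlying $\mr{Sc}(\pi)$ meets $T_{\mf s} = X_\nr (T)\chi_0$ in exactly one $W_{\mf s}$-orbit. Under the equivalence \eqref{eq:1.6} and the isomorphism $Z(\Rep (G)^{\mf s}) \cong Z(\End_G (\Pi_{\mf s})) \cong \mc O (T_{\mf s}/W_{\mf s})$ from \cite{BeDe}, this says that the central character of the $\End_G (\Pi_{\mf s})^{op}$-module $\Hom_G (\Pi_{\mf s},\pi)$ is $\mr{Sc}(\pi)$. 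By Theorem \ref{thm:3.4} the centre of $\mc H (\mf s)$ is $\mc O (T_{\mf s})^{W_{\mf s}}$ and the isomorphism $\End_G (\Pi_{\mf s}) \cong \mc H (\mf s)$ restricts to the identity there (the subalgebra $\mc O (T_{\mf s})$ is mapped naturally), so the second bijection in \eqref{eq:5.1} leaves the central character unchanged. Then Theorem \ref{thm:4.3} provides $\psi_{\mf s} \colon \mc H (\mf s)^{op} \isom \mc H (\mf s^\vee,q_F^{1/2})$ which on $\mc O (T_{\mf s})$ is induced by the bijection $T_{\mf s} \cong T_{\mf s^\vee}$ coming from the LLC for tori; hence the third bijection in \eqref{eq:5.1} carries a central character in $T_{\mf s}/W_{\mf s}$ to its image in $T_{\mf s^\vee}/W_{\mf s^\vee}$ under that bijection, using $W_{\mf s} \cong W_{\mf s^\vee}$ from \eqref{eq:4.1}. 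Finally, Theorem \ref{thm:2.3}.a (in the form of Lemma \ref{lem:6.14}) says $\bar M (\phi,\rho,q_F^{1/2})$ has central character $W_{\mf s^\vee} \tilde\phi \in T_{\mf s^\vee}/W_{\mf s^\vee}$.

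On the Galois side, the discussion at the start of Section \ref{sec:2} shows $\mr{Sc}(\phi,\rho) = (T^\vee,\psi,\mr{triv})$ with $u_\psi = 1$, and $\psi$ is the L-parameter determined by \eqref{eq:2.1}, i.e. $\psi|_{\mb I_F} = \phi|_{\mb I_F}$ and $\psi(\Fr_F) = \phi\big(\Fr_F, \matje{q_F^{-1/2}}{0}{0}{q_F^{1/2}}\big)$; in other words $\psi = \tilde\phi$, and this depends neither on $u_\phi$ nor on $\rho$. Thus $\mr{Sc}(\phi,\rho)$, regarded in $\Phi (T)/N_{G^\vee}(T^\vee \rtimes \mb W_F)$, corresponds to $W_{\mf s^\vee}\tilde\phi \in T_{\mf s^\vee}/W_{\mf s^\vee}$, again because $W_{\mf s^\vee} = \mr{Stab}(\mf s_T^\vee)$ so the relevant conjugacy orbit meets $T_{\mf s^\vee}$ in one $W_{\mf s^\vee}$-orbit. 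Matching the two computations, the image of $\pi(\phi,\rho)$ under $\Irr (G,T) \to \Irr (T)/N_G(T)$ and the image of $(\phi,\rho)$ under $\Phi_e (G,T) \to \Phi (T)/N_{G^\vee}(T^\vee \rtimes \mb W_F)$ both correspond to the same $W_{\mf s^\vee}$-orbit, and they are matched precisely by the LLC for tori, which is the bottom arrow of the diagram and which is $W(\mc G,\mc S) \cong N_{G^\vee}(T^\vee \rtimes \mb W_F)/T^\vee$-equivariant via \eqref{eq:4.1}. Hence the diagram commutes.

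The step I expect to require the most care is the bookkeeping of the two quotients: reconciling "cuspidal support up to $N_G(T)/T$-conjugacy" on the left with "cuspidal support up to $N_{G^\vee}(T^\vee \rtimes \mb W_F)/T^\vee$-conjugacy" on the right, and verifying in each case that the relevant conjugacy orbit meets the fixed Bernstein component in a single $W_{\mf s}$- (respectively $W_{\mf s^\vee}$-) orbit, so that the central-character data genuinely captures the full cuspidal support. The remaining input — that the Bernstein centre acts on an irreducible representation through its cuspidal support, and that $\mr{Sc}(\phi,\rho)$ depends only on $\tilde\phi$ — is already available from \cite{BeDe} and from Section \ref{sec:2}.
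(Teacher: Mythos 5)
Your proof is correct and takes essentially the same route as the paper: both arguments track central characters through the chain of bijections in \eqref{eq:5.1}, identifying the cuspidal support with the central character on the $p$-adic side via the Bernstein centre and on the Galois side via \eqref{eq:2.1} and Theorem~\ref{thm:2.3}.a, and then matching them through the LLC for tori. You spell out a few steps (the centre-preserving nature of the second and third bijections, and the single-orbit bookkeeping) that the paper leaves implicit, but the underlying argument is the same.
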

\begin{proof}
From the formula for the cuspidal support \eqref{eq:2.1} and Theorem \ref{thm:2.2}.a, we see
that the central character of $\overline{M} (\phi,\rho,q_F^{1/2})$ is given by
$\mr{Sc} (\phi,\rho) / W_{\mf s^\vee} \in \Phi_e (T) / W_{\mf s^\vee}$. Hence the central 
character of $\Hom_G (\Pi_{\mf s}, \pi (\phi,\rho))$ is the image $W_{\mf s} \chi_\phi$ of 
$\mr{Sc} (\phi,\rho) / W_{\mf s^\vee}$ in $\Irr (T) / W_{\mf s}$.

More explicitly, $\mc O (T_{\mf s})^{W_{\mf s}}$ acts on $\Hom_G (\Pi_{\mf s}, \pi (\phi,\rho))$
via $W_{\mf s} \chi_\phi$. Then a glance at the construction of $\Pi_{\mf s}$ reveals that 
$W_{\mf s} \chi_\phi$ represents the supercuspidal support of $\pi (\phi,\rho)$.
\end{proof}

We turn to more analytic properties of $G$-representations.

\begin{lem}\label{lem:5.5}
$\pi \in \Irr (G,T)$ is tempered if and only if $\phi_\pi \in \Phi (G)$ is bounded.
\end{lem}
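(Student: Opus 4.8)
The statement to prove is that $\pi \in \Irr(G,T)$ is tempered if and only if $\phi_\pi$ is bounded. The natural strategy is to trace temperedness through the chain of bijections \eqref{eq:5.1} and reduce to known statements about Hecke algebras and L-parameters. First I would recall that, under the equivalence of categories \eqref{eq:1.6} combined with Theorem \ref{thm:3.4} and Theorem \ref{thm:4.3}, the $G$-representation $\pi$ corresponds to an irreducible $\mc H(\mf s^\vee, q_F^{1/2})$-module $\overline{M}(\phi_\pi, \rho_\pi, q_F^{1/2})$. The plan is to argue separately that (i) $\pi$ is tempered if and only if the corresponding Hecke algebra module is tempered, and (ii) the Hecke algebra module is tempered if and only if $\phi_\pi$ is bounded; the latter is precisely Theorem \ref{thm:2.3}(b) (in the form of Lemma \ref{lem:6.14}, which preserves all properties of \cite[Theorem 3.18]{AMS3}).

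For step (i), the key point is that temperedness on both sides is detected by the same combinatorial condition on the $\mc O(T_{\mf s})$-weights (equivalently, on the cuspidal support). Concretely, a finite-length $G$-representation in $\Rep(G)^{\mf s}$ is tempered if and only if all of its cuspidal supports lie in the "tempered part" of $\Irr(T)$, i.e. correspond to unitary characters of $T$ up to the relevant positivity condition; this is the Casselman criterion, and via \eqref{eq:1.6} and the compatibility of the functor with parabolic induction \eqref{eq:1.7} it matches the Casselman-type criterion for temperedness of $\mc H(\mf s)$-modules (weights lie in the obtuse negative cone, equivalently the $\mathcal{O}(T_{\mf s})$-weights of $\pi$ have absolute values bounded appropriately). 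Since the isomorphism of Theorem \ref{thm:3.4} is an isomorphism of algebras respecting the commutative subalgebra $\mc O(T_{\mf s})$ and the decomposition $W_{\mf s}=W(R_{\mf s}^\vee)\rtimes\Gamma_{\mf s}$, and since Theorem \ref{thm:4.3} identifies $T_{\mf s}$ with $T_{\mf s^\vee}$ compatibly with the root data, temperedness is preserved in both directions. A clean way to phrase this is: $\pi$ is tempered $\iff$ $\Hom_G(\Pi_{\mf s},\pi)$ is a tempered $\mc H(\mf s)^{op}$-module $\iff$ (via $\psi_{\mf s}$) $\overline{M}(\phi_\pi,\rho_\pi,q_F^{1/2})$ is a tempered $\mc H(\mf s^\vee,q_F^{1/2})$-module. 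One subtlety: the standard references for the equivalence between temperedness of smooth representations and of Hecke-algebra modules (e.g. the work of Opdam--Solleveld, or \cite{SolComp}) are typically stated under extra hypotheses, so I would cite the relevant compatibility with parabolic induction \eqref{eq:1.7} and with cuspidal supports (Lemma \ref{lem:5.4}) to make the reduction precise, rather than invoking a black box.

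Putting the two steps together: $\pi$ tempered $\iff$ $\overline{M}(\phi_\pi,\rho_\pi,q_F^{1/2})$ tempered $\iff$ (by Theorem \ref{thm:2.3}(b)) $\phi_\pi$ bounded. I expect the main obstacle to be step (i): one must be careful that ``tempered'' for $G$-representations (defined via matrix coefficients, or via Casselman's criterion on Jacquet modules) corresponds \emph{exactly} to ``tempered'' for $\mc H(\mf s)$-modules (defined via weights of the commutative subalgebra), including the correct normalization of the half-sum of roots $\delta_B^{1/2}$ that enters $\Pi_{\mf s}$ and the matching of positive cones on the two sides. The extension part $\Gamma_{\mf s}$ does not affect temperedness (it acts on a finite-dimensional ``elliptic'' direction), but verifying the cone condition on the $\mc O(T_{\mf s})$-weights requires the precise identification of $T_{\mf s}$ with $X_\nr(T)\chi_0$ and the fact that $\delta_B$ is already absorbed into the definition of $I_B^G$; I would spell out this bookkeeping carefully, or alternatively reduce to Lemma \ref{lem:5.4} plus the characterization of tempered representations as those whose cuspidal support is a unitary character of $T$ (after twisting $B$ appropriately), which on the Galois side is exactly boundedness of $\tilde\phi|_{\mathbf{W}_F}$, hence of $\phi$.
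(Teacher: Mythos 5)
Your proposal follows the same strategy as the paper: chase temperedness through the chain of bijections \eqref{eq:5.1}, using Theorem \ref{thm:2.3}(b) for the final step and the identification of Hecke algebras (Theorem \ref{thm:4.3} together with Lemma \ref{lem:4.2}) for the middle step. The subtlety you flag in step (i) — matching the Casselman criterion on the $p$-adic side with the weight criterion for Hecke algebra modules — is handled in the paper by a direct citation to \cite[Theorem 9.6.a]{SolEnd}, which establishes precisely that the equivalence $\Rep(G)^{\mf s}\simeq \End_G(\Pi_{\mf s})\text{-}\Mod$ preserves temperedness; so the extra bookkeeping you contemplate is unnecessary given that reference.
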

\begin{proof}
Theorem \ref{thm:2.2}.b says that the fourth bijection in \eqref{eq:5.1} has the desired 
property. By Lemma \ref{lem:4.2} and Theorem \ref{thm:4.3}, the third bijection in \eqref{eq:5.1}
preserves temperedness. By \cite[Theorem 9.6.a]{SolEnd}, so does the composition of the first and
the second bijections in \eqref{eq:5.1}.
\end{proof}

\begin{lem}\label{lem:5.6}
$\pi \in \Irr (G,T)$ is essentially square-integrable if and only if $\phi$ is discrete.
\end{lem}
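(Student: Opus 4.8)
The plan is to run the same argument as for Lemma \ref{lem:5.5}, following the chain of canonical bijections \eqref{eq:5.1}, but replacing boundedness/temperedness by discreteness/essential square-integrability and using Theorem \ref{thm:2.3}.c in place of Theorem \ref{thm:2.3}.b. The one genuinely new ingredient is the extra ``rank condition'' $\mr{rk}(R_{\mf s^\vee}) = $ ($F$-split rank of $\mc T / Z(\mc G)$) in Theorem \ref{thm:2.3}.c, and the crux will be to see that it is harmless.

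First I would handle the last two bijections of \eqref{eq:5.1}. By Lemmas \ref{lem:4.1} and \ref{lem:4.2}, the isomorphism $\psi_{\mf s} : \mc H (\mf s)^{op} \to \mc H (\mf s^\vee, q_F^{1/2})$ of Theorem \ref{thm:4.3} identifies the maximal commutative subalgebras via the LLC for tori, which also identifies the positive parts $X_\nr^+ (T)$ on the two sides (as used throughout Section \ref{sec:red}), and it identifies the root data and the label functions. Since essential discrete series is characterised intrinsically by the Casselman criterion on the $\mc O$-weights relative to these data, $\psi_{\mf s}$ matches essentially discrete series $\mc H (\mf s)^{op}$-modules with essentially discrete series $\mc H (\mf s^\vee, q_F^{1/2})$-modules, and $\mr{rk}(R_{\mf s}^\vee) = \mr{rk}(R_{\mf s^\vee})$ by Lemma \ref{lem:4.1}. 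Combined with Theorem \ref{thm:2.3}.c this gives: $\phi_\pi$ is discrete if and only if the $\mc H (\mf s)^{op}$-module $\Hom_G (\Pi_{\mf s}, \pi)$ is essentially discrete series and $\mr{rk}(R_{\mf s}^\vee)$ equals the $F$-split rank of $\mc T / Z(\mc G)$.

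Next, the first two bijections of \eqref{eq:5.1}, namely the equivalence \eqref{eq:1.6} and Theorem \ref{thm:3.4}. Here I would invoke the essentially square-integrable analogue of the statement used for Lemma \ref{lem:5.5}, from \cite[Theorem 9.6]{SolEnd}: under this equivalence $\pi$ is essentially square-integrable precisely when $\Hom_G (\Pi_{\mf s}, \pi)$ is an essentially discrete series $\mc H (\mf s)^{op}$-module and, in addition, $R_{\mf s}^\vee$ exhausts the non-central directions of $T_{\mf s}$, i.e.\ $\mr{rk}(R_{\mf s}^\vee)$ equals the $F$-split rank of $\mc T / Z(\mc G)$. (Conceptually: essential square-integrability of $\pi$ forces its cuspidal support to be a residual point of $\mc H (\mf s)$ modulo $X_\nr (Z(\mc G)) = X_\nr (G)$, and such residual points exist only when the root system spans $T_{\mf s}$ modulo the $X_\nr (Z(\mc G))$-directions; that is exactly the rank condition, and when it fails both sides of the asserted equivalence are vacuous, so the lemma holds trivially in that case.) Putting the two halves together, $\pi$ is essentially square-integrable if and only if $\Hom_G (\Pi_{\mf s}, \pi)$ is essentially discrete series and the rank condition holds, which by the previous paragraph is if and only if $\phi_\pi$ is discrete.

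The step I expect to be the main obstacle is the precise bookkeeping of the rank condition: one must check that the normalisations of ``essentially discrete series'' used in \cite{AMS3} (Theorem \ref{thm:2.3}.c) and in \cite{SolEnd} (for the equivalence \eqref{eq:1.6} and Theorem \ref{thm:3.4}) are compatible, so that the extra rank hypothesis in Theorem \ref{thm:2.3}.c corresponds under \eqref{eq:5.1} exactly to the difference between ``square-integrable modulo $Z(G)$'' and ``square-integrable modulo a possibly larger torus'' on the $p$-adic side. Everything else --- the real structure of the LLC for tori, the matching of labels via Lemma \ref{lem:4.2}, and the intrinsic nature of the Casselman criterion --- is routine given the results already established.
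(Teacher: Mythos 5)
Your proposal is correct and follows essentially the same route as the paper: invoke Theorem~\ref{thm:2.3}.c (including its rank hypothesis) on the Galois side, transfer essentially discrete series through Theorem~\ref{thm:4.3} via Lemmas~\ref{lem:4.1} and~\ref{lem:4.2}, and match with \cite[Theorem~9.6]{SolEnd} on the $p$-adic side. The only cosmetic difference is organizational: the paper disposes of the rank condition by an upfront case split, using \cite[Theorem~9.6.b]{SolEnd} to show both sides are empty when $\mr{rk}(R_{\mf s,\mu}) < \mr{rk}(R(\mc G,\mc S))$, and \cite[Theorem~9.6.c]{SolEnd} when equality holds --- which is exactly the bookkeeping you flagged as the ``main obstacle'' and which indeed resolves cleanly.
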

\begin{proof}
Suppose first that $R_{\mf s,\mu}$ has smaller rank that $R(\mc G,\mc S)$. By 
\cite[Theorem 9.6.b]{SolEnd}, $\Rep (G)^{\mf s}$ contains no essentially square-integrable
representations. As rk$(R(\mc G,\mc S))$ equals the $F$-split rank of $\mc G$ and
\[
\mr{rk} (R_{\mf s,\mu}) = \mr{rk} (R_{\mf s}^\vee) = \mr{rk} (R_{\mf s^\vee}) ,
\]
Theorem \ref{thm:2.2}.c says that $\Phi_e (G )^{\mf s^\vee}$ contains no discrete enhanced
L-parameters.

Now we suppose that rk$(R_{\mf s,\mu}) = \mr{rk}(R(\mc G,\mc S))$. Then \cite[Theorem 9.6.c]{SolEnd}
says that \eqref{eq:1.6} restricts to a bijection between essentially square-integrable 
representations in $\Irr (G)^{\mf s}$ and essentially discrete series representations in
$\Irr (\mc H (\mf s)^{op})$. By Lemma \ref{lem:4.2} and Theorem \ref{thm:4.3}, the latter set is
naturally in bijection with the set of essentially discrete series representations in 
$\Irr \big( \mc H (\mf s^\vee, q_F^{1/2}) \big)$. Combine that with Theorem \ref{thm:2.2}.c.
\end{proof}

Recall from \cite[p. 20--23]{Lan1} and \cite[\S 10]{Bor2} that every $\phi \in \Phi (G)$ determines
in a canonical way a character $\chi_\phi$ of $Z(G)$.

\begin{lem}\label{lem:5.7}
For any $(\phi, \rho) \in \Phi_e (G,T)$, the central character of $\pi (\phi,\rho)$ equals
$\chi_\phi$. 
\end{lem}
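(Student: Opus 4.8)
The plan is to reduce the statement to the case where $\mc G$ is semisimple, then further to torus central characters controlled by $Z(\mc G)$, and finally to check a compatibility between the construction of $\pi(\phi,\rho)$ and known facts about central characters. First I would recall that the central character of $\pi(\phi,\rho)$ is determined by its supercuspidal support: if $\mr{Sc}(\pi(\phi,\rho))$ is represented by $\chi \in \Irr(T)$, then $\chi|_{Z(G)}$ is the central character of $\pi(\phi,\rho)$ (every irreducible subquotient of $I_B^G(\chi)$ has central character $\chi|_{Z(G)}$, since $Z(G) \subset T$ acts by $\chi$ on the inducing data and $\delta_B^{1/2}$ is trivial on $Z(G)$). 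By Lemma \ref{lem:5.4}, that representative $\chi$ corresponds under the LLC for tori to $\mr{Sc}(\phi,\rho) \in \Phi(T)/N_{G^\vee}(T \rtimes \mb W_F)$, and by the discussion at the start of Section \ref{sec:2}, $\mr{Sc}(\phi,\rho)$ as element of $\Phi_e(T)$ depends only on $\phi$ (not on $\rho$ or $u_\phi$) and equals $\mr{Sc}(\phi,\mr{triv})$. So it suffices to compare $\chi_\phi$ with the restriction to $Z(G)$ of the character of $T$ attached by the LLC for tori to $\mr{Sc}(\phi)$.

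The key step is then purely a statement about tori and the compatibility of Langlands' construction of $\chi_\phi$ with the LLC for tori and with the cuspidal support map on the Galois side. Concretely, the inclusion $Z(\mc G) \hookrightarrow \mc T$ dualizes to a surjection $T^\vee \rtimes \mb W_F \twoheadrightarrow (Z(\mc G)^\vee) \rtimes \mb W_F$, i.e. ${}^L T \to {}^L Z(\mc G)$. Composing an L-parameter $\hat\chi : \mb W_F \to {}^L T$ with this map gives an L-parameter for $Z(\mc G)$, whose associated character of $Z(\mc G)$ (via the LLC for tori) is exactly Langlands' $\chi_\phi$ when $\hat\chi$ arises from $\phi$ — this is essentially the definition of $\chi_\phi$ in \cite{Lan1} (the character of $Z(\mc G)$ obtained by pushing $\phi$ through ${}^L G \to {}^L Z(\mc G)$, which factors through $T^\vee \rtimes \mb W_F$ precisely because $\phi$ is in the principal series, condition (i)). Since the cuspidal support map on the Galois side only modifies $\phi$ via \eqref{eq:2.1} and a unipotent datum inside $H^\vee = Z_{G^\vee}(\phi(\mb W_F))$, and since the projection to $Z(\mc G)^\v)$ kills that unipotent part and is unchanged by \eqref{eq:2.1} up to the $(\Fr_F,\mathrm{diag})$-twist which also lies over the same image in ${}^L Z(\mc G)$, we get that $\mr{Sc}(\phi)$ and $\phi$ push forward to the same L-parameter for $Z(\mc G)$ — hence to the same character $\chi_\phi$. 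Chasing this through the LLC for tori (which is functorial for the homomorphism $Z(\mc G) \hookrightarrow \mc T$) identifies $\chi_\phi$ with the restriction to $Z(G)$ of the character of $T$ attached to $\mr{Sc}(\phi)$.

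I would then assemble these: the central character of $\pi(\phi,\rho)$ is $\chi|_{Z(G)}$ where $\chi \leftrightarrow \mr{Sc}(\phi)$ under the LLC for tori, which by the previous paragraph equals the pushforward of $\mr{Sc}(\phi)$ to a character of $Z(G)$, which equals the pushforward of $\phi$, which is $\chi_\phi$ by definition. One should also double-check the normalization issue: Langlands' $\chi_\phi$ is defined using $\phi$ itself (with its $SL_2(\C)$ part), and one must confirm that the $SL_2(\C)$ part does not contribute, i.e. that $\phi$ and $\tilde\phi$ (or $\mr{Sc}(\phi)$) have the same image in ${}^L Z(\mc G)$; this holds because $Z(G^\vee)^{\circ}$ — and a fortiori the image of $\phi(SL_2(\C))$, which lands in the derived group of $H^\vee$ — maps trivially to $Z(\mc G)^\vee$, and the twist by $\matje{q_F^{-1/2}}{0}{0}{q_F^{1/2}}$ likewise has trivial image there.

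The main obstacle I expect is the bookkeeping around the various normalizations: making sure that the character of $T$ realizing the supercuspidal support of $\pi(\phi,\rho)$ is precisely the LLC-for-tori image of $\mr{Sc}(\phi)$ and not a twist thereof (the $\delta_B^{1/2}$ in $I_B^G$ and the shift $\hat\phi(\Fr_F) = \phi(\Fr_F, \mathrm{diag}(q_F^{-1/2},q_F^{1/2}))$ in \eqref{eq:2.1} must be checked to be compatible with each other), and confirming that the definition of $\chi_\phi$ in \cite[pp.\ 20--23]{Lan1} is literally the pushforward of $\phi$ along ${}^L G \to {}^L Z(\mc G)$ composed with the LLC for tori for $Z(\mc G)$. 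Once these normalization points are nailed down, the argument is a short diagram chase; the reduction to $\mc G$ semisimple (where $Z(\mc G)$ is finite) can be used to simplify, or avoided entirely since the argument works uniformly.
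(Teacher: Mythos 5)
Your first two steps match the paper exactly: reduce the central character to the supercuspidal support $\chi|_{Z(G)}$ via Lemma~\ref{lem:5.4}, and observe that $\mr{Sc}(\phi,\rho)$ depends only on $\phi$. The gap is in the final step. You invoke a canonical L-group map ${}^L G \to {}^L Z(\mc G)$ and then ``the LLC for tori for $Z(\mc G)$'', but $Z(\mc G)$ is in general not a torus -- it is a possibly disconnected diagonalizable group (finite when $\mc G$ is semisimple). There is no L-group ${}^L Z(\mc G)$ of the usual sort, and the LLC for tori does not apply to it. Your remark that this map ``factors through $T^\vee \rtimes \mb W_F$ precisely because $\phi$ is in the principal series'' is also a sign something is off: whether such a quotient map exists is a group-theoretic fact independent of $\phi$. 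You flag the issue at the end (``confirming that the definition of $\chi_\phi$ is literally the pushforward along ${}^L G \to {}^L Z(\mc G)$''), but you treat it as a normalization check; it is actually a structural obstruction.

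The paper's proof fixes this exactly as Langlands does: choose a quasi-split group $\overline{G}$ with \emph{connected} centre and $\mc G_\der = \overline{\mc G}_\der$, lift $\phi$ to $\overline{\phi} \in \Phi(\overline{G})$, push forward along the genuine map ${}^L\overline{G} \to {}^L Z(\overline{G})$ (which exists because $Z(\overline{G})$ is a torus), apply the LLC for tori to $Z(\overline{G})$, and restrict to $Z(G)$. The paper then checks that the cuspidal support $\overline{\psi}$ of $\overline{\phi}$ differs from $\overline{\phi}$ only by elements of $\overline{G}^\vee_\der \subset \ker(\overline{p})$, so the two have the same image in ${}^L Z(\overline{G})$. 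This is the precise form of the idea you are reaching for -- that the ``abelianized'' part of $\phi$ is unchanged by $\mr{Sc}$ -- but it needs the $\overline{G}$-device to make sense; without it the argument does not go through.
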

\begin{proof}
For any subquotient $\pi$ of $I_B^G (\chi) = \mr{ind}_B^G (\chi \otimes \delta_B^{1/2})$, the
central character of $\pi$ equals $(\chi \otimes \delta_B^{1/2}) |_{Z(G)} = \chi |_{Z(G)}$.
In particular the central character of $\pi (\phi,\rho)$ equals $\mr{Sc} (\pi (\phi,\rho)) |_{Z(G)}$.
By Lemma \ref{lem:5.4} that is $\pi (\Sc (\phi,\rho)) |_{Z(G)}$. With \eqref{eq:2.1} we write it as 
\[
\chi |_{Z(G)} \quad \text{where} \quad \hat \chi |_{\mb I_F} = \phi |_{\mb I_F} \quad \text{and} \quad
\hat \chi (\Fr_F) = \phi \big( \Fr_F, \matje{q_F^{-1/2}}{0}{0}{q_F^{1/2}} \big) .  
\]
It remains to show that $\chi_\phi$ equals $\chi |_{Z(G)}$, and to that end we revisit the construction
from \cite{Bor2,Lan1}. Let $\overline G$ be a quasi-split reductive $F$-group with connected centre,
such that $\mc G_\der = \overline{\mc G}_\der$. Let $\overline \phi \in \Phi (\overline G)$ be a lift
of $\phi \in \Phi (G)$. With the canonical map $\overline p : {}^L \overline G \to {}^L Z(\overline G)$
we obtain $\overline p (\overline \phi) \in \Phi (Z(\overline G))$.
Via the LLC for tori that gives $\chi_{\overline p (\overline \phi)} \in \Irr (Z(\overline G))$, and
by definition $\chi_\phi = \chi_{\overline p (\overline \phi)} |_{Z(G)}$. 

Let $\overline T = Z_{\overline G}(S) = Z_{\overline G} (\overline T)$. From \eqref{eq:2.1} we see that,
for any enhancement $\overline \rho$ of $\overline \phi$ such that $(\overline \phi, \overline \rho)
\in \Phi_e (\overline G, \overline T)$, we have $\Sc (\overline \phi, \overline \rho) = (\overline \psi,
\overline \epsilon)$, where $\overline \psi \in \Phi (\overline T)$ is a lift of $\hat \chi \in  
\Phi (T)$. As $\overline \phi$ and $\overline \psi$ differ only by elements of ${\overline G^\vee}_\der
\subset \ker (\overline p)$, we have $\overline p \overline \phi = \overline p \overline \psi$.
By the naturality of the LLC for tori, $\chi_{\overline \psi}$ extends both $\chi \in \Irr (T)$ and
$\chi_{\overline p \overline \psi} = \chi_{\overline p \overline \phi} \in \Irr (Z(\overline G))$. 
Hence $\chi |_{Z(G)} =  \chi_{\overline p \overline \phi} |_{Z(G)} = \chi_\phi$.
\end{proof}

Suppose that $P = M R_u (P)$ is a parabolic subgroup of $G$, where $M$ is a Levi factor of $P$ and
$T \subset M$. We can use the normalized parabolic induction functor $I_P^G$ to relate
representations of $M$ and of $G$. 

The restriction of $\xi$ to $U \cap M$ is a nondegenerate character $\xi_M$. We use $(U \cap M,\xi_M)$
to define genericity of $M$-representations and to normalize the LLC for $\Irr (M,T)$.

Suppose furthermore that $\phi \in \Phi (G)$ factors via $\Phi (M)$.  By \cite[Theorem 7.10.a]{AMS1}
the group $R_\phi^M = \pi_0 (Z_{M^\vee} (\phi) / Z(M^\vee))$ injects naturally into $R_\phi$. 
Hence any enhancement of $\phi \in \Phi (G)$ can be considered as (possibly reducible) representation 
of $R_\phi^M$.

\begin{lem}\label{lem:5.8}
Let $(\phi, \rho^M) \in \Phi_e (M,T)$ be bounded. Then
\[
I_P^G \pi (\phi, \rho^M) \cong 
\bigoplus\nolimits_\rho \Hom_{R_\phi^M} (\rho^M, \rho) \otimes \pi (\phi,\rho),
\]
where the sum runs over all $\rho \in \Irr (R_\phi)$ with $\Sc (\phi,\rho) = \Sc (\phi,\rho^M)$.
\end{lem}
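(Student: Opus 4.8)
The plan is to transport the statement to the Hecke algebra side via the equivalences of Sections \ref{sec:1}--\ref{sec:4}, and then to read off the decomposition from the geometric realization of the modules $\bar M(\phi,\rho,q_F^{1/2})$.

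Since $M$ is a standard Levi containing $T$, the root subsystem $R(\mc M,\mc S)$ is parabolic in $R(\mc G,\mc S)$, and as $T=Z_M(S)$ is the same torus for $M$ and for $G$ this gives $R_{\mf s_M,\mu}=R_{\mf s,\mu}\cap R(\mc M,\mc S)$, $R_{\mf s_M}^\vee\subset R_{\mf s}^\vee$ and $\Gamma_{\mf s_M}\subset\Gamma_{\mf s}$, with the labels of Section \ref{sec:1} agreeing. Hence $\mc H(\mf s_M)=\mc H(\mf s_M)^\circ\rtimes\Gamma_{\mf s_M}$ is a parabolic subalgebra of $\mc H(\mf s)$, equal under Theorem \ref{thm:3.4} to the image of $\End_M(\Pi_{\mf s_M})$ in $\End_G(\Pi_{\mf s})$ (the Whittaker normalization for $M$ being the restriction of the one for $G$, since $\xi_M=\xi|_{U\cap M}$). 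Feeding this into the commutative square \eqref{eq:1.7} and then applying Theorems \ref{thm:3.4} and \ref{thm:4.3} --- the latter compatible with passage to standard Levi subgroups, by the remark after its proof --- yields a natural isomorphism
\[
\Hom_G\big(\Pi_{\mf s},\,I_P^G\,\pi(\phi,\rho^M)\big)\;\cong\;
\ind_{\mc H(\mf s_M^\vee,q_F^{1/2})}^{\mc H(\mf s^\vee,q_F^{1/2})}\bar M(\phi,\rho^M,q_F^{1/2}),
\]
with $\mc H(\mf s_M^\vee,q_F^{1/2})$ the parabolic subalgebra of $\mc H(\mf s^\vee,q_F^{1/2})$ attached to $R_{\mf s_M^\vee}\subset R_{\mf s^\vee}$. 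Using Lemma \ref{lem:6.14} to identify the right-hand side, the lemma reduces to
\[
\ind_{\mc H(\mf s_M^\vee,q_F^{1/2})}^{\mc H(\mf s^\vee,q_F^{1/2})}\bar M(\phi,\rho^M,q_F^{1/2})\;\cong\;
\bigoplus\nolimits_\rho\Hom_{R_\phi^M}(\rho^M,\rho)\otimes\bar M(\phi,\rho,q_F^{1/2}).
\]

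This last isomorphism is a parabolic-induction compatibility for the parametrization of \cite{AMS3}, and I would establish it in two steps. First, as $\phi$ is bounded the module $\bar M(\phi,\rho^M,q_F^{1/2})$ is tempered (Theorem \ref{thm:2.3}.b), hence unitary, and induction of a unitary module along a parabolic subalgebra of an extended affine Hecke algebra is again unitary (see \cite{SolAHA}), so the induced module is completely reducible; by Theorem \ref{thm:2.3}.a every summand has central character $W_{\mf s^\vee}\tilde\phi$, hence is $\bar M(\phi,\rho,q_F^{1/2})$ for some $\rho\in\Irr(R_\phi)$ with $(\phi,\rho)\in\Phi_e(G)^{\mf s^\vee}$, which (using the explicit description of the cuspidal support in Section \ref{sec:2}) is exactly the hypothesis $\Sc(\phi,\rho)=\Sc(\phi,\rho^M)$. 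Second, to compute the multiplicities I would descend, simultaneously for $M$ and for $G$, through the reductions of Section \ref{sec:red} (Theorems \ref{thm:6.5}, \ref{thm:6.6}, \ref{thm:6.11}) to the extended graded Hecke algebras $\mh H_{\mf s_M^\vee,u}$ and $\mh H_{\mf s^\vee,u}$; these reductions commute with parabolic induction once the basepoint $u$ (the compact part of $\tilde\phi$), the semisimple $\sigma$ and the nilpotent $y$ are taken in $\mr{Lie}(M^\vee)$, and $\det$ restricts to $\det$. There $\bar M(\phi,\rho,q_F^{1/2})$ is the module $\bar M_{y,\sigma,\rho}$ built from the standard module $E_{y,-\sigma}=H_*(\mc P^y)$ of \cite{AMS2} for $G_u^\vee=Z_{G^\vee}(u)$, and the decomposition follows from a transitivity property of Lusztig's induction of standard modules --- inducing from $T^\vee$ through $M_u^\vee=Z_{M^\vee}(u)$ to $G_u^\vee$ --- which gives $\ind\,E^M_{y,-\sigma}\cong E_{y,-\sigma}$ equivariantly for $\pi_0(Z_{M_u^\vee}(\sigma,y))\hookrightarrow\pi_0(Z_{G_u^\vee}(\sigma,y))$; taking $\pi_0$-isotypic parts and applying Clifford theory and Frobenius reciprocity for these component groups turns the multiplicity of $\bar M(\phi,\rho)$ into $\dim\Hom_{\pi_0(Z_{M_u^\vee}(\sigma,y))}(\rho^M,\rho)=\dim\Hom_{R_\phi^M}(\rho^M,\rho)$, the $\rho$ that actually occur being exactly those for which $\bar M_{y,\sigma,\rho}$ is defined, i.e.\ those with $(\phi,\rho)\in\Phi_e(G,T)$.

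The hard part will be making the second step precise: proving $\ind\,E^M_{y,-\sigma}\cong E_{y,-\sigma}$ equivariantly for the component groups (which needs the dual parabolic $P^\vee\cap G_u^\vee$ to be adapted to $y$, and careful tracking of the various $\pi_0$'s and of the $\det$-renormalized Iwahori--Matsumoto involution of Lemma \ref{lem:6.14}), together with checking that the three reductions of Section \ref{sec:red} genuinely commute with parabolic induction for compatible choices of basepoints for $M$ and $G$ --- this uses that $\phi$ factors through $M$ and that $\Sc^G(\phi,\rho)=\Sc^M(\phi,\rho^M)$. Most of this geometric work can be avoided by instead quoting the parabolic-induction compatibility already present, by design, in the construction of \cite{AMS3}, leaving only the routine verification that it is preserved by the $\det$-twist of Lemma \ref{lem:6.14} and by the Whittaker normalization of Sections \ref{sec:Whit} and \ref{sec:charact}.
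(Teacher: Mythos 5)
Your approach is essentially the paper's: transfer the statement to $\mc H(\mf s^\vee,q_F^{1/2})$-modules via \eqref{eq:1.7} and Theorem \ref{thm:4.3} (noting the latter's compatibility with parabolic induction), then invoke the parabolic-induction compatibility already built into \cite[Theorem 3.18.f and Lemma 3.19.a]{AMS3}, whose survival under the $\det$-twist is what Lemma \ref{lem:6.14} records. The geometric re-derivation occupying the bulk of your middle paragraph (unitarity, descent to graded Hecke algebras, Lusztig's induction of standard modules, Clifford theory) re-proves what the paper simply cites, and --- as your final paragraph correctly recognizes --- can be bypassed entirely.
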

\begin{proof}
By \cite[Theorem 3.18.f and Lemma 3.19.a]{AMS3}, the analogous statement holds for $\mc H (\mf s^\vee,
q_F^{1/2})$-modules. Theorem \ref{thm:4.3} (which is compatible with parabolic induction) entails
it also holds for $\mc H (\mf s)^{op}$-modules. Then \eqref{eq:1.7} enables us to transfer the 
desired statement from $\mc H (\mf s)^{op}$ to $\Rep (G)^{\mf s}$. 
\end{proof}

\noindent
Recall that the Langlands classification for irreducible $G$-representations \cite{Lan1,Ren}
associates to any $\pi \in \Irr (G)$ a unique standard parabolic subgroup $P = M R_u (P)$, a unique
tempered $\tau \in \Irr (M)$ and a unique strictly positive $z \in \Hom (M,\R_{>0})$, such that
$\pi$ is the unique irreducible quotient of the standard module $I_P^G (\tau \otimes z)$.
It has a counterpart for (enhanced) L-parameters \cite{SiZi}. Let $(\phi, \rho) \in \Phi_e (G,T)$
and let $(P = M R_u (P), \phi_b, z)$ be the triple associated to $\phi$ by \cite[Theorem 4.6]{SiZi}.
Here $\phi_b \in \Phi (M)$ is bounded and $z \in X_\nr (M) \cong (Z (M^\vee)^{\mb I_F} )^\circ_{\mb W_F}$
is ``strictly positive with respect to $P$". By \cite[Theorem 7.10.b]{AMS1} there are natural
isomorphisms
\[
R_{\phi_b}^M \cong R_{z \phi_b}^M = R_\phi^M \cong R_\phi .
\]
Hence $\rho$ can also be regarded as enhancement of $\phi \in \Phi (M)$ or $\phi_b \in \Phi (M)$.

\begin{lem}\label{lem:5.9}
In the above setting:
\enuma{
\item $\pi (\phi,\rho)$ is the unique irreducible quotient of $I_P^G \pi^M (\phi, \rho)$.
\item $\pi^M (\phi,\rho) = \pi^M (z \phi_b, \rho) = z \otimes \pi^M (\phi_b,\rho)$ with
$\pi^M (\phi_b,\rho) \in \Irr (M)$ tempered. 
\item The triple associated to $\pi (\phi,\rho)$ by the Langlands classification for $\Irr (G)$ is
$(P,\pi^M (\phi_b,\rho),z)$.
}
\end{lem}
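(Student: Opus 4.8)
The plan is to reduce everything to the corresponding statement on the Hecke algebra side, where the Langlands classification is already understood, and then transport it back via the chain of equivalences in \eqref{eq:5.1}. First I would recall that on the Galois side the triple $(P,\phi_b,z)$ attached to $\phi$ by \cite[Theorem 4.6]{SiZi} is characterized by: $\phi$ factors through ${}^L M$, $\phi = z\phi_b$ with $\phi_b$ bounded and $z\in X_\nr(M)$ strictly positive with respect to $P$, and $R_\phi^M\cong R_\phi$ so that $\rho$ makes sense as an enhancement of each of $\phi$, $z\phi_b$ and $\phi_b$ over $M$. Via Theorem \ref{thm:4.3} and the AMS parametrization (Lemma \ref{lem:6.14}), the module $\bar M(\phi,\rho,q_F^{1/2})$ is the Langlands quotient of a standard module built from $\bar M^M(\phi_b,\rho,q_F^{1/2})$ induced from the parabolic subalgebra attached to $M$; this is exactly \cite[Lemma 3.19]{AMS3} (or the tempered-plus-positive decomposition underlying \cite[Theorem 3.18]{AMS3}). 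This gives part (b) immediately on the Hecke side: $\bar M(z\phi_b,\rho,q_F^{1/2})$ is obtained from $\bar M(\phi_b,\rho,q_F^{1/2})$ by the unramified twist by $z$, and the latter is tempered because $\phi_b$ is bounded (Theorem \ref{thm:2.2}.b). Transporting through Theorem \ref{thm:4.3} and \eqref{eq:1.6} then yields $\pi^M(\phi,\rho)=\pi^M(z\phi_b,\rho)=z\otimes\pi^M(\phi_b,\rho)$ with $\pi^M(\phi_b,\rho)$ tempered; here I use that Theorem \ref{thm:4.3} intertwines unramified twists (Lemma \ref{lem:5.3}) and Lemma \ref{lem:5.5} to identify temperedness.

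For part (a), I would use that the chain \eqref{eq:5.1} is compatible with parabolic induction: the diagram \eqref{eq:1.7} relates $I_P^G$ on $\Rep(G)^{\mf s}$ with $\ind$ from $\End_M(\Pi_{\mf s_M})$ to $\End_G(\Pi_{\mf s})$, Theorem \ref{thm:4.3} is compatible with parabolic induction from standard Levi subgroups (remarked just after its proof), and the parabolic induction on the Galois side matches that on $\mc H(\mf s^\vee,q_F^{1/2})$-modules by \cite[Theorem 3.18.f]{AMS3}. So the standard module $I_P^G\pi^M(\phi,\rho)$ corresponds, under \eqref{eq:5.1}, to the induced standard module on the Hecke side, whose unique irreducible quotient is $\bar M(\phi,\rho,q_F^{1/2})$; pulling that back gives that $\pi(\phi,\rho)$ is the unique irreducible quotient of $I_P^G\pi^M(\phi,\rho)$. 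One must check that ``unique irreducible quotient'' is preserved by the equivalence of categories \eqref{eq:1.6} and by Theorem \ref{thm:4.3}, which is automatic since these are equivalences of abelian categories; and that positivity of $z$ with respect to $P$ on the Galois side translates to positivity of the corresponding unramified character of $M$ in the Langlands classification for $\Rep(G)^{\mf s}$, which follows from the identification $X_\nr(M)\cong (Z(M^\vee)^{\mb I_F})^\circ_{\mb W_F}$ and the fact that the positive cones on both sides are defined by the same root datum (Lemma \ref{lem:4.1}, \eqref{eq:4.2}).

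Part (c) is then essentially a bookkeeping statement: the Langlands classification for $\Irr(G)$ \cite{Lan1,Ren} says $\pi(\phi,\rho)$ is the unique irreducible quotient of a standard module $I_Q^G(\tau\otimes\chi)$ with $\tau$ irreducible tempered, $\chi$ strictly positive, and the triple $(Q,\tau,\chi)$ is unique up to $G$-conjugacy. By (a) and (b) the triple $(P,\pi^M(\phi_b,\rho),z)$ has exactly these properties, so by the uniqueness in the Langlands classification it must be $G$-conjugate to the canonical one. The only subtlety is matching the notion of ``strictly positive'' in \cite{SiZi} with that in \cite{Lan1}; this is where I expect the main (though modest) obstacle, and I would resolve it exactly as in \cite[\S 5]{SolLLCunip} by noting both are defined by the condition that $\langle\chi,\alpha^\vee\rangle>0$ (equivalently $|z(\alpha^\vee)|_F>1$) for $\alpha$ in $R_u(P)$, which is identical under the identification \eqref{eq:4.2} of cocharacter lattices and the compatibility of the two Langlands classifications established in \cite{SiZi}. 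I would also remark that this compatibility with the Langlands classification, together with Lemma \ref{lem:5.8}, gives the compatibility with parabolic induction for tempered representations claimed in Theorem A(f).
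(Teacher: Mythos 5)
Your proposal is correct and follows essentially the same route as the paper: transfer the Hecke-algebra Langlands classification to $\Rep(G)^{\mf s}$ using compatibility with parabolic induction (diagram \eqref{eq:1.7} and the remark after Theorem \ref{thm:4.3}), and then deduce (b) from Lemmas \ref{lem:5.3} and \ref{lem:5.5} and (c) from the uniqueness in the Langlands classification. The only cosmetic difference is that for the Hecke-side Langlands quotient statement the paper simply cites \cite[Proposition 2.3]{SolLLCunip}, whereas you reconstruct it from \cite[Lemma 3.19 and Theorem 3.18.f]{AMS3}; both amount to the same input.
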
 
\begin{proof}
(a) By \cite[Proposition 2.3]{SolLLCunip}, the analogue in $\Rep (\mc H (\mf s^\vee, q_F^{1/2}))$
holds. As in the proof of Lemma \ref{lem:5.8}, that can be transferred to $\Rep (G)^{\mf s}$
via \eqref{eq:1.7}.\\
(b) This is a direct consequence of Lemmas \ref{lem:5.3} and \ref{lem:5.5}.\\
(c) This follows from parts (a) and (b) and the uniqueness in the Langlands classification.
\end{proof}

Suppose that $F' / F$ is a finite extension inside the fixed separable closure $F_s$. Let 
$\mc G'$ be a quasi-split $F'$-group and put $\mc G = \mr{Res}_{F'/F} (\mc G')$. Then
$\mc G (F) = \mc G' (F')$, so there is a natural bijection $\Irr (\mc G (F)) \to \Irr (\mc G' (F'))$.
On the other hand, Shapiro's lemma provides a natural isomorphism
\[
\mr{Sh} : \Phi_e (\mc G (F)) \to \Phi_e (\mc G' (F')) ,
\]
see \cite[Lemma A.3]{FOS1}.

\begin{lem}\label{lem:5.10}
The bijection in Theorem \ref{thm:5.1} is compatible with restriction of scalars, in the sense that
the following diagram commutes:
\[
\begin{array}{ccc}
\Irr (\mc G (F), \mc T (F)) & \to & \Phi_e (\mc G (F), \mc T (F)) \\
\downarrow \mr{Sh} & & \downarrow \mr{Sh} \\
\Irr (\mc G' (F'), \mc T' (F')) & \to & \Phi_e (\mc G' (F'), \mc T' (F')) 
\end{array}
\]
Here $\Res_{F'/F} \mc T' = \mc T$.
\end{lem}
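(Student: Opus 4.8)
The plan is to carry restriction of scalars through the four canonical bijections of \eqref{eq:5.1} one at a time. Write $\mc G = \Res_{F'/F}\mc G'$, so that $G = \mc G(F) = \mc G'(F')$ as topological groups, and likewise $S = \mc S(F) = \mc S'(F')$, $T = \mc T(F) = \mc T'(F')$, while the chosen Borel subgroup, its unipotent radical $U$ and the nondegenerate character $\xi$ coincide for $\mc G$ and $\mc G'$; hence the Whittaker data agree. First I would observe that $\Rep(G)^{\mf s}$, the progenerator $\Pi_{\mf s}$ and the algebra $\End_G(\Pi_{\mf s})$ are literally the same objects whether $G$ is read as $\mc G(F)$ or as $\mc G'(F')$, so the functor \eqref{eq:1.6} is the same and the first bijection in \eqref{eq:5.1} is the identity under $\mr{Sh}$.

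Next I would treat the passage to Hecke algebras. The isomorphism $\End_G(\Pi_{\mf s})\cong\mc H(\mf s)$ of Theorem \ref{thm:3.4} is canonical once the Whittaker datum and (as in Lemma \ref{lem:3.3}) a conjugacy class of good maximal compact subgroups are fixed, and those choices coincide for $\mc G$ and $\mc G'$; the root datum $\mc R_{\mf s}$ is built from $R(\mc G,\mc S)=R(\mc G',\mc S')$ and $T/T_\cpt=T'/T'_\cpt$, so it agrees. The only discrepancy is that Theorem \ref{thm:1.1} records the $q$-base as $q_F^{1/2}$ for $\mc G$ and $q_{F'}^{1/2}$ for $\mc G'$, with labels $f(F_\alpha/F)$ resp.\ $f(F'_\alpha/F')$ (and the analogous data in the exceptional ${}^2A_{2n}$ cases). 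The computations of Section \ref{sec:1} and of Lemma \ref{lem:2.1} reduce all these parameters to the absolutely almost-simple factors $\mc H_\alpha(E_\alpha)$ together with the residue degree $f(E_\alpha/F)$; replacing $F$ by $F'$ multiplies each such residue degree by $f(F'/F)^{-1}$ and each $\log q$-base by $f(F'/F)$, so every eigenvalue $q_F^{\lambda(h_\alpha^\vee)/2}$ of an $N_{s_\alpha}$ equals $q_{F'}^{\lambda'(h_\alpha^\vee)/2}$. Hence $\mc H(\mf s)$ and $\mc H(\mf s')$ are the same algebra with the same distinguished elements, so the second and third bijections in \eqref{eq:5.1} coincide, and likewise $\mc H(\mf s^\vee,q_F^{1/2})=\mc H(\mf s'^\vee,q_{F'}^{1/2})$ (the Galois-side data of Lemma \ref{lem:2.1} transforming in the complementary way) — provided the Bernstein components $\mf s^\vee$ and $\mf s'^\vee$ correspond under $\mr{Sh}$. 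That last point is the compatibility of the LLC for tori \cite{Lan} with Weil restriction: $\mr{Sh}$ identifies $\Phi(\mc T(F))$ with $\Phi(\mc T'(F'))$, compatibly with $\Irr(\mc T(F))=\Irr(\mc T'(F'))$ and equivariantly for $N_G(T)/T\cong N_{G^\vee}(T^\vee\rtimes\mb W_F)/T^\vee$ via \cite[Proposition 3.1]{ABPS3}.

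It then remains to check that the fourth bijection in \eqref{eq:5.1}, the parametrization of $\Irr\big(\mc H(\mf s^\vee,q_F^{1/2})\big)$ by $\Phi_e(G)^{\mf s^\vee}$ from Lemma \ref{lem:6.14} (i.e.\ Theorem \ref{thm:2.3} in the form built from Theorems \ref{thm:6.5}, \ref{thm:6.6} and \ref{thm:6.11}), intertwines $\mr{Sh}$ with the identification of Hecke-algebra module categories established above. Since those algebras have already been identified, this is a statement purely about the construction of that bijection, which takes place entirely inside $G^\vee$, $\mb I_F$ and $\Fr_F$. Shapiro's isomorphism $\mr{Sh}\colon\Phi_e(\mc G(F))\to\Phi_e(\mc G'(F'))$ of \cite[Lemma A.3]{FOS1} matches $Z_{G^\vee}(\phi(\mb I_F))$, the element $u_\phi$, the relevant varieties of Borel subgroups, the component groups $R_\phi$, and the cuspidal support map of \cite{AMS1}; hence it carries the entire construction of Lemma \ref{lem:6.14} for $\mc G$ to the one for $\mc G'$, and the square commutes. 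The Whittaker normalization needs no separate argument, since the genericity criterion of Lemma \ref{lem:6.14} (and hence the chosen normalization of the enhancements) is phrased through $Z_{G^\vee}(\tilde\phi(\mb W_F))$-orbits, which $\mr{Sh}$ preserves.

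The hard part will be the middle step: confirming that the $q$-parameters of the two extended affine Hecke algebras — on both the $p$-adic and the Galois side — genuinely agree across Weil restriction, i.e.\ that the residue-degree factors in the labels and the change of $q$-base from $q_F$ to $q_{F'}$ cancel exactly. This is essentially the bookkeeping already done in Section \ref{sec:1} and in the proof of Lemma \ref{lem:2.1} (where each case is handled precisely by reducing $\mc G_\alpha(F)$ to $\mc H_\alpha(E_\alpha)$), so it amounts to assembling those case analyses uniformly; everything else is formal, following from the canonicity of every arrow in \eqref{eq:5.1} together with the functoriality of the LLC for tori and of the constructions of \cite{AMS1,AMS3} for restriction of scalars.
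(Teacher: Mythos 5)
Your approach is essentially the paper's: carry $\mr{Sh}$ through each arrow of \eqref{eq:5.1}. The difference is that the paper's proof is very short and delegates the two non-trivial steps to citations — it invokes \cite[(26)]{SolLLCunip} for the fact that $\mr{Sh}$ induces a bijection of Bernstein components commuting with cuspidal supports, and \cite[Lemma 2.4]{SolLLCunip} for the natural isomorphism $\mc H (\mf s^\vee, q_F^{1/2}) \cong \mc H (\mf s'^\vee, q_{F'}^{1/2})$ — whereas you re-derive (or sketch re-deriving) those facts from first principles. In particular your observation that the first arrow of \eqref{eq:5.1} is literally the identity because $\mc G(F) = \mc G'(F')$ as topological groups (so $\Rep(G)^{\mf s}$, $\Pi_{\mf s}$ and $\End_G(\Pi_{\mf s})$ are the same object on both rows), and your bookkeeping that $q_{F'} = q_F^{f(F'/F)}$ while all residue-degree labels divide by $f(F'/F)$ so that the products $q_F^{\lambda}$, $q_{F'}^{\lambda'}$ agree, together amount to an independent verification of what \cite[Lemma 2.4]{SolLLCunip} supplies. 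Both treatments are correct; yours is longer but more self-contained, while the paper's is more compact at the cost of relying on the earlier unipotent paper. The only mild caveat is that you assert rather than fully verify that Shapiro's isomorphism intertwines the cuspidal support maps and the whole construction behind Lemma \ref{lem:6.14}; that is true and is exactly what \cite[(26)]{SolLLCunip} provides, but in a fully self-contained writeup it would deserve a couple of explicit lines (that $\mr{Sh}$ is implemented by a concrete identification $G^\vee \cong \mr{Ind}_{\mb W_{F'}}^{\mb W_F} G'^\vee$ under which $Z_{G^\vee}(\phi(\mb I_F))$, $u_\phi$, $R_\phi$, the Springer-type varieties, and the cuspidal quasi-support all transport compatibly).
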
 
\begin{proof}
By \cite[(26)]{SolLLCunip}, Sh induces a bijection from the set of Bernstein components of 
$\Phi_e (\mc G (F))$ to the analogous set for $\mc G' (F')$. This bijection commutes with the 
cuspidal support maps, so it also applies to $\Phi_e (\mc G (F), \mc T (F))$ and 
$\Phi_e (\mc G' (F'), \mc T' (F'))$. Whenever $\mf s^\vee$ corresponds to $\mf s'^\vee$, there is
a natural algebra isomorphism $\mc H (\mf s^\vee, q_F^{1/2}) \cong \mc H (\mf s'^\vee, q_{F'}^{1/2})$
\cite[Lemma 2.4]{SolLLCunip}. Combine that with \eqref{eq:5.1}.
\end{proof}

Finally we investigate in what sense our (enhanced) L-parameters are unique.

\begin{lem}\label{lem:5.12}
Let $\pi \in \Irr (G,T)$. Then the $\phi_\pi$ from Theorem \ref{thm:5.1} is uniquely determined by
Lemmas \ref{lem:5.3}, \ref{lem:5.4}, \ref{lem:5.5} and \ref{lem:5.9}.
\end{lem}
\begin{proof}
Suppose that $\pi$ is tempered. Lemma \ref{lem:5.4} determines Sc$(\phi_\pi,\rho_\pi) = \tilde \phi$
up to $N_{G^\vee}(T^\vee \rtimes \mb W_F)$. Lemma \ref{lem:5.6} says that $\phi_\pi$ must be bounded,
so according to \cite[\S 0.6]{CFZ} $\phi_\pi$ is an open Langlands parameter. In other words,
$u_{\phi_\pi}$ is uniquely determines (up to $Z_{G^\vee}(\tilde{\phi_\pi}(\mb W_F))$-conjugacy) 
as an element of the open orbit in $V_{\tilde{\phi_\pi}}$. Thus $\phi_\pi$ is unique up to 
$G^\vee$-conjugacy.

Suppose now that $\pi$ is not tempered. Let $(P,\tau,z)$ be the triple associated to $\pi$ by the 
Langlands classification. Here $\tau$ is tempered, so the above determines $\phi_\tau \in \Phi 
(P/R_u (P),T)$ uniquely. Then Lemma \ref{lem:5.3} forces $\phi_{\tau \otimes z} =z \cdot \phi_\tau$ 
and Lemma \ref{lem:5.9} says that $\phi_\pi$ equals $z \phi_\tau$ up to $G^\vee$-conjugacy. 
\end{proof}

It is less clear to what extent the enhancement $\rho_\pi$ of $\phi_\pi$ is uniquely specified.
Lemma \ref{lem:5.9} reduces this issue to tempered $\pi \in \Irr (G,T)$. Then $\phi_\pi$ is bounded,
so open. By Lemma \ref{lem:5.11} the L-packet $\Pi_{\phi_\pi}(G)$ contains a unique generic member,
namely $\pi (\phi_\pi,\mr{triv})$. That fixes the normalization of the interwining operators from
elements of $R_{\phi_\pi}$, which then determines $\pi (\phi_\pi,\rho)$ for any $\rho \in \Irr
(R_{\phi_\pi})$ such that $(\phi_\pi,\rho) \in \Phi_e (G,T)$. However, to make that precise one
has to say on which module these intertwining operators acts. That involves the constructions with
Hecke algebras in Sections \ref{sec:4} and \ref{sec:red}. Those are canonical, but they may not be
unique, see Remarks \ref{rem:4.5} and \ref{rem:5.13}.\\

\noindent\textbf{Acknowledgements.}
The author thanks Jessica Fintzen and Tasho Kaletha for interesting discussions about this paper.
He also thanks the referee for his or her work.


\begin{thebibliography}{99}

\bibitem[ABPS1]{ABPSprin} A.-M. Aubert, P.F. Baum, R.J. Plymen, M. Solleveld,
``The principal series of $p$-adic groups with disconnected centre",
Proc. London Math. Soc. {\bf 114.5} (2017), 798--854

\bibitem[ABPS2]{ABPS3} A.-M. Aubert, P.F. Baum, R.J. Plymen, M. Solleveld,
``Conjectures about $p$-adic groups and their noncommutative geometry",
Contemp. Math. {\bf 691} (2017), 15--51

\bibitem[AMS1]{AMS1} A.-M. Aubert, A. Moussaoui, M. Solleveld,
``Generalizations of the Springer correspondence and cuspidal Langlands parameters'',
Manus. Math. {\bf 157} (2018), 121--192

\bibitem[AMS2]{AMS2} A.-M. Aubert, A. Moussaoui, M. Solleveld,
``Graded Hecke algebras for disconnected reductive groups",
\emph{Geometric aspects of the trace formula, W. M\"uller, S. W. Shin, N. Templier (eds.)},
Simons Symposia, Springer, 2018, 23--84

\bibitem[AMS3]{AMS3} A.-M. Aubert, A. Moussaoui, M. Solleveld,
``Affine Hecke algebras for Langlands parameters",
arXiv:1701.03593v6, 2024

\bibitem[AMS4]{AMS4} A.-M. Aubert, A. Moussaoui, M. Solleveld,
``Affine Hecke algebras for classical $p$-adic groups",
arXiv:2211.08196, 2022

\bibitem[Bad]{Bad} M.P. Badea,
``Hecke algebras for covers of principal series Bernstein
components in quasisplit unitary groups over local Fields",
PhD thesis, Radboud Universiteit Nijmegen, 2020

\bibitem[BeDe]{BeDe} J. Bernstein, P. Deligne,	
``Le "centre" de Bernstein",
pp. 1--32 in: \emph{Repr\'esentations des groupes r\'eductifs sur un corps local}, 
Travaux en cours, Hermann, Paris, 1984

\bibitem[Bor1]{Bor1} A. Borel,
``Admissible representations of a semi-simple group over a local field
with vectors fixed under an Iwahori subgroup",
Inv. Math. {\bf 35} (1976), 233--259

\bibitem[Bor2]{Bor2} A. Borel,
``Automorphic L-functions",
Proc. Symp. Pure Math {\bf 33.2} (1979), 27--61

\bibitem[BoWa]{BoWa} A. Borel, N.R. Wallach,
\emph{Continuous cohomology, discrete subgroups, and representations of reductive groups},
Annals of Mathematics Studies {\bf 94}, Princeton University Press, Princeton NJ, 1980

\bibitem[BuHe]{BuHe} C.J. Bushnell, G. Henniart,
``Generalized Whittaker models and the Bernstein center",
Amer. J. Math. {\bf 125.3} (2003), 513--547

\bibitem[ChSa]{ChSa} K.Y. Chan, G. Savin,
``Iwahori component of the Gelfand--Graev representation",
Math. Z. {\bf 288} (2018), 125--133

\bibitem[CFMMX]{CFMMX} C. Cunningham, A. Fiori, J. Mracek, A. Moussaoui, B. Xu,
``Arthur packets for $p$-adic groups by way of microlocal vanishing cycles of perverse
sheaves, with examples", 
Mem. Amer. Math. Soc. vol {\bf 276}, nr. {\bf 1353}, (2022) 

\bibitem[CFZ]{CFZ} C. Cunningham, A. Fiori, Q. Zhang,
``Toward the endoscopic classification of unipotent representations of $p$-adic $G_2$",
arXiv:2101.04578, 2021

\bibitem[FaSc]{FaSc} L. Fargues, P. Scholze, 
``Geometrization of the local Langlands correspondence",
arXiv:2102.13459, 2021

\bibitem[FOS1]{FOS1} Y. Feng, E. Opdam, M. Solleveld,
``Supercuspidal unipotent representations: L-packets and formal degrees",
J. \'Ec. polytech. Math. {\bf 7} (2020), 1133--1193

\bibitem[FOS2]{FOS2} Y. Feng, E. Opdam, M. Solleveld,
``On formal degrees of unipotent representations",
J. Inst. Math. Jussieu {\bf 21.6} (2022), 1947--1999

\bibitem[GGP]{GGP} W.T. Gan, B. Gross, D. Prasad,
``Symplectic local root numbers, central critical L-values and restriction problems 
in the representation theory of classical groups",
Ast\'erisque {\bf 346} (2012), 1--109

\bibitem[Hai]{Hai} T.J. Haines,
``The stable Bernstein center and test functions for Shimura varieties'',
pp. 118--186 in: \emph{Automorphic forms and Galois representations},
London Math. Soc. Lecture Note Ser. {\bf 415}, Cambridge University Press, 2014

\bibitem[Hei]{Hei1} V. Heiermann, ``Op\'erateurs d'entrelacement et alg\`ebres de Hecke 
avec param\`etres d'un groupe r\'eductif p-adique - le cas des groupes classiques",
Selecta Math. {\bf 17.3} (2011), 713--756

\bibitem[Hel]{Hel} E. Hellmann,
``On the derived category of the Iwahori-Hecke algebra",
Compos. Math. {\bf 159.5} (2023), 1042--1110

\bibitem[HII]{HII} K. Hiraga, A. Ichino, T. Ikeda,
``Formal degrees and adjoint $\gamma$-factors",
J. Amer. Math. Soc. {\bf 21.1} (2008), 283--304
and correction J. Amer. Math. Soc. {\bf 21.4} (2008), 1211--1213

\bibitem[Kal]{Kal2} T. Kaletha,	
``Representations of reductive groups over local fields",
\emph{International Congress of Mathematicians} Vol. {\bf 4}, Sections {\bf 5--8}, 2948--2975,
EMS Press, Berlin, 2023

\bibitem[KaLu]{KaLu} D. Kazhdan, G. Lusztig,
``Proof of the Deligne--Langlands conjecture for Hecke algebras",
Invent. Math. {\bf 87} (1987), 153--215

\bibitem[Kud]{Kud} S. Kudla,
``The local Langlands correspondence: the non-Archimedean case",
Proc. Symp. Pure Math {\bf 55.2} (1994), 365--391

\bibitem[Lan1]{Lan1} R.P. Langlands,	
``On the classification of irreducible representations of real algebraic groups",
pp. 101--170 in: \emph{Representation theory and harmonic analysis on semisimple Lie groups}, 
Math. Surveys Monogr. {\bf 31}, American Mathematical Society, Providence RI, 1989

\bibitem[Lan2]{Lan} R.P. Langlands,	
``Representations of abelian algebraic groups",
pp. 231--250 in: \emph{Olga Taussky-Todd: in memoriam},
Pacific J. Math. 1997, Special Issue

\bibitem[Lom]{Lom} L.A. Lomel\'i,
``The Langlands-Shahidi method over function fields: 
Ramanujan Conjecture and Riemann Hypothesis for the unitary groups",
arXiv:1507.03625v5, 2017

\bibitem[Lus1]{Lus-Int} G. Lusztig,
``Intersection cohomology complexes on a reductive group'',
Invent. Math. {\bf 75.2} (1984), 205--272

\bibitem[Lus2]{Lus-Cusp1} G. Lusztig,
``Cuspidal local systems and graded Hecke algebras'',
Publ. Math. Inst. Hautes \'Etudes Sci. {\bf 67} (1988), 145--202

\bibitem[Lus3]{Lus-Gr} G. Lusztig,
``Affine Hecke algebras and their graded version",
J. Amer. Math. Soc {\bf 2.3} (1989), 599--635

\bibitem[Lus4]{Lus-Cusp2} G. Lusztig,
``Cuspidal local systems and graded Hecke algebras. II'',
pp. 217--275 in: \emph{Representations of groups},
Canadian Mathematical Society Conference Proceedings {\bf 16}, 1995

\bibitem[Lus5]{Lus-Cusp3} G. Lusztig,
``Cuspidal local systems and graded Hecke algebras. III'',
Represent. Theory {\bf 6} (2002), 202--242

\bibitem[MiPa]{MiPa} M. Mishra, B. Pattanayak,
``Principal series component of Gelfand--Graev representation",
Proc. Amer. Math. Soc. {\bf 149.11} (2021), 4955--4962	

\bibitem[M\oe]{Moe} C. M\oe glin,
``Classification et changement de base pour les s\'eries discr\`etes des groupes unitaires $p$-adiques", 
Pacific J. Math. {\bf 233} (2007), 159–-204		 

\bibitem[Ree1]{Ree1} M. Reeder,
``Whittaker models and unipotent representations of $p$-adic groups",
Math. Ann. {\bf 308} (1997), 587--592
		 
\bibitem[Ree2]{Ree} M. Reeder,
``Isogenies of Hecke algebras and a Langlands correspondence for ramified principal series 
representations", Representation Theory {\bf 6} (2002), 101--126

\bibitem[Ren]{Ren} D. Renard,
\emph{Repr\'esentations des groupes r\'eductifs p-adiques},
Cours sp\'ecialis\'es {\bf 17}, Soci\'et\'e Math\'ematique de France, 2010

\bibitem[Roc1]{Roc1} A. Roche,
``Types and Hecke algebras for principal series representations of split reductive $p$-adic groups",
Ann. Sci. \'Ecole Norm. Sup. {\bf 31.3} (1998), 361--413

\bibitem[Roc2]{Roc2} A. Roche,
``The Bernstein decomposition and the Bernstein centre",
pp. 3--52 in: \emph{Ottawa lectures on admissible representations of reductive $p$-adic groups}, 
Fields Inst. Monogr. {\bf 26}, Amer. Math. Soc., Providence RI, 2009

\bibitem[Rod]{Rod} F. Rodier,
``Whittaker models for admissible representations of reductive $p$-adic split groups", 
pp. 425--430 in: \emph{Harmonic analysis on homogeneous spaces}, 
Proc. Sympos. Pure Math. AMS {\bf 26} (1973)

\bibitem[Shah]{Shah} F. Shahidi,
``A proof of Langlands' conjecture on Plancherel measures; complementary series for $p$-adic groups",
Ann. of Math. (2) {\bf 132.2} (1990), 273-–330 

\bibitem[Shal]{Sha} J.A. Shalika, 
``The multiplicity one theorem for $GL_n$",
Ann. of Math. (2) {\bf 100} (1974), 171--193 

\bibitem[SiZi]{SiZi} A.J. Silberger, E.-W. Zink,
``Langlands classification for L-parameters",
J. Algebra {\bf 511} (2018), 299--357

\bibitem[Sol1]{SolAHA} M. Solleveld, ``On the classification of irreducible 
representations of affine Hecke algebras with unequal parameters",
Representation Theory {\bf 16} (2012), 1--87

\bibitem[Sol2]{SolComp} M. Solleveld,
``On completions of Hecke algebras", 
pp. 207--262 in: \emph{Representations of Reductive p-adic Groups, 
A.-M. Aubert, M. Mishra, A. Roche, S. Spallone (eds.)},
Progress in Mathematics {\bf 328}, Birkh\"auser, 2019

\bibitem[Sol3]{SolFunct} M. Solleveld,
``Langlands parameters, functoriality and Hecke algebras",
Pacific J. Math. {\bf 304.1} (2020), 209--302

\bibitem[Sol4]{SolHecke} M. Solleveld,
``Affine Hecke algebras and their representations",
Indagationes Mathematica {\bf 32.5} (2021), 1005--1082

\bibitem[Sol5]{SolEnd} M. Solleveld,
``Endomorphism algebras and Hecke algebras for reductive $p$-adic groups",
J. Algebra {\bf 606} (2022), 371--470, and correction in arXiv:2005.07899v3 (2023)

\bibitem[Sol6]{SolLLCunip} M. Solleveld,
``A local Langlands correspondence for unipotent representations",
Amer. J. Math. {\bf 145.3} (2023), 673--719

\bibitem[Sol7]{SolParam} M. Solleveld,
``Parameters of Hecke algebras for Bernstein components of $p$-adic groups",
Indag. Math. (2024), doi.org/10.1016/j.indag.2024.04.005

\bibitem[Yu]{Yu} J.-K. Yu,
``On the local Langlands correspondence for tori", pp. 177--183 in: 
\emph{Ottawa lectures on admissible representation of reductice p-adic groups},
Fields Institute Monographs, American Mathemical Society, 2009

\bibitem[Vog]{Vog} D. Vogan,	
``The local Langlands conjecture",
pp. 305--379 in: \emph{Representation theory of groups and algebras},
Contemp. Math. {\bf 145}, American Mathematical Society, Providence RI, 1993

\bibitem[Zhu]{Zhu} X. Zhu,
``Coherent sheaves on the stack of Langlands parameters",
arXiv:2008.02998, 2020		

\end{thebibliography}
\end{document}